\DeclareMathAlphabet{\mathpzc}{OT1}{pzc}{m}{it}
\newtheorem{theorem}{Theorem}[section]
\newtheorem{lemma}[theorem]{Lemma}
\newtheorem{proposition}[theorem]{Proposition}
\newtheorem{corollary}[theorem]{Corollary}
\theoremstyle{definition}
\newtheorem{definition}[theorem]{Definition}
\newtheorem{remark}[theorem]{Remark}
\numberwithin{equation}{section}
\renewcommand{\Pr}{\mathbb{P}}	% probability
\newcommand{\Ex}{\mathbb{E}}	% expectation
\newcommand{\ind}{\mathbf{1}}	% indicator
\newcommand{\til}{\widetilde}
\newcommand{\diag}{\operatorname{diag}}
\newcommand{\hslg}{\mathpzc{HSLG}}
\newcommand{\irw}{\mathsf{IRW}}
\newcommand{\m}{\mathsf}
\newcommand{\zh}{Z}
\newcommand{\zt}{\mathpzc{Z}_N^{\mathsf{PL}}}
\newcommand{\sd}{\textcolor{black}}
\newcommand{\bl}{\textcolor{black}}
\newcommand{\R}{\mathbb{R}} % real numbers
\newcommand{\Z}{\mathbb{Z}} % integers
\newcommand{\e}{\varepsilon}
\newcommand{\zs}{Z_{\operatorname{sym}}}
\renewcommand{\ll}{\llbracket}
\newcommand{\rr}{\rrbracket}
\newcommand{\wb}{W_{\operatorname{br}}}
\newcommand{\pc}{\Pr_{\operatorname{block}}}
\newcommand{\ec}{\Ex_{\operatorname{block}}}
\newcommand{\pg}{\Pr_{\operatorname{gibbs}}}
\title[HSLG polymer in the bound phase]{The half-space log-gamma polymer in the bound phase}
\author[S.\ Das]{Sayan Das}
\address{S.\ Das,
	Department of Mathematics, University of Chicago,
	\newline\hphantom{\quad \ \ S. Das}
	5734 S.~University Avenue, Chicago, Illinois 60637 USA
}
\email{sayan.das@columbia.edu}
\author[W.\ Zhu]{Weitao Zhu}
\address{W.\ Zhu,
	Department of Mathematics, Columbia University,
	\newline\hphantom{\quad \ \ S. Das}
	2990 Broadway, New York, NY 10027 USA
}
\email{weitao.zhu@columbia.edu}
\subjclass[2020]{%
	Primary 60K37, 82B21,	
	Secondary 82D60.  	
}
\keywords{%
	Directed Polymer, Kardar--Parisi--Zhang equation, stochastic heat equation, Brownian bridge.
}
\begin{document}

\begin{abstract} We consider the log-gamma polymer in the half-space with bulk weights distributed as $\operatorname{Gamma}^{-1}(2\theta)$ and diagonal weights as $\operatorname{Gamma}^{-1}(\alpha+\theta)$ for $\theta>0$ and $\alpha>-\theta$. We show that in the bound phase, i.e., when $\alpha\in (-\theta,0)$, the endpoint of the polymer lies within an $O(1)$ stochastic window of the diagonal. This result gives the first rigorous proof of the pinned phenomena for the half-space polymers in the bound phase conjectured by Kardar \cite{kar2}. We also show that the limiting quenched endpoint distribution of the polymer around the diagonal is given by a random probability mass function proportional to the exponential of a random walk with log-gamma type increments.
\end{abstract}

\maketitle
{
		\hypersetup{linkcolor=black}
		\setcounter{tocdepth}{1}
		%\tableofcontents
	}

\begin{figure}[h!]
    \centering
    \vspace{-0.6cm}
    \includegraphics[height=6.5cm]{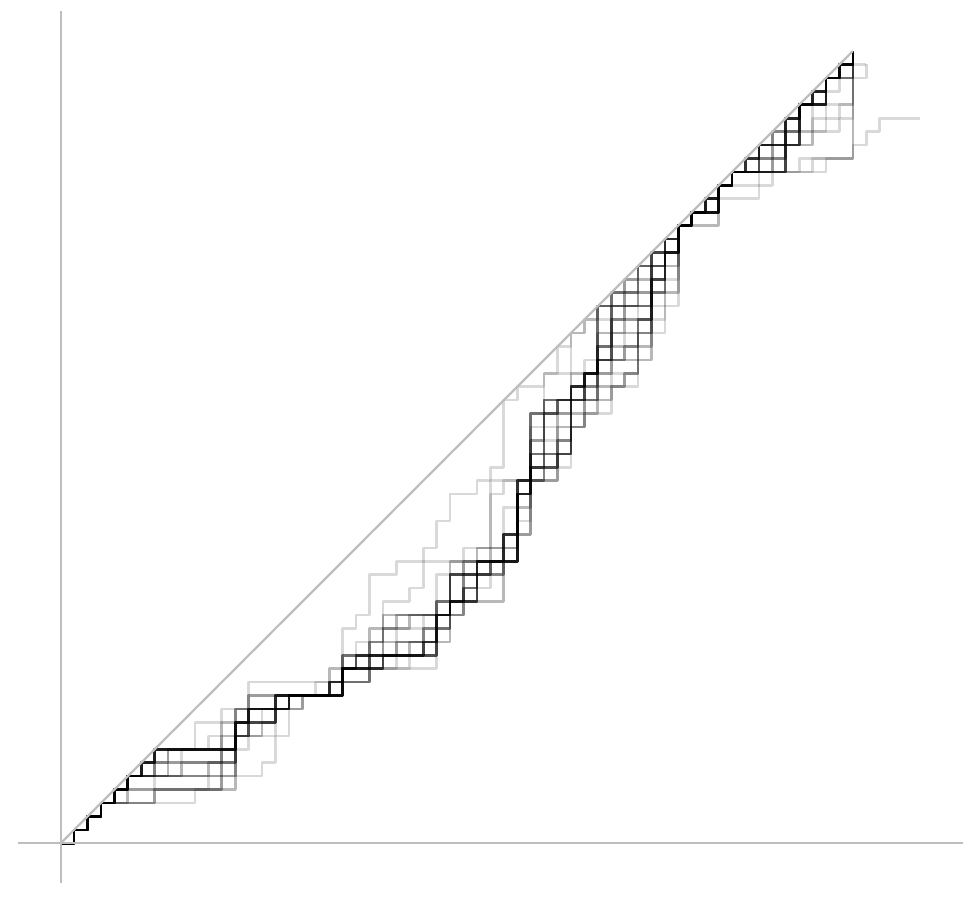} \includegraphics[height=6.5cm]{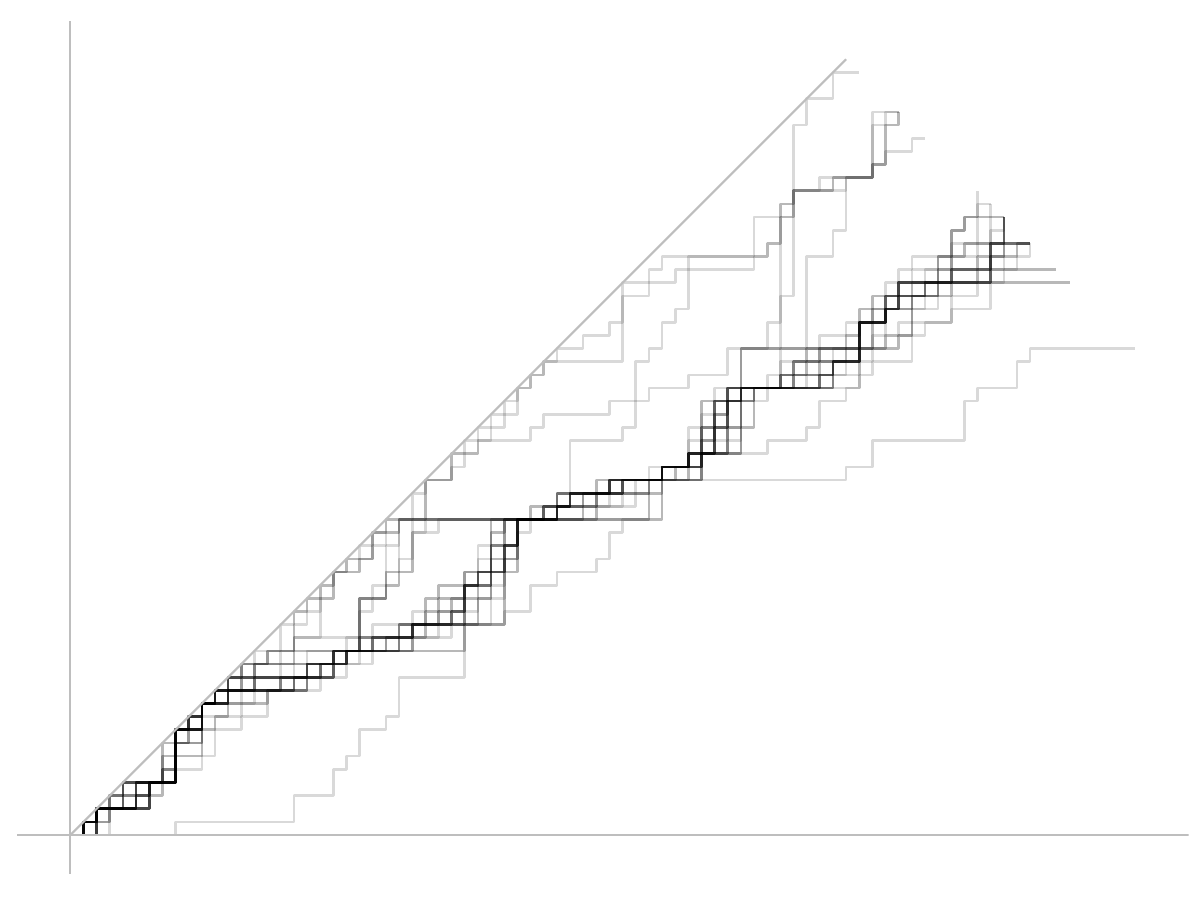}
    \caption{The bound and the unbound phase.}
    \label{f.simulation1}
\end{figure}

\section{Introduction}

 Directed polymers in random environments, first appeared in \cite{hhf, imb, bol}, are a rich class of mathematical physics models that have been extensively studied over the last several decades (see ~books \cite{bk1,gia,den,comets} and the references therein). More recently, a particular variant of the polymer models, the half-space polymers, has garnered considerable attention. The structure of the half-space polymers resembles the behavior of an interface in the presence of an attractive wall and their understanding renders importance to the studies of the wetting phenomena (\cite{abr80, psw82, bhl83}). Depending on the attraction force of the wall, it was conjectured in \cite{kar2} that these models exhibit a ``depinning" phase transition. When the attraction force exceeds a certain critical threshold (colloquially known as the bound phase), \cite{kar2} conjectured that the endpoint of the polymer stays within a $O(1)$ window around the wall, i.e., it gets pinned to the wall. In this paper, we focus on the half-space polymers with log-gamma weights which make the model integrable and demonstrate a $O(1)$ pinning phenomena of the endpoint in the bound phase. Our work is the first rigorous proof of such $O(1)$ endpoint pinning phenomena.
 
 Presently, we begin with an introduction to the model and the statements of our main results. 

\subsection{The model and the main results} Fix any $\theta>0$ and $\alpha>-\theta$ and define the half-space index set: $\mathcal{I}^{-}=\{(i,j) \in \Z_{>0}^2 \mid  j\le i\}$. We consider a family of independent variables $(W_{i,j})_{(i,j)\in \mathcal{I}^{-}}$:
	\begin{align}\label{eq:wt}
		W_{i,i}\sim 
			\operatorname{Gamma}^{-1}(\alpha+\theta) \qquad
			W_{i,j} \sim \operatorname{Gamma}^{-1}(2\theta) \ \mbox{ for } i>j,
	\end{align}
	where $\operatorname{Gamma}(\beta)$ denotes a random variable with density $\ind\{x>0\}[\Gamma(\beta)]^{-1}x^{\beta-1}e^{-x}$.
Let $\Pi_{N}^{\operatorname{half}}$ be the set of all upright lattice paths of length $2N-2$ starting from $(1,1)$  that are confined to the half-space $\mathcal{I}^{-}$ (see Figure \ref{fig1}). Given the weights in \eqref{eq:wt}, the half-space log-gamma ($\hslg$) polymer is a random measure on $\Pi_{N}^{\operatorname{half}}$ defined as
\begin{align}\label{eq:Gibbs}
		\Pr^{W}(\pi)=\frac1{\zh(N)} \prod_{(i,j)\in \pi} W_{i,j} \cdot \ind_{\pi\in \Pi_{N}^{\operatorname{half}}},
	\end{align}
where $\zh(N)$ is the normalizing constant. %\sdn{moved the partition function defn to later}

\begin{figure}[h!]
		\centering
		\begin{tikzpicture}[line cap=round,line join=round,>=triangle 45,x=0.6cm,y=0.6cm]
			\foreach \x in {0,1,2,...,7}
			{        
				\coordinate (A\x) at ($(9,0)+(\x,0)$) {};
				\coordinate (B\x) at ($(9,0)+(0,\x)$) {};
				\draw[dotted] ($(A\x)+(0,\x)$) -- ($(A\x)$);
				\draw[dotted] ($(B\x)+(7,0)$) -- ($(B\x)+(\x,0)$);
				\coordinate (C\x) at ($(16,0)+(\x,0)$) {};
				\coordinate (D\x) at ($(16,0)+(0,\x)$) {};
				\draw[dotted] ($(C\x)+(0,7-\x)$) -- ($(C\x)$);
				\draw[dotted] ($(D\x)+(7-\x,0)$) -- ($(D\x)+(0,0)$);
			}
			\draw [line width=1.5pt,color=blue] (9,0)-- (11,0)--(11,2)--(14,2)--(14,4)--(19,4);
			\draw [line width=1.5pt] (9,0)-- (10,0)--(10,1)--(13,1)--(13,3)--(15,3)--(15,4)--(16,4)--(16,6)--(17,6);	
   \begin{scriptsize}
       \node at (8.4,0) {$(1,1)$};
	\node at (19.6,4) {$(11,5)$};
	\node at (17.6,6) {$(9,7)$};
   \node[] (A) at (12.5,5.5) {$\operatorname{Gamma}^{-1}(\alpha+\theta)$};
\node[] (B) at (16.2,2.5) {$\operatorname{Gamma}^{-1}(2\theta)$};
   \end{scriptsize}
   \draw[-stealth] (12,3) -- (A);
			\draw[-stealth] (13,4) -- (A);	
			\draw[-stealth] (15,1) -- (B);
			\draw[-stealth] (16,1) -- (B);	
			\draw[-stealth] (14,1) -- (B);	
		\end{tikzpicture}
		\caption{Two possible paths of length $14$ in $\Pi_{8}^{\operatorname{half}}$ are shown in the figure.}
		\label{fig1}
	\end{figure}
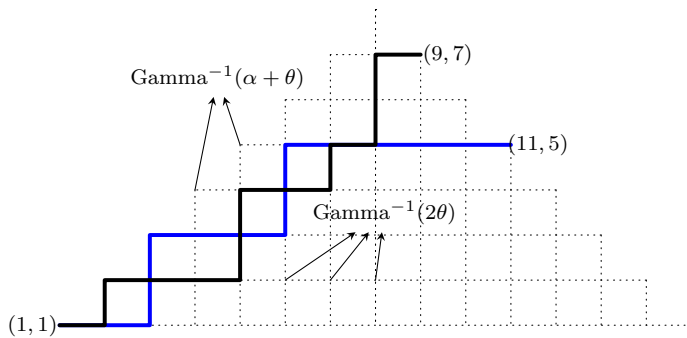

%\begin{definition}[Half-space weights] \label{def:hsw}
%Fix any $\theta>0$ and $\alpha>-\theta$ and define the half-space index set: $\mathcal{I}^{-}=\{(i,j) \in \Z_{>0}^2 \mid  j\le i\}$. We consider a family of independent variables $(W_{i,j})_{(i,j)\in \mathcal{I}^{-}}$ such that
	%\begin{align}\label{eq:wt}
		%W_{i,j}^{-1}\sim \begin{cases}
			%\operatorname{Gamma}(\alpha+\theta) & i=j \\
			%\operatorname{Gamma}(2\theta) & j<i,
		%\end{cases}
	%\end{align}
	%where $\operatorname{Gamma}(\beta)$ denotes the a random variable with density $\ind\{x>0\}\Gamma^{-1}(\beta)x^{\beta-1}e^{-x}$.
%\end{definition}

%\begin{definition}[Half-space log-gamma polymer] Let $\Pi_{N}^{\operatorname{half}}$ be the set of all up-right paths of length $2N-2$ starting from $(1,1)$  that are confined to the half-space $\mathcal{I}^{-}$. Given the half-space weights, the half-space log-gamma polymer can be viewed as a random measure on $\Pi_{N}^{\operatorname{half}}$ defined as
%\begin{align}\label{eq:Gibbs}
	%	\Pr^{W}(\pi)=\frac1{\zh(N)} \prod_{(i,j)\in \pi} W_{i,j} \cdot \ind_{\pi\in \Pi_{N}^{\operatorname{half}}},
	%\end{align}
%where $\zh(N)$ is the normalizing constant.	
%\end{definition}

%\wz{ discuss the role of $\alpha$: talk about in the bound phase, pinning and wetting, and \cite{ims22} result.} 
The parameter $\alpha$ controls the strength of the boundary weights, i.e. the attractiveness of the wall, and a ``depinning" phase transition occurs when $\alpha= 0$ (see \cite{kar2,ps02,bc20}). When $\alpha \ge 0$, \cite{bw, bcd23} showed that the polymer measure is unpinned and the endpoint lies in a $O(N^{2/3})$ window. For $\alpha <0,$ the conjecture is that the attraction is strong enough so that the polymer measure is pinned to the diagonal (see Figure \ref{f.simulation1}). Indeed, our first main result below confirms that in the bound phase, i.e., when $\alpha\in (-\theta,0)$, the endpoint of the $\hslg$ polymer is within $O(1)$ window of the diagonal and is the first such result to capture the ``pinning" phenomenon of the half-space polymer measure to the diagonal. 

%\begin{figure}[h!]
    %\centering
    %\includegraphics[height=6.5cm]{bound.pdf} %\includegraphics[height=6.5cm]{unbound.pdf}
    %\caption{The bound and the unbound phase.\wz{say %what are the $\alpha, \theta$ length etc.}}
 %   \label{f.simulation}
%\end{figure}

\begin{theorem}[Bounded endpoint]\label{t:bdpt} Fix $\theta>0$ and $\alpha\in (-\theta,0)$ and consider the random measure $\Pr^W$ from \eqref{eq:Gibbs}. For a path $\pi\in \Pi_N^{\operatorname{half}}$, we denote $\pi(2N-2)$ as the height (i.e., $y$-coordinate) of the endpoint of the polymer. We have
    \begin{align}
        \lim_{k\to\infty}\limsup_{N\to\infty}\Pr^{W}(\pi(2N-2) \le N-k)=0, \quad \mbox{ in probability}.
    \end{align}
\end{theorem}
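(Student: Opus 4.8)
The plan is to reduce the statement to a uniform-in-$N$, exponential-in-$j$ tail bound for a ratio of point-to-point partition functions, and then to establish that bound using the integrable structure of the half-space log-gamma polymer.

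\emph{Step 1 (reduction).} I would first write, for $1\le n\le N$, $Z_n$ for the partition function of up-right paths from $(1,1)$ to the point $(2N-n,n)$ of height $n$ on the final anti-diagonal, confined to $\mathcal{I}^{-}$, so that $\zh(N)=\sum_{n=1}^{N}Z_n$ and $\Pr^{W}(\pi(2N-2)=n)=Z_n/\zh(N)$. Since $\zh(N)\ge Z_N$,
\[
\Pr^{W}(\pi(2N-2)\le N-k)=\frac{\sum_{n\le N-k}Z_n}{\sum_{n\le N}Z_n}\le\sum_{j\ge k}\frac{Z_{N-j}}{Z_N}.
\]
Thus it suffices to produce $C<\infty$ and $\rho\in(0,1)$, independent of $N$, with $\Pr\big(Z_{N-j}/Z_N>\rho^{\,j/2}\big)\le C\rho^{\,j/2}$ for all $N$ and all $1\le j\le N-1$: off the event $\bigcup_{j\ge k}\{Z_{N-j}/Z_N>\rho^{\,j/2}\}$, of probability $\le C'\rho^{\,k/2}$, the sum above is $\le\sum_{j\ge k}\rho^{\,j/2}<\e$ once $k$ is large, so $\Pr\big(\Pr^{W}(\pi(2N-2)\le N-k)>\e\big)\le C'\rho^{\,k/2}$ uniformly in $N$, which is exactly the asserted convergence.

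\emph{Step 2 (macroscopic regime).} Fix a small $\delta>0$. For $j\ge\delta N$ the endpoint $(N+j,N-j)$ lies at macroscopic distance from the diagonal, and I expect standard law-of-large-numbers and exponential concentration estimates for (half-space) log-gamma free energies to provide a strictly positive gap, uniform over $j/N\in[\delta,1)$, between $N^{-1}\log Z_{N-j}$ and $N^{-1}\log Z_N$, together with an $e^{-cN}$ bound on the fluctuation of each around its mean. Hence $Z_{N-j}/Z_N\le e^{-cN}\le\rho^{\,j/2}$ off an event of probability $\le e^{-c'N}\le C\rho^{\,j/2}$ throughout this range.

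\emph{Step 3 (near-diagonal regime, the crux).} For $j<\delta N$ the idea is that, viewed from the final anti-diagonal, the free-energy profile $i\mapsto\log Z_{N-i}$ on the window $0\le i\le\delta N$ stabilizes as $N\to\infty$ to (a shift of) a random walk $S_i=\sum_{r=1}^{i}\xi_r$ with i.i.d.\ increments of ``log-gamma type''---concretely one expects $\xi_r\stackrel{d}{=}\log\operatorname{Gamma}^{-1}(2\theta)-\log\operatorname{Gamma}^{-1}(2\theta+\alpha)$ or a close variant, whose mean $\psi(2\theta+\alpha)-\psi(2\theta)$ is \emph{strictly negative precisely because $\alpha<0$}. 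This is where integrability enters: the half-space log-gamma polymer enjoys a Burke-type stationarity under which partition-function increments along an anti-diagonal next to the wall become exactly independent inverse-gamma variables, and $\alpha<0$ is exactly the condition placing the corresponding stationary increment process at negative drift. I would then telescope $Z_{N-j}/Z_N=\prod_{i=1}^{j}(Z_{N-i}/Z_{N-i+1})$ and run a fractional-moment argument: a uniform-in-$N$ bound $\Ex\big[(Z_{N-i}/Z_{N-i+1})^{s}\big]\le m<1$ for a fixed small $s>0$ (reflecting $\Ex[\xi_r]<0$), together with rapidly decaying dependence between the factors, should yield $\Ex\big[(Z_{N-j}/Z_N)^{s}\big]\le (m')^{j}$ with $m'<1$, whence Markov's inequality gives the required $\rho^{\,j/2}$-tail. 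Passing to the limit along the way also identifies the limiting quenched endpoint law near the diagonal with the normalization of $(e^{S_i})_{i\ge0}$, the random probability mass function announced in the abstract.

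\emph{The main obstacle.} The hard part will be Step 3: controlling the near-diagonal free-energy increments \emph{uniformly in $N$}, not merely in the $N\to\infty$ limit. The boundary weight $\operatorname{Gamma}^{-1}(\alpha+\theta)$ does not by itself make the finite-$N$ model stationary, so one must either isolate the correct stationary perturbation together with a monotone coupling to it---so as to read off the negative drift and the exponential tails at fixed $N$---or extract the uniform tail bounds directly from the exact Laplace-transform / Fredholm-Pfaffian formulas and carry their asymptotic analysis through uniformly in the endpoint location, which is the genuinely delicate analytic step. Two lesser nuisances: annealed quantities are infinite once $\theta\le\tfrac12$ (then $\operatorname{Gamma}^{-1}(2\theta)$ has infinite mean), so all expectations above must be taken after truncating the weights or replaced by quenched tail bounds; and the two ends $i\approx0$ and $i\approx\delta N$ of the window, where the lone diagonal weight $W_{N,N}$ enters and where the near-diagonal picture meets the macroscopic regime, require separate bookkeeping.
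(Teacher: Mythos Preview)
Your high-level architecture --- split the tail into a ``far'' regime handled by free-energy separation and a ``near-diagonal'' regime where the profile should look like a random walk with negative drift --- is the right instinct, and it matches the paper's division into a deep tail and a shallow tail. But the mechanism you propose for Step~3 has a genuine gap, and the paper closes it by a completely different route.

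The gap is exactly the obstacle you flag yourself: you need uniform-in-$N$ control on the near-diagonal increments $\log(Z_{N-i}/Z_{N-i+1})$, and neither of your suggested fixes delivers it. The Burke/stationary structure applies to the model with first-row weights $\operatorname{Gamma}^{-1}(\theta-\alpha)$, not to the original model; getting from one to the other \emph{is} the theorem (this is the ``attractor'' statement of Theorem~\ref{t:walk}), so you cannot assume it as input. A monotone coupling to the stationary model in the direction you need is not available. The fractional-moment route requires either near-independence of the successive ratios or a uniform bound on $\Ex[(Z_{N-i}/Z_{N-i+1})^s]$, and there is no mechanism in your proposal producing either; indeed for $\theta\le\tfrac12$ you already note that annealed moments diverge, and truncation does not obviously commute with the telescoping. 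The Fredholm--Pfaffian formulas from \cite{ims22,bw} give one-point information (Laplace transforms at fixed endpoints), not joint control of the whole increment process uniformly in $N$. Also, a minor point: the limiting increment law is $\log\operatorname{Gamma}(\theta-\alpha)-\log\operatorname{Gamma}(\theta+\alpha)$ (Definition~\ref{def:lgrw}), not the $2\theta$ and $2\theta+\alpha$ parameters you guessed.

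What the paper actually does is orthogonal to both of your suggestions. It embeds $\log Z(N+r,N-r)$ as the top curve $H_N^{(1)}$ of the $\hslg$ line ensemble (Definition~\ref{hslg}) and uses the Gibbs property (Theorem~\ref{t:gibbs}): conditional on the second curve $H_N^{(2)}$, the top curve is a log-gamma random walk reweighted by a soft non-crossing interaction with $H_N^{(2)}$. The entire difficulty is then transferred to showing that $H_N^{(2)}$ sits uniformly at least $\sqrt{N}$ below $H_N^{(1)}$ on a window of length $O(\sqrt{N})$ (Proposition~\ref{p:benchmark}), so that the reweighting factor is $1+o(1)$ and the top curve is essentially the free random walk (Proposition~\ref{p:tech}). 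That separation is obtained via a new combinatorial ``$U$ map'' on pairs of non-intersecting paths (Lemma~\ref{l:umap}), which forces all diagonal weights onto one path of each pair and yields an average law of large numbers for the top $2k$ curves strictly below $R$ (Proposition~\ref{p:alaw}); this pins down $H_N^{(2k+2)}$ and then, via stochastic monotonicity of the Gibbs measure, $H_N^{(2)}$. The deep tail itself is cut at $M\sqrt{N}$ (not at $\delta N$) and dispatched directly by the $\sqrt{N}$-scale fluctuation result of \cite{bw} (Lemma~\ref{l:deep}), so no separate macroscopic concentration step is needed.
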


Theorem \ref{t:bdpt} is a quenched result and naturally implies its annealed version. Following the above theorem, our next point of inquiry is the limiting behavior of the quenched distribution of the endpoints around the diagonal.
We introduce and clarify a few more notations below before stating our results in this direction. Let $\Pi_{m,n}^{\operatorname{half}}$ is the set of all upright lattice paths starting from $(1,1)$ and ending at $(m, n)$ that reside solely in the half-space $\mathcal{I}^{-}$. We define the \textit{point-to-point} partition function as
 \begin{align}\label{eq:hpp}
 \zh(m,n):=\sum_{\pi\in \Pi_{m,n}^{\operatorname{half}}} \prod_{(i,j)\in \pi} W_{i,j}.
\end{align}	
Under the above definition, the normalizing constant $\zh(N)$ in \eqref{eq:Gibbs}, can also be viewed as the \textit{point-to-line} partition function, i.e.
\begin{align*}
    \zh(N)=\sum_{p=0}^{N-1} \zh(N+p,N-p).
\end{align*}
The natural logarithm of the partition function is termed as the free energy of the polymer. The aforementioned depinning phase transition can be observed by studying the fluctuations of the free energy of the polymer. In this context, \cite{bw} obtained precise one-point fluctuations for the point-to-line free energy $\log Z(N)$ in both the bound and unbound phases and observed the BBP phase transition. A similar fluctuation result and Baik-Rains phase transition were later shown in \cite{ims22} for the point-to-point free energy $\log Z(N,N)$ on the diagonal. For $\alpha \ge 0$, it was recently proven in \cite{bcd23} that the point-to-point free energy process $$\big(\log Z(N+pN^{2/3}, N-pN^{2/3})\big)_{p\in [0,r]}$$ after appropriate centering and scaling by $N^{1/3}$ is functionally tight. This result captures the characteristic KPZ $1/3$ fluctuation and $2/3$ transversal scaling exponents. In our present work, we study the point-to-point free energy process under $\alpha<0$ case. Our second main result below obtains precise fluctuations for the increments of the point-to-point free energy process when $\alpha <0$. To state the result, we introduce the definition of the \textit{log-gamma random walk}.

\begin{definition}\label{def:lgrw} Fix $\theta>0$ and $\alpha\in (-\theta,0]$. Let $Y_1 \sim \operatorname{Gamma}(\theta+\alpha)$ and $Y_2\sim \operatorname{Gamma}(\theta-\alpha)$ be independent random variables. We refer to $X:=\log Y_{2}-\log Y_1$ as a log-gamma random variable. It has a density given by
\begin{align}\label{lgrw:den}
    p(x):=\frac1{\Gamma(\theta+\alpha)\Gamma(\theta-\alpha)}\int_{\R} \exp\left((\theta-\alpha)y-e^y +(\theta+\alpha)(y-x)-e^{y-x}\right)dy. 
\end{align}
Let $(X_i)_{i\ge 0}$ be a sequence of such iid log-gamma random variables. Set $S_0=0$ and $S_k=\sum_{i=1}^k X_i$. We refer to $(S_k)_{k\ge 0}$ as a \textit{log-gamma random walk}. 
\end{definition}

Our next result states that in the bound phase, the above random walk is an \textit{attractor} for the increments of the half-space log-partition function.

\begin{theorem}\label{t:walk}  Fix $\theta>0$ and $\alpha\in (-\theta,0)$. For each $k\ge 1$, as $N\to \infty$, we have the following multi-point convergence in distribution 
\begin{align}
    \left(\frac{\zh(N+r,N-r)}{\zh(N,N)}\right)_{r\in \ll0,k\rr} \stackrel{d}{\to} \left(e^{-S_r}\right)_{r\in \ll0,k\rr},
\end{align}
where $(S_r)_{r\ge 0}$ is a log-gamma random walk from Definition \ref{def:lgrw}.
\end{theorem}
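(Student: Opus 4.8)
The plan is to reduce the statement to the asymptotics of the one-step anti-diagonal ratios near the diagonal, and then to identify an invariant measure for a Markov dynamics they satisfy. Set $\rho^{(N)}_s:=\zh(N+s,N-s)/\zh(N+s-1,N-s+1)$; telescoping gives $\zh(N+r,N-r)/\zh(N,N)=\prod_{s=1}^{r}\rho^{(N)}_s$, so by continuous mapping it suffices to prove that $(\rho^{(N)}_s)_{s\in\ll1,k\rr}$ converges in distribution to $(e^{-X_s})_{s\in\ll1,k\rr}$ with $X_s$ i.i.d.\ log-gamma, noting that $e^{-X}\stackrel{d}{=}\operatorname{Gamma}(\theta+\alpha)/\operatorname{Gamma}(\theta-\alpha)$. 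From the defining recursions $\zh(i,j)=W_{i,j}(\zh(i-1,j)+\zh(i,j-1))$ for $i>j$ and $\zh(i,i)=W_{i,i}\zh(i,i-1)$, one derives a closed recursion for the ratios $\Phi^{(\ell)}_i:=\zh(i,\ell-i)/\zh(i-1,\ell-i+1)$ along successive anti-diagonals $\ell$:
\[
\Phi^{(\ell)}_i=\frac{W_{i,\ell-i}}{W_{i-1,\ell-i+1}}\cdot\frac{1+\Phi^{(\ell-1)}_i}{1+1/\Phi^{(\ell-1)}_{i-1}},
\]
with the convention $1/\Phi^{(\ell-1)}_{i-1}=0$ whenever the denominator site $(i-1,\ell-i+1)$ lies on the diagonal. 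Thus the partition functions in an $O(1)$ strip around the diagonal evolve, as $\ell$ increases, as a Markov chain driven by fresh $\operatorname{Gamma}^{-1}(2\theta)$ weights in the bulk and fresh $\operatorname{Gamma}^{-1}(\theta+\alpha)$ weights at a reflecting boundary at the diagonal, and $\rho^{(N)}_s=\Phi^{(2N)}_{N+s}$.

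The second step is to identify the invariant measure of this dynamics as the half-space analogue of Sepp\"al\"ainen's stationary log-gamma structure. I would show that if the ratios $(\Phi^{(\ell)}_i)_i$ on an anti-diagonal are i.i.d.\ copies of $\operatorname{Gamma}(\theta+\alpha)/\operatorname{Gamma}(\theta-\alpha)$, then after a double step the ratios $(\Phi^{(\ell+2)}_i)_i$ have the same law. This is a Burke-type property and, after the substitution above, reduces to repeated use of the beta-gamma algebra---the independence of $\operatorname{Gamma}(a)+\operatorname{Gamma}(b)$ and $\operatorname{Gamma}(a)/(\operatorname{Gamma}(a)+\operatorname{Gamma}(b))\sim\operatorname{Beta}(a,b)$---with the parameters closing up precisely because the diagonal weight has shape $\theta+\alpha$ and the bulk weight shape $2\theta=(\theta+\alpha)+(\theta-\alpha)$. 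Equivalently, one may construct a genuinely stationary half-space log-gamma polymer, with a suitable stationary boundary placed far from the diagonal, whose near-diagonal anti-diagonal ratios are i.i.d.\ log-gamma for every $N$; the exact identity $\zh(N,N)/\zh(N,N-1)=W_{N,N}\sim\operatorname{Gamma}^{-1}(\theta+\alpha)$ is the simplest instance and pins the ``effective boundary parameter'' to $\alpha$. (If needed, the correct one-point marginal can alternatively be read off from the exact formulas of \cite{ims22}.)

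The third step is convergence to this invariant measure. The displayed recursion is monotone in its two inputs, so on the common weight field one can sandwich the true polymer---started from the single site $(1,1)$---between copies of the dynamics launched from extreme configurations on a far anti-diagonal, and use attractivity near the reflecting boundary to force the true ratios $(\rho^{(N)}_s)_{s\le k}$ to agree, as $N\to\infty$, with the stationary (i.e.\ i.i.d.\ log-gamma) ratios. This is where Theorem \ref{t:bdpt} is essential: the pinning estimate provides the a priori control needed to run the comparison---it gives that $\zh(N)/\zh(N,N)=\sum_{r\ge0}\prod_{s=1}^{r}\rho^{(N)}_s$ is tight, so the ratios cannot degenerate to the absorbing configurations $0$ or $\infty$---and it is the mechanism by which the near-diagonal ratios lose all memory of the far environment. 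Assembling the three steps and taking partial products yields $(\zh(N+r,N-r)/\zh(N,N))_{r\in\ll0,k\rr}\stackrel{d}{\to}(e^{-S_r})_{r\in\ll0,k\rr}$.

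I expect the third step to be the main obstacle: making the attractivity and decorrelation of the anti-diagonal dynamics near the diagonal quantitative and uniform in $N$, and inserting Theorem \ref{t:bdpt} at the right place (no such stabilization holds in the unbound phase). The beta-gamma computation of the second step is routine but delicate, since it is exactly the matching of the shapes $\theta\pm\alpha$ and $2\theta$ that manufactures a log-gamma random walk; consistently, $X$ has positive mean---because the digamma function is increasing and $\theta-\alpha>\theta+\alpha>0$---so $S_r\to+\infty$ and $\sum_r e^{-S_r}<\infty$ almost surely, matching the $O(1)$ pinning of Theorem \ref{t:bdpt}.
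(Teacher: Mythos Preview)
Your route is genuinely different from the paper's. The paper does not analyze anti-diagonal ratio dynamics at all; instead it embeds $\log Z(N+r,N-r)=H_N^{(1)}(2r+1)$ as the top curve of the $\hslg$ line ensemble and uses its Gibbs property (Theorem~\ref{t:gibbs}): conditioned on $H_N^{(2)}$ and on $H_N^{(1)}$ at a boundary point $2p^*+1=O(\sqrt N)$, the increments $(H_N^{(1)}(1)-H_N^{(1)}(2r+1))_r$ have exactly the log-gamma random walk law, reweighted by $\exp\big(-\sum_j e^{H_N^{(2)}(2j)-H_N^{(1)}(2j\pm1)}\big)$. The technical heart of the paper (Sections~\ref{sec:2k}--\ref{sec:2nd}, via a combinatorial $U$-map on pairs of non-intersecting paths together with the one-point input from \cite{bw}) shows that $H_N^{(2)}$ lies at least $\sqrt N$ below $H_N^{(1)}$ on a window of length $O(\sqrt N)$, so this reweighting is $1+o(1)$. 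That is Proposition~\ref{p:tech}, and both Theorem~\ref{t:walk} and Theorem~\ref{t:bdpt} are immediate corollaries; in particular Theorem~\ref{t:bdpt} is an output, not an input.

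Your Steps~1--2 are fine: the recursion is correct, and the invariance of the i.i.d.\ log-gamma law for the near-diagonal ratios is exactly the stationarity of \cite{bkld20} recalled in the introduction. The gap is Step~3, and it is not a technicality. Tightness from Theorem~\ref{t:bdpt} only prevents degeneration; it does not select a limit. The half-space model admits a one-parameter family of stationary measures (cf.\ \cite{bc22}), and your argument must show that the delta initial condition $Z(k,0)=\ind_{k=1}$ is attracted to the particular one with parameter $\alpha$; the paper explicitly notes (in the discussion following Theorem~\ref{t:qdistn}) that this attractivity is only conjectured in \cite{bc22} and not proven for this initial data. Monotonicity of $(\Phi^{(\ell-1)}_{i-1},\Phi^{(\ell-1)}_i)\mapsto\Phi^{(\ell)}_i$ gives a sandwich, but you offer no mechanism forcing the upper and lower envelopes to meet, and the observation $Z(N,N)/Z(N,N-1)=W_{N,N}$ pins one marginal but neither the joint law nor any convergence. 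In effect, Step~3 is a repackaging of Theorem~\ref{t:walk} as an ergodicity statement for the ratio dynamics, with no new leverage to prove it.
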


From the above result, we deduce the following limiting quenched distribution of the endpoint when viewed around the diagonal.

\begin{theorem}\label{t:qdistn}  Fix $\theta>0$ and $\alpha\in (-\theta,0)$ and consider the random measure $\Pr^W$ from \eqref{eq:Gibbs}. Let $(S_k)_{k\ge 0}$ be a log-gamma random walk from Definition \ref{def:lgrw}. Set $Q:=\sum_{p\ge 0}  e^{-S_p}$. For a path $\pi\in \Pi_N^{\operatorname{half}}$, we denote $\pi(2N-2)$ as the height (i.e., $y$-coordinate) of the endpoint of the polymer. Then for each $k\ge 1$, as $N\to \infty$, we have the following multi-point convergence in distribution 
\begin{align}\label{eq:tqd}
    \left(\Pr^{W}(\pi(2N-2)=N-r)\right)_{r\in \ll0,k\rr} \stackrel{d}{\to} \left(Q^{-1} \cdot  e^{-S_r}\right)_{r\in \ll0,k\rr}. 
\end{align}
\end{theorem}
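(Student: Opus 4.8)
The plan is to deduce Theorem \ref{t:qdistn} from Theorem \ref{t:walk} together with Theorem \ref{t:bdpt}, treating the latter as the ``tightness'' input that upgrades a finite-dimensional statement about ratios of partition functions into a statement about the genuine probability mass function of the endpoint. First I would rewrite the quenched endpoint probabilities in terms of point-to-point partition functions: for $0 \le r \le k$,
\begin{align*}
    \Pr^{W}(\pi(2N-2)=N-r) = \frac{\zh(N+r,N-r)}{\zh(N)} = \frac{\zh(N+r,N-r)/\zh(N,N)}{\sum_{p=0}^{N-1}\zh(N+p,N-p)/\zh(N,N)}.
\end{align*}
Write $R_N(p) := \zh(N+p,N-p)/\zh(N,N)$ and $Q_N := \sum_{p=0}^{N-1} R_N(p)$, so the left side of \eqref{eq:tqd} is $(R_N(r)/Q_N)_{r\in\ll 0,k\rr}$. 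Theorem \ref{t:walk} gives, for every fixed $M$, the joint convergence $(R_N(p))_{p\in\ll 0,M\rr} \stackrel{d}{\to} (e^{-S_p})_{p\in\ll 0,M\rr}$; the target is $(R_N(r)/Q_N)_{r} \stackrel{d}{\to} (Q^{-1}e^{-S_r})_{r}$ with $Q = \sum_{p\ge 0} e^{-S_p}$.

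The core of the argument is controlling the tail of $Q_N$ uniformly in $N$. I would split $Q_N = \sum_{p=0}^{M} R_N(p) + \sum_{p=M+1}^{N-1} R_N(p)$. The first (head) sum converges jointly with the numerator by Theorem \ref{t:walk} and a continuous mapping argument, and $\sum_{p=0}^{M} e^{-S_p} \uparrow Q$ a.s.\ as $M\to\infty$, with $Q<\infty$ a.s.\ because the log-gamma walk $S_p$ has positive drift: indeed $\Ex[X_1] = \Ex[\log Y_2] - \Ex[\log Y_1] = \psi(\theta-\alpha) - \psi(\theta+\alpha) > 0$ for $\alpha<0$, where $\psi$ is the digamma function, so $S_p \to +\infty$ linearly and $\sum e^{-S_p}$ converges. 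For the tail sum $\sum_{p=M+1}^{N-1} R_N(p)$, observe that this equals $\Pr^{W}(\pi(2N-2) \le N-M-1) \cdot Q_N / \big(\text{normalization}\big)$ — more precisely $\sum_{p>M} R_N(p) = Q_N \cdot \Pr^{W}(\pi(2N-2)\le N-M-1)$. By Theorem \ref{t:bdpt}, $\Pr^{W}(\pi(2N-2)\le N-M-1)\to 0$ in probability as first $N\to\infty$ then $M\to\infty$. Combined with a lower bound $Q_N \ge R_N(0) = 1$ (so $Q_N$ is bounded below) and an upper bound on $Q_N$ in probability (which follows since $\sum_{p=0}^M R_N(p)$ is tight for each $M$ and the tail is negligible — a small fixed-point/bootstrapping step), one concludes that $\sum_{p=M+1}^{N-1} R_N(p) \to 0$ in probability in the iterated limit.

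Assembling: fix $\e>0$; choose $M$ large so that $\Pr(Q - \sum_{p=0}^M e^{-S_p} > \e)$ is small and the tail contribution to $Q_N$ is small with high probability uniformly in large $N$; then on this event $(R_N(r)/Q_N)_{r\in\ll 0,k\rr}$ is within $\e$-distance (in a metric for convergence in distribution, e.g.\ via the Portmanteau theorem applied to bounded Lipschitz test functions) of $\big(R_N(r)/\sum_{p=0}^M R_N(p)\big)_{r\in\ll 0,k\rr}$, which by Theorem \ref{t:walk} and continuous mapping converges to $\big(e^{-S_r}/\sum_{p=0}^M e^{-S_p}\big)_{r\in\ll 0,k\rr}$, itself within $\e$ of $(Q^{-1}e^{-S_r})_{r\in\ll 0,k\rr}$. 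A standard $\e\to 0$ diagonal argument then yields \eqref{eq:tqd}. The main obstacle I anticipate is the uniform-in-$N$ control of the tail of $Q_N$: Theorem \ref{t:bdpt} gives that the quenched mass beyond level $N-k$ vanishes, but one must convert this into an $L^0$ (in-probability) bound on $\sum_{p>M} R_N(p)$ that does not secretly require already knowing $Q_N$ is tight — this is the delicate interplay, resolved by noting $Q_N \ge 1$ deterministically so that $\sum_{p>M}R_N(p) \le Q_N \cdot o_{\Pr}(1)$ can be rerun with the a priori bound $Q_N \le \big(1 - \Pr^W(\pi(2N-2)\le N-M-1)\big)^{-1}\sum_{p=0}^M R_N(p)$, closing the loop.
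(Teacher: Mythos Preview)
Your proposal is correct and follows essentially the same approach as the paper. The paper's proof is slightly more streamlined: it writes $\Pr^{W}(\pi(2N-2)=N-r) = \Pr^{W}(\pi(2N-2)\ge N-M)\cdot \frac{R_N(r)}{\sum_{p=0}^{M}R_N(p)}$ directly, so the first factor goes to $1$ by Theorem~\ref{t:bdpt} and the second converges by Theorem~\ref{t:walk} and continuous mapping, then $M\to\infty$ via Lemma~\ref{a1}; this is exactly the algebraic identity you eventually derive to close your bootstrapping loop, so your concern about the tightness of $Q_N$ dissolves once that factorization is made at the outset.
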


\begin{remark} Lemma \ref{a1} ensures that $\Pr(Q\in [1,\infty))=1$ and makes the right-hand side of \eqref{eq:tqd} a valid (random) probability distribution.
In fact, by Theorem 1.2 in \cite{abo21}, once we multiply both the denominator and numerator of the r.h.s. of \eqref{eq:tqd} by $R_0 \sim \operatorname{Gamma}^{-1}(\theta-\alpha)$, then the new denominator $QR_0 \sim \operatorname{Gamma}^{-1}(-2\alpha).$
\end{remark}

%\begin{definition}[Half-space point-to-point partition function]\label{def:hpp}
%Fix $(m,n)\in \mathcal{I}$. Let $\Pi_{m,n}^{\operatorname{half}}$ be the set of all up-right paths from $(1,1)$ to $(m,n)$ that are confined to the half-space $\mathcal{I}^{-}$. We define the point-to-point partition function of the $\hslg$ polymer as
%	\begin{align} \label{eq:hpp}
%		\zh(m,n):=\sum_{\pi\in \Pi_{m,n}^{\operatorname{half}}} \prod_{(i,j)\in \pi} W_{i,j},
%	\end{align} 
%	\end{definition}

%\begin{align}
    %\Pi_N^{\operatorname{half}}=\bigcup_{p=0}^{N-1} \Pi_{N+p,N-p}^{\operatorname{half}}, \quad Z(N)=\sum_{p=0}^{N-1} Z(N+p,N-p).
%\end{align}

%\sdn{I have skipped the stationary paragraph.}

\medskip

 Beyond proving the $O(1)$ transversal fluctuation around the point $(N, N)$ and pinning down the exact density within this region, our main theorems above also shed light on the attractive properties of half-space log-gamma stationary measures.  In \cite{bkld20} a stationary version of the half-space log-gamma polymer was considered for $\alpha\in (-\theta,\theta)$, where the horizontal weights along the first row are assumed to be distributed as $\operatorname{Gamma}^{-1}(\theta-\alpha)$ (i.e., $W_{i,1}\sim \operatorname{Gamma}^{-1}(\theta-\alpha)$). Let us denote $Z^{\operatorname{stat}}(n,m)$ to be the point-to-point $\hslg$ partition function computed using these weights. It was shown in \cite[Proposition 4.5]{bkld20}, that this model is stationary in the sense that for all $k\ge 1$, and $N\ge k+1$ 
 $$(\log Z^{\operatorname{stat}}(N,N)-\log Z^{\operatorname{stat}}(N+r,N-r))_{r\in \ll0,k\rr}\stackrel{d}{=} (S_r)_{r\in \ll0,k\rr}.$$ 
where $(S_r)_{r\ge 0}$ is a log-gamma random walk defined in Definition \ref{def:lgrw}. 
\begin{remark} Using the above stationary weights, one can define an associated polymer measure $\Pr_{\operatorname{stat}}^W$ in the spirit of \eqref{eq:Gibbs}. We remark that both Theorem \ref{t:bdpt} and Theorem \ref{t:qdistn} continue to hold under $\Pr_{\operatorname{stat}}^W$. This is not hard to check from our log-gamma random walk results presented in Appendix \ref{sec:app}.
%\wz{Similar results for stationary log-gamma random walk, maybe specify parameters, would follow from our results in the appendix.}
\end{remark}
Theorem \ref{t:walk} shows that for $\alpha<0$ the above log-gamma random walk measure is an \textit{attractor} for the original polymer model in the sense that the increment of the log-partition function of the original model converges to the same log-gamma random walk measure. We believe that our broad techniques should also lead to a similar convergence result for $\alpha \ge 0$. %However,  significant modification is required before we apply our current proof framework, as the proof in Section \ref{sec:2k} has been tailored for the case of $\alpha <0$. 
We leave this for future consideration.

\medskip

We end this section by mentioning a recent work \cite{bc22} on the stationary measures for the $\hslg$ polymer. The point-to-point log-gamma polymer partition function $Z(n,m)$ satisfies a recurrence relation 
 \begin{align*}
     Z(n,m) & =W_{n,m} \cdot (Z(n-1,m)+Z(n,m-1)) \mbox{ for }n>m\ge 1, \\
     Z(n,n) & =W_{n,n} \cdot Z(n,n-1) \mbox{ for }n\ge 1.
 \end{align*}
We refer to a process $(h(k))_{k\ge 0}$ as horizontal-stationary for the $\hslg$ polymer if the solution to the above recurrence relation with initial data $z(\cdot,0)=e^{h(\cdot)}$ has stationary horizontal increments. For instance, the distribution of horizontal increments $(\log Z(N+k,N)-\log Z(N,N))_{k\ge 0}$ is same for all $N\ge 0$ (and equal to that of the initial data). Recently, \cite{bc22} posited a one-parameter family of horizontal-stationary measures for the $\hslg$ polymer model and conjectured that these stationary measures are attractors for a large class of initial data $(Z(n,0))_{n\ge 0}$ subject to the condition $\lim_{k\to\infty} \log Z(k,0)/k=d \in \R$.
However, the initial data relevant to our polymer model corresponds to $Z(k,0)=\ind_{k=1}$ and is not covered in \cite{bc22}. Nevertheless, our result is consistent with \cite{bc22}, in the sense that it
corresponds to the $v=\infty$ limit of their Conjecture 1.9. Our limiting distributon also matches with stationary measures for log-gamma polymer models on the strip obtained in \cite{bcy} after taking the right side of the strip to infinity.

\subsubsection{Implications of Gaussian fluctuations on the diagonal} In \cite{ims22}, the authors studied one point fluctuations of the $\hslg$ log-partition function on the diagonal, $\log Z(N,N)$, in both phases. In bound phase, they showed that \begin{align}\label{e:gaufl}
    \frac{\log Z(N,N)-RN}{\sigma\sqrt{N}} \to G,
\end{align}
where $G\sim \mathcal{N}(0,1)$ and
\begin{equation*}
    \begin{aligned}
    R(\theta, \alpha) & := -\Psi(\theta+\alpha) -\Psi(\theta - \alpha), \quad \sigma^2(\theta,\alpha) & : = \Psi'(\theta + \alpha) - \Psi'(\theta - \alpha).
\end{aligned}
\end{equation*}
Here $\Psi(\cdot)$ denote the digamma function defined on $\R_{>0}$ by
	\begin{align}\label{psidef}
		\Psi(z)=\partial_z\log \Gamma(z)=-\gamma+\sum_{n=0}^{\infty} \left(\frac1{n+1}-\frac1{n+z}\right),
	\end{align}
	where $\gamma$ is the Euler-Mascheroni constant. \bl{We also record here the expression for the $k$-th derivative of digamma function for later use.}
\begin{align}\label{deripsi}
    \bl{\Psi^{(k)}(z)=\partial^k_z \Psi(z) = (-1)^{k+1}k!\sum_{n=0}^{\infty} \frac{1}{(n+z)^{k+1}}.}
\end{align}
Combining the above result from \cite{ims22} with our results, we prove gaussianity away from the diagonal.

{\begin{theorem}\label{t:fluc} \bl{Fix $\theta>0$ and $\alpha\in (-\theta,0)$.} Fix any $k\in \Z_{> 0}$. For each $N>0$, fix $(a_{N,1},\ldots,a_{N,k}) \in \Z_{\ge 0}^k$. Suppose that as $N\to \infty$, $a_{N,i}/\sqrt{N} \to 0$ for each $i\in \{1,\ldots,k\}$. We have
\begin{align*}
    \bigg(\frac{\log Z(N+a_{N,i},N-a_{N,i})-RN}{\sigma\sqrt{N}}\bigg)_{i=1}^k \to (G,G,\ldots,G).
\end{align*}
 where $G\sim \mathcal{N}(0,1)$.   
\end{theorem}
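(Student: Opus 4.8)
\emph{Proof plan.} The idea is to transfer the diagonal Gaussian limit \eqref{e:gaufl} of \cite{ims22} to the shifted endpoints by showing that the logarithmic cost of the horizontal shift is negligible on the $\sqrt N$ scale. Write
\[
\xi_N:=\frac{\log Z(N,N)-RN}{\sigma\sqrt N},\qquad
\Delta_{N,i}:=\log Z(N+a_{N,i},N-a_{N,i})-\log Z(N,N),
\]
so that the $i$-th coordinate of the vector in the statement equals $\xi_N+\Delta_{N,i}/(\sigma\sqrt N)$, and the whole vector equals $\xi_N\cdot(1,\dots,1)+\big(\Delta_{N,1},\dots,\Delta_{N,k}\big)/(\sigma\sqrt N)$. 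By \eqref{e:gaufl} the first summand converges in distribution to $G\cdot(1,\dots,1)$. So by Slutsky's theorem it suffices to prove that $\Delta_{N,i}/\sqrt N\to 0$ in probability for each fixed $i$; then the second summand converges to $0$ in probability, and the whole vector converges in distribution to $G\cdot(1,\dots,1)=(G,\dots,G)$.

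To control $\Delta_{N,i}$ I would use that $(N,N)$ and $(N+a_{N,i},N-a_{N,i})$ lie on the common anti-diagonal $\{(p,q):p+q=2N\}$ and telescope along it:
\[
\Delta_{N,i}=\sum_{r=0}^{a_{N,i}-1}\Big[\log Z(N+r+1,N-r-1)-\log Z(N+r,N-r)\Big].
\]
For each fixed $r$, Theorem \ref{t:walk} identifies the limit of the $r$-th summand as $-X$ for an i.i.d.\ copy $X$ of the log-gamma variable of Definition \ref{def:lgrw}; in particular these increments have a fixed finite mean $-\mu$ with $\mu=\Psi(\theta-\alpha)-\Psi(\theta+\alpha)$, which is strictly positive since $\alpha<0$. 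The quantitative input I need is a version valid over a growing number of increments, of the first-moment form
\[
\sup_{N}\ \sup_{0\le r\le \delta N}\ \Ex\big|\log Z(N+r+1,N-r-1)-\log Z(N+r,N-r)\big|\ <\ \infty
\]
for some fixed $\delta>0$. I expect to obtain this from the estimates already developed for Theorem \ref{t:walk}, supplemented by a comparison with the stationary model $Z^{\operatorname{stat}}$ of \cite{bkld20}, whose anti-diagonal free-energy increments are, for every $N$, exactly a log-gamma random walk with $L^1$-bounded steps. Granting the displayed bound, $\Ex|\Delta_{N,i}|=O(a_{N,i})$, so Markov's inequality together with the hypothesis $a_{N,i}=o(\sqrt N)$ gives $\Delta_{N,i}/\sqrt N\to 0$ in probability, which finishes the argument.

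The main obstacle is precisely this last uniform estimate: Theorem \ref{t:walk} controls only a \emph{fixed} number of increments, whereas here the number of telescoped terms $a_{N,i}$ grows with $N$. I would deduce the uniformity from quenched moment bounds for the partition-function ratios $Z(n+1,m)/Z(n,m)$ and $Z(n,m+1)/Z(n,m)$, namely bounds on some positive and all negative polynomial moments that are uniform over lattice points $(n,m)$ with $n+m\to\infty$ and $n-m=o(N)$, i.e.\ in the near-diagonal regime governed by the pinning estimates behind Theorem \ref{t:bdpt} and far from the corner $(1,1)$ where the special boundary weights $W_{1,1}$ and $(W_{i,1})_{i}$ sit. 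Such moment control for log-gamma ratios is a standard (if technical) ingredient, of the type appearing in Seppäläinen-style analyses and already implicit in the proofs of Theorems \ref{t:bdpt} and \ref{t:walk}; the elementary bound $|\log x|\le p^{-1}(x^{p}+x^{-p})$ then upgrades it to the displayed $L^1$-control on the anti-diagonal increments of $\log Z$, and the telescoping argument goes through.
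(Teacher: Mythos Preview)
Your overall reduction is correct and matches the paper: write the $i$-th coordinate as $\xi_N+\Delta_{N,i}/(\sigma\sqrt N)$, use \eqref{e:gaufl} for $\xi_N$, and reduce via Slutsky to showing $\Delta_{N,i}/\sqrt N\to 0$ in probability. The divergence is in how you close this last step.

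You telescope $\Delta_{N,i}$ into $a_{N,i}$ many one-step increments and ask for a uniform $L^1$ bound on them. You correctly flag this as the main obstacle, since Theorem~\ref{t:walk} only controls a \emph{fixed} number of increments. Your proposed fix --- uniform polynomial moment bounds on the ratios $Z(n+1,m)/Z(n,m)$ in the near-diagonal regime, borrowed from Sepp\"al\"ainen-style arguments or the stationary comparison --- is plausible, but it is \emph{not} established in this paper and is not ``already implicit'' in the proofs of Theorems~\ref{t:bdpt} and~\ref{t:walk}. Those proofs do not go through moment bounds on individual increments at all; they go through Proposition~\ref{p:tech}. So as written, your argument has a genuine gap at exactly the point you identify.

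The paper closes the gap differently, and more cheaply. Proposition~\ref{p:tech} is strictly stronger than the finite-$k$ statement of Theorem~\ref{t:walk}: it says that for any event $\m A$ measurable with respect to the \emph{entire} increment vector $(H_N^{(1)}(1)-H_N^{(1)}(2r+1))_{r=1}^{M\sqrt N}$, the probabilities under $\Pr$ and under the log-gamma random walk law differ by at most $9\e$ for large $N$. Since $a_{N,i}=o(\sqrt N)\le M\sqrt N$ eventually, one can take $\m A=\{|H_N^{(1)}(1)-H_N^{(1)}(2a_{N,i}+1)|>\delta\sqrt N\}$ and transfer directly to $\Pr(|S_{a_{N,i}}|>\delta\sqrt N)$; finite first moment of the log-gamma increment plus Markov then gives the conclusion. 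No telescoping, no uniform $L^1$ control on individual increments, is needed.

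In short: same skeleton, but the paper already contains the tool (Proposition~\ref{p:tech}) that resolves your obstacle in one line, whereas your proposed route would require establishing new moment estimates not present here.
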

The above theorem establishes gaussianity in the $o(\sqrt{N})$ window around the diagonal with trivial correlations. In fact, we expect the above theorem to hold even if $a_{N,i}/N\to 0$. When $a_{N,i}$ are precisely of the order $N$, we still expect to see gaussianity but with nontrivial correlations. The above result is proved using a strong coupling result (Proposition \ref{p:tech}) that we prove in Section \ref{sec:mainpf}. The gaussianity in the above theorem essentially comes from the \cite{ims22} input. However, we believe that it is possible to re-establish \eqref{e:gaufl} using the machinery developed in this paper. We leave this for future work.} 

\subsection{Proof Ideas}\label{sec:pfidea} In this section we sketch the key ideas behind the proofs of our main results. Our proof relies on inputs from the recently developed $\hslg$ Gibbsian line ensemble in \cite{bcd23}, one-point fluctuation results for point-to-(partial)line half-space log-partition functions from \cite{bw} and the localization techniques from \cite{dz1}. At the heart of our argument lies an innovative combinatorial argument that bridges the aforementioned inputs and enables our proof. 

\smallskip

\begin{figure}[h!]
    \centering
    	\begin{tikzpicture}[scale=.8,line cap=round,line join=round,>=triangle 45,x=4cm,y=1.5cm]
     
\draw [line width=1pt] (0,2.7361969564619426)-- (0.09968303324610706,2.9960253171531424);
			\draw [line width=1pt] (0.09968303324610706,5.9960253171531424)-- (0.19830773702771506,5.6884965795913667);
			\draw [line width=1pt] (0.19830773702771506,5.6884965795913667)-- (0.30006302893410286,5.889974785777928);
			\draw [line width=1pt] (0.30006302893410286,5.889974785777928)-- (0.4,6);
			\draw [line width=1pt] (0.4,6)-- (0.5,5.9);
			\draw [line width=1pt] (0.5,5.9)-- (0.5973644533669638,5.7832141676196814);
			\draw [line width=1pt] (0.5973644533669638,5.7832141676196814)-- (0.698588596048139,5.7266568810878273);
			\draw [line width=1pt] (0.698588596048139,5.7266568810878273)-- (0.8017593568577984,5.469758011699555);
			\draw [line width=1pt] (0.8017593568577984,5.469758011699555)-- (0.8990902632820055,5.0648171825842884);
			\draw [line width=1pt] (0.8990902632820055,5.0648171825842884)-- (0.9990053878836391,5.200795293991784);
			\draw [line width=1pt] (0.9990053878836391,5.200795293991784)-- (1.1079547104749525,5.1089745438145457);
			\draw [line width=1pt] (1.1079547104749525,5.1089745438145457)-- (1.2,5);
			\draw [line width=1pt] (1.2,5)-- (1.300093597749738,4.659193187756184);
			\draw [line width=1pt] (1.300093597749738,4.659193187756184)-- (1.4032643585593976,4.32597371537499);
			\draw [line width=1pt] (1.4032643585593976,4.32597371537499)-- (1.5,4);
			\draw [line width=1pt] (1.5,4)-- (1.5998727895362959,3.925290549662153);
			\draw [line width=1pt] (1.5998727895362959,3.925290549662153)-- (1.7,4);
			\draw [line width=1pt] (1.7,4)-- (1.8,3.5);
			\draw [line width=1pt] (1.8,3.5)-- (1.895758745065885,3.6867886653092736);
			\draw [line width=1pt] (1.895758745065885,3.6867886653092736)-- (2,3.5);

\draw[line width=1pt,dashed,<->] (0,0)--(2,0);
\draw[line width=1pt,dashed,<->] (0.5,3.1)--(0.5,5.8);
     
			\draw [line width=1pt] (0,2.7361969564619426)-- (0.09968303324610706,2.9960253171531424);
			\draw [line width=1pt] (0.09968303324610706,2.9960253171531424)-- (0.19830773702771506,2.6884965795913667);
			\draw [line width=1pt] (0.19830773702771506,2.6884965795913667)-- (0.30006302893410286,2.889974785777928);
			\draw [line width=1pt] (0.30006302893410286,2.889974785777928)-- (0.4,3);
			\draw [line width=1pt] (0.4,3)-- (0.5,3);
			\draw [line width=1pt] (0.5,3)-- (0.5973644533669638,2.3832141676196814);
			\draw [line width=1pt] (0.5973644533669638,2.3832141676196814)-- (0.698588596048139,2.7266568810878273);
			\draw [line width=1pt] (0.698588596048139,2.7266568810878273)-- (0.8017593568577984,2.869758011699555);
			\draw [line width=1pt] (0.8017593568577984,2.869758011699555)-- (0.8990902632820055,2.7648171825842884);
			\draw [line width=1pt] (0.8990902632820055,2.7648171825842884)-- (0.9990053878836391,2.600795293991784);
			\draw [line width=1pt] (0.9990053878836391,2.600795293991784)-- (1.1079547104749525,2.5089745438145457);
			\draw [line width=1pt] (1.1079547104749525,2.5089745438145457)-- (1.2,2.5);
			\draw [line width=1pt] (1.2,2.5)-- (1.300093597749738,2.259193187756184);
			\draw [line width=1pt] (1.300093597749738,2.259193187756184)-- (1.4032643585593976,2.32597371537499);
			\draw [line width=1pt] (1.4032643585593976,2.32597371537499)-- (1.5,2);
			\draw [line width=1pt] (1.5,2)-- (1.5998727895362959,1.925290549662153);
			\draw [line width=1pt] (1.5998727895362959,1.925290549662153)-- (1.7,2);
			\draw [line width=1pt] (1.7,2)-- (1.8,1.5);
			\draw [line width=1pt] (1.8,1.5)-- (1.895758745065885,1.6867886653092736);
			\draw [line width=1pt] (1.895758745065885,1.6867886653092736)-- (2,1.5);
			\draw [line width=1pt] (0,2.8983782378219005)-- (0.09903021247502392,2.812517559454864);
			\draw [line width=1pt] (0.09903021247502392,2.812517559454864)-- (0.20025435515619922,2.8315977102030945);
			\draw [line width=1pt] (0.20025435515619922,2.8315977102030945)-- (0.2975852615804062,2.745737031836058);
			\draw [line width=1pt] (0.2975852615804062,2.745737031836058)-- (0.40398128310420617,2.841199415267653);
			\draw [line width=1pt] (0.40398128310420617,2.841199415267653)-- (0.5000335469427568,2.698036654965482);
			\draw [line width=1pt] (0.5000335469427568,2.698036654965482)-- (0.5993110714954479,2.5740156751019847);
			\draw [line width=1pt] (0.5993110714954479,2.5740156751019847)-- (0.7,2.5);
			\draw [line width=1pt] (0.7,2.5)-- (0.8116756285443952,2.510788449351374);
			\draw [line width=1pt] (0.8116756285443952,2.510788449351374)-- (0.9,2.5);
			\draw [line width=1pt] (0.9,2.5)-- (1.0003144059631806,2.2114928108856082);
			\draw [line width=1pt] (1.0003144059631806,2.2114928108856082)-- (1.1073784030298084,2.2401130370079536);
			\draw [line width=1pt] (1.1073784030298084,2.2401130370079536)-- (1.200816073197047,1.8871302481656922);
			\draw [line width=1pt] (1.200816073197047,1.8871302481656922)-- (1.2981469796212541,1.6677085145610433);
			\draw [line width=1pt] (1.2981469796212541,1.6677085145610433)-- (1.3993711223024294,1.7344890421798496);
			\draw [line width=1pt] (1.3993711223024294,1.7344890421798496)-- (1.4947554105981522,1.3624261025893578);
			\draw [line width=1pt] (1.4947554105981522,1.3624261025893578)-- (1.5998727895362959,1.3528860272152425);
			\draw [line width=1pt] (1.5998727895362959,1.3528860272152425)-- (1.701096932217471,1.162084519732939);
			\draw [line width=1pt] (1.701096932217471,1.162084519732939)-- (1.8003744567701623,0.6278402987824895);
			\draw [line width=1pt] (1.8003744567701623,0.6278402987824895)-- (1.8938121269374009,0.6850807510271805);
			\draw [line width=1pt] (1.8938121269374009,0.6850807510271805)-- (2.0008761240040287,0.12221630395438526);
			\begin{scriptsize}
				\draw (0.93,4.6) node[anchor=west] {$H_N^{(1)}(\cdot)$};
				\draw (0.93,2.85) node[anchor=west] {$H_N^{(2)}(\cdot)$};
				\draw (0.93,1.5) node[anchor=west] {$H_N^{(3)}(\cdot)$};
                \draw (0.82,-0.2) node[anchor=west] {$M_1\sqrt{N}$};
                \draw (0.51,4.2) node[anchor=west] {$\sqrt{N}$};
			\end{scriptsize}
		\end{tikzpicture}
    \caption{First three curves of the $\hslg$ line ensemble. There is a high probability uniform separation of length $\sqrt{N}$ between the first two curves in the above $M_1\sqrt{N}$ window.}
    \label{fig:le}
\end{figure}
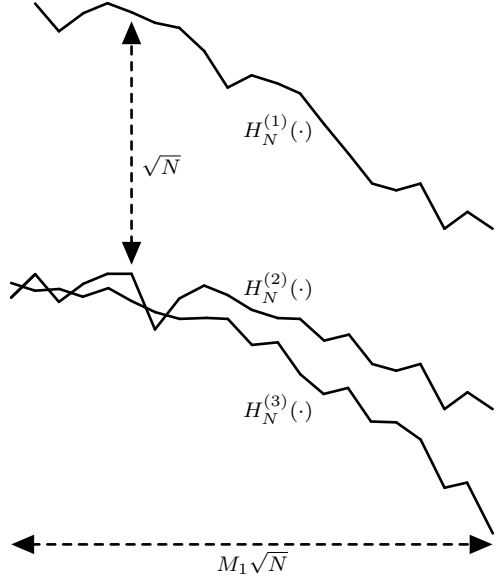

 The starting point of our analysis is the $\hslg$ Gibbsian line ensemble in \cite{bcd23}, which allows us to embed the free energy $\log Z(N+r, N-r)$ of the $\hslg$ polymer as the top curve of a Gibbsian line ensemble $(H_N^{(k)}(\cdot))_{k\in \ll1,N\rr}$ of log-gamma increment random walks interacting through a soft
version of non-intersection (Theorem \ref{t:order}) conditioning and subject to an energetic interaction at the left boundary
(where $r=0$) depending on the value of $\alpha$. This fact is due to the geometric RSK correspondence (\cite{cosz14, osz14, nz17,bz19}) and the half-space Whittaker process (\cite{bbc20}). The key idea of our proof is to show that with high probability, the first and the second curves in our line ensemble (see Figure \ref{fig:le}) are sufficiently uniformly separated. Then the separation allows us to conclude that the first curve indeed behaves similarly to a log-gamma random walk by a localization analysis. 

The existing literature contains some information about the locations of the top two curves. When $\alpha < 0$, one can deduce from the line ensemble description in \cite{bcd23} that the first and the second curves are repulsed from each other at the left boundary.  Results in \cite{bw} also supply information about the location for the first curve. However, one cannot deduce that the entire second curve lies uniformly much lower than the first curve from the above two inputs and line ensemble techniques alone.

\subsubsection{Intuition behind the separation} Before we proceed to further break down our argument about the separation, it is worth dwelling on the mathematical intuition behind the separation between the first and second curves, which originates from the definition of the line ensemble defined in Section \ref{sec2.1}. For simplicity, let us focus only on the left boundary. By Definition \ref{hslg}, we have $H_N^{(1)}(1)=\log Z(N,N)$, and 
\begin{align}\label{def2}
    H_N^{(1)}(1)+H_N^{(2)}(1):=\log \left[2\sum_{\pi_1,\pi_2} \prod_{(i,j)\in \pi_1\cup \pi_2}\til{W}_{i,j}\right],
\end{align}
where the above sum is over all pair of non-intersecting upright paths $\pi_1,\pi_2$ from $(1,1)$ to $(N,N-1)$ and from $(1,2)$ to $(N, N)$ confined in the entire quadrant $\Z_{> 0}^2$ (instead of octant). Here $\til{W}_{i,j}$ is the symmetrized version of the weights defined in \eqref{eq:wt} on the entire quadrant as:
 \begin{align}\label{eq:symwt}
			\til{W}_{i,i}=
				W_{i,i}/2 \ \mbox{ for } \ i\ge 1, \qquad
				\til{W}_{i,j}=\til{W}_{j,i}=W_{i,j} \ \mbox{ for } \ i>j.
		\end{align} 
Using point-to-(partial)line log-partition function fluctuation results from \cite{bw} and line ensemble techniques, it is not hard to deduce that 
    $\tfrac1N H_N^{(1)}(1) \to R:=-\Psi(\theta+\alpha)-\Psi(\theta-\alpha),$ where $\Psi$ is the digamma function defined in \eqref{psidef}. However, $H_N^{(2)}(1)$ should follow a different law of large numbers. This can be understood intuitively from \eqref{def2} as follows. For $\alpha$ close to $-\theta$, the weights on the diagonal are huge and stochastically dominate all the other weights. The sum in \eqref{def2} then concentrates on the pair of paths $\pi_1^*,\pi_2^*$ which jointly have the maximal numbers of diagonal points. This occurs when one of the paths carries all the diagonal weights and the other path has no diagonal weights. Thus we expect,
   \begin{align}\label{expect}
       \sum_{\pi_1,\pi_2} \prod_{(i,j)\in \pi_1\cup \pi_2}\til{W}_{i,j} \asymp \left[\sum_{\pi_1} \prod_{(i,j)\in \pi_1}\til{W}_{i,j}\right]   \cdot \left[\sum_{\pi_2 \mid \diag(\pi_2)=\varnothing} \prod_{(i,j)\in \pi_2}\til{W}_{i,j}\right]
   \end{align} 
  Upon taking logarithms and dividing by $N$, the first term goes to $R$. However, the second term does not feel  the effect of the diagonal and hence should follow the law of large numbers corresponding to the unbound phase, i.e., $\alpha>0$. The unbound phase law of large numbers is given by $-\Psi(\theta)$ noted in \cite{bw,bcd23}. Thus overall, we expect $\frac1N(H_N^{(1)}(1)+H_N^{(2)}(1)) \to R-\Psi(\theta)$ \bl{which forces  $\frac1NH_N^{(2)}(1) \to -\Psi(\theta)$. From the explicit formula of $\Psi^{(2)}$ in \eqref{deripsi}, we have that $\Psi$ is concave, which in turn implies $R>-\Psi(\theta)$.} Hence the above heuristics suggests $H_N^{(2)}(1)$ follow a lower law of large numbers. While our technical arguments to be presented later do not yield exactly \eqref{expect}, we utilize the above idea to obtain a large enough separation between the two curves, which turns out to be sufficient for proving our main theorems.

  %This idea of disparities between laws of large numbers also forms the basis for us to develop techniques to extract information on the second curves in Sections \ref{sec:2k} and \ref{sec:2nd}.

%In a zero-temperature environment, the sum of the first and second curves is the maximum energy between two non-intersecting paths. When $\alpha \in (-\theta, 0)$, the weights on the diagonal are huge and the max should be attained when one path has all the diagonal weights. Therefore, the path with no diagonal weights should follow a different law of large numbers. %Currently, input on the second and below curves is unavailable from the line ensembles results in \cite{bcd23} alone and is a technical hurdle for us to overcome.

\smallskip

\subsubsection{The $U$ map and its consequences} We now describe the key idea that makes the above intuition work. All the statements mentioned in this subsection should be understood as high probability statements. The above idea of having one path having all diagonal weights is made precise in Section \ref{sec:2k}, where we develop a combinatorial map in Lemma \ref{l:umap}, referred to as the $U$ map. 

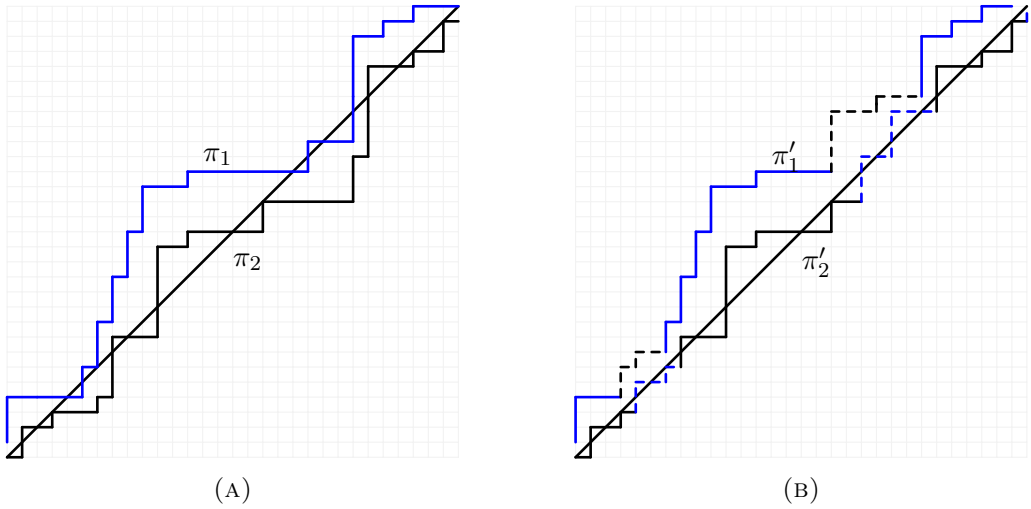
\begin{figure}[h!]
    \centering
    \begin{subfigure}[b]{0.45\textwidth}
        \centering
\begin{tikzpicture}[line cap=round,line join=round,>=triangle 45,x=0.2cm,y=0.2cm]
\draw [color=gray!10!white,, xstep=0.2cm,ystep=0.2cm] (1,1) grid (31,31);
\draw [line width=1pt] (1,1)-- (2,1);
\draw [line width=1pt] (2,1)-- (2,3);
\draw [line width=1pt] (2,3)-- (4,3);
\draw [line width=1pt] (4,3)-- (4,4);
\draw [line width=1pt] (1,1)-- (31,31);
\draw [line width=1pt] (4,4)-- (7,4);
\draw [line width=1pt] (7,4)-- (7,5);
\draw [line width=1pt] (7,5)-- (8,5);
\draw [line width=1pt] (8,5)-- (8,9);
\draw [line width=1pt] (8,9)-- (11,9);
\draw [line width=1pt] (11,9)-- (11,15);
\draw [line width=1pt] (11,15)-- (13,15);
\draw [line width=1pt] (13,15)-- (13,16);
\draw [line width=1pt] (13,16)-- (18,16);
\draw [line width=1pt] (18,16)-- (18,18);
\draw [line width=1pt] (18,18)-- (24,18);
\draw [line width=1pt] (24,18)-- (24,21);
\draw [line width=1pt] (24,21)-- (25,21);
\draw [line width=1pt] (25,21)-- (25,24);
\draw [line width=1pt] (25,24)-- (25,27);
\draw [line width=1pt] (25,27)-- (28,27);
\draw [line width=1pt] (28,27)-- (28,28);
\draw [line width=1pt] (28,28)-- (30,28);
\draw [line width=1pt] (30,28)-- (30,30);
\draw [line width=1pt] (30,30)-- (31,30);
\draw [line width=1pt,color=blue] (1,2)-- (1,5);
\draw [line width=1pt,color=blue] (1,5)-- (3,5);
\draw [line width=1pt,color=blue] (3,5)-- (6,5);
\draw [line width=1pt,color=blue] (6,5)-- (6,7);
\draw [line width=1pt,color=blue] (6,7)-- (7,7);
\draw [line width=1pt,color=blue] (7,7)-- (7,10);
\draw [line width=1pt,color=blue] (7,10)-- (8,10);
\draw [line width=1pt,color=blue] (8,10)-- (8,13);
\draw [line width=1pt,color=blue] (8,13)-- (9,13);
\draw [line width=1pt,color=blue] (9,13)-- (9,16);
\draw [line width=1pt,color=blue] (9,16)-- (10,16);
\draw [line width=1pt,color=blue] (10,16)-- (10,19);
\draw [line width=1pt,color=blue] (10,19)-- (13,19);
\draw [line width=1pt,color=blue] (13,19)-- (13,20);
\draw [line width=1pt,color=blue] (13,20)-- (20,20);
\draw [line width=1pt,color=blue] (20,20)-- (21,20);
\draw [line width=1pt,color=blue] (21,20)-- (21,22);
\draw [line width=1pt,color=blue] (21,22)-- (24,22);
\draw [line width=1pt,color=blue] (24,22)-- (24,25);
\draw [line width=1pt,color=blue] (24,25)-- (24,29);
\draw [line width=1pt,color=blue] (24,29)-- (26,29);
\draw [line width=1pt,color=blue] (26,29)-- (26,30);
\draw [line width=1pt,color=blue] (26,30)-- (28,30);
\draw [line width=1pt,color=blue] (28,30)-- (28,31);
\draw [line width=1pt,color=blue] (28,31)-- (31,31);
\node at (15,21) {$\pi_1$};
\node at (17,14) {$\pi_2$};
\end{tikzpicture}
\caption{}
    \end{subfigure}
\begin{subfigure}[b]{0.45\textwidth}
     \centering
\begin{tikzpicture}[line cap=round,line join=round,>=triangle 45,x=0.2cm,y=0.2cm]
\draw [color=gray!10!white,, xstep=0.2cm,ystep=0.2cm] (1,1) grid (31,31);
\draw [line width=1pt] (1,1)-- (2,1);
\draw [line width=1pt] (2,1)-- (2,3);
\draw [line width=1pt] (2,3)-- (4,3);
\draw [line width=1pt] (4,3)-- (4,4);
\draw [line width=1pt] (1,1)-- (31,31);
\draw [line width=1pt] (4,4)-- (5,4);
\draw [line width=1pt] (8,7)-- (8,9);
\draw [line width=1pt] (8,9)-- (11,9);
\draw [line width=1pt] (11,9)-- (11,15);
\draw [line width=1pt] (11,15)-- (13,15);
\draw [line width=1pt] (13,15)-- (13,16);
\draw [line width=1pt] (13,16)-- (18,16);
\draw [line width=1pt] (18,16)-- (18,18);
\draw [line width=1pt] (18,18)-- (20,18);
\draw [line width=1pt] (25,24)-- (25,27);
\draw [line width=1pt] (25,27)-- (28,27);
\draw [line width=1pt] (28,27)-- (28,28);
\draw [line width=1pt] (28,28)-- (30,28);
\draw [line width=1pt] (30,28)-- (30,30);
\draw [line width=1pt] (30,30)-- (31,30);
\draw [line width=1pt,color=blue] (1,2)-- (1,5);
\draw [line width=1pt,color=blue] (1,5)-- (4,5);
\draw [line width=1pt,color=blue] (7,8)-- (7,10);
\draw [line width=1pt,color=blue] (7,10)-- (8,10);
\draw [line width=1pt,color=blue] (8,10)-- (8,13);
\draw [line width=1pt,color=blue] (8,13)-- (9,13);
\draw [line width=1pt,color=blue] (9,13)-- (9,16);
\draw [line width=1pt,color=blue] (9,16)-- (10,16);
\draw [line width=1pt,color=blue] (10,16)-- (10,19);
\draw [line width=1pt,color=blue] (10,19)-- (13,19);
\draw [line width=1pt,color=blue] (13,19)-- (13,20);
\draw [line width=1pt,color=blue] (13,20)-- (18,20);
\draw [line width=1pt,color=blue] (24,25)-- (24,29);
\draw [line width=1pt,color=blue] (24,29)-- (26,29);
\draw [line width=1pt,color=blue] (26,29)-- (26,30);
\draw [line width=1pt,color=blue] (26,30)-- (28,30);
\draw [line width=1pt,color=blue] (28,30)-- (28,31);
\draw [line width=1pt,color=blue] (28,31)-- (30,31);
\draw [line width=1pt,dashed,color=blue] (5,4)-- (5,6);
\draw [line width=1pt,dashed,color=blue] (5,6)-- (7,6);
\draw [line width=1pt,dashed,color=blue] (7,6)-- (7,7);
\draw [line width=1pt,dashed,blue] (7,7)-- (8,7);
\draw [line width=1pt,dashed] (4,5)-- (4,7);
\draw [line width=1pt,dashed] (4,7)-- (5,7);
\draw [line width=1pt,dashed] (5,7)-- (5,8);
\draw [line width=1pt,dashed] (5,8)-- (7,8);
\draw [line width=1pt,dashed,color=blue] (20,18)-- (20,20);
\draw [line width=1pt,dashed,color=blue] (20,20)-- (20,21);
\draw [line width=1pt,dashed,color=blue] (20,21)-- (22,21);
\draw [line width=1pt,dashed,color=blue] (22,21)-- (22,24);
\draw [line width=1pt,dashed,color=blue] (22,24)-- (25,24);
\draw [line width=1pt,dashed] (18,20)-- (18,24);
\draw [line width=1pt,dashed] (18,24)-- (21,24);
\draw [line width=1pt,dashed] (21,24)-- (21,25);
\draw [line width=1pt,dashed] (21,25)-- (24,25);
\draw [line width=1pt,dashed] (21,25)-- (21,24);
\draw [line width=1pt,dashed,blue] (31,30)-- (31,31);
\node at (15,21) {$\pi_1'$};
\node at (17,14) {$\pi_2'$};
\end{tikzpicture}
\caption{}
     \end{subfigure}
\caption{The $U$ map takes $\pi_1,\pi_2$ from (A) and returns $\pi_1',\pi_2'$ in (B). The precise description of the map is given in the proof of Lemma \ref{l:umap}}
    \label{fig:d}
\end{figure}

The $U$ map takes every pair of paths $\pi_1,\pi_2$ in the sum in \eqref{def2} and returns a pair of non-intersecting paths $\pi_1',\pi_2'$ while preserving their shared weights up to reflections (see Figure \ref{fig:d}).  Moreover, the diagonal weights collectively carried by the pair will only rest on one of the paths among $\pi_1',\pi_2'$. The $U$ map is not injective but has at most $2^N$ many pre-images for each pair in its image (i.e., $|U^{-1}(\pi_1',\pi_2')|\le 2^N$).
%To be more precise, Theorem \ref{t:bw} on the point-to-(partial)line partition function from \cite{bw} implies that the law of large numbers for the free energy process, i.e the top curve, is $R > 0 $ (see \eqref{def:dk} for the definition of $R$). To obtain a difference in the laws of large numbers $\frac{1}{N} \sum_{p=1}^{N-1} e^{H^{(i)}_N(2p+1)}$ for $i = 1, 2$, we The notion of adjacency is defined in Lemma \ref{l:umap} and stems from the definition of the $\hslg$ line ensemble. 
When we apply the $U$ map to a single pair of adjacent paths, we get that $$\tfrac{1}{N}(H_N^{(1)}(1)+H_N^{(2)}(1)) \le \log 2+R-\Psi(\theta).$$ The $\log 2$ is an entropy factor that comes from overcounting the number of inverses of our $U$ map. To remove its influence, we rely on the definition of the lower curves of the line ensemble. Indeed, similar to \eqref{def2}, $\sum_{i=1}^{2k} H_N^{(i)}(1)$ admits a representation in terms of $2k$-many non-intersecting paths. When we apply the $U$ map to $k$ pairs of adjacent paths simultaneously, it leads to the following average law of large numbers of the top $2k$ curves: $$\frac{1}{2kN}\sum_{i = 1}^{2k} H_N^{(i)}(1)\le \tfrac1{2k}\log 2 -\tfrac12\Psi(\theta) - \tfrac12\Psi(\theta+\alpha)-\tfrac12\Psi(\theta-\alpha).$$
Taking $k$ large enough, one can ensure the right-hand side constant is strictly less than $R$. In fact, the above argument can be strengthened to conclude that for large enough $k$
\begin{align*}
    \sup_{p\in \ll1,2N-4k+2\rr} \frac1{2kN}\sum_{i=1}^{2k} H_N^{(i)}(p) \le R-\delta,
\end{align*}
for some $\delta>0$. This is obtained in Proposition \ref{p:alaw}.

As a consequence of this result, using soft non-intersection property of the line ensemble (Theorem \ref{t:order}), we derive that with high probability, the $(2k+2)$-th curve $H_N^{2k+2}(\cdot)$ is uniformly $\mbox{Const}\cdot N$ below $RN$ over $\ll 1, N\rr$ in Section \ref{sec:2nd}. Employing one-point results from \cite{bw}, one can ensures the point $H_N^{(1)}(M_1\sqrt{N})$ on the top curve is $(M_2+1)\sqrt{N}$ below $RN.$ Combining the last two results and line ensemble techniques we are able to benchmark the second curve from above:
\begin{align}\label{e.bench}
   \sup_{p \in \ll 1, M_1\sqrt{N} \rr} H_N^{(2)}(p) \le RN - M_2\sqrt{N} 
\end{align}
in Proposition \ref{p:benchmark}. %\wz{where should the details go? here or prop} 
The details of the argument are presented in Section \ref{sec:2nd}. While we are unable to obtain a mismatch in the laws of large numbers for the first two curves following the above procedures, the fact that the second curve is  below the diffusive regime of the first curve (since $M_2$ can be chosen as large as possible) over an interval of length $M_1\sqrt{N}$ is sufficient for our next step of the analysis. 

%\sdn{I edited up to this point. Below is to be read.} 

\subsubsection{Localization analysis}\label{sec1.2.4} The remaining piece of our proof of main theorems boils down to a localization analysis of the first curve in Section \ref{sec:mainpf}. Our proof roughly follows the techniques developed in our paper \cite{dz1}. First, to prove Theorem \ref{t:bdpt} we divide the tail into a deep and a shallow tail depending on the distance away from $(N, N)$, see Figure \ref{f.tri1}.

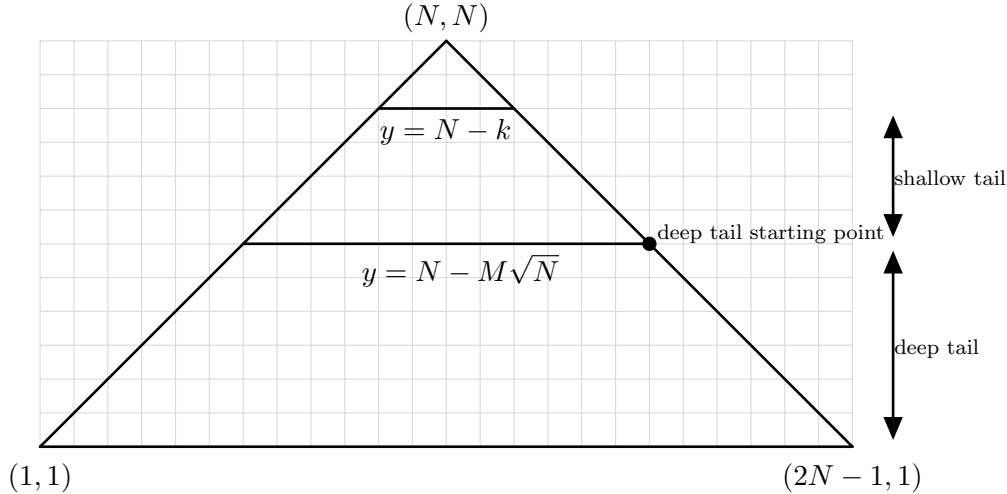
\begin{figure}[h!]
    \centering
    \begin{tikzpicture}[line cap=round,line join=round,>=triangle 45,x=0.9cm,y=0.9cm]
    \draw [color=gray!30!white,, xstep=0.45cm,ystep=0.45cm] (1,1) grid (13,7);
\draw [line width=1pt] (7,7)-- (13,1);
\draw [line width=1pt] (7,7)-- (1,1);
\draw [line width=1pt] (1,1)-- (13,1);
\draw [line width=1pt] (4,4)-- (10,4);
\draw [line width=1pt] (6,6)-- (8,6);
\draw [line width=1pt,<->] (13.6,5.9)-- (13.6,4.1);
\draw [line width=1pt,<->] (13.6,3.9)-- (13.6,1.1);
\draw (5.6,4) node[anchor=north west] {$y=N-M\sqrt{N}$};
\draw (5.87,6) node[anchor=north west] {$y=N-k$};
\draw (7,7.7) node[anchor=north] {$(N,N)$};
\draw (1,0.9) node[anchor=north] {$(1,1)$};
\draw (13,0.9) node[anchor=north] {$(2N-1,1)$};
\begin{scriptsize}
\draw [fill=black] (10,4) circle (2.5pt);
\draw (10,4.4) node[anchor=north west] {deep tail starting point};
\draw (13.5,2.7) node[anchor=north west] {deep tail};
\draw (13.5,5.2) node[anchor=north west] {shallow tail};
\end{scriptsize}
\end{tikzpicture}
    \caption{{If the height of the endpoint of the polymer is less than $N-k$, it either lies in the shallow tail or in the deep tail (illustrated above). Lemma \ref{l:deep} shows it is exponentially unlikely to lie in the deep tail.}}
    \label{f.tri1}
\end{figure}

Our argument in Lemma \ref{l:deep} uses one-point fluctuations results of point-to-(partial)line log-partition function from \cite{bw} as input and shows that the probability of the endpoint living in the deep tail region is exponentially small. To show that the shallow tail contribution is also small and to prove our remaining theorems, we establish the following strong convergence result in Proposition \ref{p:tech}:
\begin{enumerate}[label=(\alph*),leftmargin=18pt]
\setlength\itemsep{0.5em}
    \item \label{1} the law of the top curve within the $[1,M\sqrt{N}]$ window is arbitrarily close to that of a log-gamma random walk for large enough $N$ (Proposition \ref{p:tech}).
    \end{enumerate}
In light of \ref{1}, the conclusion that the shallow tail contribution is small follows from estimating the probability of the same event under the log-gamma random walk law. Theorem \ref{t:walk} is immediate from \ref{1} and Theorem \ref{t:qdistn} also follows from \ref{1} after some calculations. The details are presented in Section \ref{sec.maintech}. 

Finally, we briefly explain how we establish \ref{1}. A detailed discussion appears in the Step 1 of the proof of Proposition \ref{p:tech}. As $H_N^{(1)}(\cdot)$ is a log-gamma random walk subject to soft non-intersecting condition with $H_N^{(2)}(\cdot)$, it suffices to show that there's sufficient distance between the first and the second curves. Indeed, this will imply $H_N^{(1)}$ behaves like a true log-gamma random walk. As we have already benchmarked the second curve in \eqref{e.bench}, it remains to determine a suitable lower bound for the first curve. The key idea here is to find a point $p=O(\sqrt{N})$ on the first curve in the deep tail region such that with high probability $$H_N^{(1)}(p) \ge RN-M'\sqrt{N}$$ for some $M'$. This is achieved in Lemma \ref{l:high} using fluctuation results from \cite{bw}. Then using standard random walk tools such as Kolmogorov's maximal inequality, we derive that with high probability $H_N^{(1)}(q) \ge RN-(M'+1)\sqrt{N}$ for all $q\in \ll1,p\rr$. Choosing $M_2=M'+2$ in \eqref{e.bench} implies that with high probability the first curve is at least $\sqrt{N}$ above the second curve. This completes our deduction and consequently establishes \ref{1}. %Albeit our proofs in this section resemble those in \cite{dz1} in structure, our localization analysis in this paper relies on different tools and is considerably simpler. For instance, our deep tail argument follows directly from the convergence result of the point-to-(partial)line partition function of $\hslg$ polymers of \cite{bw} in Theorem \ref{t:bw}. However, the deep tail analysis in \cite{dz1} required the tail estimates of the KPZ equations as input, where the $\hslg$ analogs are currently unavailable. Our shallow tail argument in this paper also produces a global convergence result as the second curve of the $\hslg$ lies $\sqrt{N}$ outside the diffusive window of the top curve. On the other hand, the top two curves of the KPZ line ensemble lie in the same diffusive window. Hence we need to examine locally by zooming into a small window of $N^{-1/6}$ in \cite{dz1}. 

\subsection{Related works and future directions} \label{sec1.3}
%\begin{enumerate}
 %   \item full space polymers 
  %  \item half space polymers
   %  \begin{enumerate}
   %  \item zero-temperature: LPP
    % \item positive-temperature: log-gamma; kpz
    % \end{enumerate}
    %\item localization literature; gibbs
%\end{enumerate}
 Our study of half-space polymers succeeds an extensive history of endeavors that attempt to unravel their full-space variant. These full-space polymer models have rich connections with symmetric functions, random matrices, stochastic PDEs and integrable systems and are believed to belong to the KPZ universality class (see \cite{comets, gia, bc20}). Yet in spite of intense efforts in the past decade, rigorous results proving either the $1/3$ fluctuation exponent or the $2/3$ transversal exponent for general polymers have been scarce outside a few integrable cases (see \cite{comets, timo, bc20, bcd21, dz1, dz2} and the references therein).

In the half-space geometry, a wealth of literature has focused on the phase diagram for limiting distributions based on the diagonal strength. One of the first mathematical works goes to the series of joint works \cite{br1, br2, br3} on the geometric last passage percolation (LPP), i.e. polymers with zero temperature. Their multi-point fluctuations were studied in \cite{si04} and similar results were later proved for exponential LPP in
\cite{bbcs18a, bbcs18b} using Pfaffian Schur processes. For further recent works on half-space LPP,
we refer to \cite{bbnv18, bfo20, bfo22, fo22} and the references therein.

For positive temperature models, i.e., polymers, as they are no longer directly related to the Pfaffian point processes, the first rigorous proof of the depinning transition appeared much later in \cite{bw}. Here the authors also included precise fluctuation results such as the BBP phase transition
\cite{bap05} for the point-to-line log-gamma free energy. For the point-to-point log-gamma free energy,
the limit theorem as well as the Baik-Rains phase transition were conjectured in \cite{bbc20} based on steepest descent analysis of half-space Macdonald
processes. This result has been recently proved in \cite{ims22} by relating the half-
space model to a free boundary version of the Schur process. 

Similar to their full-space counterparts, in addition to fluctuations, another dimension of interest to half-space polymers is their localization behaviors, which refer to the concentration of polymers in a very small region given the environment. Figure \ref{f.simulation1} is a simulation of $30$ samples of $\hslg$ polymers of length $120$ sampled from the same environment with $\theta=1$, $\alpha=-0.2$ and $\alpha=+0.2$. The simulation suggests that even in the unbound phase, we expect a localization phenomenon around a favorite site given by the environment. Localization is a unique behavior of the polymer path in the strong disorder regime. %, which corresponds to the unbound phase of the half-space polymers. 
In the full space, various levels of localization results have been established for discrete and continuous polymers. The mathematical work began with the \textit{strong} localization result of \cite{carmonaHu1} that confirmed the existence of the favorite sites for the endpoint distributions of point-to-line polymers and has been upgraded to the notion of \textit{atomic} and \textit{geometric} localization for general reference random walks in a series of joint works \cite{bates, bc20, bak,bak22}. An even stronger notion, the ``favorite region conjecture", which conjectures the favorite corridor of a polymer to be stochastically bounded, has been proved for two integrable models: the stationary log-gamma polymer in the discrete case (\cite{comngu}) and the continuous directed random polymer (CDRP) in the continuous case (\cite{dz1}). In this direction, building up on \cite{dz1} work, recently  \cite{yugu22} %and \cite{bak22} 
have studied the localization distance of the CDRP. %and sufficient criteria for the joint localization of discrete polymers respectively.  

Investigating the geometry of the half-space CDRP is an interesting question to consider next. Recently, a number of new results have appeared on the half-space KPZ equation, which arises as the free energy of the half-space CDRP \cite{wu20, bc22}, in both the mathematics
\cite{cs18, bbcw18, bbc20, par19, par22, bc22, ims22} and the physics literature \cite{gld12, bbc16,
it18, dnkldt20, kld18, bkld20, bld21, bkld22}. %As the half-space KPZ equation is the free energy of the half-space continuum directed random polymers, 
These results on the free energy render the half-space continuous polymers amenable to analysis. However, the challenge with further studying the geometry of the half-space CDRP remains, due to the lack of an analogous half-space KPZ line ensemble.

%\wz{can do the same problem for half-space CDRP. BC22 constructed the stationary measures. We will need to figure out which are the attractive stationary measures. One caveat is that one needs to construct the half-space CDRP line ensemble, which is not yet constructed. not write, just intuition, the half-space stationary measure for CDRP should be a BM with drift, i.e. analog of Theorem \ref{t:walk}.}

%\wz{Can talk about what happens with annealed polymers in the unbound phase. In the bound phase, theorem 1.1 already implies the annealed version. In the unbound phase, we will have the localization window. We could think about annealed analogs for CDRPs.} %We expect that in the annealed case, if $\alpha = 0$, then the polymer will hit the diagonal $O(N^{2/3})$ many times (like a recurrent walk) and for $\alpha > 0$, it will hit the diagonal only $O(1)$ many times. (in the bound phase, it's $O(N)$ times, always sticks to the diagonal)}

 %\wz{Fluctuations of free energy away from the diagonal $\log Z(N+r, N-r)$.\cite{ims22} has Gaussian fluctuations in the diagonal $\log Z(N, N)$ but they used contour analysis and integrable structures. Is there a more direct approach from line ensembles? (Shirshendu idea, since polymers are hitting the diagonal $O(N)$ times, when you hit the diagonal, you start anew, if one can make it precise, via usual CLT result, one can probably figure something out)}
 
\subsection*{Outline}
The rest of the paper is organized as follows. Section \ref{sec:bac} reviews some of the existing results related to $\hslg$ line ensemble and one-point fluctuations of point-to-(partial)line free energy of $\hslg$ polymer. In Section \ref{sec:2k} we prove our key combinatorial lemma and use it to control the average law of large numbers for the top curves of the line ensemble. In Section \ref{sec:2nd}, we establish control over the second curve of the line ensemble. Finally, in Section \ref{sec:mainpf}, we complete the proofs of our main theorems. Appendix \ref{sec:app} contains basic properties of log-gamma random walks.
\subsection*{Notation} Throughout this paper, we will assume $\theta>0$ and $\alpha\in (-\theta,0)$ are fixed parameters. We write $\ll a,b\rr:=[a,b]\cap \Z$ to denote the set of integers between $a$ and $b$. We will use serif fonts such as $\m{A}, \m{B}, \ldots$ to denote events. The complement of
an event $\m{A}$ will be denoted as $\neg\m{A}$.
%\wz{some standard shorthand like $\theta, \alpha$, $\ll \rr$, etc}

\subsection*{Acknowledgements} We thank Guillaume Barraquand and Ivan Corwin for useful discussions, comments, and for their encouragement during the completion of this manuscript. SD's research was partially supported by Ivan Corwin's  W.M.~Keck Foundation Science and
Engineering Grant. We thank the anonymous
referees for their careful reading and useful comments on improving our manuscript.

\section{Basic framework and tools}
\label{sec:bac}
%\sdn{When you have time can you read Sec2? Thanks! What I want to know is if a reader who is not familiar with every detail of \cite{bcd23} but has the \cite{bcd23} paper available to them, can they follow this section without spending much time on \cite{bcd23}? One thing I know that is NOT super clear is how Observation 4.2 in \cite{bcd23} comes into play in Theorem \ref{t:gibbs}. I will edit Observation 4.2 in \cite{bcd23} in such a way so that the connection in Theorem \ref{t:gibbs} becomes clearer.}

In this section, we present the necessary background on the half-space log-gamma ($\hslg$) line ensemble and point-to-(partial) line partition function. From  \cite{bcd23} and \cite{bw} we gather a few of the known results on these objects that are crucial in our proofs.

\subsection{The $\hslg$ line ensemble and its Gibbs property} \label{sec2.1}
We begin with the description of the $\hslg$ line ensemble and its Gibbs property. The definition of the $\hslg$ line ensemble is based on the point-to-point symmetrized partition function for multiple paths defined in \eqref{e:nz}. These are sum over multiple non-intersecting upright paths on the entire quadrant $\Z_{>0}^2$ of products of the symmetrized version defined in \eqref{eq:symwt} of the weights defined in \eqref{eq:wt}. Fix $m,n,r\in \Z_{>0}$ with $n\ge r$, let $\Pi_{m,n}^{(r)}$ be the set of all $r$-tuples of non-intersecting upright paths in $\Z_{>0}^2$ starting from $(1,r), (1,r-1),\ldots, (1,1)$ and going to $(m,n),(m,n-1), \ldots, (m,n-r+1)$ respectively. We define the point-to-point symmetrized partition function for $r$ paths as
		\begin{align}\label{e:nz}
			\zs^{(r)}(m,n):=\sum_{(\pi_1,\ldots,\pi_r)\in \Pi_{m,n}^{(r)}}  \prod_{(i,j)\in \pi_1\cup \cdots \cup \pi_r} \til{W}_{i,j}.
		\end{align}
 where $\til{W}_{i,j}$ are defined in \eqref{eq:symwt}. We write $\zs(m,n):=\zs^{(1)}(m,n)$ and use the convention that $\zs^{(0)}(m,n)\equiv 1$. One can recover $\hslg$ partition function from symmetrized partition function via the following identity. For each $(m,n)\in \mathcal{I}^{-}$ we have 
\begin{align}\label{l:iden}
   2\zs(m,n)=\zh(m,n). 
\end{align}
 The above identity appears in Section 2.1 of \cite{bw} and follows easily due to the symmetry of the weights. We stress that the above relation is an exact equality not just in distribution.

\begin{comment}
\begin{figure}[h!]
		\centering
		\begin{tikzpicture}[line cap=round,line join=round,>=triangle 45,x=0.8cm,y=0.8cm]
			\foreach \x in {0,1,2,...,7}
			{        
				\coordinate (B\x) at ($(0,0)+(0,\x)$) {};
				\draw[dotted] ($(B\x)+(9,0)$) -- ($(B\x)$);
			}
			\foreach \x in {0,1,2,...,9}
			{        
				\coordinate (A\x) at ($(0,0)+(\x,0)$) {};

				\draw[dotted] ($(A\x)+(0,7)$) -- ($(A\x)$);
			}
			
			\draw [line width=1.5pt] (0,1)-- (1,1)--(1,2)--(4,2)--(4,3)--(5,3)--(5,5)--(7,5)--(7,6)--(9,6);
			\draw [line width=1.5pt] (0,0)-- (2,0)--(2,1)--(5,1)--(5,2)--(6,2)--(6,4)--(8,4)--(8,5)--(9,5);
			\draw [line width=1.5pt] (0,2)-- (0,3)--(2,3)--(2,4)--(4,4)--(4,6)--(5,6)--(5,7)--(9,7);
			\node at (-0.7,0) {$(1,1)$};
			\node at (-0.7,1) {$(1,2)$};
			\node at (-0.7,2) {$(1,3)$};
			\node at (9.75,7) {$(10,8)$};
			\node at (9.75,6) {$(10,7)$};
			\node at (9.75,5) {$(10,6)$};
			\node at (3.5,4.5) {$\pi_1$};
			\node at (3,2.5) {$\pi_2$};
			\node at (5.5,1.5) {$\pi_3$};
		\end{tikzpicture}
		\caption{A possible triplet of paths $(\pi_1,\pi_2,\pi_3)\in \Pi_{10,8}^{(3)}$.}
		\label{fig:multipath}
	\end{figure}
\end{comment}

\begin{definition}[$\hslg$ line ensemble]\label{hslg} Fix $N>1$. For each $k\in \ll1,N\rr$ and $p \in \ll1,2N-2k+2\rr$ set
\begin{align}\label{eq:hslg}
    H_N^{(k)}(p) := \log\left(\frac{2\zs^{(k)}(N+\lfloor p/2\rfloor ,N- \lceil p/2\rceil+1)}{\zs^{(k-1)}(N+\lfloor p/2\rfloor ,N- \lceil p/2\rceil+1)}\right)
\end{align}
     We view the $k$-th curve $H_N^{(k)}$ as a random continuous function $H_N^{(k)}: [1,2(N-k+1)]\to \R$ by linearly interpolating its values on integer points. We call the collection of curves $H_N:=(H_N^{(1)},H_N^{(2)},\ldots,H_{N}^{(N)})$ the $\hslg$ line ensemble.  
\end{definition}

\begin{figure}[h!]
		\centering
		\begin{tikzpicture}[line cap=round,line join=round,>=triangle 45,x=0.6cm,y=0.6cm]
  \draw [dotted, color=gray,xstep=0.6cm,ystep=0.6cm] (9,0) grid (23,7);
  \draw [dotted, color=gray] (9,0)--(16,7);
   \foreach \x in {0,1,2,...,6}{
			\draw [line width=1.5pt] (16+\x,7-\x)-- (17+\x,7-\x)--(17+\x,6-\x);
   }
       \node at (16,7.5) {$(8,8)$};
       \node at (9,-0.5) {$(1,1)$};
   \node[] (A) at (15.5,4) {$H_8^{(k)}(3)$};
\node[] (B) at (20.5,8) {$H_8^{(k)}(4)$};
\node[] (C) at (17.5,2.5) {$H_8^{(k)}(5)$};
   \draw[-stealth] (17,6) -- (A);
    \draw[-stealth] (18,6) -- (B);
     \draw[-stealth] (18,5) -- (C);
     \draw [fill=black] (17,6) circle (2pt);
     \draw [fill=black] (18,5) circle (2pt);
     \draw [fill=black] (18,6) circle (2pt);
		\end{tikzpicture}
		\caption{The lattice points used in computing in $H_N^{(k)}(p)$ is shown above for $N=8$ and $p=3,4,5$.}
		\label{fig1b}
	\end{figure}
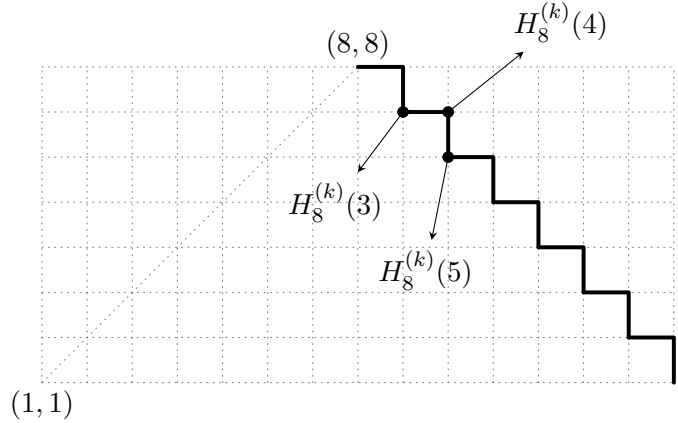
\bl{Some remarks and observations related to the above definition are in order. For each $k$, $H_N^{(k)}(\cdot)$ is computed by taking the logarithm of the ratio of appropriate symmetrized partition function along a staircase path (see Figure \ref{fig1b}). $H_N^{(k)}(p)$ is not defined for $p> 2N-2k+2$ as $\Pi_{N+\lfloor p/2\rfloor ,N- \lceil p/2\rceil+1}^{(k)}=\varnothing$ for $p>2N-2k+2$. The prefactor of $2$ in \eqref{eq:hslg} is kept to ensure that for all $p\le 2N$ we have
\begin{align}\label{eq:rem}
    H_N^{(1)}(p) =\log \zh(N+\lfloor p/2\rfloor ,N- \lceil p/2\rceil +1),
\end{align}
thanks to the relation \eqref{l:iden}. } We remark that in Definition 2.7 in \cite{bcd23}, the authors defined the $\hslg$ line ensemble by defining $\mathcal{L}_i^N(j)=H_N^{(i)}(j)+\mbox{Const}\cdot N$ where the $\mbox{`Const'}$ is explicit and encodes the law of large numbers for the $\hslg$ free energy process (as well as the entire line ensemble) in the unbound phase. Since the laws of large numbers for the first curve and the second curve in the bound phase are possibly different (recall our discussion of the proof idea in the introduction), we choose to not add this constant in our definition of line ensemble. All the results from \cite{bcd23} can be easily translated to results in our setting by adding this appropriate constant.

\medskip

The $\hslg$ line ensemble enjoys a property known as the $\hslg$ Gibbs property. To state the $\hslg$ Gibbs property, we introduce the $\hslg$ Gibbs measures via graphical representation. 

Consider a diamond lattice on the lower-right quadrant with vertices $\{(m,-n),(m+\frac12,-n+\frac12)\mid m,n\in \Z_{>0}^2\}$ and nearest neighbor edges as shown in Figure \ref{fig:5}. We label the vertices by setting $\phi((m,n))=(-\lfloor n\rfloor,2m-1)$. In the rest of the paper, we identify a vertex in this lattice by this labeling and will not mention its actual coordinates.

On the diamond lattice domain, we add potential \textit{directed colored edges}. A directed colored edge $\vec{e}=\{v_1\to v_2\}$ on this lattice is a directed edge from $v_1$ to $v_2$ that has three choices of colors: $\textcolor{blue}{\textbf{blue}}, \textcolor{red}{\textbf{red}},\mbox{ and }\textcolor{black}{\textbf{black}}$. \bl{Given a directed colored edge $\vec{e}$, we associate a weight function $W_{\vec{e}} : \R \to \R_{>0}$ based on the color of the edge defined as follows}:
		\begin{align}\label{eq:color}
			W_{\vec{e}}(x)=\begin{cases}
				\exp((\theta-\alpha)x-e^x) & \mbox{ if $\vec{e}$ is \textcolor{blue}{\textbf{blue}}} \\
				\exp((\theta+\alpha)x-e^x) & \mbox{ if $\vec{e}$ is \textcolor{red}{\textbf{red}}}\\
    \exp(-e^x) & \mbox{ if $\vec{e}$ is \textcolor{black}{\textbf{black}.}}
			\end{cases}
		\end{align}
We consider a graph $G_N$ on the diamond lattice with vertex set
 \begin{align*}
      K_N & :=\{(i,j)\mid i\in\ll1,N\rr, j\in \ll1,2N-2i+2\rr\}.
 \end{align*}
with directed colored edges described below. For each $(p,q)\in K_N$, 
		\begin{itemize}[leftmargin=15pt]
			\item  If $q$ is odd and $p$ is odd (even resp.), we put a \textcolor{blue}{\textbf{blue}} (\textcolor{red}{\textbf{red}} resp.) edge: $(p,q)\to(p,q+1)$. 
            \item If $q\ge 3$ is odd and $p$ is odd (even resp.), we put a \textcolor{red}{\textbf{red}} (\textcolor{blue}{\textbf{blue}} resp.) edge: $(p,q)\to(p,q-1)$.
			\item If $q$ is even, we put two \textbf{black} edges: $(p,q)\to (p-1,q)\mbox{ and }(p,q)\to (p+1,q).$
		\end{itemize}
	The corresponding graph is shown in Figure \ref{fig:5}.	We write $E(F)$ for the set of edges of any graph $F\subset G_N$. 

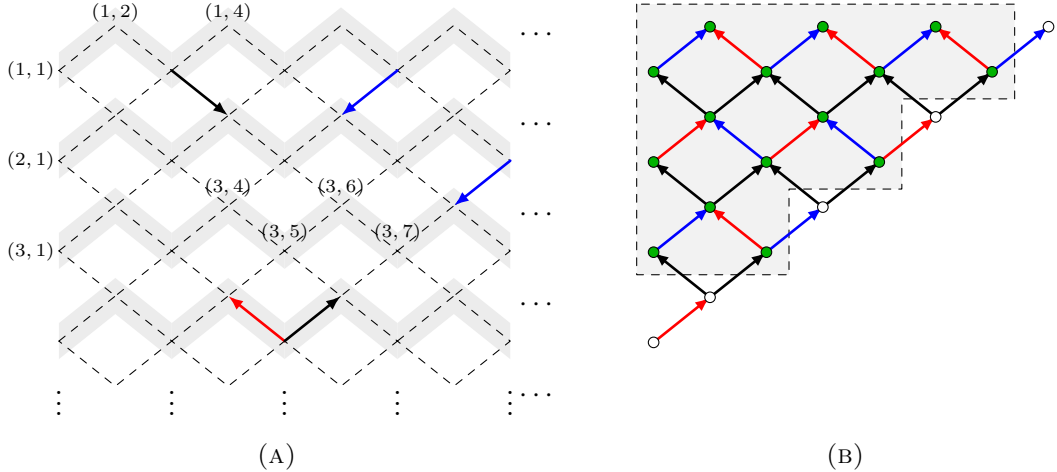
\begin{figure}[h!]
    \centering
\begin{subfigure}[b]{0.45\textwidth}
        \centering
    \begin{tikzpicture}[line cap=round,line join=round,>=triangle 45,x=1.5cm,y=1.2cm]
%\draw (0.75,-7.25)--(0.75,-0.25)--(7.25,(-0.25);
\foreach \x in {1,2,3,4}{
\foreach \y in {1,2,3,4}{
\path[fill=gray!15] (\x,-\y+0.2)--(\x+0.5,-\y+0.7)--(\x+1,-\y+0.2)--(\x+1,-\y-0.2)--(\x+0.5,-\y+0.3)--(\x,-\y-0.2)--(\x,-\y+0.2);
}
}
    \foreach \x in {1,2,3,4}{
    \foreach \y in {1,2,3,4}{
    \draw[dashed] (\x,-\y)--(\x+0.5,-\y+0.5)--(\x+1,-\y)--(\x+0.5,-\y-0.5)--(\x,-\y);
    }
    }
    \foreach \x in {0,1,2,3,4}{
    \node at (\x+1,-4.6) {$\vdots$};
    \node at (5.25,-\x-0.6) {$\cdots$};
    }
    \node at (0.75,-1) {{\tiny${(1,1)}$}};
    \node at (0.75,-2) {{\tiny${(2,1)}$}};
    \node at (0.75,-3) {{\tiny${(3,1)}$}};
    \node at (1.5,-0.35) {{\tiny${(1,2)}$}};
    \node at (2.5,-0.35) {{\tiny${(1,4)}$}};
    \node at (2.5,-2.3) {{\tiny${(3,4)}$}};
    \node at (3,-2.8) {{\tiny${(3,5)}$}};
    \node at (3.5,-2.3) {{\tiny${(3,6)}$}};
    \node at (4,-2.8) {{\tiny${(3,7)}$}};
    \draw[line width=1pt,black,{Latex[length=2mm]}-]  (2.5,-1.5)--(2,-1);
    \draw[line width=1pt,red,{Latex[length=2mm]}-]  (2.5,-3.5)--(3,-4);
    \draw[line width=1pt,black,{Latex[length=2mm]}-]  (3.5,-3.5)--(3,-4);
    \draw[line width=1pt,blue,{Latex[length=2mm]}-]  (4.5,-2.5)--(5,-2);
    \draw[line width=1pt,blue,{Latex[length=2mm]}-]  (3.5,-1.5)--(4,-1);
\end{tikzpicture}
\caption{}
\end{subfigure}
    \begin{subfigure}[b]{0.45\textwidth}
        \centering
        \begin{tikzpicture}[line cap=round,line join=round,>=triangle 45,x=1.5cm,y=1.2cm]
				\draw[fill=gray!10,dashed] (-0.65,0.25)--(-0.65,-2.75)--(0.7,-2.75)--(0.7,-1.8)--(1.7,-1.8)--(1.7,-0.8)--(2.7,-0.8)--(2.7,0.25)--(-0.65,0.25);
    %\draw[fill=gray!30,dashed] (-0.3,0.25)--(-0.3,-1)--(-0.65,-1)--(-0.65,-1.75)--(0.7,-1.75)--(0.7,-1.25)--(1.2,-1.25)--(1.2,0.25)--(-0.3,0.25);
				
				\foreach \x in {0,1}
				{    
					\draw[line width=1pt,blue,{Latex[length=2mm]}-]  (\x,0) -- (\x-0.5,-0.5); 
					\draw[line width=1pt,red,{Latex[length=2mm]}-] (\x,0) -- (\x+0.5,-0.5);
					\draw[line width=1pt,black,{Latex[length=2mm]}-] (\x-0.5,-0.5) -- (\x,-1);
					\draw[line width=1pt,black,{Latex[length=2mm]}-] (\x+0.5,-0.5) -- (\x,-1);
					\draw[line width=1pt,red,{Latex[length=2mm]}-]  (\x,-1) -- (\x-0.5,-1.5); 
					\draw[line width=1pt,blue,{Latex[length=2mm]}-] (\x,-1) -- (\x+0.5,-1.5);
					\draw[line width=1pt,black,{Latex[length=2mm]}-] (\x-0.5,-1.5) -- (\x,-2);
					\draw[line width=1pt,black,{Latex[length=2mm]}-] (\x+0.5,-1.5) -- (\x,-2);
				}
			\draw[line width=1pt,blue,{Latex[length=2mm]}-]  (3,0) -- (3-0.5,-0.5); 
			\draw[line width=1pt,blue,{Latex[length=2mm]}-]  (2,0) -- (2-0.5,-0.5); 
			\draw[line width=1pt,red,{Latex[length=2mm]}-]  (2,0) -- (3-0.5,-0.5);
			\draw[line width=1pt,red,{Latex[length=2mm]}-]  (2,-1) -- (2-0.5,-1.5); 
			\draw[line width=1pt,blue,{Latex[length=2mm]}-]  (1,-2) -- (1-0.5,-2.5); 
			\draw[line width=1pt,blue,{Latex[length=2mm]}-]  (0,-2) -- (0-0.5,-2.5); 
			\draw[line width=1pt,red,{Latex[length=2mm]}-]  (0,-2) -- (1-0.5,-2.5); 
			\draw[line width=1pt,red,{Latex[length=2mm]}-]  (0,-3) -- (-0.5,-3.5);  
			\draw[line width=1pt,black,{Latex[length=2mm]}-]  (-0.5,-2.5) -- (0,-3); 
			\draw[line width=1pt,black,{Latex[length=2mm]}-]  (0.5,-2.5) -- (0,-3); 
			\draw[line width=1pt,black,{Latex[length=2mm]}-]  (1.5,-0.5) -- (2,-1);
			\draw[line width=1pt,black,{Latex[length=2mm]}-]  (2.5,-0.5) -- (2,-1); 
			\foreach \x in {0,1,2}
			{	
			\draw [fill=green!70!black] (-0.5,-\x-0.5) circle (2pt);
			\draw [fill=green!70!black] (0,-\x) circle (2pt);
			\draw [fill=green!70!black] (0.5,-\x-0.5) circle (2pt);
		}
	\foreach \x in {0,1}
	{	
		\draw [fill=green!70!black] (1,-\x) circle (2pt);
		\draw [fill=green!70!black] (1.5,-\x-0.5) circle (2pt);
	}
	\draw [fill=green!70!black] (2,0) circle (2pt);
	\draw [fill=green!70!black] (2.5,-0.5) circle (2pt);
	\foreach \x in {0,1,2,3}
	{\draw[fill=white] (\x,-3+\x)  circle (2pt);}
	\draw[fill=white] (-0.5,-3.5)  circle (2pt);
 \draw[white] (-0.5,-4.35) circle (2pt);
			\end{tikzpicture}
    \caption{}
    \end{subfigure}
    \caption{(A) Diamond lattice with a few of the labeling of the vertices shown in the figure.  The $m$-th gray-shaded region have vertices with labels of the form $\{(m,n)\mid n\in \Z_{>0}^2\}$. Thus each such region consists of vertices with the same first coordinate labeling. Potential directed colored edges on the lattice are also drawn above. (B) $K_N$ with $N=4$. $\Lambda_N^*$ consists of all vertices in the shaded region.}
    \label{fig:5}
\end{figure}

The following result from \cite{bcd23} shows how the conditional distribution of the $\hslg$ line ensemble is given by certain measures called $\hslg$ Gibbs measures.
  
\begin{theorem}[Gibbs property]\label{t:gibbs}
Consider the directed colored graph $G_N$ described above.   Set $\Lambda_N^*$ to be a connected subset of the graph $G_N$ on the diamond lattice $K_N$ $$\Lambda_N^*=\{(i,j)\mid i\in \ll1,N-1\rr,j\in \ll1,2N-2i+1\rr\}.$$ Let $\Lambda$ be a connected subset of $\Lambda_N^*$. Recall the $\hslg$ line ensemble $H_N$ from Definition \ref{hslg}. The law of $\{H_N^{(i)}(j) \mid (i,j)\in \Lambda\}$ conditioned on $\{H_N^{(i)}(j)\mid (i,j)\in \Lambda^c\}$ is a measure on $\R^{|\Lambda|}$ with density at $(u_{i,j})_{(i,j)\in \Lambda}$ proportional to
			\begin{align}\label{eq:gibbs}
				\prod_{\vec{e}=\{v_1\to v_2\}\in E(\Lambda\cup \partial\Lambda)} W_{\vec{e}}(u_{v_1}-u_{v_2}),
			\end{align}
   where $u_{i,j}=H_N^{(i)}(j)$ for $(i,j)\in \partial\Lambda$.
\end{theorem}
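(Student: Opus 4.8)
The plan is to obtain this as a direct consequence of the $\hslg$ Gibbs property established in \cite{bcd23}, with the only real work being to account for the different normalization used here. In \cite{bcd23} the line ensemble is taken to be $\mathcal L^N_i(j)=H^{(i)}_N(j)+\mathrm{Const}\cdot N$ for a single deterministic constant, while Definition \ref{hslg} works with $H_N$ itself. The first step is to observe that this reparametrization is invisible to the Gibbs measures \eqref{eq:gibbs}: every weight $W_{\vec e}$ in \eqref{eq:color} depends on its two arguments only through the difference $u_{v_1}-u_{v_2}$, and along any edge $\vec e=\{v_1\to v_2\}$ of $G_N$ both endpoints receive the same shift $\mathrm{Const}\cdot N$, so the density \eqref{eq:gibbs} is literally unchanged when $H_N$ is replaced by $\mathcal L^N$. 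Thus the $\hslg$ Gibbs property for $\mathcal L^N$ proved in \cite{bcd23} transfers verbatim to $H_N$, and I would simply record this invariance.

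For completeness I would then recall the mechanism behind the statement. Applying the geometric RSK correspondence to the symmetrized weight array $(\til W_{i,j})$ and invoking the theory of the half-space Whittaker process \cite{cosz14,osz14,nz17,bz19,bbc20}, one obtains an explicit joint density on $\R^{|K_N|}$ for the full array $(H^{(i)}_N(j))_{(i,j)\in K_N}$. The structural fact one needs — a consequence of writing geometric RSK as a composition of local ``toggle'' moves and pushing the inverse-gamma densities through them in logarithmic coordinates — is that this density equals $C_N\prod_{\vec e\in E(G_N)}W_{\vec e}(u_{v_1}-u_{v_2})$ for a normalization $C_N$, up to factors depending only on coordinates far from $\Lambda_N^*$. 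Granting this, the Gibbs property is purely formal: conditioning a nearest-neighbor product measure on $\{u_v : v\notin\Lambda\}$ leaves a density on $\R^{|\Lambda|}$ proportional to $\prod_{\vec e\in E(\Lambda\cup\partial\Lambda)}W_{\vec e}(u_{v_1}-u_{v_2})$, since every edge of $G_N$ disjoint from $\Lambda$ contributes a factor constant in $(u_v)_{v\in\Lambda}$; the hypothesis $\Lambda\subseteq\Lambda_N^*$ is precisely what forces $\Lambda\cup\partial\Lambda\subseteq K_N$, so each factor in the product is a genuine colored edge of $G_N$ and no boundary edges are missing.

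The main obstacle is the structural fact invoked above — that the half-space Whittaker density collapses to a nearest-neighbor Boltzmann weight on the colored diamond lattice. This is exactly where the integrable input enters: it rests on the local, Jacobian-controlled nature of the geometric RSK moves adapted to the half-space geometry together with properties of Whittaker functions, and it is carried out in \cite{bcd23} building on \cite{cosz14,osz14,bbc20}. Everything else — the reparametrization invariance and the conditioning computation — is routine, so in this paper I would present the proof as the short reduction described in the first paragraph, citing \cite{bcd23} for the substantive content.
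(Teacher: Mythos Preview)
Your proposal is correct and matches the paper's approach: the paper simply states that the theorem follows directly from Theorem~1.3 and Lemma~4.2 in \cite{bcd23}, together with the observation that a global additive shift leaves the density \eqref{eq:gibbs} unchanged (citing Lemma~2.1(b) in \cite{bcd23}). Your additional ``for completeness'' sketch of the geometric RSK\,/\,Whittaker mechanism is consistent with what underlies \cite{bcd23} but goes beyond what the paper itself records.
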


We refer to the above conditional law as the $\hslg$ Gibbs measure with boundary condition $\vec{u}=(u_{i,j})_{(i,j)\in \partial\Lambda}$ and denote this measure as $\pg^{\vec{u}}(\cdot)$. \bl{The above theorem follows directly from Theorem 1.3 and Lemma 4.2 in \cite{bcd23}. We remark that \cite{bcd23} specifies the Gibbs property for the centered line ensemble $\mathcal{L}_i^N(j):=H_N^{(i)}(j)+2\Psi(\theta)N$. However from the precise expression of the density in \eqref{eq:gibbs}, it is not hard to see that the Gibbs property remains unchanged upon a constant shift of the entire line ensemble (shown in Lemma 2.1(b) in \cite{bcd23}). Thus the same Gibbs property continues to hold for $H_N$.} %The description Gibbs property stated in Theorem 1.3 of \cite{bcd23} is a bit different from ours. However, when $\alpha\in (-\theta,\theta)$, one can redistribute the edge-weights (see Lemma 4.2 and Figure 14(A) in \cite{bcd23}) to obtain the above stated Gibbs property.

\medskip

The $\hslg$ Gibbs measures satisfy stochastic monotonicity w.r.t.~the boundary data.
\begin{proposition}[Stochastic monotonicity, Proposition 2.6 in \cite{bcd23}]\label{p:gmc} Fix $k_1\le k_2$, $a_i\le b_i$ for $k_1\le i\le k_2$ and $\alpha>-\theta$. Let 
		\begin{align*}
			\Lambda:=\{(i,j)\mid k_1\le i\le k_2, a_i\le j\le b_i\}.
		\end{align*} 
		There exists a probability space consisting of a collection of random variables
		\begin{align*}
			\{L(v;(u_w)_{w\in \partial \Lambda}) \mid v\in \Lambda, (u_w)_{w\in\partial\Lambda} \in \R^{|\partial\Lambda|}\}
		\end{align*}
		such that
		\begin{enumerate}
			\item for each $(u_w)_{w\in\partial\Lambda}\in \R^{|\partial\Lambda|}$,  the law of $\{L(v;(u_w)_{w\in \partial \Lambda})\mid v\in \Lambda\}$ is given by the $\hslg$ Gibbs measure for the domain $\Lambda$ with boundary condition $(u_w)_{w\in\partial\Lambda}\in \R^{|\partial\Lambda|}$. 
			\item with probability $1$, for all $v\in \Lambda$ we have 
			$$L(v;(u_w)_{w\in \partial \Lambda}) \le L(v;(u_w')_{w\in \partial \Lambda}) \mbox{ whenever }u_w\le u_w' \mbox{ for all }w\in \partial\Lambda.$$
		\end{enumerate}
	\end{proposition}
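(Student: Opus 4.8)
The plan is to reduce the statement to a single-site computation and then assemble the coupling from a monotone heat-bath (Glauber) dynamics driven by common randomness; the explicit exponential form of the weights in \eqref{eq:color} makes the single-site step completely elementary, so the only real work is in passing to a simultaneous almost-sure coupling over all boundary data.

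\emph{Single-site monotonicity.} First I would isolate the only analytic input. Fix $v\in\Lambda$ and condition on all other coordinates of $\{H_N^{(i)}(j)\mid (i,j)\in\Lambda\cup\partial\Lambda\}$. By \eqref{eq:gibbs} the conditional law of $u_v$ has density proportional to a product, over the edges $\vec e$ of $G_N$ incident to $v$, of factors $W_{\vec e}(u_v-u_w)$ or $W_{\vec e}(u_w-u_v)$ (depending on the orientation of $\vec e$), where $w$ is the other endpoint and $W_{\vec e}(x)=\exp(\beta x-e^{x})$ with $\beta\in\{0,\theta-\alpha,\theta+\alpha\}$. Each factor is log-concave since $(\log W_{\vec e})''=-e^{x}<0$, and the product is integrable in $u_v\in\R$: the structure of $G_N$ ensures $v$ is incident to at least one blue or red edge, whose weight $\exp(\beta x-e^{x})$ with $\beta=\theta\mp\alpha>0$ decays at both $x\to\pm\infty$. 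Hence the conditional law is a genuine probability measure on $\R$ with an everywhere-positive, log-concave density. The key point is the monotone-likelihood-ratio property: a direct computation with $\log W_{\vec e}(x)=\beta x-e^{x}$ shows that raising a neighbour value $u_w\mapsto u_w'\ge u_w$ changes the log-density of $u_v$ by an additive term which is an affine function of $e^{u_v}$ (when $\vec e$ points out of $v$) or of $e^{-u_v}$ (when it points into $v$), with the sign making it strictly increasing in $u_v$. Multiplying over the incident edges, the conditional law of $u_v$ is stochastically increasing in the neighbour values; more precisely, its generalized inverse CDF is a coordinatewise nondecreasing function of them.

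\emph{Coupled dynamics and passage to the limit.} On one probability space carry an i.i.d.\ sequence $(\xi_n)_{n\ge1}$ of $\mathrm{Unif}[0,1]$ variables and fix a cyclic enumeration of $\Lambda$. For every boundary datum $\vec u=(u_w)_{w\in\partial\Lambda}$ simultaneously, run the heat-bath dynamics from a common initial configuration: at step $n$ replace the value at the $n$-th site by the $\xi_n$-quantile of its current conditional law given the rest and $\vec u$. By the previous step each update is a coordinatewise nondecreasing map of (current configuration, $\vec u$), so by composition $X_n(\cdot;\vec u)\le X_n(\cdot;\vec u')$ almost surely for every $n$ whenever $\vec u\le\vec u'$. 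The chain leaves $\pg^{\vec u}$ invariant by construction, and, being irreducible with everywhere-positive transition densities and a strong (super-exponential, from $e^{-e^{x}}$) drift toward compact sets, it is ergodic with unique stationary law $\pg^{\vec u}$; hence $X_n(\cdot;\vec u)\Rightarrow\pg^{\vec u}$. To turn these one-boundary-at-a-time distributional limits into a single almost-sure coupling I would run the dynamics from the infinite past in the Propp--Wilson sense: positivity of the one-sweep transition density forces the coupled chains to coalesce in finite time almost surely on any compact set of boundary data and starting configurations (the drift reduces everything to compacts), so $L(v;\vec u):=\lim_n X_n(v;\vec u)$ exists almost surely, has law $\pg^{\vec u}$, and inherits $L(\cdot;\vec u)\le L(\cdot;\vec u')$ for $\vec u\le\vec u'$. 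Joint continuity of $\vec u\mapsto L(\cdot;\vec u)$ — from monotonicity together with weak continuity of $\vec u\mapsto\pg^{\vec u}$ — then upgrades the order relation from a countable dense set of $\vec u$ to all of $\R^{|\partial\Lambda|}$ at once, which is the asserted probability space. (Alternatively one can skip the dynamical construction: since $\partial_{u_a}\partial_{u_b}\log W_{\vec e}(u_a-u_b)=-(\log W_{\vec e})''\ge 0$ for $a\ne b$, the density in \eqref{eq:gibbs} is jointly log-supermodular in $(u_{i,j})_{(i,j)\in\Lambda\cup\partial\Lambda}$, so Holley's inequality gives that $\pg^{\vec u}$ is stochastically increasing in $\vec u$, and the Kamae--Krengel--O'Brien theorem furnishes the simultaneous monotone coupling directly.)

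\emph{Main obstacle.} The single-site MLR identity and the monotonicity of the heat-bath update are routine given the explicit weights; the genuine difficulty is the last step — producing one probability space on which the inequality in part (2) holds simultaneously and almost surely for the uncountable family of all pairs $\vec u\le\vec u'$ — which is exactly what the coalescence-from-the-past plus continuity argument (or the general monotone-coupling theorem) is there to supply.
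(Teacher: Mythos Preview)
The paper does not prove this proposition at all; it is quoted verbatim from \cite{bcd23} and used as a black box. So there is no ``paper's own proof'' to compare against here. Your approach --- verify the single-site monotone-likelihood-ratio property from the explicit form $W_{\vec e}(x)=\exp(\beta x-e^{x})$, then build a simultaneous monotone coupling either by common-noise Glauber dynamics or by Holley's inequality plus a Kamae--Krengel--O'Brien type coupling theorem --- is exactly the standard route used throughout the line-ensemble literature (Corwin--Hammond and its descendants, including \cite{bcd23}), and your log-supermodularity computation $\partial_{u_v}\partial_{u_w}\log W_{\vec e}(u_v-u_w)=e^{u_v-u_w}>0$ is the correct analytic core.

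One small caution: in your dynamical construction you invoke ``coalescence in finite time'' of the coupled chains. In a continuous state space with everywhere-positive transition densities and inverse-CDF updates, distinct chains stay ordered but do \emph{not} coalesce exactly; the Propp--Wilson picture needs sandwiching between extremal initial configurations, which do not exist here since the state space is $\R^{|\Lambda|}$. The clean fix is the one you already give as an alternative: the Glauber argument establishes stochastic monotonicity of $\vec u\mapsto \pg^{\vec u}$ (take limits in distribution of the ordered chains), and then the simultaneous monotone coupling over all of $\R^{|\partial\Lambda|}$ follows from the general coupling theorem. That route is airtight and is what \cite{bcd23} effectively does.
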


As mentioned in the introduction, the $\hslg$ line ensemble enjoys a certain soft non-intersection property. This property is captured in our next theorem.

\begin{theorem}[Ordering of points, Theorem 3.1 in \cite{bcd23}]\label{t:order} Fix any $k\in \Z_{>0}$ and $\rho\in (0,1)$. Recall the $\hslg$ line ensemble $H_N$ from Definition \ref{hslg}. There exists $N_0=N_0(\rho,k)>k$ such that for all $N\ge N_0$, $i\in \ll 1,k\rr$ and $p\in \ll 1,N-i\rr$ the following inequalities holds:
		\begin{equation*}
			\begin{aligned}
			\Pr(H_N^{(i)}(2p+1)\le H_N^{(i)}(2p)+\log^2 N) & \ge 1-\rho^{N} , \\ \Pr(H_N^{(i)}(2p-1)\le H_N^{(i)}(2p)+\log^2 N) & \ge 1-\rho^{N}, \\
			\Pr(H_N^{(i+1)}(2p)\le H_N^{(i)}(2p+1)+\log^2 N) & \ge 1-\rho^{N} , \\ \Pr(H_N^{(i+1)}(2p)\le H_N^{(i)}(2p-1)+\log^2 N) & \ge 1-\rho^{N}.
		\end{aligned}
		\end{equation*}
\end{theorem}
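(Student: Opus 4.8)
The engine is the \emph{super-exponential} decay of every edge weight in \eqref{eq:color}: each $W_{\vec{e}}(x)=\exp(cx-e^x)$ has $c\in\{0,\theta+\alpha,\theta-\alpha\}\subset[0,\infty)$, so $W_{\vec{e}}(x)\le e^{-e^x/2}$ for $x$ large. Hence in any $\hslg$ Gibbs measure \eqref{eq:gibbs} a difference $u_{v_1}-u_{v_2}$ along an edge $\{v_1\to v_2\}$ has conditional upper tail of order $\exp(-e^t)$ at level $t$, which at $t=\log^2N$ is already far below $\rho^N$. Each of the four displayed inequalities is such a ``one short path at a time'' bound, so the plan is: derive each by resampling a single vertex (or a small connected region) of $G_N$ via Theorem~\ref{t:gibbs}, obtain a per-site failure probability $\exp(-e^{(\log^2N)/2})$ \emph{uniform in the surrounding data}, and union bound over the $O(kN)$ pairs $(i,p)$ — the polynomial factor $kN$ is absorbed and $1-\rho^N$ follows for $N\ge N_0(\rho,k)$.

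\textbf{Same-curve inequalities.} For $H_N^{(i)}(2p+1)\le H_N^{(i)}(2p)+\log^2N$, apply Theorem~\ref{t:gibbs} with $\Lambda=\{(i,2p+1)\}$, which lies in $\Lambda_N^*$ under the stated constraints. In $G_N$ the odd-position vertex $(i,2p+1)$ is the source of exactly the two colored edges to $(i,2p)$ and $(i,2p+2)$ and touches nothing else, so the conditional law of $u:=H_N^{(i)}(2p+1)$ has density $\propto\exp\!\big(c_1(u-a)-e^{u-a}+c_2(u-b)-e^{u-b}\big)$ with $a=H_N^{(i)}(2p)$, $b=H_N^{(i)}(2p+2)$ and $c_1,c_2>0$. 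A direct ratio computation — bounding the numerator after the substitution $t=e^u(e^{-a}+e^{-b})$, and the denominator from below by integrating over a unit interval near $\min(a,b)$ — cancels the $a,b$-dependent prefactors and leaves $\Pr(u>a+\log^2N)\le C\int_{e^{\log^2N}}^\infty s^{c_1+c_2-1}e^{-s}\,ds\le e^{-e^{(\log^2N)/2}}$, uniformly in $a,b$; the $2p-1$ case is identical. Applying the same single-vertex scheme to $\Lambda=\{(i+1,2p)\}$, where the black edge $(i+1,2p)\to(i,2p)$ carries the super-exponential weight $\exp(-e^{u-H_N^{(i)}(2p)})$ and the two incoming colored edges only reinforce the resulting upper bound, gives the auxiliary estimate $H_N^{(i+1)}(2p)\le H_N^{(i)}(2p)+\log^2N$ — valid provided the colored neighbours on curve $i+1$ are not themselves far above $H_N^{(i)}(2p)$, a caveat resolved below.

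\textbf{Cross-curve inequalities — the main obstacle.} For $H_N^{(i+1)}(2p)\le H_N^{(i)}(2p\pm1)+\log^2N$ the vertices $(i+1,2p)$ and $(i,2p\pm1)$ are \emph{not} adjacent in $G_N$; the only length-two bridge runs through $(i,2p)$, and along it the colored edge $(i,2p\pm1)\to(i,2p)$ controls $H_N^{(i)}(2p)-H_N^{(i)}(2p\pm1)$ from above only with an \emph{exponential} tail, too weak to beat $\rho^N$ at level $\log^2N$. The plan is to combine the Gibbs estimate on a connected region containing this bridge (e.g.\ $\Lambda=\{(i+1,2p),(i,2p)\}$) with a priori control of the ensemble: using the one-point free-energy fluctuations of \cite{bw}, the identity $\sum_{l\le j}H_N^{(l)}(q)=\log(j!\,\zs^{(j)}(\cdots))$ together with \eqref{eq:rem}, and Chernoff bounds for products of inverse gammas, show that for $N\ge N_0(\rho,k)$ all $H_N^{(l)}(q)$ with $l\le k+1$ and $q$ in range lie in a deterministic window $[-DN,DN]$ with probability $\ge 1-\tfrac12\rho^N$. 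On that event one resamples $\Lambda$ and bounds $\Pr(H_N^{(i+1)}(2p)>H_N^{(i)}(2p+1)+\log^2N)$ by $\Pr(H_N^{(i+1)}(2p)>H_N^{(i)}(2p)+\tfrac12\log^2N)+\Pr(H_N^{(i)}(2p)>H_N^{(i)}(2p+1)+\tfrac12\log^2N)$: the first term is the auxiliary estimate above (the window bound now discharges its caveat), and for the second one notes that a gap $H_N^{(i)}(2p)-H_N^{(i)}(2p+1)>\tfrac12\log^2N$ forces the edge $(i,2p+1)\to(i,2p)$ to carry Gibbs weight $\le e^{-e^{(\log^2N)/4}}$ while the competing configurations keeping the gap $O(\log^2N)$ carry weight $\Omega(e^{-cDN})$, so the ratio stays super-exponentially small after the harmless factor $e^{cDN}$. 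The genuine difficulty is precisely this last step — ruling out the ``bad'' boundary configurations (e.g.\ curve $i+1$ locally above curve $i$) that would otherwise destroy the ratio estimate — and I expect the cleanest route to be an induction on the curve index $i$, pinning the top curve via \cite{bw} and pushing the ordering downward so that when curve $i+1$ is treated curves $1,\dots,i$ are already in place; controlling the growth of the accumulated $\log^2N$ slack with $i$ is the book-keeping one must get right.

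\textbf{Conclusion.} For each of the four families the per-site failure probability is at most $e^{-e^{(\log^2N)/4}}$; summing over $i\in\ll1,k\rr$ and $p\in\ll1,N\rr$ costs a factor $\le kN$, and adding the $\tfrac12\rho^N$ from the a priori window gives total failure below $\rho^N$ for $N\ge N_0(\rho,k)$, as asserted.
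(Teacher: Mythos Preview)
The paper does not prove this theorem; it is imported from \cite{bcd23} (Theorem~3.1 there) and used as a black box throughout. There is no proof in the present paper to compare against.

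On your attempt itself: the underlying mechanism --- every edge weight $W_{\vec e}(x)$ in \eqref{eq:color} decays like $e^{-e^x}$, so the directed increment along any edge has a doubly-exponential conditional upper tail, and $e^{-e^{(\log^2N)/4}}\ll\rho^N$ absorbs a polynomial union bound --- is exactly right and is the engine behind the result in \cite{bcd23}. But your execution rests on a misreading of the graph $G_N$ that sends both halves of the argument astray.

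Read the edge structure off the explicit densities the paper actually uses (the $\irw$ density \eqref{eq:denirw}, $\wb$ in \eqref{e:wb}, $W_i$ in \eqref{def:w}, and the conditional density \eqref{cold}--\eqref{colb}): black edges run from $(i{+}1,2j)$ to $(i,2j{-}1)$ and $(i,2j{+}1)$, i.e.\ from even positions of curve $i{+}1$ to \emph{odd} positions of curve $i$. Two consequences:

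\emph{(a)} The odd vertex $(i,2p{+}1)$ is \emph{not} isolated as you claim. Besides the two outgoing colored edges, it is the target of black edges from $(i{+}1,2p)$ and $(i{+}1,2p{+}2)$, contributing factors $\exp(-e^{H_N^{(i+1)}(\cdot)-u})$ that push $u$ upward. If curve $i{+}1$ happens to sit far above curve $i$ there, these kill your denominator and the ``uniform in $a,b$'' ratio bound fails. The same-curve case is thus already entangled with the ordering of curve $i{+}1$.

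\emph{(b)} The cross-curve pair $(i{+}1,2p)$ and $(i,2p{\pm}1)$ \emph{are} adjacent, via exactly this black edge. So the inequality $H_N^{(i+1)}(2p)\le H_N^{(i)}(2p{\pm}1)+\log^2N$ is along an edge with weight $\exp(-e^x)$ and enjoys the same super-exponential mechanism as the colored-edge inequalities; your detour through $(i,2p)$ with its merely-exponential step is unnecessary.

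With the correct graph, all four inequalities sit on single edges and all four face the identical obstacle you flagged only for cross-curve: the one-vertex conditional density carries extra factors from the other incident edges, and bad boundary data (curve $i{+}1$ locally above curve $i$, or the same-curve neighbour very low) can spoil the ratio. Your a priori window $[-DN,DN]$ is too coarse to discharge this --- a $2DN$ gap in the wrong direction still makes the denominator doubly-exponentially small. The induction on curve index you sketch in passing is therefore not a refinement but the substance of the argument; until it is actually carried out, with control of the accumulated slack, what you have is a correct outline of the mechanism but not a proof.
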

We remark that the above theorem is true in the unbound phase as well (i.e., for $\alpha\ge 0$).
\medskip

We now introduce the \textit{interacting random walks} which are a specialized version of $\hslg$ Gibbs measures (see Figure \ref{fig:my_label}). 

\begin{definition}[Interacting random walk] \label{irw} 
We say $\big(L_1(\ll1,2T-2\rr),L_2(\ll1,2T-1\rr)\big)$ is an interacting random walk ($\irw$) of length $T$ with boundary condition $(a,b)\in \R^2$ if its law is a measure on $\R^{4T-3}$ with density at $(u_{1,j})_{j=1}^{2T-2}, (u_{2,j})_{j=1}^{2T-1}$ proportional to
\begin{align}\label{eq:denirw}
    \prod_{j=1}^{T-1} \exp(-e^{u_{2,2j}-u_{1,2j-1}}-e^{u_{2,2j}-u_{1,2j+1}}) \prod_{i=1}^2\prod_{j=1}^{2T-1} G_{\theta+(-1)^{i+j}\alpha}\big((-1)^{j+1}(u_{i,j}-u_{i,j+1})\big)
\end{align}
where $u_{1,2T-1}=a$, $u_{2,2T}=b$, and $u_{1,2T}=0$ and for $\beta>0$ $G_{\beta} :\R \to \R$ is defined by $$G_{\beta}(x)=[\Gamma(\beta)]^{-1}\exp(\beta x-e^x).$$  Note that $G_{\theta+\alpha}(u_{1,2T-1}-u_{1,2T})$ is a constant and can be absorbed into proportionality constant. 
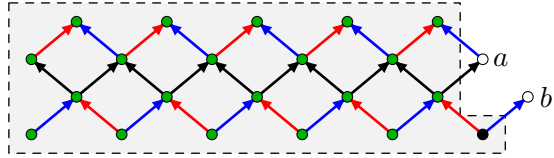
\begin{figure}[h!]
    \centering
    
    \begin{tikzpicture}[line cap=round,line join=round,>=triangle 45,x=1.2cm,y=1cm]
    \draw[fill=gray!10,line width=0.5pt,dashed] (-0.75,1.25)--(-0.75,-0.75)--(4.75,-0.75)--(4.75,-0.25)--(4.25,-0.25)--(4.25,1.25)--(-0.75,1.25);
        \foreach \x in {0,1,2,3,4}
        {\draw[line width=1pt,blue,{Latex[length=2mm]}-]  (\x,0) -- (\x-0.5,-0.5);
        \draw[line width=1pt,red,{Latex[length=2mm]}-]  (\x,1) -- (\x-0.5,0.5);
        \draw[line width=1pt,red,{Latex[length=2mm]}-]  (\x,0) -- (\x+0.5,-0.5);
        \draw[line width=1pt,blue,{Latex[length=2mm]}-]  (\x,1) -- (\x+0.5,0.5);
        \draw[line width=1pt,black,{Latex[length=2mm]}-]  (\x+0.5,0.5)--(\x,0);
        \draw[line width=1pt,black,{Latex[length=2mm]}-]  (\x-0.5,0.5)--(\x,0);}
        \draw[line width=1pt,blue,{Latex[length=2mm]}-]  (5,0) -- (4.5,-0.5);
        \draw[fill=white] (5,0) circle (2pt);
        \draw[fill=white] (4.5,0.5) circle (2pt);
        \node at (5.2,0) {$b$};
        \node at (4.7,0.5) {$a$};
        \draw[fill=black] (4.5,-0.5) circle (2pt);
        \foreach \x in {0,1,2,3,4}
        {
        \draw[fill=green!70!black] (\x-0.5,-0.5) circle (2pt);
        \draw[fill=green!70!black] (\x,0) circle (2pt);
        \draw[fill=green!70!black] (\x,1) circle (2pt);
        \draw[fill=green!70!black] (\x-0.5,0.5) circle (2pt);
        }
    \end{tikzpicture}
    \caption{$\irw$ of length $6$ with boundary condition $a$ and $b$.}
    \label{fig:my_label}
\end{figure}
\end{definition}
\bl{We now explain how the density in the definition in $\irw$ can be written in a form similar to \eqref{eq:gibbs}.
More specifically, for each $i\ge 1$, if we consider the vertex set
$$V_{i,T}:=\{(2i,j),(2i+1,j) \mid j\in \ll1,2T-1\rr\}\cup\{(2i+1,2T)\},$$
and the subgraph induced by $V_{i,T}, E(V_{i,T})$ (see Figure \ref{fig:my_label}), then the density in \eqref{eq:denirw} is same as
\begin{align*}
    \prod_{\vec{e}=\{v_1\to v_2\} \in E(V_{i,T})} W_{\vec{e}}(u_{v_1}-u_{v_2}),
\end{align*}
with the boundary condition $u_{2i,2T-1}=a$ and $u_{2i+1,2T}=b$ (the dummy variables in the above density are $(u_{2i,j})_{j=1}^{2T-2}, (u_{2i+1,j})_{j=1}^{2T-1}$ instead). The same density  can also be extracted in a similar manner from the subgraph induced by $\hat{V}_T,E(\hat{V}_T)$ where
$$\hat{V}_{T}:=\{(1,j),(2,j) \mid j\in \ll1,2T-1\rr\}\cup\{(2,2T)\},$$
provided we switch $\alpha$ to $-\alpha$ in \eqref{eq:color} (i.e., switching red and blue edges).} Since we have restricted $\alpha \in (-\theta,0)$ (bound phase), under this switching $\irw$ can be viewed as certain $\hslg$ Gibbs measures in the unbound phase. Indeed, after switching $\alpha$ to $-\alpha$, in the language of \cite{bcd23}, $\irw$ precisely corresponds to bottom-free measure on the domain $\mathcal{K}_{2, T}$ with boundary condition $(a,b)$ (see Definition 2.3 in \cite{bcd23}). This allows us to use the unbound phase estimates developed in \cite{bcd23}. We end this section by recording one such estimate.

\begin{proposition}[Lemma 5.3 in \cite{bcd23}]\label{p:irw} Fix any $T\ge 2$. Let $(L_1,L_2)$ be an $\irw$ of length $T$ with boundary condition $(0,-\sqrt{T})$.  Fix $\e\in (0,1)$. There exists $M_0=M_0(\e)>0$ such that
\begin{align*}
    \Pr\left(\sup_{p\in \ll1,2T-1\rr}|L_1(p)|+\sup_{q\in \ll1,2T\rr}|L_2(q)|\ge M_0\sqrt{T}\right) \le \e.
\end{align*}
\end{proposition}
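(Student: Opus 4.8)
The plan is to reduce the claimed estimate for an $\irw$ of length $T$ with boundary condition $(0,-\sqrt T)$ to the already available unbound-phase bound for bottom-free measures stated in \cite{bcd23}. As explained in the paragraph preceding the statement, after switching $\alpha$ to $-\alpha$ in \eqref{eq:color} (which swaps the red and blue edge weights and is harmless since we are in the bound phase $\alpha\in(-\theta,0)$, so $-\alpha\in(0,\theta)$ is a legitimate unbound-phase parameter), the $\irw$ of length $T$ with boundary condition $(a,b)$ coincides \emph{exactly} (not just in distribution) with the $\hslg$ bottom-free Gibbs measure on the domain $\mathcal{K}_{2,T}$ with boundary data $(a,b)$, in the sense of Definition 2.3 of \cite{bcd23}. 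So the first step is to make this identification precise: write out the vertex set $\hat V_T$ and its induced edges, verify that the product of edge weights $\prod_{\vec e}W_{\vec e}(u_{v_1}-u_{v_2})$ over $E(\hat V_T)$ reproduces the density \eqref{eq:denirw} after the $\alpha\mapsto-\alpha$ switch and after absorbing the constant factor $G_{\theta-\alpha}(u_{1,2T-1}-u_{1,2T})$ into the normalization, and confirm the boundary slots $u_{1,2T-1}=a$, $u_{2,2T}=b$ match.

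The second step is simply to quote Lemma 5.3 of \cite{bcd23}: for a bottom-free measure on $\mathcal{K}_{2,T}$ with boundary condition $(0,-\sqrt T)$ (note the boundary data has the same $O(\sqrt T)$ scale, matching the normalization used there), given $\e\in(0,1)$ there is $M_0=M_0(\e)$ so that the sup of $|L_1|$ over $\ll1,2T-1\rr$ plus the sup of $|L_2|$ over $\ll1,2T\rr$ exceeds $M_0\sqrt T$ with probability at most $\e$. Transporting this through the identification of the previous paragraph yields the stated bound verbatim. One should double-check the indexing conventions: the $\irw$ lower curve $L_2$ is indexed on $\ll1,2T-1\rr$ in Definition \ref{irw} but the claim states $\sup_{q\in\ll1,2T\rr}$; this is consistent because the value at the endpoint $q=2T$ is the pinned boundary value $b=-\sqrt T$, whose absolute value is exactly $\sqrt T\le M_0\sqrt T$, so including or excluding it changes nothing. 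Similarly the curve $L_1$ carries the fixed value $a=0$ at $p=2T-1$, which contributes $0$ to the supremum.

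The only genuine subtlety — and hence the step I would be most careful about — is the bookkeeping in the dictionary between the two descriptions: matching the parity-dependent assignment of red/blue (equivalently $\theta\pm\alpha$) edges in \eqref{eq:denirw} with the coloring rules on the diamond lattice after the red–blue swap, and making sure the black-edge interaction terms $\exp(-e^{u_{2,2j}-u_{1,2j\pm1}})$ line up with the two black edges emanating from each even-$q$ vertex in $G_N$. This is routine but must be done literally, because an off-by-one in the parity would flip which curve sees $\theta+\alpha$ versus $\theta-\alpha$. Since \cite{bcd23} has already set up the bottom-free measure on $\mathcal{K}_{2,T}$ precisely so that this matching works, I expect no real obstacle here; the proof is essentially a citation once the identification is recorded. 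I would therefore present the proof as: (i) recall the identification of $\irw$ with the bottom-free measure on $\mathcal{K}_{2,T}$ under $\alpha\mapsto-\alpha$; (ii) apply \cite[Lemma 5.3]{bcd23}; (iii) note the endpoint values are deterministic of size $\le\sqrt T$, so the change of index range is immaterial.
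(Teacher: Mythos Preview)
Your proposal is correct and matches the paper's approach: the paper does not give an independent proof but simply records the identification of the $\irw$ with the bottom-free measure on $\mathcal{K}_{2,T}$ under $\alpha\mapsto-\alpha$ (in the paragraph preceding the proposition) and then cites Lemma~5.3 of \cite{bcd23} directly. Your additional remarks about the endpoint values $L_1(2T-1)=0$ and $L_2(2T)=-\sqrt{T}$ being deterministic and harmless are a useful sanity check that the paper leaves implicit.
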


\subsection{One-point fluctuations of point-to-(partial)line free energy}

In this section, we gather the point-to-(partial)line free energy fluctuation results from \cite{bw}. To state the theorem, we introduce a few necessary objects first.

\medskip

Recall the point-to-point half-space partition function $Z(m,n)$ from
\eqref{eq:hpp}. For $N\ge 1$, $m\in \ll0,N-1\rr$, we define the point-to-(partial)line half-space partition function as
\begin{align}\label{defzt}
    \zt(m)=\sum_{p=m}^{N-1} Z(N+p,N-p) = \sum_{p=m}^{N-1} e^{H_N^{(1)}(2p+1)}.
\end{align}
For the second equality, note that by \eqref{eq:rem} we have $H_N^{(1)}(2p+1) = \log Z(N+p, N-p)$ and thus we can translate the point-to-(partial)line partition function in Definition 1.8 (or equivalently in Definition 1.3) of \cite{bw} into sums of $e^{H_N^{(1)}(2p+1)}$ by way of the full-space point-to-point partition function $Z(n+p, n-p)$.

Recall the digamma function $\Psi(\cdot)$ from \eqref{psidef}. For any $k \in \Z_{>0}$, we set 
\begin{equation}
    \label{def:dk}
    \begin{aligned}
    R(\theta, \alpha) & := -\Psi(\theta+\alpha) -\Psi(\theta - \alpha),\\ \tau(\theta,\alpha) & : = \Psi(\theta - \alpha) - \Psi(\theta + \alpha), \\ \sigma^2(\theta,\alpha) & := \Psi'(\theta+\alpha)-\Psi'(\theta-\alpha), \\  \Delta_k (\theta, \alpha) & :=\Psi(\theta)-\tfrac12[\Psi(\theta+\alpha)+\Psi(\theta-\alpha)]-\tfrac1{2k}\log2.
\end{aligned}
\end{equation}
For the remainder of the paper, we will make use of the above notation repeatedly. As $\Psi$ is a strictly concave function, for all large enough $k$ (depending on $\alpha,\theta$) we have $\Delta_k>0$. For the results and proofs in the remainder of the paper, we always choose $k$ large enough such that $\Delta_k > 0.$

\medskip

We now state the necessary results from \cite{bw} about the point-to-(partial)line partition function $\zt(m)$ that we need in our subsequent analysis.   
\begin{theorem} \label{t:bw} \bl{Fix $\theta>0$ and $\alpha \in (-\theta,0)$}. Suppose $g: \mathbb{Z}_{>0} \to \mathbb{Z}_{>0}$ is a function that satisfies $\frac{g(N)}{N} \to 0$ as $N\to \infty$. We have  
\begin{align}\label{eq:bw}
    \frac1{N^{1/2}\sigma}\left[\log \zt(g(N))-RN+g(N)\tau\right]\stackrel{d}{\to} \mathcal{N}(0,1).
\end{align} 
where $R,\tau,\sigma$ are defined in \eqref{def:dk}. We have the following law of large numbers
 \begin{align}\label{lln}
         \frac{1}{N}\log \left[\sum_{p=1}^{N-1} \zh(N+p,N-p)\right] \stackrel{p}{\to} R \quad \frac{1}{N}\log \left[\sum_{p=1}^{N} \zh(N+p,N-p+1)\right] \stackrel{p}{\to} R. 
    \end{align}
    Furthermore, the above law of large numbers continues to hold when $\alpha=0$, i.e., the diagonal weights are assumed to be distributed as $\operatorname{Gamma}^{-1}(\theta)$. In that case $R(\theta,\alpha)$ is interpreted as $R(\theta,0)=-2\Psi(\theta)$.
\end{theorem}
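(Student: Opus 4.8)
\emph{Proof proposal.} All three assertions are, up to a change of notation, results of \cite{bw}, so the plan is to set up the dictionary between the present conventions and those of \cite{bw} and then observe that the statements follow, tracking the one nontrivial constant carefully. First I would match the parametrization in \eqref{eq:wt} with the half-space log-gamma polymer of \cite{bw}: one verifies that the bulk weights $\operatorname{Gamma}^{-1}(2\theta)$ and diagonal weights $\operatorname{Gamma}^{-1}(\theta+\alpha)$ agree with theirs after the appropriate relabeling of parameters, that the point-to-(partial)line partition function $\zt(m)$ from \eqref{defzt} is exactly the object of Definition 1.8 (equivalently Definition 1.3) of \cite{bw} with starting offset $m$, and that the bound phase $\alpha\in(-\theta,0)$ corresponds to the regime in which \cite{bw} proves a central limit theorem for $\log\zt(0)$ with centering $RN$ and variance $N\sigma^{2}$ (as opposed to the Baik--Rains limit at the critical point $\alpha=0$, or the Tracy--Widom type limits for $\alpha>0$). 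Granting this, the first display of Theorem \ref{t:bw} is exactly \cite{bw} applied with the sublinearly growing offset $g(N)$. The one point that requires care is the $+g(N)\tau$ correction to the centering: I would interpret it as the deterministic first-order cost of anchoring the line at offset $g(N)$, since in the bound phase $p\mapsto\mathbb{E}\log Z(N+p,N-p)$ has slope $-\tau$ near $p=0$ with $\tau(\theta,\alpha)=\Psi(\theta-\alpha)-\Psi(\theta+\alpha)$ as in \eqref{def:dk}, and confirm that \cite{bw}'s formula produces precisely this correction with $o(\sqrt N)$ error throughout the range $g(N)=o(N)$.

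The two laws of large numbers are then immediate specializations. Taking $g(N)\equiv 1$ in the first display yields $\log\big[\sum_{p=1}^{N-1}Z(N+p,N-p)\big]=\log\zt(1)=RN+O_{\Pr}(\sqrt N)$, so dividing by $N$ gives the first limit in probability. For the second, note that by \eqref{eq:rem} the sum $\sum_{p=1}^{N}Z(N+p,N-p+1)$ equals $\sum_{p=1}^{N}e^{H_N^{(1)}(2p)}$, i.e.\ the point-to-line partition function along the adjacent antidiagonal $\{x+y=2N+1\}$; it is covered by the corresponding case of \cite{bw}, or, for a self-contained argument, one can sandwich it using the partition-function recursion: it is bounded below by $W_{N+1,N}Z(N,N)$ and above by $\big(\max_{1\le p\le N}W_{N+1+p,N-p+1}^{-1}\big)Z(N+1)$, and both bounds have logarithm $RN+O_{\Pr}(\sqrt N)$ by the $m=0$ fluctuation result of \cite{bw} together with \eqref{e:gaufl} (and the elementary fact that the maximum of $N$ i.i.d.\ Gamma variables is $O(\log N)$). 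The same recursion likewise handles the first sum without invoking the $g(N)\equiv1$ case, via $\sum_{p=1}^{N-1}Z(N+p,N-p)=\zt(0)-Z(N,N)\ge \frac{W_{N+1,N-1}}{W_{N,N}+W_{N+1,N-1}}\zt(0)$. Finally, the $\alpha=0$ extension is the statement of \cite{bw}'s law of large numbers at the critical point, with $R$ extended by continuity: $R(\theta,\alpha)=-\Psi(\theta+\alpha)-\Psi(\theta-\alpha)\to-2\Psi(\theta)$ as $\alpha\uparrow0$.

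The main obstacle I anticipate is bookkeeping rather than mathematics: confirming that \cite{bw}'s fluctuation theorem is stated with enough uniformity to admit a growing offset $g(N)$ subject only to $g(N)/N\to0$ — and with the centering in exactly the form $RN-g(N)\tau$ — rather than only a bounded offset or $m=0$. The presence of the $g(N)\tau$ term strongly suggests this is available, but it must be checked against their precise formulation, including their conventions for path length, the orientation of the half-space, and the normalization of diagonal weights. Should \cite{bw} only provide the bounded-offset (or $m=0$) version, the substantive step becomes upgrading it to growing $g(N)$: I would then write $\zt(0)-\zt(g(N))=\sum_{p=0}^{g(N)-1}Z(N+p,N-p)$ and control this difference — using that in the bound phase the mass of $\zt(0)$ already concentrates on $p=O(1)$, and the stochastic monotonicity of Proposition \ref{p:gmc} to compare partition functions with shifted boundary data — to transfer the point-to-line CLT to the partial line.
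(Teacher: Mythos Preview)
Your proposal is correct and takes essentially the same approach as the paper: translate the present notation into that of \cite{bw} and read off the result. The paper's execution is cleaner on the one point you flagged as an obstacle: \cite{bw}'s Theorem~1.10(3) is already stated for general endpoints $(m,n)$, so taking $n=N-g(N)$, $m=N+g(N)$ directly absorbs the growing offset, and the centering $(N-g(N))\mu_p$ with $\mu_p=\Psi(\theta+\alpha)+\tfrac{m}{n}\Psi(\theta-\alpha)$ equals $-RN+g(N)\tau$ \emph{exactly}, while the variance ratio tends to $1$; no separate difference-control argument is needed. Likewise for the second law of large numbers the paper simply applies \cite{bw} with $n=N$, $m=N+2$, so your sandwich via the recursion is unnecessary. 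For $\alpha=0$ the paper invokes Theorem~1.10(2) of \cite{bw} (the critical case with $N^{1/3}$ fluctuations), which immediately yields the law of large numbers; your ``extend $R$ by continuity'' is the right intuition but the actual input is that critical-point limit theorem.
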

\begin{proof} Theorem 1.10 in \cite{bw} discusses several fluctuation results for point-to-(partial)line partition function for the $\hslg$ polymer, including Theorem 1.10(3) which applies to the bound phase in this paper.  Letting $n = N-g(N)$, $m = N+g(N)$, \bl{and $\alpha=1$ in (1.12)  
of \cite{bw} and noting that $(\theta,\theta_0)$ in \cite{bw} corresponds to $(2\theta,\theta+\alpha)$ in our notation, we have
\begin{align*}
    m^{-1}(n\Psi'(\theta+\alpha)-m\Psi'(\theta-\alpha)) \to \Psi'(\theta+\alpha)-\Psi'(\theta-\alpha)
\end{align*}
Since $\Psi^{(2)}$ is positive, $\Psi'$ is strictly increasing and hence the above limit is positive for $\alpha\in (-\theta,0)$. Thus the hypothesis of Theorem 1.10(3) in \cite{bw} holds and we have}
\begin{align*}
    \frac1{(N-g(N))^{1/2}\sigma_p}\left[\log \zt(g(N))+(N-g(N))\mu_p\right]\stackrel{d}{\to} \mathcal{N}(0,1).
\end{align*}   
where $\mu_p:=\Psi(\theta+\alpha)+p\Psi(\theta-\alpha)$ and $\sigma_p^2:=\Psi'(\theta+\alpha)-p\Psi'(\theta-\alpha)$ with $p=\frac{N+g(N)}{N-g(N)}$. Observe that $(N-g(N))\mu_p = -RN+g(N)\tau$. As $g(N)/N\to 0$, we have that $$\frac{(N-g(N))^{1/2}\sigma_p}{N^{1/2}\sigma} \to 1.$$ Therefore the above fluctuation result implies \eqref{eq:bw}. For the law of large numbers, the first convergence in \eqref{lln} follows by taking $g(N)\equiv 1$ and appealing to \eqref{eq:bw}. The second law of large numbers follows in a similar manner by taking $n=N$ and $m=N+2$ in Theorem 1.10(3) in \cite{bw}.

\bl{When $\alpha=0$, we appeal to Theorem 1.10(2) in \cite{bw}. We take $p=\frac{N+1}{N-1}$. Let $\theta_c$ be the unique solution of $\Psi'(\theta_c)=p\Psi'(2\theta-\theta_c)$. By Taylor expansion, it follows that $\theta_c=\theta+O(N^{-1})$. Thus $$(N-1)^{1/3}(2\theta-\theta_c)\big(-\tfrac12\Psi^{(2)}(\theta_c)-\tfrac{p}2\Psi^{(2)}(2\theta-\theta_c)\big)^{1/3}\to 0.$$
Hence by Theorem 1.10(2) in \cite{bw} $(\log  \zt(1)+2\Psi'(\theta)N)/N^{1/3}$ converges weakly (to some distribution which is not relevant here). Thus, in particular, we have the first law of large numbers in \eqref{lln} for $\alpha=0$. The second law of large numbers in \eqref{lln} for $\alpha=0$ follows in a similar manner by taking $n=N,m=N+2$ in Theorem 1.10(2) in \cite{bw}.}
\end{proof}

\section{Controlling the average law of large numbers of the top curves}\label{sec:2k}

%\sdn{The reds are my comments and the blues are lines where I made significant changes. I call them significant but in reality, there are very minor, to be honest. These are some sentences I added or some existing sentences I reworded a bit. If you can go over all the colored (blue+red) texts in Sections 3 to 5 and the appendix, that'd be great. Feel free to hide the reds and turn the blues into black as you go over them one by one.}

In this section, we control the average law of large numbers of the top $2k$ curves for large enough $k$ (Proposition \ref{p:alaw}). \sd{As explained in the introduction, the key idea behind this proposition is to show that the contribution of diagonal weights in the $2k$ many non-intersecting paths of $Z_{\operatorname{sym}}^{(2k)}(m, n)$ (defined in \eqref{e:nz}) essentially comes from $k$ many of them. The starting point of this idea is Lemma \ref{l:umap}.} Given a pair of non-intersecting paths $(\pi_1, \pi_2)$ starting and ending at adjacent locations with the same $x$-coordinate, Lemma \ref{l:umap} constructs two new non-intersecting paths $(\pi_1', \pi_2')$  from $(\pi_1, \pi_2)$ such that the new paths collectively carry the same weight variables but the diagonal weights only rest on the lower path. This combinatorial result proceeds to help us decompose the symmetrized multilayer partition function $Z_{\operatorname{sym}}^{(2k)}(m, n)$ into pairs of single-layer ones in Lemmas \ref{l:sbd} and \ref{l:log} before culminating into the final result in Proposition \ref{p:alaw}.

\medskip

Let $\Pi(\mathbf{u}_1\to \mathbf{u}_2,\mathbf{v}_1\to \mathbf{v}_2)$ denote the set of  pairs of non-intersecting upright paths in $\Z_{>0}^2$ starting from $\mathbf{u}_1, \mathbf{v}_1 \in \Z_{>0}^2$ and ending at $\mathbf{u}_2,\mathbf{v}_2 \in \Z_{>0}^2$ respectively. Recall that $\mathcal{I}^{-} = \left\{(i, j)\in \Z_{>0}^2 \mid j\le i\right\}$. Define $\mathcal{I}^{+}:=\left\{(i, j)\in \Z_{>0}^{2} \mid j\ge i \right\}$ which represents the half-space index set that includes points on and above the diagonal. The first lemma constructs the $U$ map.
\begin{lemma}[Construction of $U$ map]\label{l:umap}
Fix $x\in \Z_{>0}$ and any $(m,n)\in \mathcal{I}^-$ with $n\ge 2$. Then there exists a map $U: \Pi_1\to \Pi_2$ where
\begin{align*}
    \Pi_1 & :=\Pi((1,x+1)\to (m,n), (1,x)\to (m,n-1)) \\  \Pi_2 & :=\Pi((1,x+1)\to (n-1,m), (1,x)\to (n,m)),
\end{align*}
such that the following properties hold (let $(\pi_1', \pi_2'): = U(\pi_1,\pi_2)$):

\medskip

\begin{enumerate}[label=(\alph*),leftmargin=18pt]
\setlength\itemsep{0.5em}
    \item \label{parta} $\pi_1'$ has no diagonal points, i.e., $\{(i,i)\in \Z_{>0}^2\} \cap \pi_1'$ is empty and  $$\{(i,i)\in \Z_{>0}^2\} \cap \pi_2'= \{(i,i)\in \Z_{>0}^2\} \cap \{\pi_1\cup \pi_2\}.$$
    \item \label{partb} Recall the symmetrized weights $(\til{W}_{i,j})_{(i,j)\in \Z_{>0}^2}$ from \eqref{eq:symwt}. We have
$$\prod_{(i,j)\in \pi_1\cup \pi_2} \til{W}_{i,j} \overset{a.s.}{=}\prod_{(i,j)\in \pi_1'\cup \pi_2'} \til{W}_{i,j}.$$
\item \label{partc} For each $(\pi_1',\pi_2')\in \Pi_2$ we have
$$\left|U^{-1}(\{(\pi_1',\pi_2')\})\right| \le 2^{|\{(i,i) \in \pi_1\cup \pi_2\}|}.$$
\end{enumerate}
\end{lemma}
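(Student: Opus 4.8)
The plan is to construct $U$ explicitly by a ``reflection across the diagonal'' surgery on the portion of $(\pi_1,\pi_2)$ that lies strictly above the diagonal. Since $(1,x+1)$ and $(1,x)$ both start in $\mathcal{I}^+$ (indeed on the $y$-axis) and the endpoints $(m,n),(m,n-1)$ lie in $\mathcal{I}^-$, each path crosses the diagonal. The key structural observation is that because $\pi_1$ starts one step above $\pi_2$ and ends one step above $\pi_2$ (in $y$-coordinate), and the two paths are non-intersecting, near the diagonal the pair looks locally like two parallel staircases separated by a ``tube'' of width one; the diagonal points $(i,i)$ visited by $\pi_1\cup\pi_2$ occur in a contiguous range of $i$'s and get distributed between the two paths in an alternating/interleaved fashion. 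First I would make this precise: identify the maximal diagonal run, and show that reflecting the above-diagonal excursions of $\pi_1$ onto the below-diagonal side (via $(i,j)\mapsto(j,i)$) and simultaneously re-routing $\pi_2$ to absorb all diagonal vertices produces two genuine upright lattice paths $\pi_1',\pi_2'$ that remain non-intersecting and have the prescribed endpoints $(n-1,m)$ and $(n,m)$ — note the endpoints are the reflections of the originals, consistent with the statement.

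For part \ref{parta}, once the surgery is set up so that $\pi_2'$ is defined to pass through every diagonal vertex of $\pi_1\cup\pi_2$ while $\pi_1'$ is routed strictly off the diagonal (this is possible precisely because one unit-width path can ``carry'' a whole diagonal run while the partner detours around it, just as in the heuristic \eqref{expect} where one path grabs all diagonal weights), the claim is immediate from the construction. For part \ref{partb}, the weight-preservation: the multiset of lattice vertices in $\pi_1\cup\pi_2$ is, after the reflection $(i,j)\leftrightarrow(j,i)$ applied to the above-diagonal part, equal as a multiset (allowing for the $\{(i,i)\}$ points being relocated but still present) to the vertices of $\pi_1'\cup\pi_2'$; since $\til W_{i,j}=\til W_{j,i}$ by \eqref{eq:symwt} and the off-diagonal weights are genuinely symmetric while each diagonal weight $\til W_{i,i}$ is visited exactly once on each side, the two products agree almost surely. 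The prefactor $1/2$ issue ($\til W_{i,i}=W_{i,i}/2$) does not arise here because diagonal vertices are visited by exactly one of the two paths in each of $\pi_1\cup\pi_2$ and $\pi_1'\cup\pi_2'$ — this matching of ``how many times each diagonal vertex is used'' is something I would check carefully.

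For part \ref{partc}, the bound on preimages: given the target pair $(\pi_1',\pi_2')$, the forward map was deterministic except for a binary choice at each diagonal vertex in the run — namely, when reconstructing $(\pi_1,\pi_2)$ one must decide, for each diagonal point $(i,i)\in \pi_1\cup\pi_2$, whether in the original configuration that point belonged to $\pi_1$ or to $\pi_2$ (equivalently which of the two interleaved paths was ``above'' at that diagonal step). Each such choice, together with the non-intersection and endpoint constraints, determines at most one consistent original pair, so $|U^{-1}(\{(\pi_1',\pi_2')\})|\le 2^{|\{(i,i)\in\pi_1\cup\pi_2\}|}$; I'd phrase this as: the map that records, for each element of the target, the sequence of these binary labels, is injective on $U^{-1}(\{(\pi_1',\pi_2')\})$.

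The main obstacle I anticipate is the bookkeeping in the construction itself — making the surgery simultaneously (i) produce honest upright paths (no backtracking, monotone in both coordinates), (ii) keep the pair non-intersecting across the reflection seam where an above-diagonal piece gets glued to a below-diagonal piece, and (iii) land at exactly the reflected endpoints with the correct offset so that $\pi_1'$ ends at $(n-1,m)$ and $\pi_2'$ at $(n,m)$. Getting the case analysis right at the ``crossing region'' near the diagonal (where the two paths are forced into a narrow corridor and the diagonal run sits) — including degenerate cases such as $\pi_1\cup\pi_2$ containing no diagonal points, or the paths touching the diagonal only at isolated points versus along a long run — is where the real work lies, and is presumably why the authors illustrate it with Figure \ref{fig:d} rather than only describing it in words. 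The probabilistic content is negligible; everything is deterministic combinatorics on lattice paths, with the only ``a.s.'' coming from the harmless event that the $W$'s are finite and positive.
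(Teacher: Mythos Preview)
Your overall plan—reflection surgery plus the binary-choice counting for part \ref{partc}—matches the paper's strategy, but there is a real gap in your construction. Your key structural claim that ``the diagonal points $(i,i)$ visited by $\pi_1\cup\pi_2$ occur in a contiguous range of $i$'s'' is false in general. For instance, with $x=1$ one can take $\pi_2$ running $(1,1)\to(2,1)\to(2,2)\to(5,2)\to(5,5)\to\cdots$ while $\pi_1$ stays well above the diagonal throughout this stretch; then $\pi_1\cup\pi_2$ hits $(1,1),(2,2),(5,5)$ but not $(3,3)$ or $(4,4)$. The paths can wander far from the diagonal between visits, so there is no single ``maximal diagonal run'' to identify, and your surgery as described does not cover the general case.

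The paper's construction handles this by a segment-by-segment procedure. It orders the diagonal points of $\pi_1\cup\pi_2$ and singles out a special subset $\mathsf{SPDiag}\subset\operatorname{diag}(\pi_2)$: those diagonal points of $\pi_2$ that are adjacent (in the diagonal ordering) to a diagonal point of $\pi_1$. Between consecutive points $A_j<A_{j+1}$ of $\mathsf{SPDiag}$, exactly one of $\pi_1,\pi_2$ touches the diagonal. On a segment where only $\pi_1$ does, one reflects the (below-diagonal) portion of $\pi_2$ across the diagonal to get a path $\pi_3$, locates its first and last intersections $P_1,P_2$ with $\pi_1$, and swaps the portions of $\pi_1$ and $\pi_2$ between $P_1,P_2$ (resp.\ their reflections $P_1',P_2'$). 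The final segment to the endpoints is handled separately with a one-sided swap. It is this local swap—not a global reflection of an ``above-diagonal excursion''—that guarantees upright monotone paths, non-intersection, and the correct reflected endpoints, i.e.\ the bookkeeping you correctly flagged as the main obstacle.

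Your arguments for \ref{partb} and \ref{partc} are essentially the paper's once the construction is in place: weight preservation follows from $\til W_{i,j}=\til W_{j,i}$ and the fact that the surgery only exchanges reflected portions; and the preimage bound follows because $\operatorname{diag}(\pi_1'\cup\pi_2')=\operatorname{diag}(\pi_1\cup\pi_2)$, and any partition of this set into candidate $\operatorname{diag}(\pi_1)$ and $\operatorname{diag}(\pi_2)$ determines $\mathsf{SPDiag}$ and hence (by reversing the swaps) at most one preimage.
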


\begin{figure}[h!]
    \centering
    \begin{subfigure}[b]{0.45\textwidth}
        \centering
          \begin{tikzpicture}[line cap=round,line join=round,>=triangle 45,x=0.2cm,y=0.2cm]
\draw [color=gray!10!white,, xstep=0.2cm,ystep=0.2cm] (0,3) grid (31,31);
\draw [line width=1pt] (0,3)-- (1,3);
\draw [line width=1pt] (1,3)-- (1,4);
\draw [line width=1pt] (1,4)-- (3,4);
\draw [line width=1pt] (3,4)-- (3,5);
\draw [line width=1pt] (3,5)-- (6,5);
\draw [line width=1pt] (6,5)-- (6,7);
\draw [line width=1pt] (6,7)-- (10,7);
\draw [line width=1pt,color=blue] (0,4)-- (0,6);
\draw [line width=1pt,color=blue] (0,6)-- (4,6);
\draw [line width=1pt,color=blue] (4,6)-- (4,9);
\draw [line width=1pt,color=blue] (4,9)-- (9,9);
\draw [line width=1pt,color=blue] (9,9)-- (9,10);
\draw [line width=1pt,color=blue] (9,10)-- (12,10);
\draw [line width=1pt,color=blue] (12,10)-- (12,13);
\draw [line width=1pt,color=blue] (12,13)-- (14,13);
\draw [line width=1pt,color=blue] (14,16)-- (14,13);
\draw [line width=1pt] (10,7)-- (16,7);
\draw [line width=1pt] (16,7)-- (16,13);
\draw [line width=1pt] (16,13)-- (18,13);
\draw [line width=1pt] (18,13)-- (18,15);
\draw [line width=1pt,color=blue] (14,16)-- (14,21);
\draw [line width=1pt,color=blue] (14,21)-- (19,21);
\draw [line width=1pt] (18,15)-- (18,20);
\draw [line width=1pt] (18,20)-- (21,20);
\draw [line width=1pt,color=blue] (19,21)-- (23,21);
\draw [line width=1pt,color=blue] (23,21)-- (23,25);
\draw [line width=1pt,color=blue] (23,25)-- (31,25);
\draw [line width=1pt] (21,20)-- (24,20);
\draw [line width=1pt] (24,20)-- (24,21);
\draw [line width=1pt] (24,21)-- (25,21);
\draw [line width=1pt] (25,21)-- (25,22);
\draw [line width=1pt] (25,22)-- (28,22);
\draw [line width=1pt] (28,22)-- (28,24);
\draw [line width=1pt] (28,24)-- (31,24);
\draw [line width=1pt] (3,3)-- (31,31);
\begin{scriptsize}
\end{scriptsize}
\end{tikzpicture}
\caption{}
    \end{subfigure}
\begin{subfigure}[b]{0.45\textwidth}
     \centering
\begin{tikzpicture}[line cap=round,line join=round,>=triangle 45,x=0.2cm,y=0.2cm]
\draw [color=gray!10!white,, xstep=0.2cm,ystep=0.2cm] (0,3) grid (31,31);
\draw [line width=1pt] (0,3)-- (1,3);
\draw [line width=1pt] (1,3)-- (1,4);
\draw [line width=1pt] (1,4)-- (3,4);
\draw [line width=1pt] (3,4)-- (3,5);
\draw [line width=1pt] (3,5)-- (6,5);
\draw [line width=1pt] (6,5)-- (6,7);
\draw [line width=1pt] (6,7)-- (9,7);
\draw [line width=1pt,color=blue] (0,4)-- (0,6);
\draw [line width=1pt,color=blue] (0,6)-- (4,6);
\draw [line width=1pt,color=blue] (4,6)-- (4,9);
\draw [line width=1pt,color=blue] (4,9)-- (7,9);
\draw [line width=1pt,color=blue] (14,18)-- (14,21);
\draw [line width=1pt,color=blue] (14,21)-- (20,21);
\draw [line width=1pt] (18,14)-- (18,20);
\draw [line width=1pt] (18,20)-- (21,20);
\draw [line width=1pt,dashed] (7,9)-- (7,16);
\draw [line width=1pt,dashed] (14,18)-- (13,18);
\draw [line width=1pt,dashed] (13,18)-- (13,16);
\draw [line width=1pt,dashed] (13,16)-- (7,16);
\draw [line width=1pt,dashed] (20,21)-- (20,24);
\draw [line width=1pt,dashed] (20,24)-- (21,24);
\draw [line width=1pt,dashed] (21,24)-- (21,25);
\draw [line width=1pt,dashed] (21,25)-- (22,25);
\draw [line width=1pt,dashed] (22,25)-- (22,28);
\draw [line width=1pt,dashed] (22,28)-- (24,28);
\draw [line width=1pt,dashed] (24,28)-- (24,31);
\draw [line width=1pt,dashed,color=blue] (9,9)-- (9,7);
\draw [line width=1pt,dashed,color=blue] (9,9)-- (10,9);
\draw [line width=1pt,dashed,color=blue] (10,9)-- (10,12);
\draw [line width=1pt,dashed,color=blue] (10,12)-- (13,12);
\draw [line width=1pt,dashed,color=blue] (13,12)-- (13,14);
\draw [line width=1pt,dashed,color=blue] (13,14)-- (16,14);
\draw [line width=1pt,dashed,color=blue] (16,14)-- (18,14);
\draw [line width=1pt,dashed,color=blue] (21,20)-- (21,23);
\draw [line width=1pt,dashed,color=blue] (21,23)-- (25,23);
\draw [line width=1pt,dashed,color=blue] (25,23)-- (25,25);
\draw [line width=1pt,dashed,color=blue] (25,25)-- (25,28);
\draw [line width=1pt,dashed,color=blue] (25,28)-- (25,31);
\draw [line width=1pt] (3,3)-- (31,31);
\begin{scriptsize}
\end{scriptsize}
\end{tikzpicture}
\caption{}
     \end{subfigure}
\caption{The $U$ map takes (A) to (B).}
    \label{fig:c}
\end{figure}
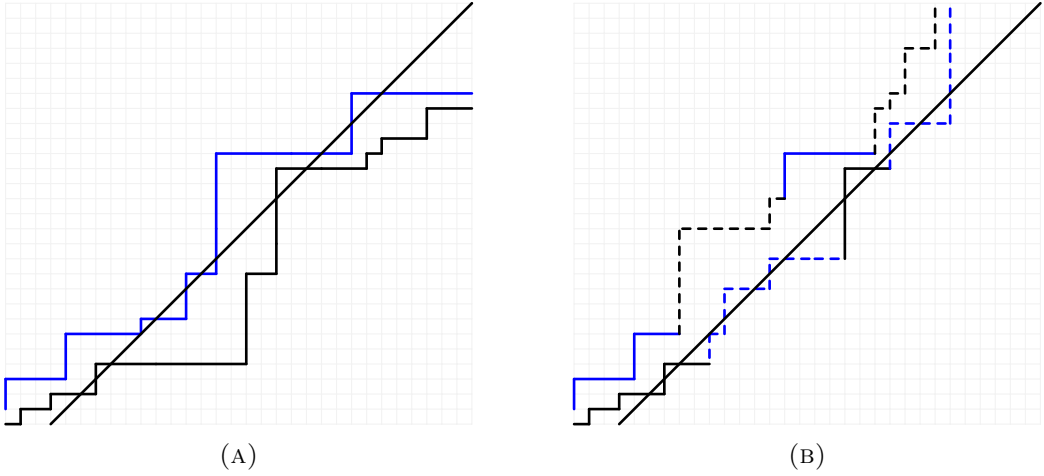

\sd{We remark that Lemma \ref{l:umap} is entirely combinatorial and does not use any results about the integrability of the model. Lemma \ref{l:umap} continues to hold for any collection of symmetrized weights that are not necessarily distributed as inverse-Gamma random variables.}

\medskip

\begin{proof} We begin with several necessary definitions. We define a partial order $<$ on the points $\Z_{>0}^2$ by requiring $P_1=(a_1,b_1)<P_2=(a_2,b_2)$ whenever $a_1+b_1<a_2+b_2$. \bl{Suppose $\pi$ is an upright path from  $\mathbf{u}\in \Z_{>0}^2$ to $\mathbf{v}\in \Z_{>0}^2$ passing through two points $\mathbf{w}_1 \in \Z_{>0}^2$ and $\mathbf{w}_2 \in \Z_{>0}^2$. Suppose $\mathbf{u}<\mathbf{w}_1< \mathbf{w}_2<\mathbf{v}$. The \textit{portion} of the path $\pi$ from $\mathbf{w}_1$ to $\mathbf{w}_2$ is defined to be the unique upright path $\pi_*$ from $\mathbf{w}_1$ to $\mathbf{w}_2$ that overlaps completely with $\pi$, i.e., $\pi_*\cap \pi=\pi_*$.}

\medskip

Fix $x\in \Z_{>0}$ and any $(m,n)\in \mathcal{I}^-$ with $n\ge 2$. Let $\pi_1$ denote the path from $(1, x+1)$ to $(m, n)$ and $\pi_2$ the path from $(1, x)$ to $(m, n-1)$. We denote $\diag(\pi_i)$ as the set of points on $\pi_i$ that lie on the diagonal set $D:=\{(i,i)\in \Z_{>0}^2\}$. Recall that $\mathcal{I}^+ =\{ (i,j)\in \Z_{>0}^2 \mid i\le j\}$ and $\mathcal{I}^- =\{ (i,j)\in \Z_{>0}^2 \mid j\le i\}$.  We define a special collection of points, $\m{SPDiag}$ from $\diag(\pi_2)$. Let $D_1<D_2<D_3<\cdots<D_s$ be all the points in $\diag(\pi_1\cup \pi_2)$ arranged in the  increasing order. We put the point $D_j \in \diag(\pi_2)$ in the set $\m{SPDiag}$ if $D_{j-1} \in \diag(\pi_1)$ or $D_{j+1}\in \diag(\pi_1)$. In other words, $\m{SPDiag}$ consists of the diagonal points in $\pi_2$ that bookend contiguous clusters of $\diag(\pi_1)$ in $\diag(\pi_1 \cup \pi_2).$ We enumerate the points in $\m{SPDiag}$ as $A_1<A_2<\cdots<A_r$. Let $B_j$ be the first point on $\pi_1$ that has the same $x$-coordinate as $A_j$. Note that by construction, either only $\pi_1$ intersects the diagonal or only $\pi_2$ intersects the diagonal between $A_j$ and $A_{j+1}$, $j = 1, \ldots, r$. Let us denote $A_{r+1}:=(m,n-1)$ and $B_{r+1}:=(m,n)$.

\medskip

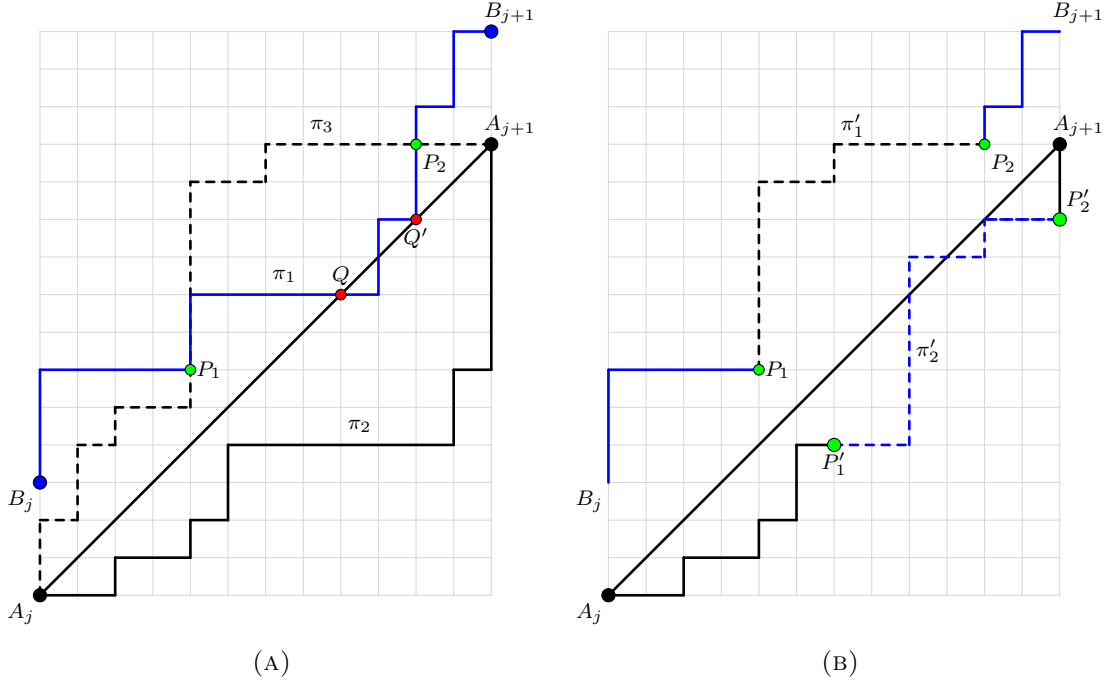
\begin{figure}[h!]
    \centering
    \begin{subfigure}[b]{0.45\textwidth}
        \centering
          \begin{tikzpicture}[line cap=round,line join=round,>=triangle 45,x=0.5cm,y=0.5cm]
\draw [color=gray!30!white,, xstep=0.5cm,ystep=0.5cm] (1,1) grid (13,16);
\draw [line width=1pt] (1,1)-- (3,1);
\draw [line width=1pt] (3,2)-- (3,1);
\draw [line width=1pt] (3,2)-- (5,2);
\draw [line width=1pt] (5,2)-- (5,3);
\draw [line width=1pt] (5,3)-- (6,3);
\draw [line width=1pt] (6,3)-- (6,5);
\draw [line width=1pt] (6,5)-- (12,5);
\draw [line width=1pt] (12,5)-- (12,7);
\draw [line width=1pt] (12,7)-- (13,7);
\draw [line width=1pt] (13,7)-- (13,13);
\draw [line width=1pt] (1,1)-- (13,13);
\draw [line width=1pt,dashed] (1,1)-- (1,3);
\draw [line width=1pt,dashed] (1,3)-- (2,3);
\draw [line width=1pt,dashed] (2,3)-- (2,5);
\draw [line width=1pt,dashed] (2,5)-- (3,5);
\draw [line width=1pt,dashed] (3,5)-- (3,6);
\draw [line width=1pt,dashed] (3,6)-- (5,6);
\draw [line width=1pt,dashed] (5,6)-- (5,12);
\draw [line width=1pt,dashed] (13,13)-- (7,13);
\draw [line width=1pt,dashed] (7,13)-- (7,12);
\draw [line width=1pt,dashed] (5,12)-- (7,12);
\draw [line width=1pt,color=blue] (1,4)-- (1,7);
\draw [line width=1pt,color=blue] (1,7)-- (5,7);
\draw [line width=1pt,color=blue] (5,7)-- (5,9);
\draw [line width=1pt,color=blue] (5,9)-- (10,9);
\draw [line width=1pt,color=blue] (10,9)-- (10,11);
\draw [line width=1pt,color=blue] (10,11)-- (11,11);
\draw [line width=1pt,color=blue] (11,11)-- (11,13);
\draw [line width=1pt,color=blue] (11,13)-- (11,14);
\draw [line width=1pt,color=blue] (11,14)-- (12,14);
\draw [line width=1pt,color=blue] (12,14)-- (12,16);
\draw [line width=1pt,color=blue] (12,16)-- (13,16);
\begin{scriptsize}
\draw [fill=black] (1,1) circle (2.5pt);
\draw [fill=blue] (1,4) circle (2.5pt);
\draw [fill=blue] (13,16) circle (2.5pt);
\draw [fill=black] (13,13) circle (2.5pt);
\draw [fill=green] (5,7) circle (2pt);
\draw [fill=green] (11,13) circle (2pt);
\draw [fill=red] (9,9) circle (2pt);
\draw [fill=red] (11,11) circle(2pt);
\node at (0.5,0.5) {$A_j$};
\node at (0.5,3.5) {$B_j$};
\node at (13.5,13.5) {$A_{j+1}$};
\node at (13.5,16.5) {$B_{j+1}$};
\node at (9,9.5) {$Q$};
\node at (11,10.5) {$Q'$};
\node at (5.5,7) {$P_1$};
\node at (11.5,12.5) {$P_2$};
\node at (8.5,13.5) {$\pi_3$};
\node at (7.5,9.5) {$\pi_1$};
\node at (9.5,5.5) {$\pi_2$};
\end{scriptsize}
\end{tikzpicture}
\caption{}
    \end{subfigure}
\begin{subfigure}[b]{0.45\textwidth}
     \centering
     \begin{tikzpicture}[line cap=round,line join=round,>=triangle 45,x=0.5cm,y=0.5cm]
\draw [color=gray!30!white,, xstep=0.5cm,ystep=0.5cm] (1,1) grid (13,16);
\draw [line width=1pt] (1,1)-- (3,1);
\draw [line width=1pt] (3,2)-- (3,1);
\draw [line width=1pt] (3,2)-- (5,2);
\draw [line width=1pt] (5,2)-- (5,3);
\draw [line width=1pt] (5,3)-- (6,3);
\draw [line width=1pt] (6,3)-- (6,5);
\draw [line width=1pt] (6,5)-- (7,5);
\draw [line width=1pt] (13,11)-- (13,13);
\draw [line width=1pt] (1,1)-- (13,13);
\draw [line width=1pt,dashed] (5,7)-- (5,12);
\draw [line width=1pt,dashed] (11,13)-- (7,13);
\draw [line width=1pt,dashed] (7,13)-- (7,12);
\draw [line width=1pt,dashed] (5,12)-- (7,12);
\draw [line width=1pt,color=blue] (1,4)-- (1,7);
\draw [line width=1pt,color=blue] (1,7)-- (5,7);
\draw [line width=1pt,color=blue] (11,13)-- (11,14);
\draw [line width=1pt,color=blue] (11,14)-- (12,14);
\draw [line width=1pt,color=blue] (12,14)-- (12,16);
\draw [line width=1pt,color=blue] (12,16)-- (13,16);
\draw [line width=1pt,dashed,color=blue] (7,5)-- (9,5);
\draw [line width=1pt,dashed,color=blue] (9,5)-- (9,10);
\draw [line width=1pt,dashed,color=blue] (9,10)-- (11,10);
\draw [line width=1pt,dashed,color=blue] (11,10)-- (11,11);
\draw [line width=1pt,dashed,color=blue] (11,11)-- (13,11);
\draw [line width=1pt,dashed,color=blue] (13,11)-- (11,11);
\begin{scriptsize}
\draw [fill=black] (1,1) circle (2.5pt);
\draw [fill=black] (13,13) circle (2.5pt);
\draw [fill=green] (7,5) circle (2.5pt);
\draw [fill=green] (5,7) circle (2pt);
\draw [fill=green] (11,13) circle (2pt);
\draw [fill=green] (13,11) circle (2.5pt);
\node at (0.5,0.5) {$A_j$};
\node at (0.5,3.5) {$B_j$};
\node at (13.5,13.5) {$A_{j+1}$};
\node at (13.5,16.5) {$B_{j+1}$};
\node at (5.5,7) {$P_1$};
\node at (11.5,12.5) {$P_2$};
\node at (7,4.5) {$P_1'$};
\node at (13.5,11.5) {$P_2'$};
\node at (9.5,7.5) {$\pi_2'$};
\node at (7.5,13.5) {$\pi_1'$};
\end{scriptsize}
\end{tikzpicture}
\caption{}
     \end{subfigure}
\caption{\sd{The second case when $j \le r-1$ and only $\pi_1$ intersects with the diagonal. $\pi_1$ and $\pi_2$ are the black and blue paths in Figure (A) respectively. $\pi_3$ is the black dashed path in Figure (A). $\pi_1'$ is the path in Figure (B) which is formed by the concatenation of solid blue paths and the black dashed path. $\pi_2'$ is the path in Figure (B) which is formed by the concatenation of solid black paths and the blue dashed path. The $U$ map takes $\pi_1,\pi_2$ and returns $\pi_1',\pi_2'$.}}
    \label{fig:c2}
\end{figure}
We now construct new paths $\pi_2'$ and $\pi_1'$ from $\pi_2$ and $\pi_1$ by reconstructing each segment between $A_j$ and $A_{j+1}$ for $\pi_2$ (and $B_j$ and $B_{j+1}$ for $\pi_1$ respectively), $j = 1,\ldots, r$. We separate the reconstruction procedures for each segment into the following cases: if only $\pi_2$ intersects the diagonal and $j\le r-1$, if only $\pi_1$ intersects the diagonal and $j \le r-1$, or if $j=r.$

\medskip

\begin{enumerate}[leftmargin=18pt]
\setlength\itemsep{0.6em}
    \item \textbf{When $1\le j\le r-1$ and only $\pi_2$ intersects the diagonal between $A_j$ and $A_{j+1}$}, we keep the original paths. We set $\pi_1'$ and $\pi_2'$ on these segments to be the same as those on $\pi_1$ and $\pi_2$ respectively.
    \item \textbf{When $1\le j\le r-1$ and only $\pi_1$ intersects the diagonal between $A_j$ and $A_{j+1}$ (see Figure \ref{fig:c2})}, the portion of the path $\pi_2$ from $A_j$ to $A_{j+1}$ (excluding $A_j$ and $A_{j+1}$) lies in $\mathcal{I}^{-}\setminus D$. Reflecting the portion of the path $\pi_2$ from $A_j$ to $A_{j+1}$ (black path in Figure \ref{fig:c2}) across the diagonal yields a path $\pi_3$ (black dashed path in Figure \ref{fig:c2}). Let $Q$ be the first point on $\diag(\pi_1)$ that lies between $A_j$ and $A_{j+1}$ and $Q'$ be the last, which exist by construction of the $\m{SPdiag}$ set ($Q$ and $Q'$ may overlap). As the $y$-coordinate of $A_i$ is strictly smaller than that of $B_i$ and $Q, Q'$ are on the $\diag(\pi_1)$, $\pi_1$ and $\pi_3$ must intersect on the segments between $A_{j}$ and $Q$ and $Q'$ and $A_{j+1}$. Let $P_1$ be the first point of intersection and $P_2$ the last point of intersection. Clearly $P_1\neq P_2$ as the former is between $A_j$ and $Q$ and the latter lies between $Q'$ and $A_{j+1}$. Replacing the portion of $\pi_1$ between $P_1$ and $P_2$ with that of $\pi_3$ yields a path $\pi_1'$ from $B_j$ to $B_{j+1}$. As the part of $\pi_3$ between $P_1$ and $P_2$ lies in $\mathcal{I}^+ \setminus D$, $\pi_1'$ lies entirely in $\mathcal{I}^+ \setminus D$. We denote the reflections of $P_1$ and $P_2$ across the diagonal as $P_1'$ and $P_2'$, which must lie on the original $\pi_2$ by construction. Similarly replacing the portion of $\pi_2$ between $P_1'$ and $P_2'$ with the reflection of $\pi_1$ between $P_1$ and $P_2$ across the diagonal yields a path $\pi_2'$ from $A_j$ to $A_{j+1}$. As $\pi_1$ and $\pi_2$ are non-intersecting, the reflected paths are also non-intersecting. Thus the new paths $\pi_1'$ and $\pi_2'$ are non-intersecting.
    \item  \textbf{When $j=r$,} consider the portion of the path $\pi_2$ from $A_r$ to $A_{r+1}$ (see Figure \ref{fig:c3}). Note that in this segment, all the diagonal points belong to $\pi_1$. Reflecting this portion of $\pi_2$ across the diagonal gives us $\pi_3$ (black dashed path in Figure \ref{fig:c3}).  Let $Q$ be the first point on $\diag(\pi_1)$ that lies between $A_j$ and $A_{j+1}$ and $Q$ exists as $\pi_1$ ends at $B_{r+1}:=(m,n)\in \mathcal{I}^-$. Note that  $\pi_3$ lies entirely in $\mathcal{I}^+\setminus D$, excluding $A_r$. Thus $\pi_1$ and $\pi_3$ necessarily intersect in $\mathcal{I}^{+}\setminus D$. Again, we locate the first point intersection $P$ and replace the portion of $\pi_1$ from $P$ to $B_{r+1}$ with the portion of $\pi_3$ from $P$ to $A_{r+1}':=(n-1,m)$. Similarly, reflecting the portion of $\pi_1$ from $P$ and $B_{r+1}$ across the diagonal and replacing the portions of $\pi_2$ between $P'$ and $A_{r+1}$ with the portion of reflection between $P$ and $B_{r+1}':=(n,m)$ yields a path $\pi_2'$ from $A_r$ to $B'_{r+1}$. Clearly, the new path $\pi_1'$ lies in $\mathcal{I}^{+}\setminus D$ and the paths $\pi_1'$ and $\pi_2'$ are non-intersecting as the reflected portions are non-intersecting.
\end{enumerate}

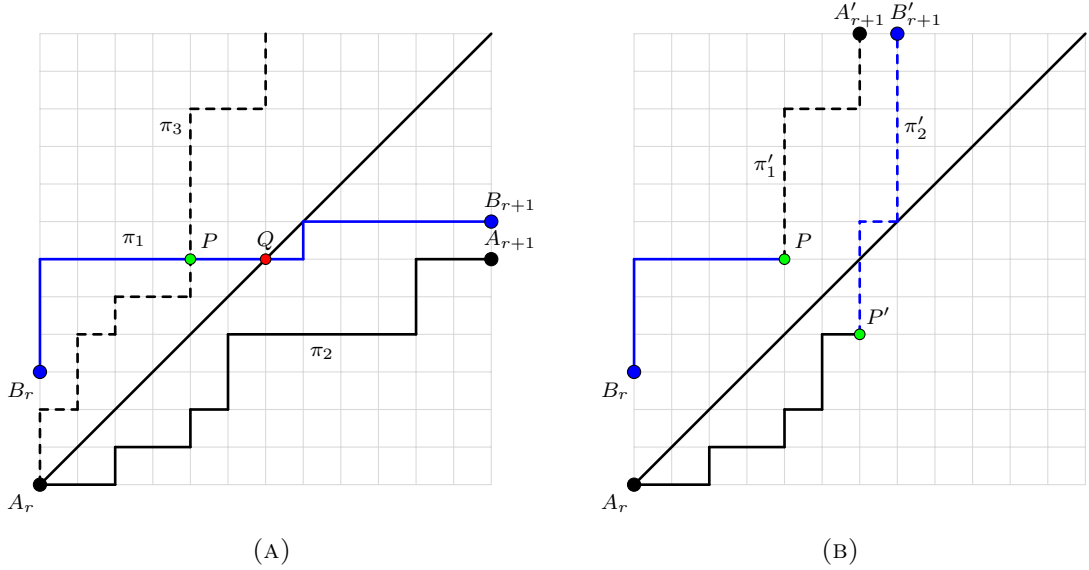
\begin{figure}[h!]
    \centering
    \begin{subfigure}[b]{0.45\textwidth}
        \centering
          \begin{tikzpicture}[line cap=round,line join=round,>=triangle 45,x=0.5cm,y=0.5cm]
\draw [color=gray!30!white,, xstep=0.5cm,ystep=0.5cm] (1,1) grid (13,13);
\draw [line width=1pt] (1,1)-- (3,1);
\draw [line width=1pt] (3,2)-- (3,1);
\draw [line width=1pt] (3,2)-- (5,2);
\draw [line width=1pt] (5,2)-- (5,3);
\draw [line width=1pt] (5,3)-- (6,3);
\draw [line width=1pt] (6,3)-- (6,5);
\draw [line width=1pt] (6,5)-- (11,5);
\draw [line width=1pt] (11,5)-- (11,7);
\draw [line width=1pt] (11,7)-- (13,7);
\draw [line width=1pt] (1,1)-- (13,13);
\draw [line width=1pt,dashed] (1,1)-- (1,3);
\draw [line width=1pt,dashed] (1,3)-- (2,3);
\draw [line width=1pt,dashed] (2,3)-- (2,5);
\draw [line width=1pt,dashed] (2,5)-- (3,5);
\draw [line width=1pt,dashed] (3,5)-- (3,6);
\draw [line width=1pt,dashed] (3,6)-- (5,6);
\draw [line width=1pt,dashed] (5,6)-- (5,11);
\draw [line width=1pt,dashed] (7,13)-- (7,11);
\draw [line width=1pt,dashed] (5,11)-- (7,11);
\draw [line width=1pt,color=blue] (1,4)-- (1,7);
\draw [line width=1pt,color=blue] (1,7)-- (8,7);
\draw [line width=1pt,color=blue] (8,7)-- (8,8);
\draw [line width=1pt,color=blue] (8,8)-- (13,8);
\begin{scriptsize}
\draw [fill=black] (1,1) circle (2.5pt);
\draw [fill=blue] (1,4) circle (2.5pt);
\draw [fill=blue] (13,8) circle (2.5pt);
\draw [fill=black] (13,7) circle (2.5pt);
\draw [fill=green] (5,7) circle (2pt);
\draw [fill=red] (7,7) circle (2pt);
\node at (0.5,0.5) {$A_{r}$};
\node at (0.5,3.5) {$B_{r}$};
\node at (13.5,7.5) {$A_{r+1}$};
\node at (13.5,8.5) {$B_{r+1}$};
\node at (7,7.5) {$Q$};
\node at (5.5,7.5) {$P$};
\node at (3.5,7.5) {$\pi_1$};
\node at (8.5,4.5) {$\pi_2$};
\node at (4.5,10.5) {$\pi_3$};
\end{scriptsize}
\end{tikzpicture}
\caption{}
    \end{subfigure}
\begin{subfigure}[b]{0.45\textwidth}
     \centering
     \begin{tikzpicture}[line cap=round,line join=round,>=triangle 45,x=0.5cm,y=0.5cm]
\draw [color=gray!30!white,, xstep=0.5cm,ystep=0.5cm] (1,1) grid (13,13);
\draw [line width=1pt] (1,1)-- (3,1);
\draw [line width=1pt] (3,2)-- (3,1);
\draw [line width=1pt] (3,2)-- (5,2);
\draw [line width=1pt] (5,2)-- (5,3);
\draw [line width=1pt] (5,3)-- (6,3);
\draw [line width=1pt] (6,3)-- (6,5);
\draw [line width=1pt] (6,5)-- (7,5);
\draw [line width=1pt] (1,1)-- (13,13);
\draw [line width=1pt,dashed] (5,7)-- (5,11);
\draw [line width=1pt,dashed] (7,13)-- (7,11);
\draw [line width=1pt,dashed] (5,11)-- (7,11);
\draw [line width=1pt,color=blue] (1,4)-- (1,7);
\draw [line width=1pt,color=blue] (1,7)-- (5,7);
\draw [line width=1pt,dashed,color=blue] (7,5)-- (7,8);
\draw [line width=1pt,dashed,color=blue] (8,8)-- (7,8);
\draw [line width=1pt,dashed,color=blue] (8,8)-- (8,13);
\begin{scriptsize}
\draw [fill=black] (1,1) circle (2.5pt);
\draw [fill=blue] (1,4) circle (2.5pt);
\draw [fill=blue] (8,13) circle (2.5pt);
\draw [fill=black] (7,13) circle (2.5pt);
\draw [fill=green] (5,7) circle (2pt);
\node at (0.5,0.5) {$A_{r}$};
\node at (0.5,3.5) {$B_{r}$};
\node at (7,13.5) {$A_{r+1}'$};
\node at (8.5,13.5) {$B_{r+1}'$};
\node at (5.5,7.5) {$P$};
\draw [fill=green] (7,5) circle (2pt);
\node at (7.5,5.5) {$P'$};
\node at (8.5,10.5) {$\pi_2'$};
\node at (4.5,9.5) {$\pi_1'$};
\end{scriptsize}
\end{tikzpicture}
\caption{}
     \end{subfigure}
\caption{\sd{The $j = r$ case. $\pi_1$ and $\pi_2$ are the black and blue paths in Figure (A) respectively. $\pi_3$ is the black dashed path in Figure (A). $\pi_1'$ is the path in Figure (B) which is formed by the concatenation of the solid blue path and the black dashed path. $\pi_2'$ is the path in Figure (B) which is formed by the concatenation of the solid black path and the blue dashed path. The $U$ map takes $\pi_1,\pi_2$ and spits out $\pi_1',\pi_2'$.}}
    \label{fig:c3}
\end{figure}

 As $A_j$ and $B_j$ remain unchanged for $j\le r$, connecting all the segments between $A_{j}$'s (and $B_j$'s respectively) for $j\le r$ and $A_r$ and $B'_{r+1}$ 
 (and $B_r$ and $A'_{r+1}$) yields the new path $\pi_2'$ from $(1, x)$ to $(n, m)$ and the new path $\pi_1'$ from $(1, x+1)$ to $(n-1, m)$ (see Figure \ref{fig:c}). At each step of the above construction, the paths remain non-intersecting. Thus $(\pi_1', \pi_2')$ form a non-intersecting pair. We call this explicitly constructed map $U$. By construction, $\pi_1'$ lies entirely in $\mathcal{I}^+\setminus D$ and has no diagonal points. This proves \ref{parta}. Since the construction involves only exchanges of reflected portions, due to the symmetry of the weights $\til{W}_{i,j}$ across the diagonal, we have \ref{partb}. \bl{Finally to verify \ref{partc}, suppose $(\pi_1',\pi_2')$ lies in the image of $U$. Note that $\diag(\pi_1'\cup \pi_2')=\diag(\pi_1\cup \pi_2)$ for every $(\pi_1,\pi_2)$ in the pre-image of $(\pi_1',\pi_2')$. Given $\diag(\pi_1'\cup \pi_2')$, there are $2^{|\diag(\pi_1'\cup \pi_2')|}$ ways to partition $\diag(\pi_1'\cup \pi_2')$ set into two sets -- one would be a candidate for $\diag(\pi_1)$ and the other for $\diag(\pi_2)$.} As $\diag(\pi_1)$ and $\diag(\pi_2)$ uniquely determine $\m{SPDiag}$ where reflections are performed, reverting the same operations on $\pi_1'$ and $\pi_2'$ between consecutive points in $\m{SPDiag}$ leads to original $\pi_1$ and $\pi_2$. Thus the map has at most $2^{|\diag(\pi_1'\cup \pi_2')|}$ pre-images for $(\pi_1',\pi_2')$, which completes the proof.
\end{proof}

\bigskip

\sd{Note that the $U$ map in Lemma \ref{l:umap} gives us a path that does not contain any diagonal vertex. To capture the contribution of this path, we now introduce the \textit{diagonal-avoiding symmetrized} partition function.} Let $\til{\Pi}_{m,n}^{(1)}$ be the collection of all upright paths from $(1,1)$ to $(m,n)\in \Z_{>0}^2 \setminus \{(i,i)\in \Z_{>0}^2\}$ that do not touch the diagonal after $(1,1)$. Set
\begin{align}\label{def:sbd}
    \til{Z}_{\operatorname{sym}}(m,n):=\sum_{\pi \in \til{\Pi}_{m,n}^{(1)}} \prod_{(i,j)\in \pi}  \til{W}_{i,j}, \qquad \til{V}_{q} := \sum_{(i,j) \mid i+j=q} \til{Z}_{\operatorname{sym}}(i,j) \quad\bl{\mbox{ for }q\in \Z_{>0}}
\end{align} where $\til{W}_{i, j}$ is defined in \eqref{eq:symwt}. \bl{Note that, if we ignore the starting point, the paths in $\til{\Pi}_{m,n}^{(1)}$ lies entirely on one side of the diagonal (either in $\mathcal{I}^+\setminus D$ or $\mathcal{I}^{-}\setminus D$)}. 
We call $\til{Z}_{\operatorname{sym}}(m,n)$ the \textit{diagonal-avoiding symmetrized} partition function. Let us recall $Z_{\operatorname{sym}}(m,n)$ from \eqref{e:nz} and we similarly define \begin{align}\label{defvq}
	   V_{q} := \sum_{(i,j) \mid i+j=q} Z_{\operatorname{sym}}(i,j) \quad \bl{\mbox{ for }q\in \Z_{>0}}.
	\end{align} 
 The next lemma establishes a relation between $  Z_{\operatorname{sym}}^{(2k)}(m,n)$, $V_{m+n}$ and  $\til{V}_{m+n}$.
\begin{lemma}\label{l:sbd} For all $(m,n)\in \mathcal{I}^-$ and $k\in \ll 1,n/2\rr$, almost surely we have
\begin{align}\label{eq:sbd}
    Z_{\operatorname{sym}}^{(2k)}(m,n) \le 2^n \cdot  \prod_{i=2}^{2k} \prod_{j=1}^{i-1} (\til{W}_{1,j})^{-1}\cdot \prod_{i=1}^k \left[V_{m+n+2-2i}\til{V}_{m+n+1-2i}\right]
\end{align}
where $ Z_{\operatorname{sym}}^{(i)}(m,n) $, $V_{m+n+2-2i}$ and $\til{V}_{m+n+1-2i}$ are defined in \eqref{e:nz}, \eqref{defvq} and \eqref{def:sbd} respectively. 
\end{lemma}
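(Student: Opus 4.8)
The plan is to fix a non‑intersecting $2k$‑tuple $(\pi_1,\dots,\pi_{2k})\in\Pi_{m,n}^{(2k)}$, split it into the $k$ adjacent pairs $(\pi_{2i-1},\pi_{2i})$ for $i=1,\dots,k$, and apply the $U$ map of Lemma \ref{l:umap} to each pair separately. Here $\pi_{2i-1}$ runs from $(1,2k-2i+2)$ to $(m,n-2i+2)$ and $\pi_{2i}$ runs from $(1,2k-2i+1)$ to $(m,n-2i+1)$, so taking $x=2k-2i+1$ the hypotheses of Lemma \ref{l:umap} are satisfied for every $i\le k$ precisely because $k\le n/2$ (this forces $2k-2i+1\ge 1$ and $n-2i+2\ge 2$). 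Writing $(\pi'_{2i-1},\pi'_{2i}):=U(\pi_{2i-1},\pi_{2i})$, the lemma tells us $\pi'_{2i-1}$ runs from $(1,2k-2i+2)$ to $(n-2i+1,m)$ and $\pi'_{2i}$ runs from $(1,2k-2i+1)$ to $(n-2i+2,m)$. Since the $2k$ paths in a non‑intersecting tuple are pairwise vertex‑disjoint, and since by Lemma \ref{l:umap} each output pair $(\pi'_{2i-1},\pi'_{2i})$ is again non‑intersecting, part \ref{partb} of that lemma yields
\[
\prod_{(a,b)\in\pi_1\cup\cdots\cup\pi_{2k}}\til{W}_{a,b}
=\prod_{i=1}^k\Big(\prod_{(a,b)\in\pi'_{2i-1}}\til{W}_{a,b}\Big)\Big(\prod_{(a,b)\in\pi'_{2i}}\til{W}_{a,b}\Big).
\]

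Next I would promote each $\pi'_{2i-1}$ and $\pi'_{2i}$ to a path starting at $(1,1)$ by prepending the vertical column segment from $(1,1)$ up to its starting vertex; this multiplies the weight product by $\prod_{j=1}^{2k-2i+1}\til{W}_{1,j}$ and by $\prod_{j=1}^{2k-2i}\til{W}_{1,j}$ respectively. A short bookkeeping identity (matching exponents of each $\til{W}_{1,j}$) shows $\prod_{i=1}^k\big(\prod_{j=1}^{2k-2i+1}\til{W}_{1,j}\big)^{-1}\big(\prod_{j=1}^{2k-2i}\til{W}_{1,j}\big)^{-1}=\prod_{i=2}^{2k}\prod_{j=1}^{i-1}(\til{W}_{1,j})^{-1}$, which is exactly the prefactor in \eqref{eq:sbd}. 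By Lemma \ref{l:umap}\ref{parta}, $\pi'_{2i-1}$ contains no diagonal vertex, so its extension $\hat\pi'_{2i-1}$ lies in $\til{\Pi}_{n-2i+1,m}^{(1)}$, while the extension $\hat\pi'_{2i}$ is simply an arbitrary upright path from $(1,1)$ to $(n-2i+2,m)$. At this point $Z_{\operatorname{sym}}^{(2k)}(m,n)$ has been rewritten as the prefactor times $\sum_{(\pi_1,\dots,\pi_{2k})}\prod_{i=1}^k\big(\prod_{\hat\pi'_{2i-1}}\til{W}\big)\big(\prod_{\hat\pi'_{2i}}\til{W}\big)$.

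Finally I would group this sum according to the image tuple $\big((\pi'_1,\pi'_2),\dots,(\pi'_{2k-1},\pi'_{2k})\big)$. By Lemma \ref{l:umap}\ref{partc} each pair $(\pi'_{2i-1},\pi'_{2i})$ has at most $2^{|\diag(\pi'_{2i-1}\cup\pi'_{2i})|}$ preimages under $U$; since $\diag(\pi'_{2i-1}\cup\pi'_{2i})=\diag(\pi_{2i-1}\cup\pi_{2i})$ and these diagonal sets are pairwise disjoint subsets of $\diag(\pi_1\cup\cdots\cup\pi_{2k})\subseteq\{(j,j):1\le j\le n\}$, the total number of preimages of any image tuple is at most $2^{|\diag(\pi_1\cup\cdots\cup\pi_{2k})|}\le 2^n$. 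Dropping the non‑intersection constraints (within each pair and across pairs) on the images only increases the remaining sum, which then factorizes as $\prod_{i=1}^k\big(\sum_{\hat\pi'_{2i-1}}\prod\til{W}\big)\big(\sum_{\hat\pi'_{2i}}\prod\til{W}\big)$; bounding the first factor by $\til{Z}_{\operatorname{sym}}(n-2i+1,m)\le\til{V}_{m+n+1-2i}$ and the second by $Z_{\operatorname{sym}}(n-2i+2,m)\le V_{m+n+2-2i}$ gives \eqref{eq:sbd}.

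The crux of the argument — and the step I expect to be most delicate — is the combined $2^n$ multiplicity bound: naively each of the $k$ pairs could contribute its own exponential preimage factor, and it is precisely Lemma \ref{l:umap}\ref{parta} (one path in every pair becomes diagonal‑free, so the diagonal weight of each pair sits on a single path) together with vertex‑disjointness of the original tuple that collapses all these contributions into the single factor $2^n$. The remaining work — checking that the prepended‑segment weights telescope to exactly the stated prefactor, and that all endpoint ranges (e.g. $n-2i+1<m$, so $\til{Z}_{\operatorname{sym}}(n-2i+1,m)$ is defined) are matched — is routine but error‑prone bookkeeping.
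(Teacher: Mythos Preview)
Your proposal is correct and follows essentially the same approach as the paper: apply the $U$ map pairwise, use part \ref{partb} for the weight identity, prepend vertical segments to reduce to $\til{Z}_{\operatorname{sym}}$ and $Z_{\operatorname{sym}}$, and invoke part \ref{partc} together with the disjointness of the diagonal sets across pairs to collapse the preimage count to a single $2^n$. Your explanation of why the multiplicity bound is $2^n$ rather than $2^{kn}$ is in fact slightly more explicit than the paper's, which simply observes that a non-intersecting $2k$-tuple carries at most $n$ diagonal vertices in total.
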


\begin{proof}  We extend  our definition of $U$ map from Lemma \ref{l:umap} to the domain $\Pi_{m,n}^{(2k)}$ (recall its definition from the discussion around \eqref{e:nz}) by defining
$$U(\pi_1,\ldots,\pi_{2k}):=(U(\pi_1,\pi_2),\ldots,U(\pi_{2k-1},\pi_{2k}))$$ 
for $(\pi_1,\ldots,\pi_{2k}) \in \Pi_{m,n}^{(2k)}$. \bl{The $2k$ paths in $U(\pi_1,\ldots,\pi_{2k})$ are pairwise non-intersecting but may not be non-interesecting as a whole as shown in Figure \ref{fig:c3a}}. Let $\til{R}_{m,n}^{i,k}$ be the collection of all upright paths from $(1,2k-2i+2)$ to $(n-2i+1,m)$ that avoid the diagonal.
Let $R_{m,n}^{i,k}$ be the collection of all upright paths from $(1,2k-2i+1)$ to $(n-2i+2,m)$. \bl{Given any $(\pi_1',\ldots,\pi_{2k}')\in U(\Pi_{m,n}^{(2k)})$, we have $\pi_{2i-1}'\in \til{R}_{m,n}^{i,k}$ and $\pi_{2i}'\in R_{m,n}^{i,k}$}. By \ref{partc}, there are at most
\begin{align*}
    \prod_{i=1}^k 2^{|(j,j)\in \pi_{2i-1}'\cup \pi_{2i}'|}
\end{align*}
many $U$-pre-images of $(\pi_1',\ldots,\pi_{2k}')$.

\begin{figure}[h!]
    \centering
    \begin{subfigure}[b]{0.45\textwidth}
        \centering
          \begin{tikzpicture}[line cap=round,line join=round,>=triangle 45,x=0.5cm,y=0.5cm]
\draw [color=gray!30!white,, xstep=0.5cm,ystep=0.5cm] (1,1) grid (13,13);
\draw [line width=1pt] (1,4)-- (1,8);
\draw [line width=1pt] (8,10)-- (13,10);
\draw [line width=1pt] (8,10)-- (8,8);
\draw [line width=1pt] (1,8)-- (8,8);
\draw [line width=1pt] (1,3)-- (2,3);
\draw [line width=1pt] (2,3)-- (2,6);
\draw [line width=1pt] (3,6)-- (2,6);
\draw [line width=1pt] (3,6)-- (3,7);
\draw [line width=1pt] (3,7)-- (9,7);
\draw [line width=1pt] (9,9)-- (9,7);
\draw [line width=1pt] (13,9)-- (9,9);
\draw [line width=1pt] (3,4)-- (3,2);
\draw [line width=1pt] (5,6)-- (5,4);
\draw [line width=1pt] (5,6)-- (10,6);
\draw [line width=1pt] (10,6)-- (10,8);
\draw [line width=1pt] (13,8)-- (10,8);
\draw [line width=1pt] (3,4)-- (5,4);
\draw [line width=1pt] (1,2)-- (3,2);
\draw [line width=1pt] (1,1)-- (4,1);
\draw [line width=1pt] (4,2)-- (4,1);
\draw [line width=1pt] (4,2)-- (5,2);
\draw [line width=1pt] (5,2)-- (5,3);
\draw [line width=1pt] (5,3)-- (6,3);
\draw [line width=1pt] (6,3)-- (6,5);
\draw [line width=1pt] (6,5)-- (11,5);
\draw [line width=1pt] (11,5)-- (11,7);
\draw [line width=1pt] (11,7)-- (13,7);
\draw [line width=1pt] (1,1)-- (13,13);
\begin{scriptsize}
\draw [fill=black] (1,1) circle (2.5pt);
\node at (3.5,8.5) {$\pi_1$};
\node at (2.5,6.5) {$\pi_2$};
\node at (3.5,4.5) {$\pi_3$};
\node at (8.5,4.5) {$\pi_4$};
\end{scriptsize}
\end{tikzpicture}
\caption{}
    \end{subfigure}
\begin{subfigure}[b]{0.45\textwidth}
     \centering
     \begin{tikzpicture}[line cap=round,line join=round,>=triangle 45,x=0.5cm,y=0.5cm]
\draw [color=gray!30!white,, xstep=0.5cm,ystep=0.5cm] (1,1) grid (13,13);
\draw [line width=1pt] (1,4)-- (1,8);
\draw [line width=1pt] (9,13)-- (9,10);
\draw [line width=1pt] (8,10)-- (8,9);
\draw [line width=1pt] (1,8)-- (7,8);
\draw [line width=1pt] (7,9)-- (7,8);
\draw [line width=1pt] (7,9)-- (8,9);
\draw [line width=1pt] (8,10)-- (9,10);
\draw [line width=1pt] (1,3)-- (2,3);
\draw [line width=1pt] (2,3)-- (2,6);
\draw [line width=1pt] (3,6)-- (2,6);
\draw [line width=1pt] (3,6)-- (3,7);
\draw [line width=1pt] (3,7)-- (8,7);
\draw [line width=1pt] (9,9)-- (9,8);
\draw [line width=1pt] (8,8)-- (9,8);
\draw [line width=1pt] (8,8)-- (8,7);
\draw [line width=1pt] (10,9)-- (9,9);
\draw [line width=1pt] (10,9)-- (10,13);

\draw [line width=1pt,blue] (6,5)-- (4,5);
\draw [line width=1pt,blue] (6,5)-- (6,10);
\draw [line width=1pt,blue] (6,10)-- (8,10);
\draw [line width=1pt,blue] (8,13)-- (8,10);
\draw [line width=1pt,blue] (4,3)-- (4,5);
\draw [line width=1pt,blue] (1,1)-- (2,1);
\draw [line width=1pt,blue] (2,3)-- (2,1);
\draw [line width=1pt,blue] (2,3)-- (4,3);
\draw [line width=1pt,blue] (1,2)-- (1,4);
\draw [line width=1pt,blue] (2,4)-- (1,4);
\draw [line width=1pt,blue] (2,4)-- (2,5);
\draw [line width=1pt,blue] (2,5)-- (3,5);
\draw [line width=1pt,blue] (3,5)-- (3,6);
\draw [line width=1pt,blue] (3,6)-- (5,6);
\draw [line width=1pt,blue] (5,6)-- (5,11);
\draw [line width=1pt,blue] (5,11)-- (7,11);
\draw [line width=1pt,blue] (7,11)-- (7,13);
\draw [line width=1pt] (1,1)-- (13,13);
\begin{scriptsize}
\draw [fill=blue] (1,1) circle (2.5pt);
\draw [fill=blue] (8,13) circle (2.5pt);
\draw [fill=cyan] (1,2) circle (2.5pt);
\draw [fill=cyan] (7,13) circle (2.5pt);
\draw [fill=black] (1,3) circle (2.5pt);
\draw [fill=black] (10,13) circle (2.5pt);
\draw [fill=gray] (1,4) circle (2.5pt);
\draw [fill=gray] (9,13) circle (2.5pt);
\node at (4.5,3.5) {$\pi_4'$};
\node at (4.5,10.5) {$\pi_3'$};
\node at (9.5,8.5) {$\pi_2'$};
\node at (2.5,8.5) {$\pi_1'$};
\end{scriptsize}
\end{tikzpicture}
\caption{}
     \end{subfigure}
\caption{(A) $(\pi_1,\pi_2,\pi_3,\pi_4) \in \Pi_{13,10}^{(4)}$. (B) $(\pi_1',\pi_2',\pi_3',\pi_4')=U(\pi_1,\pi_2,\pi_3,\pi_4)$.}
    \label{fig:c3a}
\end{figure}
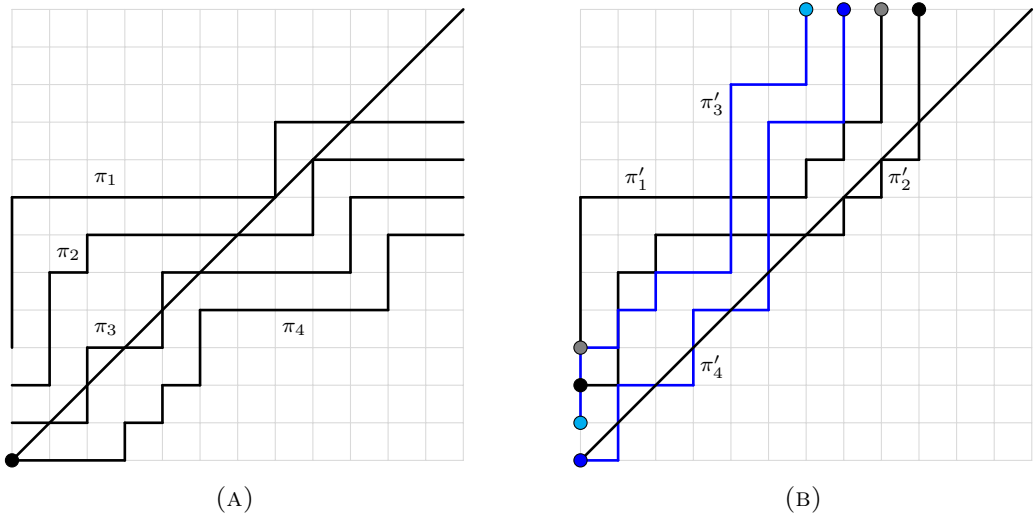

The $U$ map preserves the number of diagonal vertices by \ref{parta}. Furthermore by non-intersection, a $2k$-tuple of paths in $\Pi_{m,n}^{(2k)}$ has at most $n$ many diagonal vertices. Thus there are at most $2^{n}$ many inverses. Hence by \ref{parta} we have
\begin{align}\label{z2k}
     Z_{\operatorname{sym}}^{(2k)}(m,n) \le 2^n \cdot   \prod_{i=1}^k \left[\sum_{\pi_1\in \til{R}_{m,n}^{i,k}} \prod_{(i,j)\in \pi_1} \til{W}_{i,j} \right]\cdot   \prod_{i=1}^k \left[\sum_{\pi_2\in {R}_{m,n}^{i,k}} \prod_{(i,j)\in \pi_2} \til{W}_{i,j}\right].
\end{align}
We may elongate each of the path in $\til{R}_{m,n}^{i,k}$ and ${R}_{m,n}^{i,k}$ by appending an up-path from $(1,1)$ to $(1,2k-2i+2)$ and from $(1,1)$ to $(1,2k-2i+1)$ respectively. This produces elongated paths in $\til{\Pi}_{n-2i+1,m}^{(1)}$ and $\Pi_{n-2i+2,m}$ respectively. In terms of weights, we need to multiply the existing weights in \eqref{z2k} by $\prod_{j=1}^{2k-2i+1} \til{W}_{1,j}$ and $\prod_{j=1}^{2k-2i} \til{W}_{1,j}$ respectively to get the corresponding weights of elongated paths. After doing precisely the above, we have
\begin{align}\label{eq:sbd0}
     Z_{\operatorname{sym}}^{(2k)}(m,n) \le 2^n \cdot  \prod_{i=2}^{2k} \prod_{j=1}^{i-1} (\til{W}_{1,j})^{-1}  \cdot \prod_{i=1}^k \left[\til{Z}_{\operatorname{sym}}(n-2i+1,m)\zs(n-2i+2,m) \right].
\end{align}
We get \eqref{eq:sbd} from the above inequality in \eqref{eq:sbd0} by observing the definition of $V_q$ and $\til{V}_q$ from \eqref{defvq}. This completes the proof.     
\end{proof}

\medskip

The next lemma bounds $\log V_q$ and  $\log\til{V}_q$ from above with high probability. 
\begin{lemma}\label{l:log} Recall $R$ from \eqref{def:dk}. For every $\delta>0$  we have
\begin{align}\label{eq:vq}
\lim_{q\to\infty}\Pr\left( \log V_q \le (R+\delta)\tfrac{q}{2}\right)=1, \quad \lim_{q\to\infty}\Pr\left( \log \til{V}_q \le (-2\Psi(\theta)+\delta)\tfrac{q}{2}\right)=1.
    \end{align}
\end{lemma}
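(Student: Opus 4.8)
We outline the argument. Both estimates come from rewriting $V_q$ and $\til V_q$ as, essentially, point-to-(partial)line partition functions and feeding them into the laws of large numbers \eqref{lln} of Theorem \ref{t:bw}; the weight symmetry $\zs(i,j)=\zs(j,i)$ of \eqref{eq:symwt} together with the identity \eqref{l:iden} does the bookkeeping. Throughout, ``with high probability'' means ``with probability tending to $1$ as $q\to\infty$''.

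\textbf{The bound on $V_q$.} Pairing $(i,j)$ with $(j,i)$ in \eqref{defvq} and applying \eqref{l:iden}, one gets $V_{2N}=\sum_{p=1}^{N-1}Z(N+p,N-p)+\tfrac12 Z(N,N)$ and $V_{2N+1}=\sum_{p=1}^{N}Z(N+p,N+1-p)$. The two displayed sums are exactly the objects whose logarithms, divided by $N$, converge to $R$ by \eqref{lln}, so each is at most $e^{(R+\delta/2)N}$ with high probability, and $q/2\ge N$ in both cases. It remains to absorb the leftover diagonal term $Z(N,N)$ in the even case. For this the plan is to use the half-space recurrence stated in the introduction: $Z(N,N)=W_{N,N}Z(N,N-1)$ while $Z(N+1,N-1)\ge W_{N+1,N-1}Z(N,N-1)$, whence $Z(N,N)\le (W_{N,N}/W_{N+1,N-1})\,Z(N+1,N-1)\le (W_{N,N}/W_{N+1,N-1})\sum_{p=1}^{N-1}Z(N+p,N-p)$, and $\tfrac1N\log(W_{N,N}/W_{N+1,N-1})\to 0$ in probability because $\log$ of an inverse-gamma variable has light tails. (Alternatively, one may simply quote the Gaussian fluctuation \eqref{e:gaufl} of \cite{ims22} for $\log Z(N,N)$.) This yields $\Pr\bigl(\log V_q\le (R+\delta)\tfrac q2\bigr)\to 1$.

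\textbf{The bound on $\til V_q$.} The substance of the lemma is that $\til V_q$ must obey the strictly smaller, \emph{unbound-phase} rate $-2\Psi(\theta)$. By weight symmetry, $\til V_q=2\sum_{i+j=q,\ i>j}\til{Z}_{\operatorname{sym}}(i,j)$ (there is no diagonal contribution, since diagonal endpoints are excluded from $\til{\Pi}^{(1)}$). Fix $i>j$. A path counted in $\til{Z}_{\operatorname{sym}}(i,j)$ cannot take its first step to $(1,2)$: from there it would be confined to the region strictly above the diagonal and could never reach a point strictly below without re-touching the diagonal. So it steps to $(2,1)$ and then stays strictly below the diagonal; the shift $(a,b)\mapsto(a-1,b)$ then identifies these paths bijectively with the upright paths from $(1,1)$ to $(i-1,j)$ confined to $\mathcal{I}^-$, and under this identification the product of weights it collects (apart from the boundary factor $\til W_{1,1}=W_{1,1}/2$) becomes $\prod W_{a+1,b}$ over the shifted vertices, i.e.\ a product of i.i.d.\ $\operatorname{Gamma}^{-1}(2\theta)$ variables --- in particular the \emph{shifted} diagonal also carries $\operatorname{Gamma}^{-1}(2\theta)$ weights. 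Hence $\til{Z}_{\operatorname{sym}}(i,j)=\tfrac12 W_{1,1}\,\hat Z(i-1,j)$, where $\hat Z$ is the point-to-point partition function of a half-space log-gamma polymer all of whose weights, bulk and diagonal alike, are $\operatorname{Gamma}^{-1}(2\theta)$, and $W_{1,1}$ is independent of $\hat Z$. Since $\operatorname{Gamma}^{-1}(2\theta)$ is stochastically dominated by $\operatorname{Gamma}^{-1}(\theta)$, a monotone coupling of the weights gives $\hat Z(m,n)\le Z^{(0)}(m,n)$ almost surely, where $Z^{(0)}$ is the point-to-point partition function of the $\alpha=0$ half-space polymer (diagonal weights $\operatorname{Gamma}^{-1}(\theta)$, bulk weights $\operatorname{Gamma}^{-1}(2\theta)$). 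Re-indexing, $\sum_{i+j=q,\ i>j}\hat Z(i-1,j)\le\sum_{i'+j'=q-1,\ i'\ge j'}Z^{(0)}(i',j')$, and the right-hand side is exactly a point-to-(partial)line partition function of the $\alpha=0$ polymer at level $\lceil(q-1)/2\rceil$; by the $\alpha=0$ case of \eqref{lln}, for which $R(\theta,0)=-2\Psi(\theta)$ --- treating the diagonal endpoint exactly as in the $V_q$ step --- its logarithm is at most $(-2\Psi(\theta)+\tfrac{\delta}{2})\tfrac q2$ with high probability. Since $\tfrac1q\log W_{1,1}\to 0$ in probability, this gives $\Pr\bigl(\log\til V_q\le(-2\Psi(\theta)+\delta)\tfrac q2\bigr)\to 1$.

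\textbf{Main obstacle.} The delicate point is the shift identity for $\til{Z}_{\operatorname{sym}}(i,j)$: one must verify that diagonal-avoiding paths ending strictly below the diagonal are forced through $(2,1)$, that $(a,b)\mapsto(a-1,b)$ is a weight-respecting bijection onto paths confined to $\mathcal{I}^-$, and --- crucially --- that after the shift \emph{every} weight, including along the new diagonal, is i.i.d.\ $\operatorname{Gamma}^{-1}(2\theta)$, so that $\hat Z$ is genuinely an all-$\operatorname{Gamma}^{-1}(2\theta)$ half-space partition function amenable to comparison with the $\alpha=0$ model. Once this is in place, the stochastic domination and the appeal to \eqref{lln} are routine; the bound on $V_q$ is comparatively straightforward.
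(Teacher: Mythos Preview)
Your argument is correct. For $V_q$ you follow the paper's route, with one pleasant simplification: to absorb the diagonal term $Z(N,N)$ you use the one–step recurrence $Z(N,N)=W_{N,N}Z(N,N-1)\le (W_{N,N}/W_{N+1,N-1})Z(N+1,N-1)$, which is more elementary than the paper's device of invoking the line-ensemble ordering (Theorem~\ref{t:order}) to compare $H_N^{(1)}(1)$ with $H_N^{(1)}(2)$.

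For $\til V_q$ your approach is genuinely different from the paper's. The paper exploits the single observation that a diagonal-avoiding path touches the diagonal only at $(1,1)$; hence replacing all diagonal weights by independent $\operatorname{Gamma}^{-1}(\theta)$ variables changes $\til Z_{\operatorname{sym}}(i,j)$ only by the fixed factor $\til W_{1,1}/W_{1,1}^{\alpha=0}$, after which one simply drops the diagonal-avoiding constraint and appeals directly to the $\alpha=0$ law of large numbers. Your route instead produces, via the column shift, an \emph{exact} identification of $\til Z_{\operatorname{sym}}(i,j)$ with a half-space partition function whose weights are all $\operatorname{Gamma}^{-1}(2\theta)$, and then uses the stochastic domination $\operatorname{Gamma}^{-1}(2\theta)\preceq\operatorname{Gamma}^{-1}(\theta)$ on the diagonal to compare with the $\alpha=0$ model. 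Both reach the same $\alpha=0$ LLN, but the paper's argument is shorter and avoids the shift bookkeeping and the coupling step; your argument, on the other hand, gives a slightly finer structural statement (an exact polymer identity rather than an inequality from relaxing a constraint), at the cost of verifying that the monotone coupling can be carried out jointly over all endpoints on the line $i'+j'=q-1$ --- which indeed it can, since the shifted off-diagonal weights $\{W_{a+1,b}:a>b\}$ are disjoint from the sub-diagonal family $\{W_{a+1,a}\}$ that you couple to fresh $\operatorname{Gamma}^{-1}(\theta)$ variables.
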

\begin{proof} Fix any $\delta>0$. Following \eqref{l:iden} and \eqref{defvq} we have
\begin{align*}
    V_{2N} = \sum_{p=0}^{N-1} \zh(N+p,N-p), \quad  V_{2N+1} = \sum_{p=1}^{N} \zh(N+p,N-p+1).
\end{align*}
From Theorem \ref{t:bw} (\eqref{lln} in particular) we have that
    \begin{align}\label{lln1}
        \frac{1}{N}\log \left[\sum_{p=1}^{N} \zh(N+p,N-p+1)\right] \stackrel{p}{\to} R, \quad \frac{1}{N}\log \left[\sum_{p=1}^{N-1} \zh(N+p,N-p)\right] \stackrel{p}{\to} R, 
    \end{align}
 Note that in the above equation, we have excluded $\zh(N,N)$ as their result does not contain $\zh(N, N)$ in the sum. However, in our case, we may include $\zh(N,N)$ by appealing to Theorem \ref{t:order}. First, in view of the above law of large numbers in \eqref{lln1}, we have 
 \begin{align}\label{vodd}
 \Pr(\log V_{2N+1}\le (R+\tfrac{1}{2}\delta)N)\to 1. 
 \end{align}
 On the other hand, by \eqref{eq:rem} we have $\sum_{p=1}^{N} e^{H_N^{(1)}(2p)}=\sum_{p=1}^{N} Z(N+p,N-p+1)$. Since $H_N^{(1)}(2)\le \log \sum_{p=1}^{N} e^{H_N^{(1)}(2p)} = \log V_{2N+1}$, \eqref{vodd} implies
    \begin{align*}
        \Pr(H_N^{(1)}(2) \le (R+\tfrac12\delta)N) \to 1,
    \end{align*}
    as $N\to\infty$. In addition, by ordering of points in the line ensemble (Theorem \ref{t:order}) we know that with probability at least $1-2^{-N}$, $H_N^{(1)}(1)\le H_N^{(1)}(2)+\log^2N$ for large enough $N$. Thus we have 
    \begin{align}\label{eq:h1}
        \Pr(H_N^{(1)}(1) \le (R+\delta)N) \to 1,
    \end{align}
    as $N\to\infty$. Given that $H_N^{(1)}(1) = \log Z(N, N)$, combining \eqref{eq:h1} and the second convergence in \eqref{lln1} yields $\Pr(\log V_{2N}\le (R+\delta)N) \to 1$ and together with \eqref{vodd} this concludes the proof of the first convergence in \eqref{eq:vq}. 

\medskip

Next, for the diagonal-avoiding case, let $(W_{i,i}^{\alpha=0})_{i\ge 1}$ be a family of weights distributed as $\frac12\operatorname{Gamma}(\theta)$ independent of $(W_{i,j})$. We set $W_{i,j}^{\alpha=0}:=\til{W}_{i,j}$ for $i\neq j$. This gives us a new collection of symmetrized weights. We denote the corresponding symmetrized partition function and the diagonal-avoiding symmetrized partition function as $\zs^{\alpha=0}$ and $\til{Z}_{\operatorname{sym}}^{\alpha=0}$ respectively. Observe that
    \begin{align} \label{eq:rel}
        \til{Z}_{\operatorname{sym}}(i,j) \le \frac{\til{W}_{1,1}}{W_{1,1}^{\alpha=0}} \cdot \til{Z}_{\operatorname{sym}}^{\alpha=0}(i,j) \le \frac{\til{W}_{1,1}}{W_{1,1}^{\alpha=0}} \cdot Z_{\operatorname{sym}}^{\alpha=0}(i,j).
    \end{align}
    The first equality in \eqref{eq:rel} is due to the fact that the weight corresponding $(1,1)$ is common in all paths and that is the only diagonal weight that appears in the diagonal avoiding symmetrized partition functions. The next inequality is obvious as we have just removed the diagonal avoiding restriction. This leads to
    \begin{align*}
        \log \til{V}_{q} \le \log \til{W}_{1,1}-\log W_{1,1}^{\alpha=0} +\log\left[\sum_{(i,j)\mid i+j=q} Z_{\operatorname{sym}}^{\alpha=0}(i,j)\right].
    \end{align*}
    The first two terms on the right-hand side of the above display are tight. An upper bound on the third term can be computed by the exact same analysis as $V_q$. Indeed, the law of large numbers and Theorem \ref{t:order} continue to  hold for $\alpha=0$ when $R$ becomes $-2\Psi(\theta)$ (see the last point in Theorem \ref{t:bw}). This concludes the proof of \eqref{eq:vq}.
\end{proof}
\medskip
Finally, with Lemmas \ref{l:sbd} and \ref{l:log} in place, we are ready to control the average law of large numbers of the top curves of the $\hslg$ line ensemble. 

\begin{proposition}\label{p:alaw} Recall $\Delta_k, R$ in \eqref{def:dk}. Fix any $\e>0$ and $k \in \Z_{>0}$ large such that $\Delta_k>0$. Then there exists $N_0(k,\e)>2k+1$ such that for all $N\ge N_0$ we have
    \begin{align*}
        \Pr\left( \sup_{p\in \ll1,2N-4k+2\rr} \frac1{2k}\sum_{i=1}^{2k} H_N^{(i)}(p) \le (R-\tfrac12\Delta_k)N\right) \ge 1-\e.
    \end{align*}
\end{proposition}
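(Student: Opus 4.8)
The plan is to reduce the claim, through the telescoping structure of the $\hslg$ line ensemble, to the two preparatory estimates already established: Lemma~\ref{l:sbd} (the $U$-map bound on the symmetrized multilayer partition function, which itself rests on the combinatorial Lemma~\ref{l:umap}) and Lemma~\ref{l:log} (the upper bounds on $\log V_q$ and $\log\til V_q$). First I would fix $p\in\ll1,2N-4k+2\rr$, set $m_p:=N+\lfloor p/2\rfloor$ and $n_p:=N-\lceil p/2\rceil+1$, and note that $\lceil p/2\rceil\le N-2k+1$ forces $n_p\ge 2k$ and $(m_p,n_p)\in\mathcal I^-$, so that $H_N^{(1)}(p),\dots,H_N^{(2k)}(p)$ are all defined and $\zs^{(2k)}(m_p,n_p)>0$. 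Expanding Definition~\ref{hslg} and using $\zs^{(0)}\equiv1$, the successive ratios collapse:
\begin{align*}
\sum_{i=1}^{2k}H_N^{(i)}(p)=\sum_{i=1}^{2k}\Big(\log\big(2\zs^{(i)}(m_p,n_p)\big)-\log\zs^{(i-1)}(m_p,n_p)\Big)=2k\log2+\log\zs^{(2k)}(m_p,n_p),
\end{align*}
so it suffices to bound $\sup_p\log\zs^{(2k)}(m_p,n_p)$ from above.

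Next I would apply Lemma~\ref{l:sbd} (legitimate since $1\le k\le n_p/2$), which gives
\begin{align*}
\log\zs^{(2k)}(m_p,n_p)\le n_p\log2-\sum_{i=2}^{2k}\sum_{j=1}^{i-1}\log\til W_{1,j}+\sum_{i=1}^{k}\Big(\log V_{m_p+n_p+2-2i}+\log\til V_{m_p+n_p+1-2i}\Big).
\end{align*}
The crucial observation, which makes the supremum over the $\Theta(N)$ values of $p$ harmless, is that $n_p\le N$ and that $m_p+n_p$ equals $2N$ for odd $p$ and $2N+1$ for even $p$; hence every partition function on the right-hand side lies in the fixed, $N$-\emph{independent} family $\{V_{2N+c-2i},\ \til V_{2N+c'-2i}:1\le i\le k,\ c\in\{2,3\},\ c'\in\{1,2\}\}$, whose cardinality depends on $k$ alone. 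Taking the supremum over $p$ therefore reduces to controlling these finitely many variables together with the single ($N$-independent) sum $-\sum_{i=2}^{2k}\sum_{j=1}^{i-1}\log\til W_{1,j}$.

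To control these I would fix $\delta:=\tfrac14\Delta_k>0$, choose $C_1=C_1(k,\e)$ with $\Pr\big(-\sum_{i=2}^{2k}\sum_{j=1}^{i-1}\log\til W_{1,j}\le C_1\big)\ge1-\e/2$ (possible since it is an a.s.\ finite sum not depending on $N$), and apply Lemma~\ref{l:log} with this $\delta$ to each of the (at most $4k$) partition functions $V_{2N+c-2i},\til V_{2N+c'-2i}$; a union bound then yields $N_1=N_1(k,\e)$ so that for $N\ge N_1$, with probability $\ge1-\e/2$, $\log V_{2N+c-2i}\le(R+\delta)N+C_2(k)$ and $\log\til V_{2N+c'-2i}\le(-2\Psi(\theta)+\delta)N+C_2(k)$ for all admissible $i,c,c'$, where $C_2(k)$ absorbs the $O(k)$ gap between $\tfrac12(2N+c-2i)$ and $N$. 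Intersecting the two events (probability $\ge1-\e$) and combining with the first two steps gives, for $N\ge N_1$,
\begin{align*}
\frac{1}{2k}\sup_p\sum_{i=1}^{2k}H_N^{(i)}(p)\le N\Big(\frac{\log2}{2k}+\frac R2-\Psi(\theta)+\delta\Big)+C_3(k,\e).
\end{align*}

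Finally I would check the constant: from \eqref{def:dk} one computes $\tfrac{\log2}{2k}+\tfrac R2-\Psi(\theta)=R-\Delta_k$, so the bound reads $(R-\tfrac34\Delta_k)N+C_3(k,\e)$, which is $\le(R-\tfrac12\Delta_k)N$ as soon as $N\ge N_0(k,\e):=\max\{N_1,\,2k+2,\,\lceil 4C_3/\Delta_k\rceil\}$. I do not expect any genuine difficulty beyond bookkeeping, since the combinatorial heart of the matter is already in Lemma~\ref{l:umap}; the two points to be careful about are (i) the one flagged in the second step, that Lemma~\ref{l:log} comes with no convergence rate so a naive union bound over all $p$ is unavailable, and one must instead exploit that the $U$-map estimate depends on $p$ only through $n_p\le N$ and the parity of $p$; and (ii) the arithmetic—the factor $\tfrac12$ in $R-\tfrac12\Delta_k$ is precisely the slack needed to absorb the $\delta$ coming from Lemma~\ref{l:log} together with the $O(1)$ boundary-weight and index-shift corrections.
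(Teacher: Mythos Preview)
Your proposal is correct and follows essentially the same approach as the paper's own proof: telescoping the line ensemble to $\log\zs^{(2k)}(m_p,n_p)$, applying Lemma~\ref{l:sbd}, then invoking Lemma~\ref{l:log} with $\delta=\tfrac14\Delta_k$ and the tightness of the boundary-weight product. The only cosmetic difference is that the paper treats even and odd $p$ in two separate displays, whereas you unify them by noting that $m_p+n_p\in\{2N,2N+1\}$ so only $O(k)$ many $V_q,\til V_q$ appear.
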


\sd{In other words, Proposition \ref{p:alaw} claims that when $k$ is large enough so that $\Delta_k > 0$, the average law of large numbers of top $2k$ curves is strictly less than $R$, which is the law of large numbers for point-to-(partial)line free energy process (see Theorem \ref{t:bw}).}

\medskip

\begin{proof}  Fix any $\e>0$. The definition of the $\hslg$ line ensemble in \eqref{eq:hslg} and \eqref{eq:sbd} collectively yield that, for all $p\in \ll1,N-2k+1\rr$,
\begin{align*}
    \sum_{i=1}^{2k} H_N^{(i)}(2p) & = 2k\log 2+\log  Z_{\operatorname{sym}}^{(2k)}(N+p,N-p+1) \\ & \le 2k\log 2+N\log 2- \log \left[\prod_{i=2}^{2k} \prod_{j=1}^{i-1} \til{W}_{1,j}\right] +\log \prod_{i=1}^k \left[V_{2N+3-2i}\til{V}_{2N+2-2i}\right],
\end{align*}
where the r.h.s.~is free of $p$. Hence we may take supremum over $p\in \ll1, N-2k+1\rr$ over both sides of the above display to get
\begin{equation}
    \label{eq:logbd}
    \begin{aligned}
    \sup_{p\in \ll1, N-2k+1\rr} \sum_{i=1}^{2k} H_N^{(i)}(2p)  & \le (2k+N)\log 2- \log \left[\prod_{i=2}^{2k} \prod_{j=1}^{i-1} \til{W}_{1,j}\right] \\ & \hspace{3cm}+\log \prod_{i=1}^k \left[V_{2N+3-2i}\til{V}_{2N+2-2i}\right].
\end{aligned}
\end{equation}
We now provide high probability upper bounds for each of the terms on the r.h.s. of \eqref{eq:logbd}. Let us take $\delta:=\frac{\Delta_k}{4}$. By Lemma \ref{l:log}, we may choose $N_0(k,\e)>2k+1$ large enough such that for all $N\ge N_0$
$$\Pr(\log V_{N} \le (R+\delta)\tfrac{N}2) \ge 1-\tfrac{\e}{8k}, \quad \Pr(\log \til{V}_{N} \le (-2\Psi(\theta)+\delta)\tfrac{N}2) \ge 1-\tfrac{\e}{8k}.$$
Thus applying a union bound we see that for all large enough $N$, with probability $1-\frac{\e}{4}$,
\begin{align}\label{eq:bd1}
    \log \prod_{i=1}^k \left[V_{N+3-2i}\til{V}_{N+2-2i}\right]\le RkN-2\Psi(\theta)kN +2k\delta N.
\end{align}
Note that the random variable $\log \left[\prod_{i=2}^{2k} \prod_{j=1}^{i-1} \til{W}_{1,j}\right]$ is tight and free of $N$. Hence with probability $1-\frac{\e}{4}$ one can ensure that
\begin{align}\label{eq:bd2}
    (2k+N)\log 2 - \log \left[\prod_{i=2}^{2k} \prod_{j=1}^{i-1} \til{W}_{1,j}\right] \le N\log 2+ 2k\delta N.
\end{align}
holds for all large enough $N$. Inserting the above two bounds in \eqref{eq:bd1} and \eqref{eq:bd2} back in \eqref{eq:logbd}, we have that with probability at least $1-\frac\e2,$
\begin{align}\label{even}
    \sup_{p\in \ll1,N-2k+1\rr}\frac{1}{2k}\sum_{i=1}^{2k} H_N^{(i)}(2p) \le \left[\tfrac1{2k}\log 2+2\delta +\tfrac{R}{2}-\Psi(\theta)\right]N,
\end{align}
for all large enough $N$.
As $\delta=\frac{\Delta_k}{4}$, the r.h.s. of \eqref{even} is precisely $(R-\tfrac12\Delta_k)N$. By the exact same argument, one can check that with probability at least $1-\frac\e2$ we have
\begin{align}\label{odd}
    \sup_{p\in \ll0,N-2k\rr}\frac{1}{2k}\sum_{i=1}^{2k} H_N^{(i)}(2p+1) \le (R-\tfrac12\Delta_k)N,
\end{align}
for all large enough $N$. Taking another union bound of \eqref{even} and \eqref{odd}, we get the desired result.
\end{proof}

\section{Controlling the second curve}\label{sec:2nd}

In this section, we establish the separation between the first and the second curve of our $\hslg$ line ensemble. On the basis of Proposition \ref{p:alaw}, Lemma \ref{lowcurve} first establishes that for large enough $k$ with high probability the $(2k+2)$-th curve $H_N^{(2k+2)}(\cdot)$ is uniformly $\mbox{const}\cdot N$ below than $RN$ over an interval of $\ll1, N\rr$, where $R$ defined in \eqref{def:dk} is the law of large numbers for point-to-(partial)line free energy process (Theorem \ref{t:bw}). Using this we will show in Proposition \ref{p:benchmark} that with high probability the second curve $H_N^{(2)}(\cdot)$ over an interval of length $O(\sqrt{N})$ is $M\sqrt{N}$ below $RN$ for any $M>0$. 
\begin{lemma}\label{lowcurve} Recall $R$ in \eqref{def:dk}. Fix any $\e>0$ and $k\in \Z_{>0}$ large enough such that $\Delta_k>0$. Then there exists $N_0(k,\e)$ such that for all $N\ge N_0$ we have
\begin{align}\label{eq:lowcurve}
        \Pr\left( \sup_{p\in \ll1,N\rr}  H_N^{(2k+2)}(p) \le (R-\tfrac14\Delta_k)N\right) \ge 1-\e.
\end{align} 
\end{lemma}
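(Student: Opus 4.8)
The plan is to deduce the bound on the $(2k+2)$-th curve $H_N^{(2k+2)}$ from the average control of the top $2k$ curves in Proposition~\ref{p:alaw}, by pushing that control downward through the line ensemble with the soft ordering of Theorem~\ref{t:order}. Fix $\e>0$. First I would set up two high-probability events. Let $\m{A}$ be the event of Proposition~\ref{p:alaw} (applied with $\e/2$ and the same $k$), on which
\begin{align*}
    \sup_{p\in\ll1,2N-4k+2\rr}\frac1{2k}\sum_{i=1}^{2k}H_N^{(i)}(p)\le (R-\tfrac12\Delta_k)N ;
\end{align*}
and let $\m{B}$ be the event that all the inequalities of Theorem~\ref{t:order}, applied with parameter $2k+2$ in place of $k$ and a fixed $\rho\in(0,1)$, hold simultaneously over all $i\in\ll1,2k+2\rr$ and all $p$ in the relevant ranges. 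A union bound over the $O(kN)$ instances gives $\Pr(\m{B})\ge 1-4(2k+2)N\rho^N\ge 1-\e/2$ for all large $N$.

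The key point is that on $\m{B}$ one may chain the second and fourth inequalities of Theorem~\ref{t:order} to get, for every even $2m\le N$ and every $i\in\ll1,2k+1\rr$,
\begin{align*}
    H_N^{(i+1)}(2m)\le H_N^{(i)}(2m-1)+\log^2 N\le H_N^{(i)}(2m)+2\log^2 N ,
\end{align*}
i.e.\ the soft ordering holds \emph{at the same even point}, with no horizontal drift. Telescoping this from curve $2k+2$ down to curve $i$ gives $H_N^{(2k+2)}(2m)\le H_N^{(i)}(2m)+C_k\log^2 N$ with $C_k=2(2k+1)$, valid for every $i\in\ll1,2k\rr$. Averaging these $2k$ inequalities and applying $\m{A}$ at the point $2m$ (which lies in $\ll1,2N-4k+2\rr$ once $N\ge 4k-2$) yields
\begin{align*}
    H_N^{(2k+2)}(2m)\le\frac1{2k}\sum_{i=1}^{2k}H_N^{(i)}(2m)+C_k\log^2 N\le(R-\tfrac12\Delta_k)N+C_k\log^2 N .
\end{align*}
An odd point $p\le N$ is handled by one further inequality of Theorem~\ref{t:order}, bounding $H_N^{(2k+2)}(p)$ by $H_N^{(2k+2)}$ at an adjacent even point plus $\log^2 N$, so the same estimate holds with $C_k$ replaced by $C_k+1$.

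Combining the two events, on $\m{A}\cap\m{B}$ one gets $\sup_{p\in\ll1,N\rr}H_N^{(2k+2)}(p)\le(R-\tfrac12\Delta_k)N+(C_k+1)\log^2 N$, which is at most $(R-\tfrac14\Delta_k)N$ as soon as $N$ is large enough that $(C_k+1)\log^2 N\le\tfrac14\Delta_k N$; taking $N_0$ to exceed this threshold together with those from Proposition~\ref{p:alaw} and from Theorem~\ref{t:order} with parameter $2k+2$ (and $N_0\ge 4k$), and noting $\Pr(\m{A}\cap\m{B})\ge1-\e$, finishes the proof. I do not anticipate a genuine obstacle here — the lemma is essentially a routine consequence of Proposition~\ref{p:alaw} and Theorem~\ref{t:order} — the only step needing care is the no-drift chaining displayed above, since it is exactly what lets the uniform-in-$p$ bound of Proposition~\ref{p:alaw} be used verbatim, so that no quantitative control on the spatial oscillation of an individual curve $H_N^{(i)}$ is ever required.
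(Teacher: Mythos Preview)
Your proposal is correct and follows essentially the same approach as the paper: combine the average bound of Proposition~\ref{p:alaw} with the soft ordering of Theorem~\ref{t:order}, chain the ordering inequalities to get $H_N^{(i+1)}(p)\le H_N^{(i)}(p)+2\log^2 N$ at the same point $p$, and absorb the $O(\log^2 N)$ loss into the $\tfrac14\Delta_k N$ gap. The paper phrases this via the contrapositive (on $\m{B}\cap\neg\m{A}$ one contradicts the conclusion of Proposition~\ref{p:alaw}) rather than your direct argument on $\m{A}\cap\m{B}$, but the logic and ingredients are identical.
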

\begin{proof} Let us consider the following events
\begin{align*}
    \m{A} & :=\left\{\sup_{p\in \ll1,N\rr}  H_N^{(2k+2)}(p) \le (R-\tfrac14\Delta_k)N\right\}, \\ \m{B} & :=\left\{H_N^{(i+1)}(p) \le H_N^{(i)}(p)+2\log^2N, \mbox{ for all } i\in \ll1,2k+1\rr, p\in \ll1,N\rr\right\}, \\
    \m{C} & :=\left\{\sup_{p\in \ll1,N\rr}\frac1{2k}\sum_{i=1}^{2k} H_N^{(i)}(p) \le (R-\tfrac12\Delta_k)N\right\}.
\end{align*}
We claim that for all large enough $N$, we have $(\m{B} \cap\neg \m{A})\subset \m{\neg C}$. To see this, note that on $\m{B}\cap \neg \m{A}$, there exists a point $p^*\in \ll1,N\rr$ such that $H_N^{(2k+2)}(p^*) > (R-\tfrac14\Delta_k)N)$ and hence (as $\m{B}$ holds)
$$H_N^{(i)}(p^*) > (R-\tfrac14\Delta_k)N-(4k+4)\log^2N,$$
for all $i\in \ll1,2k+1\rr$. However, the above display also implies that
$$\sup_{p\in \ll1,N\rr}\frac1{2k}\sum_{i=1}^{2k} H_N^{(i)}(p) > (R-\tfrac14\Delta_k)N-(4k+4)\log^2N$$
which is strictly bigger than $(R-\tfrac12\Delta_k)N$ (for large enough $N$) and implies $\neg \m{C}$. Thus by a union bound, we have
\begin{align}\label{eq:negbd}
    \Pr(\neg\m{A})\le \Pr(\neg \m{B})+\Pr(\m{B} \cap \neg \m{A} ) \le \Pr(\neg\m{B})+\Pr(\neg\m{C}).
\end{align}
Note that for fixed $k$, by Theorem \ref{t:order} with $\rho=\frac12$ and a union bound, we have $\Pr(\neg\m{B})\le N\cdot (2k+1)\cdot 2^{-N}\le \frac{\e}{2}$ for all $N\ge N_1(k, \e)$. On the other hand, Proposition \ref{p:alaw} yields that for fixed $k$ and $\e$, $\Pr(\neg \m{C})\le \frac\e2$ for all $N$ greater than some $N_2(k, \frac\e2)$. Letting $N_0(k, \e)= \max\{N_1, N_2\}$ and inserting these two bounds in \eqref{eq:negbd} leads to \eqref{eq:lowcurve}.
\end{proof}

\medskip

Building on Lemma \ref{lowcurve}, the next result demonstrates that on a given interval of length $O(\sqrt{N})$ starting from $1$ and any $M_2>0$, the second curve $H_N^{(2)}(\cdot)$ is uniformly lower than $RN-M_2\sqrt{N}$ with high probability (see Figure \ref{fig:2curve}).
\begin{proposition} \label{p:benchmark} Recall $\Delta_k, R$ in \eqref{def:dk}. Fix $\e\in (0,1)$, $M_1,M_2\ge 1$ and $k \in \Z_{>0}$ such that $\Delta_k > 0$. Then there exists a constant $N_2(\e,M_1,M_2)>0$ such that for all $N\ge N_2$ we have
  \begin{align}\label{eq:benchmark}
      \Pr\left(\sup_{p\in [1,2\lfloor M_1\sqrt{N}\rfloor +1]} H_N^{(2)}(p) \le RN-M_2\sqrt{N}\right) \ge 1-\tfrac12\epsilon.
  \end{align}  
\end{proposition}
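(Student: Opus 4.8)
The plan is to realize $H_N^{(2)}$, on the window $\ll1,2\lfloor M_1\sqrt N\rfloor+1\rr$, as a line trapped between two explicit benchmarks and then run a Gibbs-property comparison; throughout I fix $k$ large enough that $\Delta_k>0$. By Lemma \ref{lowcurve}, on an event of probability $\ge 1-\e/6$ we have $H_N^{(2k+2)}(p)\le(R-\tfrac14\Delta_k)N$ for all $p\in\ll1,N\rr$, and since $(R-\tfrac14\Delta_k)N$ is eventually far below $RN-M_2\sqrt N$ this is a uniform \emph{lower} benchmark (the same soft-ordering argument used inside the proof of Lemma \ref{lowcurve} in fact gives $H_N^{(i)}(p)\le(R-\tfrac14\Delta_k)N$ on $\ll1,N\rr$ for every $i\ge 2k$). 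For the \emph{upper} benchmark, pick $L\ge M_1$ large, depending on $M_2,\e,\theta,\alpha$, and apply Theorem \ref{t:bw} with $g(N)$ ranging over a fixed-width window of integers around $\lfloor L\sqrt N\rfloor$: since $\log\zt(g(N))-RN+g(N)\tau\Rightarrow\mathcal N(0,\sigma^2N)$ and $\tau=\Psi(\theta-\alpha)-\Psi(\theta+\alpha)>0$, taking $L$ large makes each $\log\zt(g(N))\le RN-(M_2+2)\sqrt N$ hold with probability $\ge 1-\e/(100k)$; combined with $H_N^{(1)}(2p+1)=\log Z(N+p,N-p)\le\log\zt(p)$ from \eqref{defzt} and a union bound, on an event of probability $\ge 1-\e/6$ the top curve satisfies $H_N^{(1)}(q)\le RN-(M_2+2)\sqrt N$ for every column $q$ in a fixed-width neighbourhood of $q_\star:=2\lfloor L\sqrt N\rfloor+1$.

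Working on the intersection of the above events with the soft-ordering event of Theorem \ref{t:order}, of total probability $\ge 1-\tfrac12\e$, I then apply the $\hslg$ Gibbs property (Theorem \ref{t:gibbs}) on the strip $\Lambda=\{(i,j):2\le i\le 2k+1,\ 1\le j\le q_\star\}$. Since $\Lambda$ contains the column $j=1$, its boundary $\partial\Lambda$ has no left part: it consists of a portion of $H_N^{(1)}$ above, a portion of $H_N^{(2k+2)}$ below, and the right edge near column $q_\star$, whose values are all $\le RN-M_2\sqrt N$ by soft ordering together with the Step-1 bound on $H_N^{(1)}$ near $q_\star$. By stochastic monotonicity (Proposition \ref{p:gmc}) the conditional law of $H_N^{(2)}|_{\Lambda}$ is dominated by the top curve of the $2k$-curve $\hslg$ Gibbs measure on $\Lambda$ with deterministic boundary data: the constant $(R-\tfrac14\Delta_k)N$ below, the constant $RN-M_2\sqrt N$ on the right, and the crude upper bound $(R+\delta)N$ above (valid on $\ll1,q_\star\rr$ by Lemma \ref{l:log}). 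After relabelling $\alpha\mapsto-\alpha$ in the colored edges as in the discussion following Definition \ref{irw}, this dominating $2k$-curve object is a bottom-free unbound-phase $\hslg$ Gibbs measure of horizontal extent $q_\star\asymp\sqrt N$, to which one applies the uniform $\irw$-type estimates of \cite{bcd23} (Proposition \ref{p:irw} being the $2k=2$ prototype), rescaled to this length: the dominating top curve stays within $O(\sqrt{q_\star})=O(N^{1/4})$ of $RN-M_2\sqrt N$ on all of $\ll1,q_\star\rr$, hence $\le RN-M_2\sqrt N$ there for $N$ large. Undoing the domination gives \eqref{eq:benchmark} on $\ll1,2\lfloor M_1\sqrt N\rfloor+1\rr\subseteq\ll1,q_\star\rr$.

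The main obstacle is the comparison near the left endpoint $p=1$. The input from \cite{bw} only pins the free energy near a single column of order $\sqrt N$, and at $p=1$ the top curve $H_N^{(1)}$ sits at height $\approx RN$, so it does not by itself push $H_N^{(2)}$ below $RN-M_2\sqrt N$; a naive monotonicity bound replacing $H_N^{(1)}$ by a flat line at height $\approx RN$ would only yield $H_N^{(2)}\le\approx RN$ near $p=1$. What rescues the argument is that the strip $\Lambda$ is $2k$ curves deep and its bottom has been benchmarked $\Omega(N)$ below $RN$ (Lemma \ref{lowcurve}, equivalently Proposition \ref{p:alaw}): this forbids the $2k$ intervening curves from collectively sitting high, so their top curve — which is $H_N^{(2)}$ — is dragged down to the $\sqrt N$-scale set by the pinned right edge rather than the $N$-scale set by $H_N^{(1)}(1)$. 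Making this quantitative through the $\irw$/bottom-free comparison, and checking the column and parity ranges so that the staircase definition \eqref{eq:hslg} of $H_N^{(2)}$ is respected while the individual probabilities add up to $1-\tfrac12\e$, is the bulk of the remaining (largely bookkeeping) work.
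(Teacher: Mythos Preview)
Your setup is essentially the paper's: you resample curves $2$ through $2k+1$ on the window $\ll1,q_\star\rr$ with $q_\star\asymp\sqrt N$, invoke Lemma \ref{lowcurve} for the bottom boundary and Theorem \ref{t:bw} for the right boundary, and apply stochastic monotonicity. Where you diverge is in the mechanism and in what you dismiss as ``bookkeeping.'' Your third paragraph says the low bottom benchmark is what ``rescues'' the argument by forbidding the $2k$ curves from sitting high; this is not how the proof works, and it will mislead you when you try to make it rigorous. The $2k$ depth is there purely so that the \emph{bottom} boundary of your strip $\Lambda$ lands on a layer (curve $2k+2$) whose height is controlled \emph{a priori}; once there, the interaction with that layer is trivial (the Radon--Nikodym factor is $\approx 1$ because the bottom sits $\Omega(N)$ below the action), so it never ``drags'' anything. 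What actually pins $H_N^{(2)}$ is the right boundary alone: the paper sets the first-curve boundary data to $+\infty$---which makes the black-edge weight $\exp(-e^{L_2-\infty})=1$ exactly---so curve $2$ feels no constraint from above whatsoever, and is then anchored solely by its right endpoint near $RN-(M_2+1)\sqrt N$ plus diffusive fluctuations of size $O(\sqrt T)=O(N^{1/4})$.

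The step you are calling bookkeeping is the substance. The paper does not have or invoke a $2k$-curve bottom-free estimate; all it has is Proposition \ref{p:irw}, which is for a single \emph{pair}. To use it, the paper decomposes the $2k$ curves into $k$ independent IRW pairs and assigns the $i$-th pair the \emph{staggered} right boundary $(x_{2i,2T-1},x_{2i+1,2T})=(RN-(M_2+i)\sqrt N,\ RN-(M_2+i)\sqrt N-\sqrt T)$. The $\sqrt N$ gaps between successive pairs (and the $\Omega(N)$ gap to curve $2k+2$) are precisely what force the inter-pair Radon--Nikodym factor $\wb$ in \eqref{e:wb} to be $\ge 1-\e$, so the $k$ IRWs decouple and Proposition \ref{p:irw} applies to each. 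Without this staggering you cannot control the coupling between pairs, and the argument stalls; this is the genuine content of Steps 3--4, not a detail. (A minor error: your parenthetical claim that $H_N^{(i)}\le(R-\tfrac14\Delta_k)N$ for all $i\ge 2k$ is false---Lemma \ref{lowcurve} and soft ordering give this only for $i\ge 2k+2$.)
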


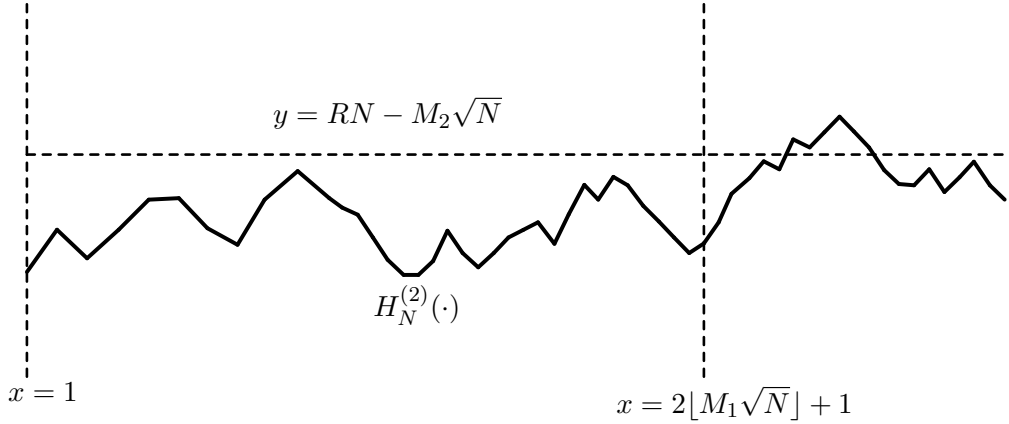
\begin{figure}[h!]
    \centering
    \begin{tikzpicture}[line cap=round,line join=round,>=triangle 45,x=1cm,y=1cm]
\draw [line width=1pt,dashed] (5,5)-- (18,5);
\draw [line width=1pt,dashed] (5,7)-- (5,2);
\draw [line width=1pt,dashed] (14,7)-- (14,2);
\draw [line width=1.5pt] (5,3.4375)-- (5.4,4);
\draw [line width=1.5pt] (5.4,4)-- (5.8,3.62);
\draw [line width=1.5pt] (5.8,3.62)-- (6.22,4);
\draw [line width=1.5pt] (6.22,4)-- (6.62,4.4);
\draw [line width=1.5pt] (6.62,4.4)-- (7.02,4.42);
\draw [line width=1.5pt] (7.02,4.42)-- (7.4,4.02);
\draw [line width=1.5pt] (7.4,4.02)-- (7.8,3.8);
\draw [line width=1.5pt] (7.8,3.8)-- (8.16,4.4);
\draw [line width=1.5pt] (8.16,4.4)-- (8.6,4.78);
\draw [line width=1.5pt] (8.6,4.78)-- (9.02,4.42);
\draw [line width=1.5pt] (9.02,4.42)-- (9.190994133662238,4.294523498358001);
\draw [line width=1.5pt] (9.190994133662238,4.294523498358001)-- (9.398154303610122,4.199341798652217);
\draw [line width=1.5pt] (9.398154303610122,4.199341798652217)-- (9.599715550045898,3.8969999289985515);
\draw [line width=1.5pt] (9.599715550045898,3.8969999289985515)-- (9.795677872969572,3.6002569828569904);
\draw [line width=1.5pt] (9.795677872969572,3.6002569828569904)-- (10.008436966429558,3.3986957364212134);
\draw [line width=1.5pt] (10.008436966429558,3.3986957364212134)-- (10.20439928935323,3.3986957364212134);
\draw [line width=1.5pt] (10.20439928935323,3.3986957364212134)-- (10.400361612276903,3.583460212320676);
\draw [line width=1.5pt] (10.400361612276903,3.583460212320676)-- (10.59072501168847,3.9865827051922302);
\draw [line width=1.5pt] (10.59072501168847,3.9865827051922302)-- (10.792286258124248,3.689839759050669);
\draw [line width=1.5pt] (10.792286258124248,3.689839759050669)-- (11,3.5);
\draw [line width=1.5pt] (11,3.5)-- (11.206606598020011,3.689839759050669);
\draw [line width=1.5pt] (11.206606598020011,3.689839759050669)-- (11.402568920943684,3.8969999289985515);
\draw [line width=1.5pt] (11.402568920943684,3.8969999289985515)-- (11.598531243867356,3.99778055221644);
\draw [line width=1.5pt] (11.598531243867356,3.99778055221644)-- (11.79449356679103,4.0985611754343285);
\draw [line width=1.5pt] (11.79449356679103,4.0985611754343285)-- (12.012851583763121,3.8130160763169774);
\draw [line width=1.5pt] (12.012851583763121,3.8130160763169774)-- (12.20321498317469,4.204940722164322);
\draw [line width=1.5pt] (12.20321498317469,4.204940722164322)-- (12.41037515312257,4.596865368011667);
\draw [line width=1.5pt] (12.41037515312257,4.596865368011667)-- (12.595139629022032,4.400903045087994);
\draw [line width=1.5pt] (12.595139629022032,4.400903045087994)-- (12.79670087545781,4.70324491474166);
\draw [line width=1.5pt] (12.79670087545781,4.70324491474166)-- (12.987064274869377,4.591266444499562);
\draw [line width=1.5pt] (12.987064274869377,4.591266444499562)-- (13.19422444481726,4.316919192406421);
\draw [line width=1.5pt] (13.19422444481726,4.316919192406421)-- (13.401384614765142,4.1097590224585385);
\draw [line width=1.5pt] (13.401384614765142,4.1097590224585385)-- (13.602945861200919,3.8969999289985515);
\draw [line width=1.5pt] (13.602945861200919,3.8969999289985515)-- (13.804507107636697,3.689839759050669);
\draw [line width=1.5pt] (13.804507107636697,3.689839759050669)-- (14,3.8176236100142256);
\draw [line width=1.5pt] (14,3.8176236100142256)-- (14.196431753484026,4.098561175434321);
\draw [line width=1.5pt] (14.196431753484026,4.098561175434321)-- (14.36693898140803,4.478040409376432);
\draw [line width=1.5pt] (14.36693898140803,4.478040409376432)-- (14.605153169867686,4.686448144205338);
\draw [line width=1.5pt] (14.605153169867686,4.686448144205338)-- (14.795516569279252,4.910405084689534);
\draw [line width=1.5pt] (14.795516569279252,4.910405084689534)-- (15.002676739227134,4.804025537959541);
\draw [line width=1.5pt] (15.002676739227134,4.804025537959541)-- (15.187441215126597,5.201549107318989);
\draw [line width=1.5pt] (15.187441215126597,5.201549107318989)-- (15.405799232098675,5.095169560588994);
\draw [line width=1.5pt] (15.405799232098675,5.095169560588994)-- (15.601761555022346,5.296730807024771);
\draw [line width=1.5pt] (15.601761555022346,5.296730807024771)-- (15.803322801458123,5.503890976972652);
\draw [line width=1.5pt] (15.803322801458123,5.503890976972652)-- (15.99368620086969,5.307928654048981);
\draw [line width=1.5pt] (15.99368620086969,5.307928654048981)-- (16.195247447305466,5.095169560588994);
\draw [line width=1.5pt] (16.195247447305466,5.095169560588994)-- (16.396808693741242,4.7928276909353285);
\draw [line width=1.5pt] (16.396808693741242,4.7928276909353285)-- (16.592771016664916,4.608063215035867);
\draw [line width=1.5pt] (16.592771016664916,4.608063215035867)-- (16.794332263100692,4.591266444499552);
\draw [line width=1.5pt] (16.794332263100692,4.591266444499552)-- (16.99589350953647,4.8040255379595385);
\draw [line width=1.5pt] (16.99589350953647,4.8040255379595385)-- (17.197454755972245,4.501683668305874);
\draw [line width=1.5pt] (17.197454755972245,4.501683668305874)-- (17.399016002408022,4.697645991229545);
\draw [line width=1.5pt] (17.399016002408022,4.697645991229545)-- (17.58937940181959,4.904806161177427);
\draw [line width=1.5pt] (17.58937940181959,4.904806161177427)-- (17.802138495279575,4.591266444499552);
\draw [line width=1.5pt] (17.802138495279575,4.591266444499552)-- (17.99810081820325,4.4009030450879845);
\draw (8.13641575535154,5.9063714364755455) node[anchor=north west] {$y=RN-M_2\sqrt{N}$};
\draw (9.473386931292762,3.4) node[anchor=north west] {$H_N^{(2)}(\cdot)$};
\draw (4.6146380236039395,2.1074167536181707) node[anchor=north west] {$x=1$};
\draw (12.70168318539473,2.058503174010565) node[anchor=north west] {$x=2\lfloor M_1\sqrt{N} \rfloor +1$};
\end{tikzpicture}
    \caption{The high probability event in Proposition \ref{p:benchmark}.}
    \label{fig:2curve}
\end{figure}

\begin{proof} \sd{The proof of Proposition \ref{p:benchmark} is conducted in the following stages: }

\sd{\begin{itemize}[leftmargin=18pt]
\setlength\itemsep{0.3em}
   \item Using Theorem \ref{t:bw} and Lemma \ref{lowcurve}, we determine the high probability locations of $H_N^{(1)}(2M\sqrt{N}+1)$ and $H_N^{(2k+2)}(\cdot)$.  Using the ordering of points in Theorem \ref{t:order}, we then bound the endpoints $H_N^{(i)}(2M\sqrt{N}+1)$, $i \in \ll 1, 2k+1\rr$ from above based on the high probability locations of $H_N^{(1)}(2M\sqrt{N}+1)$ and the $(2k+2)$-th curve.
    \item We next consider the conditional law of $(H_N^{(i)}\ll1,2M\sqrt{N}\rr)_{i\in \ll1,2k+1\rr}$ given the above boundary conditions. By Theorem \ref{t:gibbs}, this law is given by an appropriate $\hslg$ Gibbs measure. Applying stochastic monotonicity, we may also assume that the $H_N^{(2i-1)}(2{M}\sqrt{N}+1)$ and $H_N^{(2i)}(2{M}\sqrt{N}+1)$ are sufficiently far apart. This will allow us to approximate the Gibbs measure as a product of interacting random walks defined in Definition \ref{irw}.
    \item Lastly, we use the associated estimates of interacting random walks from Proposition \ref{p:irw} to dissect the Gibbs measure and yield a quantitative bound in our favor.
\end{itemize}}

\sd{We now flesh out the technical details of the above stages. In the following proof, we assume all the multiples of $\sqrt{N}$ appearing below are integers for convenience in notation. The general case follows verbatim by considering the floor function. For clarity, we split our proof into several steps.}

\medskip

\noindent\textbf{Step 1.}   Let us consider the $\hslg$ line ensemble $H_N = (H_N^{(1)}, \ldots, H_N^{(N)})$.  Fix any $\e \in (0, 1)$, $M_1,M_2\ge 1$ and any $k\in \Z_{>0}$ such that $\Delta_k>0$. \bl{In this step we will show how to choose a boundary point $p^*=2M\sqrt{N}+1$ where $H_N^{(1)}(p^*)$ is sufficiently small. Using this boundary location, we will then control the supremum of $H_N^{(2)}(p)$ on the interval $\ll1,p^*\rr$ in subsequent steps.}

Let $\Phi(x)$ be the cumulative distribution function of a standard Gaussian random variable. Set $\tau: = \Psi(\theta-\alpha)- \Psi(\theta+\alpha)$. Let $M\in \Z_{>0}$ whose precise value is to be determined. Taking $g(N) =M\sqrt{N}$ in Theorem \ref{t:bw} yields 
  \begin{align}\label{eq:gauconv2}
    \frac{1}{\sigma\sqrt{N}}\left[\log \zt(M\sqrt{N})-RN+M\tau\sqrt{N}\right] \stackrel{d}{\to} \mathcal{N}\big(0,1\big).
\end{align} Note that \eqref{eq:gauconv2} implies
\begin{align*}
    \Pr\left(\frac{1}{\sigma\sqrt{N}}\left[\log \left[\zt(M\sqrt{N})\right]-RN+M\tau\sqrt{N}\right] \le \Phi^{-1}(1-\tfrac{\e}{2})\right) \to 1-\frac{\e}{2}.
\end{align*}
Thus for $N$ large enough, we have that with probability greater than $1-\e$, 
\begin{align}\label{eq:sup}
\log \left[\zt(M\sqrt{N})\right] \le  RN - \big(M\tau- \Phi^{-1}(1-\tfrac{\e}{2})\sigma\big)\sqrt{N}.
\end{align}
Set $M = \max\{M_1, \frac1\tau(M_2+k+1+\Phi^{-1}(1-\tfrac\e2)\sigma)\}.$ Note that by definition, $H_N^{(1)}(2M\sqrt{N}+1)\le \log \zt(M\sqrt{N})$ and as $M\tau- \Phi^{-1}(1-\frac{\e}{2})\sigma > M_2+k +1$, \eqref{eq:sup}
yields that
\begin{align}\label{bdA}
   \Pr(\m{A}) \ge 1-\e, \mbox{ where } \m{A}:=\left\{H_N^{(1)}(2M\sqrt{N}+1) \le RN-(M_2+k+1)\sqrt{N}\right\}
\end{align}
for all large enough $N$. This fixes our choice for the boundary location and complete our work for this step.

\medskip

\noindent\textbf{Step 2.} Set $T=M\sqrt{N}+1$. We claim that
\begin{align}\label{e:tshw}
    \Pr(\neg\m{E}) \le 3\e+ \frac{k\e}{(1-\e)^{k+1}}, \mbox{ where }\m{E} & :=\left\{\sup_{p\in \ll 1,2T -1\rr} H_N^{(2)}(p) \le RN-M_2\sqrt{N}\right\}.
\end{align}
Since $2T-1 \ge 2M_1\sqrt{N}+1$, assuming \eqref{e:tshw} and adjusting $\e$ yield \eqref{eq:benchmark}.  In this and subsequent steps we will prove \eqref{e:tshw}. To begin with, we consider several events:
\begin{align*}
    \m{B} & :=\bigcap_{i=1}^{2k}\left\{H_N^{(i+1)}(2T) \le H_N^{(i)}(2T-1)+\log^2N, \right. \\ & \hspace{4cm}\left. H_N^{(i+1)}(2T-1) \le H_N^{(i+1)}(2T)+\log^2N\right\}, \\
    \m{C} & := \left\{\sup_{p\in \ll1,N\rr} H_N^{(2k+2)}(p) \le (R-\tfrac14\Delta_k)N\right\}, \\
    \m{D} & :=\bigcap_{i=1}^{k}\bigg\{\max\big\{H_N^{(2i)}(2T-1), H_N^{(2i)}(2T),H_N^{(2i+1)}(2T)\big\} \\ & \hspace{5cm}\le RN-(M_2+k+1)\sqrt{N}+4k\log^2N\bigg\}.
\end{align*}
Let us consider the $\sigma$-field
\begin{align*}
    \mathcal{F} & :=\sigma\left\{H_N^{(2i)}\ll 2T-1,2N-4i+2\rr, H_N^{(2i+1)}\ll 2T,2N-4i\rr, \ i\in \ll1,k\rr,\right. \\ & \hspace{4cm} \left. H_N^{(1)}\ll 1,2N\rr, H_N^{(j)}\ll 1,2N-2j+2\rr, \ j\in \ll2k+2,N\rr\right\}.
\end{align*}
 By Theorem \ref{t:order} with $\rho=\frac12$, we have $\Pr(\neg\m{B}) \le 4k 2^{-N} \le \e$ for all large enough $N$.  Observe that $\m{A}\cap\m{B} \subset \m{D}$ and recall that $\Pr(\neg\m{A})< \e$ in \eqref{bdA}. Thus via the union bound, we have $\Pr(\neg\m{D})\le \Pr(\neg\m{A})+\Pr(\neg\m{B})\le 2\e$. Note that $\m{C}\cap\m{D}$ is measurable w.r.t.~$\mathcal{F}$. Applying the union bound and tower property of conditional expectation we get
\begin{align}\label{fbd}
    \Pr(\neg\m{E}) & \le \Pr(\neg\m{C})+\Pr(\neg\m{D})+\Pr(\m{C}\cap\m{D}\cap \neg\m{E}) \le 3\e+\Ex\left[\ind_{\m{C}\cap\m{D}}\cdot \Ex\left[\ind_{\neg \m{E}} \mid \mathcal{F}\right]\right].
\end{align}
where in the last inequality we have used Lemma \ref{lowcurve} to get that $\Pr(\neg\m{C}) \le \e$ for all large enough $N$. We claim that 
\begin{align}\label{nebd}
    \Ex\left[\ind_{\m{C}\cap\m{D}}\cdot \Ex\left[\ind_{\neg \m{E}} \mid \mathcal{F}\right]\right] \le \frac{k\e}{(1-\e)^{k+1}}.
\end{align}
We will demonstrate \eqref{nebd} in the \textbf{Steps 3-4}. Currently, assuming the validity of \eqref{nebd} and appealing to \eqref{fbd} prove \eqref{e:tshw}.
\medskip

\noindent\textbf{Step 3.} In this step we study $\ind_{\m{C}\cap\m{D}}\Ex\left[\ind_{\neg \m{E}} \mid \mathcal{F}\right]$ by invoking the Gibbs property (Theorem \ref{t:gibbs}). Let us consider the domain
\begin{align*}
    \Theta_{k,T}:=\{(i,j) \mid i\in \ll2,2k+1\rr, j\in \ll1,2T-1-\ind_{i=\operatorname{even}}\rr\}.
\end{align*}
\begin{figure}[h!]
    \centering
    \begin{tikzpicture}[line cap=round,line join=round,>=triangle 45,x=1.2cm,y=1cm]
    \draw[fill=gray!10,line width=0.5pt,dashed] (-0.75,9.25)--(-0.75,3.25)--(2.75,3.25)--(2.75,3.75)--(2.25,3.75)--(2.25,5.25)--(2.75,5.25)--(2.75,5.75)--(2.25,5.75)--(2.25,7.25)--(2.75,7.25)--(2.75,7.75)--(2.25,7.75)--(2.25,9.25)--(-0.75,9.25);
    \foreach \x in {0,1,2,3}
        {\draw[line width=1pt,red,{Latex[length=2mm]}-]  (\x,9) -- (\x-0.5,8.5);
        \draw[line width=1pt,blue,{Latex[length=2mm]}-]  (\x,8) -- (\x-0.5,7.5);
        \draw[line width=1pt,red,{Latex[length=2mm]}-]  (\x,8) -- (\x+0.5,7.5);
        \draw[line width=1pt,blue,{Latex[length=2mm]}-]  (\x,9) -- (\x+0.5,8.5);
        \draw[line width=1pt,red,{Latex[length=2mm]}-]  (\x,7) -- (\x-0.5,6.5);
        \draw[line width=1pt,blue,{Latex[length=2mm]}-]  (\x,6) -- (\x-0.5,5.5);
        \draw[line width=1pt,red,{Latex[length=2mm]}-]  (\x,6) -- (\x+0.5,5.5);
        \draw[line width=1pt,blue,{Latex[length=2mm]}-]  (\x,7) -- (\x+0.5,6.5);
        \draw[line width=1pt,red,{Latex[length=2mm]}-]  (\x,5) -- (\x-0.5,4.5);
        \draw[line width=1pt,blue,{Latex[length=2mm]}-]  (\x,4) -- (\x-0.5,3.5);
        \draw[line width=1pt,red,{Latex[length=2mm]}-]  (\x,4) -- (\x+0.5,3.5);
        \draw[line width=1pt,blue,{Latex[length=2mm]}-]  (\x,5) -- (\x+0.5,4.5);
        \foreach \y in {4,5,6,7,8,9,10}{
     \draw[line width=1pt,black,{Latex[length=2mm]}-]  (\x+0.5,\y-0.5) -- (\x,\y-1);
     \draw[line width=1pt,black,{Latex[length=2mm]}-]  (\x-0.5,\y-0.5) -- (\x,\y-1);
     }
     }
        \foreach \x in {0,1,2,3}{
     \draw[line width=1pt,red,{Latex[length=2mm]}-]  (\x,8) -- (\x+0.5,7.5);
     \draw[line width=1pt,blue,{Latex[length=2mm]}-]  (\x,9) -- (\x+0.5,8.5);
     \draw[line width=1pt,red,{Latex[length=2mm]}-]  (\x,10) -- (\x+0.5,9.5);
     \draw[line width=1pt,blue,{Latex[length=2mm]}-]  (\x,10) -- (\x-0.5,9.5);
     \draw[line width=1pt,red,{Latex[length=2mm]}-]  (\x,9) -- (\x-0.5,8.5);
     }
    \end{tikzpicture}
     
    \caption{$\Theta_{k,T}$ for $k=3$, $T=4$ shown in the shaded region. The $\hslg$ Gibbs measure on $\Theta_{3,4}$ with boundary condition $(u_{i,j})_{(i,j)\in \partial\Theta_{3,4}}.$}
    \label{figsm}
\end{figure}
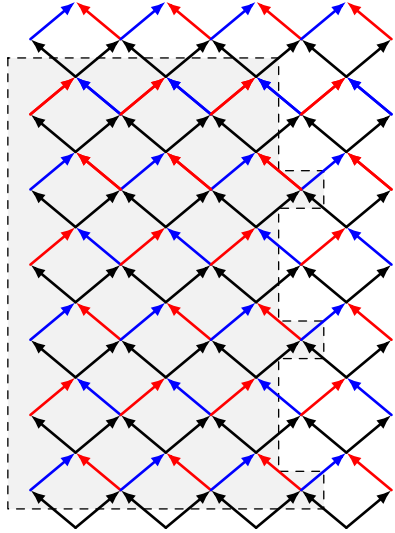
By Theorem \ref{t:gibbs}, the distribution of the line ensemble conditioned on $\mathcal{F}$ is given by $\pg^{\vec{u}}$, i.e. the $\hslg$ Gibbs measure on the domain $\Theta_{k,T}$ with boundary condition $\vec{u}:=(H_N^{(i)}(j))_{(i,j)\in \partial\Theta_{k,T}}$ and the boundary set of $\Theta_{k,T}$ is given by
\begin{align*}
    \partial\Theta_{k,T} & =\big\{(1,2j-1), (2k+2, 2j), (2, 2T-1), (3, 2T), \\ & \hspace{3cm} (2i,2T-1),(2i,2T),(2i+1,2T) \mid i\in \ll2,k\rr, j\in \ll1,T\rr\big\}.
\end{align*}
Note that for large enough $N$, on the event $\m{C}\cap\m{D}$ we have
\begin{align}
   \label{inf} & H_N^{(1)}(2j-1) \le x_{1,2j-1} :=\infty, \quad j\in \ll1,T\rr, \\ & H_N^{(2i)}(2T-1) \le x_{2i,2T-1}:=RN-(M_2+i)\sqrt{N}, \quad i\in \ll1,k\rr, \notag\\ & H_N^{(2i)}(2T) \le x_{2i,2T}:=RN-(M_2+i)\sqrt{N}, \quad i\in \ll 2,k\rr, \notag\\ & H_N^{(2i+1)}(2T) \le x_{2i+1,2T}  :=RN-(M_2+i)\sqrt{N}-\sqrt{T}, \quad i\in \ll1,k\rr,\notag\\ & H_N^{(2k+2)}(2j) \le x_{2k+2,2j} := RN-(M_2+k+1)\sqrt{N}, \quad j\in \ll1,T\rr.\notag
\end{align}
where $\m{C}$ holds only in the last inequality. Since $\neg \m{E}$ event is increasing with respect to the boundary data, by stochastic monotonicity we have
\begin{align}\label{eq:stoc}
    \ind_{\m{C}\cap\m{D}}\cdot \Ex\left[\ind_{\neg \m{E}} \mid \mathcal{F}\right] \le \ind_{\m{C}\cap\m{D}}\cdot  \pg^{\vec{u}}(\neg\m{E}) \le \pg^{\vec{x}}(\neg\m{E}).
\end{align}
\sd{To bound $\pg^{\vec{x}}(\neg\m{E})$ we need a convenient alternative representation for the $\pg^{\vec{x}}$ measure. Towards this end, through the Gibbs property, we dissect the $\pg^{\vec{x}}$ measure into blocks of independent interacting random walks (Definition \ref{irw}) and the Radon-Nikodym derivatives interleaved between adjacent blocks (see Figure \ref{figsm2}). Let us  now describe this decomposition.} 

\smallskip

Recall the interacting random walk ($\irw$) from Definition \ref{irw}. Let $(L_{2i}\ll1,2T-2\rr, L_{2i+1}\ll1,2T-1\rr)_{i=1}^k$ be $k$ independent $\irw$s of length $T$ with boundary condition $(x_{2i,2T-1}, x_{2i+1,2T})$. Let us denote the joint law and expectation of $L$ as $\pc^{\vec{x}}$ and $\ec^{\vec{x}}$ respectively. Set
\begin{align}
   \label{e:wb} \wb:=\exp\left(-\sum_{i=1}^{k}\sum_{j=1}^{T} \left[e^{L_{2i+2}(2j)-L_{2i+1}(2j+1)}+e^{L_{2i+2}(2j)-L_{2i+1}(2j-1)}\right]\right)
\end{align}
with the convention $L_{2i+1}(2T+1)=\infty$ for $i\in \ll1,k\rr$ and $L_{i}(j)=x_{i,j}$ for all $(i,j)\in \partial\Theta_{k,T}$. Note that here only $H_N^{(1)}(2j+1), j \in \ll1, T\rr$ are in the boundary and are set to $\infty$ in \eqref{inf}. Thus, their contributions to the Radon-Nikodym derivative $\wb$ would be $\prod_{j =1 }^{2T-2}\exp(-e^{H_N^{(2)}(j)-\infty}) =1.$ From the description of the $\hslg$ Gibbs measure, we have
\begin{align}\label{e:gibb2}
    \pg^{\vec{x}}(\neg\m{E}) = \frac{\ec^{\vec{x}}[\wb \ind_{\neg \m{E}}]}{\ec^{\vec{x}}[\wb]},
\end{align}

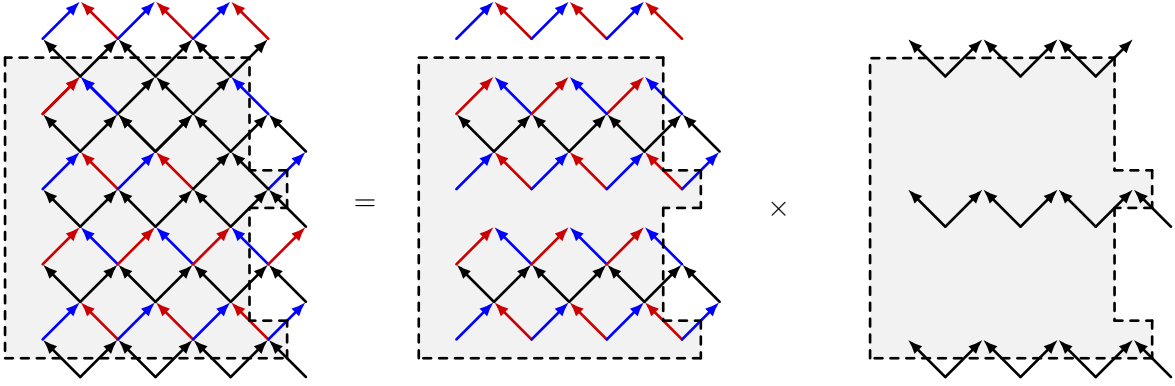
\begin{figure}[h!]
    \centering
\definecolor{ccqqqq}{rgb}{0.8,0,0}
\definecolor{qqqqff}{rgb}{0,0,1}
\begin{tikzpicture}[line cap=round,line join=round,>=triangle 45,x=0.5cm,y=0.5cm]
\fill[line width=1pt,dashed,fill=gray!10] (4,2.5) -- (10.5,2.5) -- (10.5,-0.5) -- (11.5,-0.5) -- (11.5,-1.5) -- (10.5,-1.5) -- (10.5,-4.5) -- (11.5,-4.5) -- (11.5,-5.5) -- (4,-5.5) -- cycle;
\fill[line width=1pt,dashed,fill=gray!10] (16,2.5) -- (22.5,2.5) -- (22.5,-0.5) -- (23.5,-0.5) -- (23.5,-1.5) -- (22.5,-1.5) -- (22.5,-4.5) -- (23.5,-4.5) -- (23.5,-5.5) -- (16,-5.5) -- cycle;
\fill[line width=1pt,dashed,fill=gray!10] (-0.5,2.5) -- (-7,2.5) -- (-7,-5.5) -- (0.5,-5.5) -- (0.5,-4.5) -- (-0.5,-4.5) -- (-0.5,-1.5) -- (0.5,-1.5) -- (0.5,-0.5) -- (-0.5,-0.5) -- cycle;
\draw [-{Latex[length=2mm]},line width=1pt,color=qqqqff] (-6,3) -- (-5,4);
\draw [-{Latex[length=2mm]},line width=1pt,color=ccqqqq] (-4,3) -- (-5,4);
\draw [-{Latex[length=2mm]},line width=1pt,color=qqqqff] (-4,3) -- (-3,4);
\draw [-{Latex[length=2mm]},line width=1pt,color=ccqqqq] (-2,3) -- (-3,4);
\draw [-{Latex[length=2mm]},line width=1pt,color=qqqqff] (-2,3) -- (-1,4);
\draw [-{Latex[length=2mm]},line width=1pt,color=ccqqqq] (0,3) -- (-1,4);
\draw [-{Latex[length=2mm]},line width=1pt] (-5,2) -- (-6,3);
\draw [-{Latex[length=2mm]},line width=1pt] (-5,2) -- (-4,3);
\draw [-{Latex[length=2mm]},line width=1pt] (-3,2) -- (-4,3);
\draw [-{Latex[length=2mm]},line width=1pt] (-3,2) -- (-2,3);
\draw [-{Latex[length=2mm]},line width=1pt] (-1,2) -- (-2,3);
\draw [-{Latex[length=2mm]},line width=1pt] (-1,2) -- (0,3);
\draw [-{Latex[length=2mm]},line width=1pt,color=qqqqff] (0,1) -- (-1,2);
\draw [-{Latex[length=2mm]},line width=1pt,color=ccqqqq] (-6,1) -- (-5,2);
\draw [-{Latex[length=2mm]},line width=1pt] (-5,0) -- (-6,1);
\draw [-{Latex[length=2mm]},line width=1pt] (-5,0) -- (-4,1);
\draw [-{Latex[length=2mm]},line width=1pt,color=qqqqff] (-4,1) -- (-5,2);
\draw [-{Latex[length=2mm]},line width=1pt] (-3,0) -- (-4,1);
\draw [-{Latex[length=2mm]},line width=1pt] (-3,0) -- (-2,1);
\draw [-{Latex[length=2mm]},line width=1pt] (-1,0) -- (-2,1);
\draw [-{Latex[length=2mm]},line width=1pt,color=ccqqqq] (-6,1) -- (-5,2);
\draw [-{Latex[length=2mm]},line width=1pt,color=qqqqff] (-4,1) -- (-5,2);
\draw [-{Latex[length=2mm]},line width=1pt] (-3,0) -- (-4,1);
\draw [-{Latex[length=2mm]},line width=1pt] (-3,0) -- (-2,1);
\draw [-{Latex[length=2mm]},line width=1pt,color=qqqqff] (-6,-1) -- (-5,0);
\draw [-{Latex[length=2mm]},line width=1pt,color=ccqqqq] (-4,-1) -- (-5,0);
\draw [-{Latex[length=2mm]},line width=1pt,color=qqqqff] (-4,-1) -- (-3,0);
\draw [-{Latex[length=2mm]},line width=1pt,color=ccqqqq] (-2,-1) -- (-3,0);
\draw [-{Latex[length=2mm]},line width=1pt,color=qqqqff] (0,-1) -- (1,0);
\draw [-{Latex[length=2mm]},line width=1pt] (1,-2) -- (0,-1);
\draw [-{Latex[length=2mm]},line width=1pt] (-1,-2) -- (0,-1);
\draw [-{Latex[length=2mm]},line width=1pt] (-1,-2) -- (-2,-1);
\draw [-{Latex[length=2mm]},line width=1pt] (-3,-2) -- (-2,-1);
\draw [-{Latex[length=2mm]},line width=1pt] (-3,-2) -- (-4,-1);
\draw [-{Latex[length=2mm]},line width=1pt] (-5,-2) -- (-4,-1);
\draw [-{Latex[length=2mm]},line width=1pt] (-5,-2) -- (-6,-1);
\draw (2,-1) node[anchor=north west] {=};
\draw [-{Latex[length=2mm]},line width=1pt,color=qqqqff] (5,3) -- (6,4);
\draw [-{Latex[length=2mm]},line width=1pt,color=ccqqqq] (7,3) -- (6,4);
\draw [-{Latex[length=2mm]},line width=1pt,color=qqqqff] (7,3) -- (8,4);
\draw [-{Latex[length=2mm]},line width=1pt,color=ccqqqq] (9,3) -- (8,4);
\draw [-{Latex[length=2mm]},line width=1pt,color=qqqqff] (9,3) -- (10,4);
\draw [-{Latex[length=2mm]},line width=1pt,color=ccqqqq] (11,3) -- (10,4);
\draw [-{Latex[length=2mm]},line width=1pt,color=ccqqqq] (9,1) -- (10,2);
\draw [-{Latex[length=2mm]},line width=1pt,color=qqqqff] (11,1) -- (10,2);
\draw [-{Latex[length=2mm]},line width=1pt] (6,0) -- (5,1);
\draw [-{Latex[length=2mm]},line width=1pt] (6,0) -- (7,1);
\draw [-{Latex[length=2mm]},line width=1pt] (8,0) -- (7,1);
\draw [-{Latex[length=2mm]},line width=1pt] (8,0) -- (9,1);
\draw [-{Latex[length=2mm]},line width=1pt] (10,0) -- (9,1);
\draw [-{Latex[length=2mm]},line width=1pt] (10,0) -- (11,1);
\draw [-{Latex[length=2mm]},line width=1pt] (12,0) -- (11,1);
\draw [-{Latex[length=2mm]},line width=1pt,color=qqqqff] (5,-1) -- (6,0);
\draw [-{Latex[length=2mm]},line width=1pt,color=ccqqqq] (7,-1) -- (6,0);
\draw [-{Latex[length=2mm]},line width=1pt,color=qqqqff] (7,-1) -- (8,0);
\draw [-{Latex[length=2mm]},line width=1pt,color=ccqqqq] (9,-1) -- (8,0);
\draw [-{Latex[length=2mm]},line width=1pt,color=qqqqff] (9,-1) -- (10,0);
\draw [-{Latex[length=2mm]},line width=1pt,color=ccqqqq] (11,-1) -- (10,0);
\draw [-{Latex[length=2mm]},line width=1pt,color=qqqqff] (11,-1) -- (12,0);
\draw [-{Latex[length=2mm]},line width=1pt,color=ccqqqq] (5,1) -- (6,2);
\draw [-{Latex[length=2mm]},line width=1pt,color=qqqqff] (7,1) -- (6,2);
\draw [-{Latex[length=2mm]},line width=1pt,color=ccqqqq] (7,1) -- (8,2);
\draw [-{Latex[length=2mm]},line width=1pt,color=qqqqff] (9,1) -- (8,2);
\draw [-{Latex[length=2mm]},line width=1pt] (18,2) -- (17,3);
\draw [-{Latex[length=2mm]},line width=1pt] (18,2) -- (19,3);
\draw [-{Latex[length=2mm]},line width=1pt] (20,2) -- (19,3);
\draw [-{Latex[length=2mm]},line width=1pt] (20,2) -- (21,3);
\draw [-{Latex[length=2mm]},line width=1pt] (22,2) -- (21,3);
\draw [-{Latex[length=2mm]},line width=1pt] (22,2) -- (23,3);
\draw [-{Latex[length=2mm]},line width=1pt,color=ccqqqq] (-6,-3) -- (-5,-2);
\draw [-{Latex[length=2mm]},line width=1pt,color=qqqqff] (-4,-3) -- (-5,-2);
\draw [-{Latex[length=2mm]},line width=1pt,color=ccqqqq] (-4,-3) -- (-3,-2);
\draw [-{Latex[length=2mm]},line width=1pt,color=qqqqff] (-2,-3) -- (-3,-2);
\draw [-{Latex[length=2mm]},line width=1pt,color=ccqqqq] (-2,-3) -- (-1,-2);
\draw [-{Latex[length=2mm]},line width=1pt,color=qqqqff] (0,-3) -- (-1,-2);
\draw [-{Latex[length=2mm]},line width=1pt,color=ccqqqq] (0,-3) -- (1,-2);
\draw [-{Latex[length=2mm]},line width=1pt] (-5,-4) -- (-6,-3);
\draw [-{Latex[length=2mm]},line width=1pt] (-5,-4) -- (-4,-3);
\draw [-{Latex[length=2mm]},line width=1pt] (-3,-4) -- (-4,-3);
\draw [-{Latex[length=2mm]},line width=1pt] (-3,-4) -- (-2,-3);
\draw [-{Latex[length=2mm]},line width=1pt] (-1,-4) -- (-2,-3);
\draw [-{Latex[length=2mm]},line width=1pt] (-1,-4) -- (0,-3);
\draw [-{Latex[length=2mm]},line width=1pt] (1,-4) -- (0,-3);
\draw [-{Latex[length=2mm]},line width=1pt,color=ccqqqq] (5,-3) -- (6,-2);
\draw [-{Latex[length=2mm]},line width=1pt,color=qqqqff] (7,-3) -- (6,-2);
\draw [-{Latex[length=2mm]},line width=1pt,color=ccqqqq] (7,-3) -- (8,-2);
\draw [-{Latex[length=2mm]},line width=1pt,color=qqqqff] (9,-3) -- (8,-2);
\draw [-{Latex[length=2mm]},line width=1pt,color=ccqqqq] (9,-3) -- (10,-2);
\draw [-{Latex[length=2mm]},line width=1pt,color=qqqqff] (11,-3) -- (10,-2);
\draw [-{Latex[length=2mm]},line width=1pt] (6,-4) -- (5,-3);
\draw [-{Latex[length=2mm]},line width=1pt] (6,-4) -- (7,-3);
\draw [-{Latex[length=2mm]},line width=1pt] (8,-4) -- (7,-3);
\draw [-{Latex[length=2mm]},line width=1pt] (8,-4) -- (9,-3);
\draw [-{Latex[length=2mm]},line width=1pt] (10,-4) -- (9,-3);
\draw [-{Latex[length=2mm]},line width=1pt] (10,-4) -- (11,-3);
\draw [-{Latex[length=2mm]},line width=1pt] (12,-4) -- (11,-3);
\draw [-{Latex[length=2mm]},line width=1pt] (18,-2) -- (17,-1);
\draw [-{Latex[length=2mm]},line width=1pt] (18,-2) -- (19,-1);
\draw [-{Latex[length=2mm]},line width=1pt] (20,-2) -- (19,-1);
\draw [-{Latex[length=2mm]},line width=1pt] (20,-2) -- (21,-1);
\draw [-{Latex[length=2mm]},line width=1pt] (22,-2) -- (21,-1);
\draw [-{Latex[length=2mm]},line width=1pt] (22,-2) -- (23,-1);
\draw [-{Latex[length=2mm]},line width=1pt] (24,-2) -- (23,-1);
\draw (13,-1) node[anchor=north west] {$\times$};
\draw [-{Latex[length=2mm]},line width=1pt,color=qqqqff] (-6,-5) -- (-5,-4);
\draw [-{Latex[length=2mm]},line width=1pt,color=ccqqqq] (-4,-5) -- (-5,-4);
\draw [-{Latex[length=2mm]},line width=1pt,color=qqqqff] (-4,-5) -- (-3,-4);
\draw [-{Latex[length=2mm]},line width=1pt,color=ccqqqq] (-2,-5) -- (-3,-4);
\draw [-{Latex[length=2mm]},line width=1pt,color=qqqqff] (-2,-5) -- (-1,-4);
\draw [-{Latex[length=2mm]},line width=1pt,color=ccqqqq] (0,-5) -- (-1,-4);
\draw [-{Latex[length=2mm]},line width=1pt,color=qqqqff] (0,-5) -- (1,-4);
\draw [-{Latex[length=2mm]},line width=1pt] (-5,-6) -- (-6,-5);
\draw [-{Latex[length=2mm]},line width=1pt] (-5,-6) -- (-4,-5);
\draw [-{Latex[length=2mm]},line width=1pt] (-3,-6) -- (-4,-5);
\draw [-{Latex[length=2mm]},line width=1pt] (-3,-6) -- (-2,-5);
\draw [-{Latex[length=2mm]},line width=1pt] (-1,-6) -- (-2,-5);
\draw [-{Latex[length=2mm]},line width=1pt] (-1,-6) -- (0,-5);
\draw [-{Latex[length=2mm]},line width=1pt] (1,-6) -- (0,-5);
\draw [-{Latex[length=2mm]},line width=1pt,color=qqqqff] (5,-5) -- (6,-4);
\draw [-{Latex[length=2mm]},line width=1pt,color=ccqqqq] (7,-5) -- (6,-4);
\draw [-{Latex[length=2mm]},line width=1pt,color=qqqqff] (7,-5) -- (8,-4);
\draw [-{Latex[length=2mm]},line width=1pt,color=ccqqqq] (9,-5) -- (8,-4);
\draw [-{Latex[length=2mm]},line width=1pt,color=qqqqff] (9,-5) -- (10,-4);
\draw [-{Latex[length=2mm]},line width=1pt,color=ccqqqq] (11,-5) -- (10,-4);
\draw [-{Latex[length=2mm]},line width=1pt,color=qqqqff] (11,-5) -- (12,-4);
\draw [-{Latex[length=2mm]},line width=1pt] (18,-6) -- (17,-5);
\draw [-{Latex[length=2mm]},line width=1pt] (18,-6) -- (19,-5);
\draw [-{Latex[length=2mm]},line width=1pt] (20,-6) -- (19,-5);
\draw [-{Latex[length=2mm]},line width=1pt] (20,-6) -- (21,-5);
\draw [-{Latex[length=2mm]},line width=1pt] (22,-6) -- (21,-5);
\draw [-{Latex[length=2mm]},line width=1pt] (22,-6) -- (23,-5);
\draw [-{Latex[length=2mm]},line width=1pt] (24,-6) -- (23,-5);
\draw [line width=1pt,dashed] (4,2.5)-- (10.5,2.5);
\draw [line width=1pt,dashed] (10.5,2.5)-- (10.5,-0.5);
\draw [line width=1pt,dashed] (10.5,-0.5)-- (11.5,-0.5);
\draw [line width=1pt,dashed] (11.5,-0.5)-- (11.5,-1.5);
\draw [line width=1pt,dashed] (11.5,-1.5)-- (10.5,-1.5);
\draw [line width=1pt,dashed] (10.5,-1.5)-- (10.5,-4.5);
\draw [line width=1pt,dashed] (10.5,-4.5)-- (11.5,-4.5);
\draw [line width=1pt,dashed] (11.5,-4.5)-- (11.5,-5.5);
\draw [line width=1pt,dashed] (11.5,-5.5)-- (4,-5.5);
\draw [line width=1pt,dashed] (4,-5.5)-- (4,2.5);
\draw [line width=1pt,dashed] (16.03618450638755,2.484732635153282)-- (22.5,2.5);
\draw [line width=1pt,dashed] (22.5,2.5)-- (22.5,-0.5);
\draw [line width=1pt,dashed] (22.5,-0.5)-- (23.5,-0.5);
\draw [line width=1pt,dashed] (23.5,-0.5)-- (23.5,-1.5);
\draw [line width=1pt,dashed] (23.5,-1.5)-- (22.5,-1.5);
\draw [line width=1pt,dashed] (22.5,-1.5)-- (22.5,-4.5);
\draw [line width=1pt,dashed] (22.5,-4.5)-- (23.5,-4.5);
\draw [line width=1pt,dashed] (23.5,-4.5)-- (23.5,-5.5);
\draw [line width=1pt,dashed] (23.5,-5.5)-- (16,-5.5);
\draw [line width=1pt,dashed] (16,-5.5)-- (16,2.5);
\draw [-{Latex[length=2mm]},line width=1pt] (-1,0) -- (0,1);
\draw [-{Latex[length=2mm]},line width=1pt] (-2,-1) -- (-1,0);
\draw [-{Latex[length=2mm]},line width=1pt] (0,-1) -- (-1,0);
\draw [-{Latex[length=2mm]},line width=1pt] (-4,1) -- (-3,2);
\draw [-{Latex[length=2mm]},line width=1pt] (-2,1) -- (-3,2);
\draw [-{Latex[length=2mm]},line width=1pt] (-2,1) -- (-1,2);
\draw [-{Latex[length=2mm]},line width=1pt] (1,0) -- (0,1);
\draw [line width=1pt,dashed] (-0.5,2.5)-- (-7,2.5);
\draw [line width=1pt,dashed] (-7,2.5)-- (-7,-5.5);
\draw [line width=1pt,dashed] (-7,-5.5)-- (0.5,-5.5);
\draw [line width=1pt,dashed] (0.5,-5.5)-- (0.5,-4.5);
\draw [line width=1pt,dashed] (0.5,-4.5)-- (-0.5,-4.5);
\draw [line width=1pt,dashed] (-0.5,-4.5)-- (-0.5,-1.5);
\draw [line width=1pt,dashed] (-0.5,-1.5)-- (0.5,-1.5);
\draw [line width=1pt,dashed] (0.5,-1.5)-- (0.5,-0.5);
\draw [line width=1pt,dashed] (0.5,-0.5)-- (-0.5,-0.5);
\draw [line width=1pt,dashed] (-0.5,-0.5)-- (-0.5,2.5);
\begin{scriptsize}
\end{scriptsize}
\end{tikzpicture}
    \caption{Proof Scheme: The Gibbs measure on $\Theta_{2,4}$ domain (left figure) can be decomposed into two parts: One is the combination of the top colored row and 2 $\irw$s (middle figure) and two are the remaining black weights (right figure) which will be viewed as a Radon-Nikodym derivative. Here note that in the middle figure, the only contribution from the top row comes from the odd points, $H_N^{(1)}(2j-1)$ for $j \in \ll 1, T\rr$, which are set to $\infty$. Thus, their contribution to \eqref{e:wb} from \eqref{eq:gibbs} would be $\exp(-e^{-\infty}) = 1.$ 
    }
    \label{figsm2}
\end{figure}

\medskip

\noindent\textbf{Step 4.} Finally in this step, we provide an upper bound for the right-hand side of \eqref{e:gibb2} by bounding its numerator and denominator separately. Let us consider the event:
\begin{align*}
    \m{G} & := \bigcap_{i=1}^k\left\{\sup_{p\in \ll1,2T-1\rr}|L_{2i}(p)-x_{2i,2T-1}|+\sup_{q\in \ll1,2T\rr}|L_{2i+1}(q)-x_{2i,2T-1}|\le M_0\sqrt{T}\right\}.
\end{align*}
where $M_0$ comes from Proposition \ref{p:irw}. From the description of the Gibbs measure, it is clear that if $(L_{2i}(\cdot),L_{2i+1}(\cdot))$ is an $\irw$ with boundary condition $(x_{2i,2T-1},x_{2i,2T-1}-\sqrt{T})$, then $(L_{2i}(\cdot)-x_{2i,2T-1}, L_{2i+1}(\cdot)-x_{2i,2T-1})$ is an $\irw$ with boundary condition $(0,-\sqrt{T})$. Thus, appealing to Proposition \ref{p:irw}, we see that $$\pc^{\vec{x}}(\m{G}) \ge (1-\e)^{k} \ge 1-k\e.$$ Let us assume $N$ is large enough so that $\sqrt{N}-2M_0\sqrt{T} \ge \frac12\sqrt{N}$ (recall $T=O(\sqrt{N})$). Observe that under the event $\m{G}$, we have for all $p\le 2T-1$ $$L_{2}(p) \le x_{2,2T-1}+M_0\sqrt{T}=RN-(M_2+1)\sqrt{N}+M_0\sqrt{T} \le RN-M_2\sqrt{N}.$$ Thus, $\m{E}$ defined in \eqref{e:tshw} holds. This implies $\neg \m{E} \subset \neg\m{G}$. Hence
\begin{align}\label{e:tu}
    \ec^{\vec{x}}[\wb \ind_{\neg \m{E}}] \le \pc^{\vec{x}}(\neg \m{E}) \le \pc^{\vec{x}}(\neg \m{G}) \le k\e.
\end{align}
On the event $\m{G}$, for all $p\in \ll1,2T-1\rr$
 and $q\in \ll1,2T \rr$ we have
$$L_{2i+2}(p)-L_{2i+1}(q) \le 2M_0\sqrt{T}+x_{2i+2,2T-1}-x_{2i,2T-1} =2M_0\sqrt{T}-\sqrt{N} \le -\tfrac12\sqrt{N}.$$
This implies $\wb \ge \exp(-k(2T-1)e^{-\frac12\sqrt{N}}) \ge (1-\e)$ for large enough $N$ (recall $T=O(\sqrt{N})$) on the event $\m{G}$. 
Thus, $\ec^{\vec{x}}[\wb] \ge (1-\e)\pc^{\vec{x}}(\m{G}) \ge (1-\e)^{k+1}$. Inserting this bound and the bound in \eqref{e:tu} back in \eqref{e:gibb2} we get that $\pg^{\vec{x}}(\neg\m{E})\le \frac{k\e}{(1-\e)^{k+1}}.$ Combining this bound with \eqref{eq:stoc} yields \eqref{nebd}. This completes the proof.
\end{proof}

\section{Proof of main theorems}\label{sec:mainpf}

In this section, we prove our main theorems, Theorems \ref{t:bdpt}, \ref{t:walk}, \ref{t:qdistn}, and \ref{t:fluc}. \sd{In Section \ref{sec.2tech}, we first present a few supporting technical results. Next in Section \ref{sec.maintech} we complete the proof of our main theorems by assuming a technical proposition (Proposition \ref{p:tech}) which is proved in Section \ref{sec:tech}.} 

\subsection{Preparatory lemmas} \label{sec.2tech}
The first lemma settles a weaker version of Theorem \ref{t:bdpt}. Recall the polymer measure $\Pr^{W}$ from \eqref{eq:Gibbs}, the partition function $\zh(m,n)$ from \eqref{eq:hpp}, and the $\hslg$ line ensemble $H_N$ from Definition \ref{hslg}. Note that the quenched distribution of the endpoint of the polymer is related via
\begin{align}
    \label{eq:bdpt}
    \Pr^{W}(\pi(2N-2) = N-r)= \frac{\zh(N+r,N-r)}{\sum_{p=0}^{N-1} \zh(N+p,N-p)}=\frac{e^{H_N^{(1)}(2r+1)}}{\sum_{p=0}^{N-1} e^{H_N^{(1)}(2p+1)}}.
\end{align}
where the second equality follows from the relation \eqref{eq:rem}. Recalling  $\zt(k) = \sum_{p=k}^{N-1} e^{H_N^{(1)}(2p+1)}$ from \eqref{defzt}, we obtain $$\Pr^{W}(\pi(2N-2) \le N-k)=\frac{\zt(k)}{\zt(0)}.$$ Theorem \ref{t:bdpt} claims that this quenched probability decays as $N\to \infty$ followed by $k\to \infty$. Lemma \ref{l:deep} takes $k=\lfloor M\sqrt{N}\rfloor$. For notational convenience, we assume all the multiples of $\sqrt{N}$ appearing in the proofs in this section are integers. The general case follows verbatim by considering the floor function. 

\begin{lemma}\label{l:deep}
 Fix $\e > 0$ and recall $\zt(\cdot)$ from Theorem \ref{t:bw}. There exist constants $M(\e)>0, N_1(\e)>0$ such that for all $N\ge N_1$,
    \begin{align}\label{eq:ldeep}
\Pr\left(\frac{ \zt (M\sqrt{N})}{\zt (0)}\le e^{-\sqrt{N}}\right) \ge 1-\tfrac12\e.
    \end{align}
\end{lemma}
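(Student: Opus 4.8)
The plan is to show that the point-to-(partial)line partition function at level $M\sqrt N$ is exponentially smaller than the full point-to-line partition function, by combining the Gaussian-type fluctuation estimates for $\zt(\cdot)$ from Theorem~\ref{t:bw} (applied at two different levels) with the trivial lower bound $\zt(0)\ge e^{H_N^{(1)}(1)}=\zh(N,N)$, together with the one-point control on $\zh(N,N)$ coming from \eqref{eq:h1} in the proof of Lemma~\ref{l:log}. More precisely, I would first apply Theorem~\ref{t:bw} with $g(N)=M\sqrt N$ (exactly as in \eqref{eq:gauconv2}) to get that with probability at least $1-\tfrac14\e$,
\begin{align*}
\log\zt(M\sqrt N)\le RN-\big(M\tau-C_\e\big)\sqrt N,
\end{align*}
where $C_\e:=\Phi^{-1}(1-\tfrac{\e}{8})\sigma$ and $\tau=\Psi(\theta-\alpha)-\Psi(\theta+\alpha)>0$ since $\Psi$ is strictly concave and $\alpha<0$. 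Then I would bound $\zt(0)$ from below: since $\zt(0)=\sum_{p=0}^{N-1}e^{H_N^{(1)}(2p+1)}\ge e^{H_N^{(1)}(1)}=Z(N,N)$, and since $\log Z(N,N)/N\to R$ (this is the content of the one-point law of large numbers on the diagonal established within the proof of Lemma~\ref{l:log}, via \eqref{eq:h1} combined with Theorem~\ref{t:bw} and Theorem~\ref{t:order}; alternatively one may simply cite \eqref{e:gaufl}), we have with probability at least $1-\tfrac14\e$ that $\log\zt(0)\ge RN-\sqrt N$ for all large $N$.

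Combining the two bounds on the event of probability at least $1-\tfrac12\e$ gives
\begin{align*}
\log\frac{\zt(M\sqrt N)}{\zt(0)}\le -\big(M\tau-C_\e\big)\sqrt N+\sqrt N=-\big(M\tau-C_\e-1\big)\sqrt N,
\end{align*}
so it suffices to choose $M=M(\e)$ large enough that $M\tau-C_\e-1\ge 1$, i.e. $M\ge \tfrac{1}{\tau}(C_\e+2)$; this makes the right-hand side at most $-\sqrt N$, which is \eqref{eq:ldeep}. A small technical point to handle carefully is that the lower bound on $\zt(0)$ needs to be genuinely high-probability rather than merely a convergence-in-distribution statement; this is why I would route it through the law of large numbers $\log Z(N,N)/N\stackrel{p}{\to}R$ (equivalently \eqref{eq:h1}), which gives $\Pr(\log Z(N,N)\ge (R-\delta)N)\to 1$ for any $\delta>0$, rather than through \eqref{eq:gauconv2} directly at $g(N)=1$ (which would only give the upper tail). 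Taking $\delta$ so that $\delta N\ll\sqrt N$ is automatic since $\delta$ is fixed and we only need $\log\zt(0)\ge RN-\sqrt N$ eventually; in fact any $\delta<1/\sqrt N$ works for large $N$, but it is cleaner to note that $\Pr(\log Z(N,N)\ge RN-\sqrt N)\to 1$ follows from the LLN since $\sqrt N=o(N)$.

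The main obstacle — and it is a mild one — is ensuring that the two fluctuation inputs are applied with compatible error budgets and that the constant $M$ is chosen \emph{after} $\e$ (which is fine, as the statement quantifies $M=M(\e)$). There is no genuine difficulty here: the proof is essentially a one-line comparison once one has (i) the upper-tail bound $\log\zt(M\sqrt N)\le RN-(M\tau-O_\e(1))\sqrt N$ from Theorem~\ref{t:bw}, and (ii) the lower bound $\log\zt(0)\ge RN-o(N)$ from the diagonal law of large numbers. I would also remark that the positivity and size of $\tau$ (bounded below by a constant depending only on $\theta,\alpha$) is what makes the gap grow linearly in $M$, which is precisely what lets us absorb the Gaussian fluctuation term $C_\e\sqrt N$ by taking $M$ large; this uses strict concavity of $\Psi$, guaranteed by the explicit formula \eqref{deripsi} for $\Psi^{(2)}$.
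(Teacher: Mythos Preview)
Your overall strategy matches the paper's --- apply Theorem~\ref{t:bw} at level $M\sqrt N$ for the upper bound and couple it with a lower bound on $\log\zt(0)$ --- but there is a genuine gap in the lower-bound step. You claim that $\Pr(\log Z(N,N)\ge RN-\sqrt N)\to 1$ ``follows from the LLN since $\sqrt N=o(N)$.'' This is the wrong inclusion: the LLN gives $\Pr(\log Z(N,N)\ge RN-\delta N)\to 1$ for each fixed $\delta>0$, and since $\sqrt N<\delta N$ eventually, the event $\{\log Z(N,N)\ge RN-\sqrt N\}$ is a \emph{subset} of the LLN event, so its probability is not forced to $1$. In fact the CLT \eqref{e:gaufl} shows this probability converges to $\Phi(1/\sigma)<1$, so your bound fails for all sufficiently small $\e$.

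The repair is to let the constant in front of $\sqrt N$ depend on $\e$: either use \eqref{e:gaufl} to get $\Pr(\log Z(N,N)\ge RN-P\sigma\sqrt N)\ge 1-\tfrac\e4$ with $P=\Phi^{-1}(1-\tfrac\e8)+1$, or --- more cleanly, and this is what the paper does --- bypass $Z(N,N)$ altogether, note $\zt(0)\ge\zt(1)$, and apply Theorem~\ref{t:bw} a second time with $g\equiv 1$ to obtain the same inequality directly. Either way the final choice $M\ge\tfrac1\tau(2P\sigma+1)$ closes the argument. (Minor: $\tau>0$ follows from strict \emph{monotonicity} of $\Psi$, i.e.\ $\Psi'>0$ via \eqref{deripsi}, not from concavity.)
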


\begin{proof} Fix $\e \in (0, 1)$.  Recall $\sigma$ from \eqref{def:dk} Taking $g=1$ and $g=M\sqrt{N}$ in Theorem \ref{t:bw} yields
 \begin{align*}
    \frac{1}{\sigma\sqrt{N}}\left[\log \zt(1)-RN\right] \stackrel{d}{\to} \mathcal{N}\big(0,1\big),
\end{align*} 
  \begin{align}\label{eq:gauconv}
    \frac{1}{\sigma\sqrt{N}}\left[\log \zt(M\sqrt{N})-RN+M\tau\sqrt{N}\right] \stackrel{d}{\to} \mathcal{N}\big(0,1\big)
\end{align}  
respectively, where $R, \sigma, \tau$ are defined in \eqref{def:dk}. Let us set $P:= P(\e) = \Phi^{-1}(1- \tfrac{\e}{8}) + 1$, where $\Phi(\cdot)$ is the cumulative distribution function of $\mathcal{N}(0,1)$. For all large enough $N$ we have
\begin{align*}
    \Pr\left(\log \zt(0)\ge RN -P\sigma\sqrt{N}\right) \ge \Pr\left(\log \zt(1)\ge RN -P\sigma\sqrt{N}\right) \ge 1-\tfrac{\e}4,
\end{align*}
\begin{align*}
    \Pr\left(\log \zt(M\sqrt{N})\le RN-M\tau\sqrt{N}+ P\sigma\sqrt{N}\right) \ge 1-\tfrac{\e}4.
\end{align*}
Applying a union bound gives us
\begin{align*}
    \Pr\left(\log \zt(M\sqrt{N})+(M\tau-2P\sigma)\sqrt{N}\le \log\zt(0)\right) \ge 1-\tfrac\e2,
\end{align*}
for all large enough $N$. Taking $M:=\frac1\tau(2P\sigma +1)$ in above equation leads to \eqref{eq:ldeep}. This completes the proof. 
\end{proof}

\sd{Recall our discussion in Section \ref{sec1.2.4} and Figure \ref{f.tri1}. We refer to the region $\ll N-M\sqrt{N}, N-k\rr$ and the region $\ll 1, N-M\sqrt{N}\rr$ as the shallow tail and deep tail regions respectively (see Figure \ref{f.tri1}). Lemma \ref{l:deep} implies that with high probability the quenched probability of $\pi(2N-2)$ living in the deep tail region is exponentially small. Thus the mass accumulates in the window of size $M\sqrt{N}$ below the point $(N, N)$. To establish Theorem \ref{t:bdpt}, we thus have to show the mass in the shallow tail also goes to zero. For convenience, in our proofs below we shall often refer to the point $(N+M\sqrt{N}, N-M\sqrt{N})$ as the {deep tail starting point}. Given the connection in \eqref{eq:rem}, the {deep tail starting point} corresponds to $(2M\sqrt{N}+1)$-th point for the top curve $H_N^{(1)}(\cdot)$ of the $\hslg$ line ensemble. So, in the coordinates of the $\hslg$ line ensemble, we shall refer to $2M\sqrt{N}+1$ as the deep tail starting point.}

\medskip

Below, we record another important preparatory lemma which claims the existence of a ``high point" in $H_N^{(1)}(\cdot)$ not far after the deep tail starting point (see Figure \ref{fig:1stcurve}). 

\begin{lemma}\label{l:high}
     Fix any $\e > 0$ and recall $R, \tau$ from \eqref{def:dk}. There exists a constant $M_0(\e)>0$ such that for all $M \ge M_0$, there exists $N_0(\e,M)$ such that for all $N\ge N_0,$
     \begin{align}\label{eq:highp}
     \Pr\left(\sup_{p\in \ll M\sqrt{N}, 2M\sqrt{N}\rr} H_N^{(1)}(2p+1) \ge RN- \tfrac52M\tau\sqrt{N}\right) \ge 1- \tfrac12\e,
    \end{align}
where $\tau:=\Psi(\theta-\alpha)-\Psi(\theta+\alpha).$
\end{lemma}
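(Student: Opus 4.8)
\textbf{Proof idea for Lemma \ref{l:high}.}
The plan is to extract a high point on the top curve $H_N^{(1)}$ from the Gaussian fluctuations of the point-to-(partial)line partition functions supplied by Theorem \ref{t:bw}. The key observation is that, by definition \eqref{defzt}, for any $m \in \ll 1, N-1\rr$ we have the telescoping identity $\zt(m) - \zt(m') = \sum_{p=m}^{m'-1} e^{H_N^{(1)}(2p+1)}$ for $m < m'$, and in particular
\begin{align*}
    \zt(M\sqrt{N}) - \zt(2M\sqrt{N}) = \sum_{p=M\sqrt{N}}^{2M\sqrt{N}-1} e^{H_N^{(1)}(2p+1)} \le M\sqrt{N} \cdot \exp\Big(\sup_{p\in \ll M\sqrt{N}, 2M\sqrt{N}\rr} H_N^{(1)}(2p+1)\Big).
\end{align*}
So it suffices to show that with probability at least $1-\tfrac12\e$ the difference $\zt(M\sqrt{N}) - \zt(2M\sqrt{N})$ is at least $\exp(RN - \tfrac52 M\tau \sqrt{N})$ up to the harmless polynomial prefactor $M\sqrt{N}$ (which gets absorbed since $\tfrac52 M\tau \sqrt{N}$ beats $\log(M\sqrt{N})$ for large $N$, and we have room to spare in the exponent).

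First I would apply Theorem \ref{t:bw} twice, with $g(N) = M\sqrt{N}$ and with $g(N) = 2M\sqrt{N}$ (both satisfy $g(N)/N \to 0$), to get
\begin{align*}
    \frac{1}{\sigma\sqrt{N}}\big[\log \zt(M\sqrt{N}) - RN + M\tau\sqrt{N}\big] \stackrel{d}{\to} \mathcal{N}(0,1), \quad
    \frac{1}{\sigma\sqrt{N}}\big[\log \zt(2M\sqrt{N}) - RN + 2M\tau\sqrt{N}\big] \stackrel{d}{\to} \mathcal{N}(0,1).
\end{align*}
Choosing $M_0$ (hence $P := \Phi^{-1}(1-\tfrac{\e}{8})$ and then $M \ge M_0$) large enough, one gets that with probability at least $1 - \tfrac14\e$ we have $\log \zt(M\sqrt{N}) \ge RN - M\tau\sqrt{N} - P\sigma\sqrt{N} \ge RN - \tfrac{3}{2}M\tau\sqrt{N}$ (using $P\sigma \le \tfrac12 M\tau$ once $M \ge M_0$), and similarly with probability at least $1 - \tfrac14\e$ we have $\log \zt(2M\sqrt{N}) \le RN - 2M\tau\sqrt{N} + P\sigma\sqrt{N} \le RN - \tfrac{3}{2}M\tau\sqrt{N} - \tfrac{c}{2}M\tau\sqrt{N}$ for a suitable constant $c>0$ (concretely, $\le RN - 2M\tau\sqrt{N} + \tfrac12 M\tau\sqrt{N} = RN - \tfrac32 M\tau\sqrt{N}$, but with a strict gap). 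On the intersection of these two events (probability at least $1-\tfrac12\e$), the difference $\zt(M\sqrt{N}) - \zt(2M\sqrt{N})$ is bounded below by $\zt(M\sqrt{N})\big(1 - \zt(2M\sqrt{N})/\zt(M\sqrt{N})\big) \ge \tfrac12 \zt(M\sqrt{N}) \ge \tfrac12 \exp(RN - \tfrac{3}{2}M\tau\sqrt{N})$, once the ratio is bounded away from $1$ (which follows from the strict gap in the exponents, enlarging $M_0$ if needed). Then
\begin{align*}
    \exp\Big(\sup_{p} H_N^{(1)}(2p+1)\Big) \ge \frac{\zt(M\sqrt{N}) - \zt(2M\sqrt{N})}{M\sqrt{N}} \ge \frac{\exp(RN - \tfrac32 M\tau\sqrt{N})}{2M\sqrt{N}} \ge \exp(RN - \tfrac52 M\tau\sqrt{N})
\end{align*}
for all $N$ large enough, since $\tfrac52 M\tau\sqrt{N} - \tfrac32 M\tau\sqrt{N} = M\tau\sqrt{N}$ dominates $\log(2M\sqrt{N})$. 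Taking logarithms gives \eqref{eq:highp}.

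The main technical point — really the only one requiring care — is making sure $\tau > 0$ so that the two windows $\zt(M\sqrt{N})$ and $\zt(2M\sqrt{N})$ are genuinely separated in the exponent; this holds because $\tau = \Psi(\theta-\alpha) - \Psi(\theta+\alpha) > 0$ as $\Psi$ is strictly increasing (equivalently $\Psi' > 0$ by \eqref{deripsi}) and $\alpha < 0$. Beyond that, one just needs to track constants so that the Gaussian quantile $P\sigma$, the polynomial prefactor $\log(M\sqrt{N})$, and the slack $\tfrac52 M\tau\sqrt{N}$ versus $\tfrac32 M\tau\sqrt{N}$ all line up; choosing $M_0$ depending on $\e$ first and then allowing any $M \ge M_0$, with $N_0$ depending on both $\e$ and $M$, handles this cleanly.
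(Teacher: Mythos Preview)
Your proposal is correct and follows essentially the same approach as the paper: apply Theorem \ref{t:bw} at $g(N)=M\sqrt{N}$ and $g(N)=2M\sqrt{N}$, use the resulting $\Theta(M\tau\sqrt{N})$ gap in the exponents (relying on $\tau>0$) to show $\zt(M\sqrt{N})-\zt(2M\sqrt{N})$ is a positive fraction of $\zt(M\sqrt{N})$, and then bound the sup of $H_N^{(1)}(2p+1)$ from below via the pigeonhole/averaging inequality over the $O(M\sqrt{N})$ terms in that difference. The only cosmetic difference is that the paper lower-bounds the difference by $\zt(2M\sqrt{N})$ rather than $\tfrac12\zt(M\sqrt{N})$ and tracks the $\e$'s with quantile $\Phi^{-1}(1-\tfrac{\e}{6})$, but the structure and constants line up with yours.
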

%\wz{this lemma originally said $\ge RN - 2M\tau$ and had a proof that supported that?? which i think had a bit of an error halfway through and should be $2.5M\tau$ instead. $3M$ might be a problem with the step 5 of proof of prop \ref{p:tech}; related references in proof of prop \ref{p:tech} has also been changed but please look out if anything got neglected. thank you!!}
\begin{proof}
Let us set $P:= P(\e) = \Phi^{-1}(1- \tfrac{\e}{6}) + 1$, where $\Phi(\cdot)$ is the cumulative distribution function of $\mathcal{N}(0,1)$. In view of \eqref{eq:gauconv}, for all large enough $N$ we have
\begin{align}\label{eq:gauprob}
    \Pr\left(\log \zt(M\sqrt{N})\ge RN-M\tau\sqrt{N} -P\sigma\sqrt{N}\right) \ge 1-\tfrac{\e}6,
\end{align}
\begin{align*}
    \Pr\left(\log \zt(2M\sqrt{N})\le RN-2M\tau\sqrt{N}+ P\sigma\sqrt{N}\right) \ge 1-\tfrac{\e}6.
\end{align*}
Applying a union bound gives us
\begin{align*}
    \Pr\left(\log \zt(2M\sqrt{N})+(M\tau-2P\sigma)\sqrt{N}\le \log\zt(M\sqrt{N})\right) \ge 1-\tfrac{\e}3.
\end{align*}

 Thus for any $M \ge \frac{2P\sigma+1}{\tau}$, we have that with probability at least $1-\tfrac{\e}3$, $\log \zt(2M\sqrt{N}) \le \log\zt(M\sqrt{N})- \sqrt{N}$, which implies $$2\zt(2M\sqrt{N})\le \zt(M\sqrt{N}).$$ However, by definition of $\zt(\cdot)$, the above display implies that with probability at least $1-\tfrac{\e}3$,
 \begin{align*}
\sup_{p\in \ll M\sqrt{N},2M\sqrt{N}\rr} H_N^{(1)}(2p+1) &\ge   \log \left[\tfrac1{2M\sqrt{N}}(\zt(M\sqrt{N})- \zt(2M\sqrt{N}))\right]\\&\ge \log \zt(2M\sqrt{N})-\log(2M\sqrt{N}).
\end{align*}
Note that by the first entry in \eqref{eq:gauprob} with $M$ substituted by $2M$, with probability at least $1-\frac\e6$, we have $\log \zt(2M\sqrt{N})\ge RN-2M\tau\sqrt{N}- P\sigma\sqrt{N}$. For all large enough $N$, we have $RN-(2M\tau+P\sigma)\sqrt{N}-\log(2M\sqrt{N}) \ge RN-\tfrac52M\tau \sqrt{N}$. Thus applying another union bound helps us arrive at \eqref{eq:highp} and complete the proof.   
\end{proof}

\subsection{Proof of Theorems \ref{t:bdpt}, \ref{t:walk}, \ref{t:qdistn}, and \ref{t:fluc}} \label{sec.maintech}
\sd{In this section, we prove our main theorems assuming a technical proposition, described as follows.} Fix any $M,N\ge 1$ and assume  $M\sqrt{N} \in \Z_{>0}$. For any Borel set  $A$ of $\R^{M\sqrt{N}}$ we consider the event
\begin{align}\label{defeva}
    \m{A} = \left\{(H_N^{(1)}(1)-H_N^{(1)}(2r+1))_{r=1}^{M\sqrt{N}}\in A\right\}.
\end{align}
for $N>M^2+1$. Let $(S_r)_{r=0}^{M\sqrt{N}}$ be the log-gamma random walk defined in Definition \ref{def:lgrw}. We write
\begin{align}\label{def:rwev}
    \Pr_{RW}(\m{A}):= \Pr\left( (S_r)_{r=1}^{M\sqrt{N}} \in A\right)
\end{align}

The technical proposition below is the main crux of our proofs and it claims that $\Pr$ and $\Pr_{RW}$ are close to each other when $N$ is large. We postpone its proof to Section \ref{sec:tech}.

\begin{proposition}\label{p:tech} Fix any $\e \in (0,\frac12)$. Set $M(\e)>0, N_1(\e)>0$ such that Lemma \ref{l:deep} and Lemma \ref{l:high} hold simultaneously for all $N \ge N_1$ for this fixed choice of $M$. Then there exists $N_0(\e)>0$ such that for all $N\ge N_0$,
\begin{align}\label{eq:tech}
    |\Pr(\m{A})-\Pr_{RW}(\m{A})| \le 9\e,
\end{align}
where $\m{A}$ and $\Pr_{RW}(\m{A})$ are defined in \eqref{defeva} and \eqref{def:rwev}.
\end{proposition}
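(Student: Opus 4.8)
\textbf{Proof proposal for Proposition~\ref{p:tech}.}

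The plan is to couple the top curve $H_N^{(1)}$ on the window $\ll 1,2M\sqrt N+1\rr$ with a genuine log-gamma random walk by exploiting the Gibbs property (Theorem~\ref{t:gibbs}) together with the separation of the first and second curves established in Section~\ref{sec:2nd}. The key observation is that, conditionally on the second curve $H_N^{(2)}$ (and on $H_N^{(1)}$ outside the window), the law of $\{H_N^{(1)}(j)\}_{j}$ inside the window is, by the density~\eqref{eq:gibbs}, proportional to a product of log-gamma increment weights of the form $G_{\theta\pm\alpha}$ \emph{times} the interaction factor $\prod_j \exp(-e^{H_N^{(2)}(\cdot)-H_N^{(1)}(j)})$. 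If the second curve were uniformly $\gg \log$-scale below the first, every such interaction factor would be within $(1-\e,1]$ of~$1$, and the conditional law of the increments $(H_N^{(1)}(1)-H_N^{(1)}(2r+1))_r$ would be within total-variation distance $O(\e)$ of the log-gamma random walk law. So the proof reduces to: (i) producing an event of probability $\ge 1-O(\e)$ on which $H_N^{(1)}$ is a specified amount above $RN-O(\sqrt N)$ everywhere on the window, while $H_N^{(2)}$ stays below $RN-M_2\sqrt N$ there (for $M_2$ chosen large), so that the gap $H_N^{(1)}(j)-H_N^{(2)}(\cdot)\ge \sqrt N$; and (ii) converting that gap into a Radon–Nikodym comparison between $\Pr$ restricted to $\m A$ and $\Pr_{RW}$ restricted to $A$.

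For step (i), I would first invoke Lemma~\ref{l:high} to obtain, with probability $\ge 1-\tfrac12\e$, a ``high point'' $p_\star\in\ll M\sqrt N,2M\sqrt N\rr$ with $H_N^{(1)}(2p_\star+1)\ge RN-\tfrac52 M\tau\sqrt N$. Then, running a backward random-walk / Kolmogorov maximal-inequality argument under the Gibbs conditional law on $\ll 1,2p_\star+1\rr$ (the top curve there is a log-gamma walk tilted by a nonnegative interaction factor, hence stochastically bounded below by the untilted walk), I would deduce that with probability $\ge 1-O(\e)$, $H_N^{(1)}(q)\ge RN-(\tfrac52 M\tau+C\sqrt{\log N})\sqrt N\ge RN-M'\sqrt N$ for all $q\in\ll 1,2p_\star+1\rr\supseteq\ll 1,2M\sqrt N+1\rr$, for a suitable constant $M'$. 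Simultaneously, Proposition~\ref{p:benchmark} applied with $M_1=M$ and $M_2:=M'+2$ gives, with probability $\ge 1-\tfrac12\e$, that $\sup_{p\in\ll 1,2M\sqrt N+1\rr}H_N^{(2)}(p)\le RN-(M'+2)\sqrt N$. On the intersection of these events, the vertical gap between the curves on the window is at least $2\sqrt N\ge\sqrt N$, so all interaction exponentials $e^{H_N^{(2)}(\cdot)-H_N^{(1)}(j)}$ on the window are $\le e^{-\sqrt N}$.

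For step (ii), let $\m G$ denote that good event and let $\mathcal F$ be the $\sigma$-field generated by $H_N^{(2)}$ and by $H_N^{(1)}$ outside $\ll 1,2M\sqrt N+1\rr$. On $\m G$, Theorem~\ref{t:gibbs} gives that the $\mathcal F$-conditional density of $(H_N^{(1)}(j))$ on the window is $Z^{-1}\exp(-\sum_j e^{H_N^{(2)}(\cdot)-H_N^{(1)}(j)})\prod(\text{log-gamma weights})$; since the extra factor lies in $[\,(1-\e),1\,]$ pointwise on $\m G$ (as $(2M\sqrt N)\,e^{-\sqrt N}\le\e$ for large $N$), the Radon–Nikodym derivative of this conditional law with respect to the (free) log-gamma random walk law of the increments is in $[\,(1-\e),(1-\e)^{-1}\,]$. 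Hence for any Borel $A$,
\[
(1-\e)\,\Pr_{RW}(\m A)-\Pr(\neg\m G)\ \le\ \Pr(\m A\cap\m G)\ \le\ (1-\e)^{-1}\,\Pr_{RW}(\m A),
\]
and combining with $\Pr(\neg\m G)\le O(\e)$ and $|\,(1-\e)^{\pm1}-1\,|\le 2\e$ yields $|\Pr(\m A)-\Pr_{RW}(\m A)|\le 9\e$ after bookkeeping of the constants (this is where the specific constant $9$ is pinned down; one tracks the $\tfrac12\e$ from Lemma~\ref{l:deep}, the $\tfrac12\e$ from Lemma~\ref{l:high}, the $\tfrac12\e$ from Proposition~\ref{p:benchmark}, the contributions from the maximal-inequality step, and the two $2\e$ multiplicative errors). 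The main obstacle I anticipate is step (i): one must combine the \emph{unconditional} fluctuation input of Lemma~\ref{l:high} with a \emph{conditional} lower-tail control of the top curve across the whole window, using that the Gibbsian interaction only pushes the top curve up (stochastic monotonicity, Proposition~\ref{p:gmc}, plus comparison to an unconstrained log-gamma walk) so that Doob/Kolmogorov estimates apply; care is needed because the window length is $\Theta(\sqrt N)$ and the allowed vertical error is also $\Theta(\sqrt N)$, so the $\sqrt{\log N}$ loss from the maximal inequality must be absorbed into the constant $M'$ before feeding it into Proposition~\ref{p:benchmark}.
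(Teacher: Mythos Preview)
Your overall strategy matches the paper's proof closely: condition on the second curve and on the first curve outside a window, identify the conditional law as a log-gamma random walk tilted by the interaction factor $W$, and show $W\approx 1$ on a high-probability event where the two curves are separated by at least $\sqrt N$. The inputs you cite (Lemma~\ref{l:high} for the high point, Proposition~\ref{p:benchmark} for the second-curve ceiling, Kolmogorov's inequality for the free-law lower tail) are exactly the ones the paper uses.

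There is, however, one genuine technical gap. You propose to apply the Gibbs property on the domain $\ll 1,2p_\star+1\rr$, but $p_\star$ is \emph{random} (Lemma~\ref{l:high} guarantees only the existence of a high point, not its location), and Theorem~\ref{t:gibbs} is stated for \emph{fixed} domains; one cannot simply ``condition at $2p_\star+1$'' and run the free-law/Kolmogorov argument. The paper resolves this by writing the high-point event as a disjoint union $\m B=\bigsqcup_{i}\m B_i$, where $\m B_i$ is the event that $i$ is the \emph{largest} index in $\ll M\sqrt N,2M\sqrt N\rr$ with $H_N^{(1)}(2i+1)\ge RN-\tfrac52 M\tau\sqrt N$. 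The crucial point is that $\m B_i\in\mathcal F_i:=\sigma\big(H_N^{(1)}(j):j\ge 2i+1;\ H_N^{(\ge 2)}\big)$, so on each piece one may legitimately invoke the Gibbs property at the fixed boundary $2i+1$, and then sum over $i$ using disjointness. Without this device your Step~(i) is not rigorous, and your Step~(ii)---which conditions at a \emph{different} fixed boundary $2M\sqrt N+1$---would additionally require knowing that $H_N^{(1)}(2M\sqrt N+1)$ itself is high (needed to lower-bound $\Ex_{\operatorname{free}}[W]$), which Lemma~\ref{l:high} does not give directly. The paper in fact bypasses your two-conditioning structure entirely: inside each $\m B_i$ it works only under the free law $\Pr_{\operatorname{free},i}$, bounds $W_i$ from below on the free-law event $\m{Sink}(i)$ via Kolmogorov, and never needs the unconditional lower bound on $H_N^{(1)}$ across the window nor the stochastic-monotonicity comparison you invoke.
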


Given these results, we are ready to prove our main theorems. Theorems \ref{t:walk} and \ref{t:bdpt} are direct applications of the supporting lemmas. For convenience, we shall assume in the proofs below $M\sqrt{N}$ is an integer. The general case follows verbatim by considering floor functions.

{\begin{proof}[Proof of Theorem \ref{t:fluc}] Given \eqref{e:gaufl}, it suffices to check that 
\begin{align*}
    \tfrac{1}{\sqrt{N}}\left(\log Z(N,N)-\log Z(N+a_N,N-a_N)\right) \stackrel{p}{\to} 0,
\end{align*}
where $\{a_N\}_{N\ge 1}$ is a sequence of nonnegative integers less than $N$, with $a_N/\sqrt{N}\to 0$. In light of \eqref{eq:rem}, it boils down to checking
\begin{align*}
    \tfrac{1}{\sqrt{N}}\left(H_N^{(1)}(1)-H_N^{(1)}(2a_N+1)\right) \stackrel{p}{\to} 0.
\end{align*}
But thanks to Proposition \ref{p:tech}, it is equivalent to argue that  $S_{a_N}/{\sqrt{N}} \stackrel{p}{\to} 0$ where $(S_r)_{r\ge 0}$ is the log-gamma random walk defined in Definition \ref{def:lgrw}. Since the increment of the walk has the finite first moment and $a_N/\sqrt{N} \to 0$, by Markov inequality we deduce that $S_{a_N}/{\sqrt{N}} \stackrel{p}{\to} 0$. This establishes Theorem \ref{t:fluc}.
\end{proof}}

\begin{proof}[Proof of Theorem \ref{t:walk}] Take the set $A$ as $(-\infty,x_1]\times (-\infty,x_2]\times \cdots \times (-\infty,x_k]\times \R^{M\sqrt{N}-k}$ in \eqref{defeva}. By Proposition \ref{p:tech}, 
\begin{align*}
    \limsup_{N\to \infty} \left|\Pr\left(\bigcap_{r=1}^k \{H_N^{(1)}(1)-H_N^{(1)}(2r+1)\in (-\infty,x_r]\}\right)-\Pr_{RW}\left(\bigcap_{r=1}^k \{S_r\in (-\infty,x_r]\}\right)\right| \le 9\e,
\end{align*}
where $(S_r)_{r=0}^k$ is defined in Definition \eqref{def:lgrw}.
As $\e$ is arbitrary, this implies
\begin{align*}
 \left( H_N^{(1)}(1)-H_N^{(1)}(2r+1)\right)_{r=0}^k \stackrel{d}{\to} (S_r)_{r=0}^{k}.
\end{align*}
 In conjunction with the relation \eqref{eq:rem}, we get the desired convergence in Theorem \ref{t:walk}.
\end{proof}

\begin{proof}[Proof of Theorem \ref{t:bdpt}] Fix any $\e>0$. Get $M(\e), N_1(\e)>0$ such that Lemma \ref{l:deep} and Lemma \ref{l:high} hold simultaneously for all $N\ge N_1$ for this fixed choice of $M$. Using this $M$ we split the probability as follows 
\begin{equation*}
    \begin{aligned}
 &\Pr^{W}(\pi(2N-2) \le N- k) \\ & \hspace{1cm} = \Pr^{W}\big(\pi(2N-2) \in (N-M\sqrt{N}, N-k]\big)+\Pr^{W}\big(\pi(2N-2) \le N- M\sqrt{N}\big).
\end{aligned}
\end{equation*}
For the first term observe that by \eqref{eq:bdpt}
\begin{align*}
 \Pr^{W}\big(\pi(2N-2) \in (N-M\sqrt{N}, N-k]\big)  & =\frac{\sum_{p =k}^{M\sqrt{N}-1} e^{H_N^{(1)}(2p+1)}}{\sum_{p = 0}^{N-1} e^{H_N^{(1)}(2p+1)}} \\ &\le  \frac{\sum_{p =k}^{ M\sqrt{N}} e^{H_N^{(1)}(2p+1)}}{\sum_{p = 0}^{M\sqrt{N}} e^{H_N^{(1)}(2p+1)}}  =  \frac{\sum_{p = k}^{ M\sqrt{N}} e^{H_N^{(1)}(2p+1)-H_N^{(1)}(1)}}{\sum_{p = 0}^{M\sqrt{N}} e^{H_N^{(1)}(2p+1)-H_N^{(1)}(1)}}.
\end{align*}
Fix any $\delta>0$ and consider the set
\begin{align*}
    \m{A}_{\delta}:=\left\{\frac{\sum_{p = K}^{ M\sqrt{N}} e^{H_N^{(1)}(2p+1)-H_N^{(1)}(1)}}{\sum_{p = 1}^{M\sqrt{N}} e^{H_N^{(1)}(2p+1)-H_N^{(1)}(1)}}\ge \delta\right\}.
\end{align*}
By Proposition \ref{p:tech}, $\Pr(\m{A}_{\delta})\le \Pr_{RW}(\m{A}_{\delta})+9\e$ for all large enough $N$. On the other hand, by Corollary \ref{l:cor1} we see that
$\lim_{k\to\infty}\lim_{N\to\infty}\Pr_{RW}(\m{A}_{\delta})=0$. Thus, as $\e$ is arbitrary, 
\begin{align}\label{eq:bdpt1}
\lim_{k\to\infty}\lim_{N\to\infty}\Pr^{W}\big(\pi(2N-2) \in (N-M\sqrt{N}, N-k]\big)=0, \mbox{ in probability}.
\end{align}
For the second term by Lemma \ref{l:deep}, we see that with probability $1-\frac\e2$
\begin{align*}
\Pr^{W}\big(\pi(2N-2) \le N- M\sqrt{N}\big) =\frac{\zt(M\sqrt{N})}{\zt(0)}\le  e^{-\sqrt{N}}.\end{align*} 
Again, as $\e$ is arbitrary, we have that as $N\to \infty$, $\Pr^{W}(\pi(2N-2)\le N-M\sqrt{N}) \to 0$ in probability. This completes the proof together with \eqref{eq:bdpt1}. 
\end{proof}
Lastly, with Theorems \ref{t:bdpt} and \ref{t:walk} established, we present the proof of the limiting quenched distribution of the endpoint viewed from around the diagonal.
\begin{proof}[Proof of Theorem \ref{t:qdistn}]
Fixed $\theta > 0$, $\alpha \in (-\theta, 0)$, and $M,k\in \Z_{>0}.$ with $M \ge k$. For each $r\in \ll0,k\rr$ \bl{We write
\begin{align}\label{eq:wtail}
    \Pr^{W}(\pi(2N-2) = N-r)= \Pr^W(\pi(2N-2)\ge N-M)\cdot \frac{\Pr^{W}(\pi(2N-2) = N-r)}{\Pr^W(\pi(2N-2)\ge N-M)}
\end{align}
and study weak convergence (under $N \rightarrow \infty$ followed by $M \rightarrow \infty$) of two quantities on the right side of the above equation separately. Firstly, Theorem \ref{t:bdpt} ensures that 
\begin{align*}
    \Pr^W(\pi(2N-2)\ge N-M)= 1 - \Pr^W(\pi(2N-2)< N-M)\stackrel{p}{\to}1
\end{align*}
as $N \rightarrow \infty$ followed by $M \rightarrow \infty.$ On the other hand, for the ratio in \eqref{eq:wtail}, recall from \eqref{eq:bdpt} that
\begin{align}\label{eq:qd1}
    \frac{\Pr^{W}(\pi(2N-2) = N-r)}{\Pr^W(\pi(2N-2)\ge N-M)} &= \frac{e^{H_N^{(1)}(2r+1)}}{\sum_{p=0}^{M} e^{H_N^{(1)}(2p+1)}} =  \frac{e^{H_N^{(1)}(2r+1)-H_N^{(1)}(1)}}{\sum_{p=0}^{M} e^{H_N^{(1)}(2p+1)-H_N^{(1)}(1)}}.
\end{align}
 Note that by Theorem \ref{t:walk}, a continuous mapping theorem immediately implies that 
\begin{align}\label{eq:qd2}
    \left(\frac{e^{H_N^{(1)}(2r+1)-H_N^{(1)}(1)}}{\sum_{p=0}^{M} e^{H_N^{(1)}(2p+1)-H_N^{(1)}(1)}}\right)_{r\in \ll0,k\rr} \stackrel{d}{\to}\left(\frac{e^{-S_r}}{\sum_{p=0}^{M}e^{-S_p}}\right)_{r\in \ll0,k\rr}.
\end{align}
 Here $(S_i)_{i\ge 0}$ denotes a log-gamma random walk. Upon taking $M\to \infty$, in view of lemma \ref{a1}, the right hand side of the above equation converges to  $\left(\frac{e^{-S_r}}{\sum_{p=0}^{\infty}e^{-S_p}}\right)_{r\in \ll0,k\rr}$. This proves the theorem.}
\end{proof}

\subsection{Proof of Proposition \ref{p:tech}}\label{sec:tech}

For clarity, we divide the proof into several steps. 

\medskip

\noindent\textbf{Step 1.} In this step we sketch out the main ideas behind the proof. At this point, we encourage the readers to consult with Figure \ref{fig:1stcurve}. Recall the event $\m{A}$ defined in \eqref{defeva}.

\begin{figure}[h!]
    \centering
    \definecolor{uuuuuu}{rgb}{0.26666666666666666,0.26666666666666666,0.26666666666666666}
\definecolor{blue}{rgb}{0,0,1}
\definecolor{ffqqqq}{rgb}{1,0,0}
\definecolor{ududff}{rgb}{0.30196078431372547,0.30196078431372547,1}
\begin{tikzpicture}[line cap=round,line join=round,>=triangle 45,x=1cm,y=1cm]
\draw [line width=1pt,dashed] (-4,2)-- (8,2);
\draw [line width=1pt,dashed] (2.819383230896712,5.636570400030574)-- (2.8406270484351963,-2.032447731362127);
\draw [line width=1pt,dashed] (-3.9998821989566804,5.61532658249209)-- (-4,-2);
\draw [line width=1pt,dashed] (7.195609643824435,5.594082764953606)-- (7.195609643824435,-1.9687162787466754);
\draw [line width=2pt] (-3.9998821989566804,5.61532658249209)-- (-3.3625676728021543,4.765573880952732);
\draw [line width=2pt] (-3.3625676728021543,4.765573880952732)-- (-3.0104647504494744,3.804711197247093);
\draw [line width=2pt] (-3.0104647504494744,3.804711197247093)-- (-2.1108296658213477,4.364484138793478);
\draw [line width=2pt] (-2.1108296658213477,4.364484138793478)-- (-1.471089161196902,3.7447355249385517);
\draw [line width=2pt] (-1.471089161196902,3.7447355249385517)-- (-0.7713729842639144,4.664362500336185);
\draw [line width=2pt] (-0.7713729842639144,4.664362500336185)-- (0,4.304508466484937);
\draw [line width=2pt] (0,4.304508466484937)-- (0.6080674788325467,4.144573340328827);
\draw [line width=2pt] (0.6080674788325467,4.144573340328827)-- (1.1878323111484508,3.3448977095482766);
\draw [line width=2pt] (1.1878323111484508,3.3448977095482766)-- (1.6276539080777572,3.824703088016607);
\draw [line width=2pt] (1.6276539080777572,3.824703088016607)-- (2.247402521932689,3.5248247264739003);
\draw [line width=2pt] (2.247402521932689,3.5248247264739003)-- (2.861870865973689,3.873333544336407);
\draw [line width=2pt] (2.861870865973689,3.873333544336407)-- (3.5268835311815807,3.564808508012928);
\draw [line width=2pt] (3.5268835311815807,3.564808508012928)-- (4.006688909649915,3.4648490541653594);
\draw [line width=2pt] (4.006688909649915,3.4648490541653594)-- (4.566461851196305,3.424865272626332);
\draw [line width=2pt] (4.566461851196305,3.424865272626332)-- (5,3);
\draw [line width=2pt] (5,3)-- (5.526072608132973,2.6451815326152954);
\draw [line width=2pt] (5.526072608132973,2.6451815326152954)-- (6.08585,2.66517);
\draw [line width=2pt] (6.08585,2.66517)-- (6.4520760299774915,2.0676090535652722);
\draw [line width=2pt] (6.4520760299774915,2.0676090535652722)-- (6.834464745670205,1.7701956080264973);
\draw [line width=2pt] (6.834464745670205,1.7701956080264973)-- (7.216853461362919,1.557757432641658);
\draw [line width=2pt] (7.216853461362919,1.557757432641658)-- (7.6417298121326,1.3878068923337865);
\draw [line width=1pt,dashed, color=black] (-3.9999627106987155,0.41059161542833245)-- (7.960387075209875,0.38934746802504144);
\draw [line width=2pt,dashed] (-3.999994586778187,-1.6500586853115191)-- (-3.468786760494575,-1.3101579350536734);
\draw [line width=2pt] (-3.999994586778187,-1.6500586853115191)-- (-3.468786760494575,-1.3101579350536734);
\draw [line width=2pt] (-3.468786760494575,-1.3101579350536734)-- (-3.0651542272633767,-1.3526455701306412);
\draw [line width=2pt] (-3.0651542272633767,-1.3526455701306412)-- (-2.5128149712627903,-0.8640377667455108);
\draw [line width=2pt] (-2.5128149712627903,-0.8640377667455108)-- (-2.087938620493108,-1.140207394745802);
\draw [line width=2pt] (-2.087938620493108,-1.140207394745802)-- (-1.5780869995694895,-0.7153310439761232);
\draw [line width=2pt] (-1.5780869995694895,-0.7153310439761232)-- (-1.1956982838767756,-1.0764759421303502);
\draw [line width=2pt] (-1.1956982838767756,-1.0764759421303502)-- (-0.6008713927992205,-0.6940872264376393);
\draw [line width=2pt] (-0.6008713927992205,-0.6940872264376393)-- (0,-0.460405233514316);
\draw [line width=2pt] (0,-0.460405233514316)-- (0.35510039643256425,-0.4391614159758321);
\draw [line width=2pt] (0.35510039643256425,-0.4391614159758321)-- (0.8224643822792146,-0.5241366861297678);
\draw [line width=2pt] (0.8224643822792146,-0.5241366861297678)-- (1.6084856312031266,-0.33294232828341236);
\draw [line width=2pt] (1.6084856312031266,-0.33294232828341236)-- (2.2033125222806813,-0.4391614159758321);
\draw [line width=2pt] (2.2033125222806813,-0.4391614159758321)-- (2.8354480508214768,-0.16282959280933085);
\draw [line width=2pt] (2.8354480508214768,-0.16282959280933085)-- (3.2230157641279185,-0.4179175984373481);
\draw [line width=2pt] (3.2230157641279185,-0.4179175984373481)-- (4,0);
\draw [line width=2pt] (4,0)-- (4.73132680936029,-0.31169851074492844);
\draw [line width=2pt] (4.73132680936029,-0.31169851074492844)-- (5.283666065360877,-0.14174797043705695);
\draw [line width=2pt] (5.283666065360877,-0.14174797043705695)-- (5.9422244090538845,0.26188456279413785);
\draw [line width=2pt] (5.9422244090538845,0.26188456279413785)-- (6.515807482592955,-0.12050415289857301);
\draw [line width=2pt] (6.515807482592955,-0.12050415289857301)-- (7.0681467385935415,0.2406407452556539);
\draw [line width=2pt] (7.0681467385935415,0.2406407452556539)-- (7.7904365349020015,0.4955665557174611);
\draw [color=black](-1,1.1) node[anchor=north west] {$y=RN-(3M\tau+1)\sqrt{N}$};
\draw (1.0696122714816316,-0.3525876286455738) node[anchor=north west] {$H_{N}^{(2)}(\cdot)$};
\draw (0.705577321333596,5.150764264768837) node[anchor=north west] {$H_{N}^{(1)}(\cdot)$};
\draw (-1,2.7) node[anchor=north west] {$y= RN- 3M\tau\sqrt{N}$};
\draw (2.3330276867012856,-1.7444859674468838) node[anchor=north west] {$2M\sqrt{N}+1$};
\draw (6.315998317732736,-1.765899788043827) node[anchor=north west] {$4M\sqrt{N}+1$};
\draw [color=black](2.076061839537966,4.6) node[anchor=north west] {Deep tail starting point};
%\draw [->,line width=2pt,color=blue] (3.0825114075943003,4.486935826263596) -- (2.861870865973689,3.8733335443364076);
\draw [color=black](5.4594454938550045,3.4) node[anchor=north west] {High point};
%\draw [->,line width=2pt,color=blue] (6.315998317732736,3.137865128656173) -- (6.08585,2.66517);
\begin{scriptsize}
\draw [fill=blue] (6.08585,2.66517) circle (4pt);
%\draw[color=black] (-3.8769802864122664,5.247126457455081) node {$j$};
\draw [fill=red] (2.861870865973689,3.8733335443364076) circle (4pt);
\end{scriptsize}
\end{tikzpicture}
    \caption{Illustration of the proof of Proposition \ref{p:tech}. As claimed by Lemma \ref{l:high}, there exists a high point $2p^*+1$ in $\ll 2M\sqrt{N}+1, 4M\sqrt{N}+1\rr$ such that $H_N^{(1)}(2p^*+1)$ lies above $RN - \frac52M\tau\sqrt{N}$ with high probability. \sd{This high point is illustrated in blue in the figure and helps us show that $H_N^{(1)}(\cdot)\ge RN - 3M\tau\sqrt{N}$ between $\ll1, 2p^*+1\rr.$  Meanwhile, invoking Proposition \ref{p:benchmark}, we can ensure the second curve stays below the benchmark of $RN-(3M\tau+1)\sqrt{N}$ on the interval $\ll1,4M\sqrt{N}+1\rr$ with high probability. Thus there is a $\sqrt{N}$ separation (with high probability) between the two curves. By the Gibbs property, this separation ensures that the top curve is close to a log-gamma random walk.} %We further break down $\m{B}$ as $\bigsqcup \m{B}_i$ and each $\m{B}_i$ denotes the event that $H_N^{(1)}(2i+1)$ is the last high point and $H_N^{(1)}(2p+1)< RN - \frac52M\tau\sqrt{N}$ for $p\in [i+1, 2M\sqrt{N}]$. Meanwhile, the $\m{Fluc}$ event shows that the second curve $H_N^{(2)}(\cdot)$ between  this interval $I_N : = [1, 4M\sqrt{N}]$ is universally below the benchmark of $RN - (3M\tau+1)\sqrt{N}.$  
    }
    \label{fig:1stcurve}
\end{figure}

%We want to prove that for any $\e\in (0, \frac{1}{2})$, there exists $N_0(\e)>0$ such that for all $N \ge N_0$, $|\Pr(\m{A})-\Pr_{RW}(\m{A})| \le 9\e$ where $\Pr_{RW}(\m{A})$ is defined in \eqref{def:rwev}. An overview of the proof ideas behind the next few steps is as follows:

%\sdn{I have reworded a few parts of the overview. The graining argument is something that we do for technical reasons. I felt that it does not shed much light on the actual proof idea. So, the overview is a bit different now. Can you check the overview and let me know what you think about the way the graining argument is highlighted in the overview? Thanks!}

\begin{itemize}[leftmargin=18pt]
\setlength\itemsep{0.5em}
\item We take $M$ and $N_1$ as described in the statement of the Proposition \ref{p:tech}. \sd{In the language introduced in Figure \ref{f.tri1} and the text before Lemma \ref{l:high}, $2M\sqrt{N}+1$ is the \textit{deep tail starting point}. Assuming Lemma \ref{l:high}, we have a point $2p^*+1 \in \ll 2M\sqrt{N}+1,4M\sqrt{N}+1\rr$ where $H_N^{(1)}(2p^*+1)$ is `high' enough (see Figure \ref{fig:1stcurve}). This high point event is denoted as event $\m{B}$ in \textbf{Step 2} and has a probability of at least $1-\tfrac12\e$.} 

\item Invoking Proposition \ref{p:benchmark} with $M_1 = 2M$ and $M_2 = 3M\tau + 1$, with high probability the second curve of the line ensemble is lower than a certain benchmark, i.e. $$\sup_{p\in \ll1,4M\sqrt{N}+1\rr} H_N^{(2)}(p)\le RN-(3M\tau+1)\sqrt{N}$$ 
with probability at least $1 - \frac{\e}{2}$.  We denote this phenomenon as the $\m{Fluc}$ event. As $\m{B}$ and $\m{Fluc}$ are high probability events, it suffices to show that $|\Pr(\m{A}\cap \m{B}\cap\m{Fluc})-\Pr_{RW}(\m{A})|$ is small to prove \eqref{eq:tech}. \sd{This result is achieved by considering the measure conditioned on the entire second curve and the first curve beyond $2p^*+1$. We remark that this is only a formal description of the proof and refer to the last bullet point for details.}

    \item \sd{To elaborate on the above idea, by the Gibbs property in Theorem \ref{t:gibbs}, we introduce an explicity Radon-Nikodym derivative $W_{p^*}$ for the conditional measure in \textbf{Step 3}. Informally,  the conditional measure is absolutely continuous w.r.t.~a log-gamma random walk $(S_k)_{k\ge 0}$ from Definition \ref{def:lgrw} starting at $H_N^{(1)}(2p^*+1)$. As the free law is precisely the limiting law we are interested in, it suffices to prove $W_{p^*}$ on $[1, 2p^*+1]$ is approximately $1$.} 
    
    \item \sd{$W_{p^*}$ is close to $1$ whenever there is a wide enough separation between the two curves. The diffusive nature of the random walk (with positive drift) prevents the walk from dipping too low under the free law. Thus under $\m{B}\cap \m{Fluc}$ we have a uniform separation of $\sqrt{N}$ between the top two curves between $\ll 1, 2p^*+1\ll$.} and deduce that $W_{p^*} \approx 1$ for large $N$. This is the idea of \textbf{Step 5} and concludes that the law of the $H_N^{(1)}(\cdot)$ is close to the free law of a log-gamma random walk starting at $H_N^{(1)}(2p^*+1)$.
    
    \item \sd{One issue in carrying out the arguments in the last two bullet points is that $p^*$ is \textit{random}. Thus the Gibbs property formulated for \textit{fixed} boundary points cannot be directly applied at $p^*$. This issue is circumvented by a graining argument where we denote $\m{B}$ as $\m{B}=\bigsqcup\m{B_i}$ for a disjoint collection of events $\m{B}_i \subset \{H_N^{(1)}(2i+1)\ge RN - \tfrac{5}{2}M\tau\sqrt{N}\}$ defined in \textbf{Step 2} and then apply the usual Gibbs property for each $i$.}  
\end{itemize}
\medskip

\textbf{Step 2.} Take $M_1=2M$ and $M_2=3M \tau+1$ in Proposition \ref{p:benchmark}. Taking $N_2(\e,M_1,M_2)>0$ (which depends only on $\e$ as $M_1,M_2$ depends only on $\e$) from Proposition \ref{p:benchmark}, we see that 
\begin{align}\label{def:fluc}
\Pr(\m{Fluc}) \ge 1-\tfrac{\e}{2}, \mbox{ where }\m{Fluc}: = \left\{\sup_{p\in \ll1,4M\sqrt{N}+1\rr}  H_N^{(2)}(p)\le RN-(3M\tau+1)\sqrt{N}\right\}
\end{align}
for all $N\ge N_2$. Next we consider the events 
 \begin{align*}
 	\m{G}_i := \big\{H_N^{(1)}(2i+1) \ge RN- \frac{5}{2}M\tau\sqrt{N}\big\}\mbox{ and } \m{B}_i: =  \bigcap_{j = i+1}^{2M\sqrt{N}} \m{G}_j^c\cap \m{G}_i.
 \end{align*}
 Note that $(\m{B}_i)_{i\in \ll M\sqrt{N},2M\sqrt{N}\rr}$ forms a disjoint collection of events. Define
 \begin{align*}
     \m{B} & :=\bigsqcup_{i\in \ll M\sqrt{N},2M\sqrt{N}\rr} \m{B}_i \\ & =\bigcup_{i\in \ll M\sqrt{N},2M\sqrt{N}\rr} \m{G}_i=\left\{\sup_{p\in \ll M\sqrt{N},2M\sqrt{N}\rr} H_N^{(1)}(2p+1) \ge RN- \tfrac{5}{2}M\tau\sqrt{N}\right\},
 \end{align*} 
where we write $\sqcup$ to stress that the events are disjoint in the union. In particular, as Lemma \ref{l:high} holds, there exists $N_1(\e,M)>0$ such that $\Pr(\m{B}) \ge 1- \tfrac{1}{2}\e$ for all $N\ge N_1$. Thus for all $N\ge N_1+N_2$, by a union bound we have
\begin{align*}
    |\Pr(\m{A})-\Pr(\m{A}\cap \m{B}\cap \m{Fluc})| \le \Pr(\neg \m{B})+\Pr(\neg \m{Fluc}) \le \e.
\end{align*}
Hence to prove \eqref{eq:tech} it suffices to show
\begin{align}\label{eq:mconv}
    |\Pr(\m{A}\cap \m{B}\cap \m{Fluc})-\Pr_{RW}(\m{A})| \le 8\e.
\end{align}
Define $\mathcal{F}_i$ as the $\sigma$-field 
 $\sigma\big({H_N^{(1)}(x)}_{x \ge 2i+1}, {H_N^{(j)}}(x)_{j \ge 2,x\ge 1}\big).$ Note that $\m{B}_i, \m{Fluc}$ are both measurable w.r.t. $\mathcal{F}_i.$ Exploiting the fact that $\m{B}_i$ events are disjoint yields
 \begin{align}\label{eq:decomp}
     \Pr(\m{A}\cap \m{B}\cap \m{Fluc})= \sum_{i=M\sqrt{N}}^{2M\sqrt{N}} \Ex\left[\ind_{\m{B}_i \cap \m{Fluc}}\Ex\left[\ind_{\m{A}}\mid \mathcal{F}_i\right]\right]
 \end{align}
where the last equality is due to the tower property of the conditional expectation. Thus we are left to estimate $\Ex\left[\ind_{\m{A}}\mid \mathcal{F}_i\right]$ for each $i$.

\medskip

\noindent\textbf{Step 3. Gibbs law.} To analyze $\Ex\left[\ind_{\m{A}}\mid \mathcal{F}_i\right]$, we invoke the Gibbs property (Theorem \ref{t:gibbs}) for the $\hslg$ line ensemble.  By Theorem \ref{t:gibbs}, the distribution of $(H_N^{(1)}(j))_{j=1}^{2i}$ conditioned on $\mathcal{F}_i$ has a density at $(u_j)_{j=1}^{2i}$
\begin{align}\label{cold}
     & \exp\left(-\sum_{j=1}^{i} \left[e^{H_N^{(2)}(2j)-u_{2j+1}}+e^{H_N^{(2)}(2j)-u_{2j-1}}\right]\right) \\ & \label{colr} \hspace{2cm} \cdot \prod_{j=1}^{i} \exp\left((\theta+\alpha)(u_{2j+1}-u_{2j})-e^{u_{2j+1}-u_{2j}}\right) \\ & \label{colb} \hspace{4cm} \cdot \prod_{j=1}^{i}\exp\left((\theta-\alpha)(u_{2j-1}-u_{2j})-e^{u_{2j-1}-u_{2j}}\right)
\end{align}
with $u_{2i+1}=H_N^{(1)}(2i+1)$. The above explicit expression is obtained from \eqref{eq:gibbs} and \eqref{eq:color}. Note that the terms in \eqref{cold}, \eqref{colr}, and \eqref{colb} correspond to weights of black, red, and blue edges in the graphical representation (see left figure of Figure \ref{figsm3}) respectively.

Based on the above decomposition, we define a free law $\Pr_{\operatorname{free},i}$ that depends only on $H_N^{(1)}{(2i+1)}$. Let  $\Pr_{\operatorname{free},i}$  be the law under which the distribution of  $(H_N^{(1)}(j))_{j=1}^{2i}$ has a density at $(u_{j})_{j=1}^{2i}$ proportional to 
\begin{align*}
      \prod_{j=1}^{i} \exp\left((\theta+\alpha)(u_{2j+1}-u_{2j})-e^{u_{2j+1}-u_{2j}}\right)\cdot \prod_{j=1}^{i}\exp\left((\theta-\alpha)(u_{2j-1}-u_{2j})-e^{u_{2j-1}-u_{2j}}\right)
\end{align*}
with $u_{2i+1}=H_N^{(1)}(2i+1)$. Note that free law collects all the blue and red edge weights only. A quick comparison of the above formula with \eqref{lgrw:den} shows that under the free law, $(H_N^{(1)}(1)-H_N^{(1)}(2r+1))_{r=0}^{i}$ is precisely distributed as log-gamma random walk defined in Definition \ref{def:lgrw}.

In order to obtain the original conditional distribution from the free law, we may introduce the black weights as a Radon-Nikodym derivative (see the decomposition in Figure \ref{figsm3}). Indeed, we have
\begin{align}\label{eq:giden}
    \Ex\left[\ind_{\m{A}}\mid \mathcal{F}_i\right]=\frac{\Ex_{\operatorname{free},i}[W_i\ind_{\m{A}}]}{\Ex_{\operatorname{free},i}[W_i]}
\end{align}
where
\begin{align}\label{def:w}
    W_i:=\exp\left(-\sum_{j=1}^{i} \left[e^{H_N^{(2)}(2j)-H_N^{(1)}(2j+1)}+e^{H_N^{(2)}(2j)-H_N^{(1)}(2j-1)}\right]\right)
\end{align}

\begin{figure}[h!]
    \centering
    \begin{tikzpicture}[line cap=round,line join=round,>=triangle 45,x=1.3cm,y=1cm]
    \draw[fill=gray!10,line width=0.5pt,dashed] (-6.75,10.25)--(-6.75,9.25)--(-3.75,9.25)--(-3.75,10.25)--(-6.75,10.25);
    \draw[fill=gray!10,line width=0.5pt,dashed] (-2.45,10.25)--(-2.45,9.25)--(0.55,9.25)--(0.55,10.25)--(-2.45,10.25);
    \draw[fill=gray!10,line width=0.5pt,dashed] (1.25,10.25)--(1.25,9.25)--(4.25,9.25)--(4.25,10.25)--(1.25,10.25);
     \node at (-3,9.7) {\Large{$=$}};
     \node at (1,9.7) {\Large{$\times$}};
     \foreach \x in {-6,-5,-4}{
     \draw[line width=1pt,red,{Latex[length=2mm]}-]  (\x,10) -- (\x+0.5,9.5);
     \draw[line width=1pt,blue,{Latex[length=2mm]}-]  (\x,10) -- (\x-0.5,9.5);
     \foreach \y in {10}{
     \draw[line width=1pt,black,{Latex[length=2mm]}-]  (\x+0.5,\y-0.5) -- (\x,\y-1);
     \draw[line width=1pt,black,{Latex[length=2mm]}-]  (\x-0.5,\y-0.5) -- (\x,\y-1);
     }
     }
        \foreach \x in {-1.7,-0.7,0.3}{
     \draw[line width=1pt,red,{Latex[length=2mm]}-]  (\x,10) -- (\x+0.5,9.5);
     \draw[line width=1pt,blue,{Latex[length=2mm]}-]  (\x,10) -- (\x-0.5,9.5);
     }
    \foreach \x in {2,3,4}{
     \foreach \y in {10}{
     \draw[line width=1pt,black,{Latex[length=2mm]}-]  (\x+0.5,\y-0.5) -- (\x,\y-1);
     \draw[line width=1pt,black,{Latex[length=2mm]}-]  (\x-0.5,\y-0.5) -- (\x,\y-1);
     }
    }
     \foreach \x in {-6,-5,-4,-1.7,-0.7,0.3,2,3,4}{
     \draw[fill=green!70!black] (\x,10) circle (1.5pt); 
     \draw[fill=green!70!black] (\x-0.5,9.5) circle (1.5pt); 
     \draw[fill=white] (\x,9) circle (1.5pt); 
     }
     \draw[fill=white] (4.5,9.5) circle (1.5pt);
     \draw[fill=white] (-3.5,9.5) circle (1.5pt);
     \draw[fill=white] (0.8,9.5) circle (1.5pt);
     \node at (4.7, 9.5) {$a$};
     \node at (-3.5,9.2) {$a$};
     \node at (0.8,9.3) {$a$};
     \node at (-6,8.7) {$z_1$};
     \node at (-5,8.7) {$z_2$};
     \node at (-4,8.7) {$z_3$};
     \node at (-1.7,8.7) {$z_1$};
     \node at (-0.7,8.7) {$z_2$};
     \node at (0.3,8.7) {$z_3$};
     \node at (2,8.7) {$z_1$};
     \node at (3,8.7) {$z_2$};
     \node at (4,8.7) {$z_3$};
     \end{tikzpicture}
    \caption{Gibbs decomposition. The left figure shows the Gibbs measure corresponding to conditioned on $\mathcal{F}_i$ with $i=3$. Here $a=H_N^{(1)}(2i+1)$, and $z_j:=H_N^{(2)}(2j)$ for $j\in \ll 1,i\rr$. The measure has been decomposed into two parts. The free law (middle) and a Radon-Nikodym derivative (right).}
    \label{figsm3}
\end{figure}
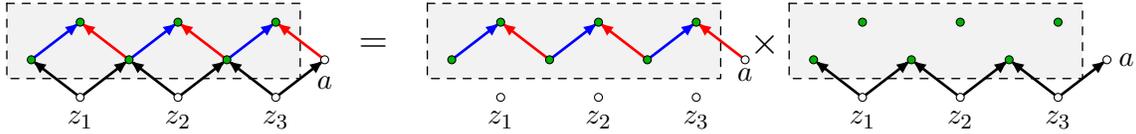

We notice that $W_i$ has a trivial upper bound: $W_i \le 1.$ For the lower bound, we claim that there exists $N_0(\e)>0$ such that for all $N\ge N_0$ we have
 \begin{align}\label{eq:favw}
     \ind_{\m{Fluc}\cap \m{B}_i}\Pr_{\operatorname{free},i}(W_i\ge 1-\e) \ge \ind_{\m{Fluc}\cap \m{B}_i} \cdot (1-\e).
 \end{align}
 Thus, \eqref{eq:favw} implies that $W_i$ is close to 1 with high probability under $\m{Fluc}\cap\m{B}_i$. Thus, going back to \eqref{eq:giden}, we expect $\Ex\left[\ind_{\m{A}}\mid \mathcal{F}_i\right]$ to be close to $\Pr_{\operatorname{free},i}(\m{A})$. As under the free law $\Pr_{\operatorname{free},i}(\m{A})= \Pr_{RW}(\m{A})$, for all $i\in \ll M\sqrt{N},2M\sqrt{N}\rr$, \eqref{eq:decomp} eventually leads to \eqref{eq:mconv}, which we make precise in the next step.

\medskip

 \noindent\textbf{Step 4.} Assuming \eqref{eq:favw}, we complete the proof of \eqref{eq:mconv} in this step. As $W_i \le 1,$ we have
 \begin{align*}
     \ind_{\m{Fluc}\cap \m{B}_i}\frac{\Ex_{\operatorname{free},i}[W_i\ind_{\m{A}}]}{\Ex_{\operatorname{free},i}[W_i]}& \ge \ind_{\m{Fluc}\cap \m{B}_i}\Ex_{\operatorname{free},i}[W_i\ind_{\m{A}}] \ge (1-\e)\cdot \ind_{\m{Fluc}\cap \m{B}_i}\Pr_{\operatorname{free},i}(\m{A} \cap \{W_i \ge 1-\e\})\\& \ge (1-\e)\cdot \ind_{\m{Fluc}\cap \m{B}_i}\left[ \Pr_{\operatorname{free},i}(\m{A}) -  \Pr_{\operatorname{free},i}(W_i < 1- \e)\right]\\& \ge (1-\e)\cdot\ind_{\m{Fluc}\cap \m{B}_i}\left[ \Pr_{\operatorname{free},i}(\m{A}) - \e\right]
 \end{align*}
 where we use \eqref{eq:favw} in the last inequality. Recall $\Pr_{\operatorname{free},i}(\m{A})=\Pr_{RW}(\m{A})$. Inserting this bound in \eqref{eq:giden} and then going back to \eqref{eq:decomp} yields 
 \begin{align*}
\Pr(\m{A}\cap\m{B}\cap \m{Fluc})  & \ge(1-\e)\cdot \left[ \Pr_{RW}(\m{A}) - \e\right]\sum_{i=M\sqrt{N}}^{2M\sqrt{N}} \Pr(\m{B}_i \cap \m{Fluc})\\& = (1-\e)\cdot\left[ \Pr_{RW}(\m{A}) - \e\right] \Pr(\m{B} \cap \m{Fluc}) \ge (1-\e)^2\left[\Pr_{RW}(\m{A})-\e\right].
 \end{align*}
 for all large enough $N$. The equality in the above equation follows by recalling that  $\m{B}_i$'s form a disjoint collection of events and the result implies that $\Pr(\m{A}\cap\m{B}\cap \m{Fluc})-\Pr_{RW}(\m{A}) \ge -3\e$. This proves the lower bound inequality in \eqref{eq:mconv}. Similarly for the upper bound, as $W_i\le 1$, we have
  \begin{align*}
     \ind_{\m{Fluc}\cap \m{B}_i}\cdot\frac{\Ex_{\operatorname{free},i}[W_i\ind_{\m{A}}]}{\Ex_{\operatorname{free},i}[W_i]} & \le \ind_{\m{Fluc}\cap \m{B}_i}\cdot\frac{\Pr_{\operatorname{free},i}(\m{A})}{(1-\e)\Pr_{\operatorname{free},i}(W_i \ge 1-\e)} \le \ind_{\m{Fluc}\cap \m{B}_i}\cdot \frac{\Pr_{\operatorname{free},i}(\m{A})}{(1-\e)^2}
 \end{align*}
 where the last inequality stems from \eqref{eq:favw}. Again, inserting this bound in \eqref{eq:giden} and then going back to \eqref{eq:decomp} gives us
 \begin{align*}
     \Pr(\m{A}\cap\m{B}\cap \m{Fluc})   & \le  \frac{\Pr_{RW}(\m{A})}{(1-\e)^2}\sum_{i=M\sqrt{N}}^{2M\sqrt{N}} \Pr(\m{B}_i \cap \m{Fluc}) = \frac{\Pr_{RW}(\m{A})}{(1-\e)^2} \Pr(\m{B} \cap \m{Fluc}) \le \frac{\Pr_{RW}(\m{A})}{(1-\e)^2}
 \end{align*}
where again the equality comes from the disjointness of $\m{B}_i$'s. As $\e\le \frac12$, this implies 
\begin{align*}
    \Pr(\m{A}\cap\m{B}\cap \m{Fluc}) -\Pr_{RW}(\m{A}) \le \frac{1-(1-\e)^2}{(1-\e)^2} \le 8\e
\end{align*}
 which proves the upper bound in \eqref{eq:mconv}. The proof of Theorem \ref{t:walk} modulo \eqref{eq:favw} is thus complete.

\medskip

\noindent\textbf{Step 5.} Finally in this step we prove \eqref{eq:favw}.  We define the event
 \begin{align*}
 \m{Sink}(i):= \left\{\inf_{p \in \ll 0,i\rr} H_N^{(1)}(2p+1)\ge RN -3M\tau\sqrt{N}\right\} .
 \end{align*} 
 We claim that there exists $N_0(\e)>0$ such that for all $N\ge N_0$, we have
\begin{align}\label{eq:sinkbd}
    \ind_{\m{B}_i}\Pr_{\operatorname{free},i}(\m{Sink}(i)) \ge \ind_{\m{B}_i}(1-\e),
\end{align}
for all $i\in \ll M\sqrt{N},2M\sqrt{N}\rr$. 

Recall that the event $\m{Fluc}$ in \eqref{def:fluc} requires the second curve $H_N^{(2)}(p)$ to lie below certain threshold within the range $p\in \ll1,4M\sqrt{N}+1\rr$. Recall the definition of $W_j$ from \eqref{def:w}. Note that on $\m{Sink}(j)\cap \m{Fluc}$ we have $$W_j \ge \exp(-2j e^{-\sqrt{N}}) \ge \exp(-4M\sqrt{N}e^{-\sqrt{N}})$$ 
as $j\le 2M\sqrt{N}$. Note that $ \exp(-4M\sqrt{N}e^{-\sqrt{N}})\ge 1-\e$ for all large enough $N$. Therefore, assuming \eqref{eq:sinkbd} we have 
\begin{align*}
  \ind_{\m{Fluc}\cap \m{B}_i}\Pr_{\operatorname{free},i}(W_i\ge 1-\e) \ge \ind_{\m{Fluc}\cap \m{B}_i}\Pr_{\operatorname{free},i}(\m{Sink}(i)) \ge \ind_{\m{Fluc}\cap \m{B}_i}\cdot (1-\e)  
\end{align*}
 for all large enough $N$. This verifies \eqref{eq:favw}. We are left to show \eqref{eq:sinkbd}. Towards this end, note that on the event $\m{B}_i$, we have $H_N^{(1)}(2i+1) \ge RN-\frac{5}{2}M\tau \sqrt{N}$. Thus,
\begin{equation}
     \label{eq:biuse}
\begin{aligned}
    \ind_{\m{B}_i}\Pr_{\operatorname{free},i}(\m{Sink}(i)) \ge \ind_{\m{B}_i}\Pr_{\operatorname{free},i}\left(\inf_{x \in \ll 0,i\rr} H_N^{(1)}(2x+1)-H_N^{(1)}(2i+1) \ge -\tfrac{1}{2}M\tau\sqrt{N}\right).
\end{aligned}
\end{equation}
Recall from our discussion in \textbf{Step 2} that under the law $\Pr_{\operatorname{free},i}$, $(H_N^{(1)}(1)-H_N^{(1)}(2r+1))_{r=0}^i$ is distributed as a log-gamma random walk. Let us use $(S_k)_{k=0}^i$ to denote a log-gamma random walk. We have 
\begin{equation}
    \label{eq:laweq}
\begin{aligned}
    & \Pr_{\operatorname{free},i}\left(\inf_{p \in \ll 0,i\rr} H_N^{(1)}(2p+1)-H_N^{(1)}(2i+1) \ge  -\tfrac{1}{2}M\tau\sqrt{N}\right) \\ & \hspace{5cm}=\Pr\left(\inf_{p \in \ll 0,i\rr} (S_i-S_p) \ge  -\tfrac{1}{2}M\tau\sqrt{N}\right).
\end{aligned}
\end{equation}
Note that $(S_i-S_p)_{p= 0}^i$ is again a time-reversed log-gamma random walk. As $i\le 2M\sqrt{N}$, appealing to Lemma \ref{l:kol} yields that
 \begin{align*}
 	\ind_{\m{B}_i}\Pr_{\operatorname{free},i}(\m{Sink}(i)) \ge  \ind_{\m{B}_i}\Pr\left(\inf_{p \in \ll 0,i\rr} (S_i-S_p) \ge -\tfrac{1}{2}M\tau \sqrt{N}\right)\ge \ind_{\m{B}_i}\left[1- \frac{8\operatorname{Var}(S_1)}{M\tau^2\sqrt{N}}\right] \ge \ind_{\m{B}_i}(1-\e)
 \end{align*} 
 for all large enough $N$ (uniformly over $i\in \ll M\sqrt{N},2M\sqrt{N}\rr$). This verifies  \eqref{eq:sinkbd}, completing the proof of Proposition \ref{p:tech}.

%\section{Stationary half-space log-gamma polymer}
%\wz{Maybe delete the section: state the result in the introduction and show proof/proof sketch in the appendix.} \sdn{I think so too. This section is not required}
\appendix

\section{Properties of random walks with positive drift}
\label{sec:app}
In this section, we collect some useful properties of random walks with positive drift whose proofs follow by classical analysis. Note that the log-gamma random walk introduced in Definition \ref{def:lgrw} is a random walk with positive drift. This is because the density $p(x)$ introduced in \eqref{lgrw:den} has mean:
\begin{align*}
    \int_{\R} xp(x)dx = \Psi(\theta-\alpha)-\Psi(\theta+\alpha),
\end{align*}
which is positive as the digamma function $\Psi$ is strictly increasing (recall $\alpha<0$).
\begin{lemma}\label{l:kol}
Let $(X_i)_{i\ge 0}$ be a sequence of iid random variables with $\Ex[X_1]=\beta>0$ and $\operatorname{Var}[X_1]=\gamma<\infty$. Set $S_0=0$ and $S_k=\sum_{i=1}^k X_i$. For all $M,N, \lambda>0$ we have
\begin{align*}
    \Pr\left(\inf_{k \in \llbracket1, M\sqrt{N}\rrbracket} S_k \le -\lambda\right) \le \frac{M\sqrt{N}\gamma}{\lambda^2}.
\end{align*}
\end{lemma}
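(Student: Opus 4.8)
The statement is a one-sided maximal inequality for a random walk with positive drift, and the cleanest route is to reduce it to Kolmogorov's maximal inequality applied to the centered walk. Write $T := \lfloor M\sqrt{N}\rfloor$ (or just $M\sqrt{N}$, treating it as an integer as the paper does elsewhere), and set $\bar S_k := S_k - k\beta$, so that $(\bar S_k)_{k\ge 0}$ is a mean-zero random walk with $\operatorname{Var}(\bar S_k) = k\gamma$. The plan is: first, observe that since $\beta > 0$ the drift only helps, so $S_k \le -\lambda$ forces $\bar S_k = S_k - k\beta \le -\lambda - k\beta \le -\lambda$; hence
\[
\Pr\Big(\inf_{k\in\llbracket 1,T\rrbracket} S_k \le -\lambda\Big) \le \Pr\Big(\inf_{k\in\llbracket 1,T\rrbracket} \bar S_k \le -\lambda\Big) \le \Pr\Big(\max_{k\in\llbracket 1,T\rrbracket} |\bar S_k| \ge \lambda\Big).
\]
Second, apply Kolmogorov's maximal inequality to the martingale $(\bar S_k)_{k=1}^{T}$: this gives
\[
\Pr\Big(\max_{k\in\llbracket 1,T\rrbracket} |\bar S_k| \ge \lambda\Big) \le \frac{\operatorname{Var}(\bar S_T)}{\lambda^2} = \frac{T\gamma}{\lambda^2} \le \frac{M\sqrt{N}\,\gamma}{\lambda^2},
\]
which is exactly the claimed bound.

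There is essentially no obstacle here — the only thing to be slightly careful about is the direction of the drift correction (one wants the inequality $S_k \le -\lambda \implies \bar S_k \le -\lambda$, which needs $\beta \ge 0$, and we have $\beta > 0$), and the fact that Kolmogorov's inequality controls the two-sided maximum $\max|\bar S_k|$ while we only need the one-sided event; throwing away the other side costs nothing. If one wanted to avoid even invoking the drift monotonicity, one could alternatively apply Kolmogorov directly to $S_k$ after noting $\Pr(\inf_k S_k \le -\lambda) \le \Pr(\inf_k \bar S_k \le -\lambda)$ is automatic; either phrasing works. I would present the two-line argument above as the full proof, since for this lemma the plan \emph{is} the proof: it is a routine consequence of Kolmogorov's maximal inequality and will not require more than a few lines.
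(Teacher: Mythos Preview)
Your proposal is correct and matches the paper's proof essentially line for line: the paper also notes that $\beta>0$ gives $\Pr(\inf_k S_k\le -\lambda)\le \Pr(\sup_k|S_k-k\beta|\ge\lambda)$ and then applies Kolmogorov's maximal inequality to obtain $M\sqrt{N}\gamma/\lambda^2$.
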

\begin{proof}
As $\beta>0$, by Kolmogorov's maximal inequality, we have 
\begin{align*}
   \Pr\left(\inf_{k \in \llbracket1, M\sqrt{N}\rrbracket} S_k \le - \lambda\right) & \le \Pr\left(\sup_{k \in \llbracket1, M\sqrt{N}\rrbracket} |S_k-k\beta| \ge \lambda\right)  \le \frac{1}{\lambda^2}\sum_{i = 1}^{M\sqrt{N}}\operatorname{Var}(X_i) =  \frac{M\sqrt{N}\gamma}{\lambda^2},
\end{align*} 
which is precisely what we want to show.
\end{proof}

\begin{lemma}\label{a1} Let $(X_i)_{i\ge 0}$ be a sequence of iid random variables with $\Ex[X_1]=\beta>0$ and $\operatorname{Var}[X_1]=\gamma<\infty$. Set $S_0=0$ and $S_n=\sum_{i=1}^n X_i$. We have
\begin{align*}
    \Pr\left(\sum_{r=0}^{\infty} e^{-S_r} <\infty\right)=1
\end{align*}
\end{lemma}
\begin{proof} By Kolmogorov's maximal inequality
    \begin{align*}
        \Pr\big(\sup_{1\le i\le n^2} |S_i-i\beta| \ge \tfrac{n^2}{2}\beta\big) \le \frac{4}{n^4\beta^2}\sum_{i=1}^{n^2}\operatorname{Var}(X_i) =\frac{4\gamma}{n^2\beta^2}.
    \end{align*} 
The last bound is summable in $n$. Thus invoking Borel-Cantelli's lemma we have that there exists a random $N$ with $\Pr(7\le N<\infty)=1$ such that 
$$S_{i} \ge i\beta-(N^2/2)\beta \ge -(N^2/2)\beta,\mbox{ for all }1\le i\le N^2,$$
and for all $n\ge N+1$ we have $$ S_{i} \ge (n-1)^2\beta-(n^2/2)\beta \ge (n^2/4)\beta,\mbox{ for all }(n-1)^2+1 \le i\le n^2,$$
where above we used the fact that $n\ge N+1\ge 8$. Thus with probability $1$, we have
\begin{align*}
    \sum_{r=0}^{\infty} e^{-S_r} & =\sum_{r=0}^{N^2} e^{-S_r} +\sum_{n=N+1}^{\infty}\sum_{i=(n-1)^2+1}^{n^2} e^{-S_i} \\ & \le N^2 e^{(N^2/2)\beta}+\sum_{n=N+1}^{\infty}\sum_{i=(n-1)^2+1}^{n^2} e^{-(n^2/4)\beta}  \le N^2 e^{(N^2/2)\beta}+\sum_{n=N+1}^{\infty} n^2e^{-(n^2/4)\beta} <\infty.
\end{align*}
This completes the proof.
\end{proof}
As a corollary, we have the following double-limit result.
\begin{corollary}\label{l:cor1}
Under the setup of Lemma \ref{a1}, almost surely we have
\begin{align*}
     \lim_{k\to \infty}\lim_{n\to \infty} \frac{\sum_{r=k}^{n} e^{-S_r}}{\sum_{r=0}^n e^{-S_r}} =0.
\end{align*}
\end{corollary}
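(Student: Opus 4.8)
\textbf{Proof plan for Corollary \ref{l:cor1}.} The plan is to reduce the double limit to two clean facts about the random walk $(S_r)_{r\ge 0}$ with positive drift: first, that the tail sum $\sum_{r\ge k} e^{-S_r}$ is almost surely finite and tends to $0$ as $k\to\infty$; and second, that the full sum $\sum_{r\ge 0} e^{-S_r}$ is almost surely finite and, crucially, almost surely \emph{strictly positive} (indeed it is at least $e^{-S_0}=1$, since $S_0=0$). Given these, the inner limit $\lim_{n\to\infty}$ exists pathwise by monotone convergence of each numerator and denominator to a finite limit, namely $\sum_{r=k}^{\infty} e^{-S_r}$ over $\sum_{r=0}^{\infty} e^{-S_r}$; and the outer limit $\lim_{k\to\infty}$ of that ratio is $0$ because the numerator decreases to $0$ (tail of a convergent series) while the denominator is a fixed positive finite number not depending on $k$.

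First I would invoke Lemma \ref{a1}, which gives that $Q:=\sum_{r=0}^{\infty} e^{-S_r}<\infty$ almost surely. On the same almost sure event, for each fixed $k$, the sequence $\big(\sum_{r=k}^{n} e^{-S_r}\big)_{n\ge k}$ is nondecreasing in $n$ and bounded above by $Q$, hence converges to $\sum_{r=k}^{\infty} e^{-S_r}=:Q_k<\infty$; likewise $\sum_{r=0}^{n} e^{-S_r}\uparrow Q$. Therefore, almost surely and for each fixed $k$,
\begin{align*}
    \lim_{n\to\infty} \frac{\sum_{r=k}^{n} e^{-S_r}}{\sum_{r=0}^{n} e^{-S_r}} = \frac{Q_k}{Q}.
\end{align*}
Since $S_0=0$ we have $Q\ge e^{-S_0}=1>0$, so this ratio is well defined pathwise. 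Finally, $Q_k = Q - \sum_{r=0}^{k-1} e^{-S_r}$ is the tail of the convergent series $Q$, hence $Q_k\downarrow 0$ as $k\to\infty$ on the almost sure event; and $Q$ does not depend on $k$. Consequently $\lim_{k\to\infty} Q_k/Q = 0$ almost surely, which is exactly the claimed double-limit identity.

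The only mild subtlety — and the one step worth being careful about — is the pathwise positivity and finiteness of the denominator, which legitimizes writing the ratio and exchanging the limits; this is handled precisely by the bound $Q\ge 1$ together with Lemma \ref{a1}. Everything else is the elementary observation that monotone partial sums of a convergent nonnegative series converge to the series, and that the tail of a convergent series vanishes. No new probabilistic input beyond Lemma \ref{a1} is needed. I do not anticipate a real obstacle here; the proof is essentially a two-line deduction from Lemma \ref{a1} once the order of limits is unpacked.
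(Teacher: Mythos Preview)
Your proposal is correct and follows essentially the same approach as the paper: both use Lemma \ref{a1} to pass to the limit $n\to\infty$ by monotone convergence, then observe that the resulting expression is the tail of a convergent series over the full sum and hence vanishes as $k\to\infty$. The only cosmetic difference is that the paper rewrites the ratio as $1-\sum_{r=0}^{k-1}e^{-S_r}/\sum_{r=0}^{n}e^{-S_r}$ before taking limits, whereas you compute $Q_k/Q$ directly; your explicit remark that $Q\ge 1$ makes the division legitimate is a nice touch the paper leaves implicit.
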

\begin{proof} Note that $\sum_{r=0}^n e^{-S_r}$ is a monotone sequence in $n$ which converges to a random variable that is almost surely finite by Lemma \ref{a1}. Thus, 
    \begin{align*}
        \frac{\sum_{r=k}^{n} e^{-S_r}}{\sum_{r=0}^n e^{-S_r}} = 1-\frac{\sum_{r=0}^{k-1} e^{-S_r}}{\sum_{r=0}^n e^{-S_r}} \stackrel{n\to\infty}{\to} 1-\frac{\sum_{r=0}^{k-1} e^{-S_r}}{\sum_{r=0}^{\infty} e^{-S_r}}.
    \end{align*}
Taking $k\to \infty$ yields the desired result.
\end{proof}

\noindent\textbf{Data availability statement.} Data sharing not applicable to this article as no datasets were
generated or analyzed during the current study.

\noindent\textbf{Statement and Declarations.} The authors declare that they have no known competing financial or non-financial interests that could have appeared to influence the work reported in this paper.

\bibliographystyle{alpha}
\bibliography{bound}

\begin{thebibliography}{DNKLDT20}

\bibitem[ABO21]{abo21}
Jonas Arista, Elia Bisi, and Neil O'Connell.
\newblock Matsumoto-{Y}or and {D}ufresne type theorems for a random walk on
  positive definite matrices.
\newblock {\em arXiv preprint arXiv:2112.12558}, 2021.

\bibitem[Abr80]{abr80}
Douglas~B Abraham.
\newblock Solvable model with a roughening transition for a planar {I}sing
  ferromagnet.
\newblock {\em Physical Review Letters}, 44(18):1165, 1980.

\bibitem[BAP05]{bap05}
Jinho Baik, G{\'e}rard~Ben Arous, and Sandrine P{\'e}ch{\'e}.
\newblock {Phase transition of the largest eigenvalue for nonnull complex
  sample covariance matrices}.
\newblock {\em The Annals of Probability}, 33(5):1643 -- 1697, 2005.

\bibitem[Bat18]{bates}
Erik Bates.
\newblock Localization of directed polymers with general reference walk.
\newblock {\em Electronic Journal of Probability}, 23:1--45, 2018.

\bibitem[BBC16]{bbc16}
Alexei Borodin, Alexey Bufetov, and Ivan Corwin.
\newblock Directed random polymers via nested contour integrals.
\newblock {\em Annals of Physics}, 368:191--247, 2016.

\bibitem[BBC20]{bbc20}
Guillaume Barraquand, Alexei Borodin, and Ivan Corwin.
\newblock Half-space {M}acdonald processes.
\newblock In {\em Forum of Mathematics, Pi}, volume~8. Cambridge University
  Press, 2020.

\bibitem[BBCS18]{bbcs18a}
Jinho Baik, Guillaume Barraquand, Ivan Corwin, and Toufic Suidan.
\newblock Facilitated exclusion process.
\newblock In {\em Computation and Combinatorics in Dynamics, Stochastics and
  Control: The Abel Symposium, Rosendal, Norway, August 2016}, pages 1--35.
  Springer, 2018.

\bibitem[BBCW18]{bbcw18}
Guillaume Barraquand, Alexei Borodin, Ivan Corwin, and Michael Wheeler.
\newblock {Stochastic six-vertex model in a half-quadrant and half-line open
  asymmetric simple exclusion process}.
\newblock {\em Duke Mathematical Journal}, 167(13):2457 -- 2529, 2018.

\bibitem[BBNV18]{bbnv18}
Dan Betea, J{\'e}r{\'e}mie Bouttier, Peter Nejjar, and Mirjana Vuleti{\'c}.
\newblock The free boundary {S}chur process and applications i.
\newblock In {\em Annales Henri Poincar{\'e}}, volume~19, pages 3663--3742.
  Springer, 2018.

\bibitem[BC20]{bc20}
Erik Bates and Sourav Chatterjee.
\newblock The endpoint distribution of directed polymers.
\newblock {\em Ann. Probab.}, 48(2):817--871, 2020.

\bibitem[BC23]{bc22}
Guillaume Barraquand and Ivan Corwin.
\newblock Stationary measures for the log-gamma polymer and {KPZ} equation in
  half-space.
\newblock {\em Ann. Probab.}, 51(5):1830--1869, 2023.

\bibitem[BCD23a]{bcd23}
Guillaume Barraquand, Ivan Corwin, and Sayan Das.
\newblock {KPZ} exponents for the half-space log-gamma polymer.
\newblock {\em arXiv preprint arXiv:2310.10019}, 2023.

\bibitem[BCD23b]{bcd21}
Guillaume Barraquand, Ivan Corwin, and Evgeni Dimitrov.
\newblock Spatial tightness at the edge of {G}ibbsian line ensembles.
\newblock {\em Communications in Mathematical Physics}, pages 1--78, 2023.

\bibitem[BCY23]{bcy}
Guillaume Barraquand, Ivan Corwin, and Zongrui Yang.
\newblock Stationary measures for integrable polymers on a strip.
\newblock {\em arXiv preprint arXiv:2306.05983}, 2023.

\bibitem[BD23]{bak22}
Yuri Bakhtin and Douglas Dow.
\newblock {Joint localization of directed polymers}.
\newblock {\em Electronic Journal of Probability}, 28(none):1 -- 43, 2023.

\bibitem[BFO20]{bfo20}
Dan Betea, Patrik~L Ferrari, and Alessandra Occelli.
\newblock Stationary half-space last passage percolation.
\newblock {\em Communications in Mathematical Physics}, 377:421--467, 2020.

\bibitem[BFO22]{bfo22}
Dan Betea, PL~Ferrari, and Alessandra Occelli.
\newblock The half-space {A}iry stat process.
\newblock {\em Stochastic Processes and their Applications}, 146:207--263,
  2022.

\bibitem[BHL83]{bhl83}
Edouard Br{\'e}zin, Bertrand~I. Halperin, and Stanislas Leibler.
\newblock Critical wetting in three dimensions.
\newblock {\em Physical Review Letters}, 50(18):1387, 1983.

\bibitem[BKLD20]{bkld20}
Guillaume Barraquand, Alexandre Krajenbrink, and Pierre Le~Doussal.
\newblock Half-space stationary {K}ardar-{P}arisi-{Z}hang equation.
\newblock {\em Journal of Statistical Physics}, 181(4):1149--1203, 2020.

\bibitem[BKLD22]{bkld22}
Guillaume Barraquand, Alexandre Krajenbrink, and Pierre Le~Doussal.
\newblock Half-space stationary {K}ardar-{P}arisi-{Z}hang equation beyond the
  {B}rownian case.
\newblock {\em Journal of Physics A: Mathematical and Theoretical},
  55(27):275004, 2022.

\bibitem[BLD21]{bld21}
Guillaume Barraquand and Pierre Le~Doussal.
\newblock {K}ardar-{P}arisi-{Z}hang equation in a half space with flat initial
  condition and the unbinding of a directed polymer from an attractive wall.
\newblock {\em Physical Review E}, 104(2):024502, 2021.

\bibitem[Bol89]{bol}
Erwin Bolthausen.
\newblock A note on the diffusion of directed polymers in a random environment.
\newblock {\em Communications in mathematical physics}, 123(4):529--534, 1989.

\bibitem[BR01a]{br1}
Jinho Baik and Eric~M. Rains.
\newblock {Algebraic aspects of increasing subsequences}.
\newblock {\em Duke Mathematical Journal}, 109(1):1 -- 65, 2001.

\bibitem[BR01b]{br3}
Jinho Baik and Eric~M Rains.
\newblock Symmetrized random permutations.
\newblock {\em Random matrix models and their applications, Math. Sci. Res.
  Inst. Publ.}, 40:1--19, 2001.

\bibitem[BR01c]{br2}
Jinho Baik and Eric~M. Rains.
\newblock {The asymptotics of monotone subsequences of involutions}.
\newblock {\em Duke Mathematical Journal}, 109(2):205 -- 281, 2001.

\bibitem[BS20]{bak}
Yuri Bakhtin and Donghyun Seo.
\newblock Localization of directed polymers in continuous space.
\newblock {\em Electronic Journal of Probability}, 25:1--56, 2020.

\bibitem[BW22]{bw}
Guillaume Barraquand and Shouda Wang.
\newblock {An Identity in Distribution Between Full-Space and Half-Space
  Log-Gamma Polymers}.
\newblock {\em International Mathematics Research Notices}, 06 2022.
\newblock rnac132.

\bibitem[BZ19]{bz19}
Elia Bisi and Nikos Zygouras.
\newblock Point-to-line polymers and orthogonal {W}hittaker functions.
\newblock {\em Transactions of the American Mathematical Society},
  371(12):8339--8379, 2019.

\bibitem[CH02]{carmonaHu1}
Philippe Carmona and Yueyun Hu.
\newblock On the partition function of a directed polymer in a gaussian random
  environment.
\newblock {\em Probability theory and related fields}, 124(3):431--457, 2002.

\bibitem[CN16]{comngu}
Francis Comets and Vu-Lan Nguyen.
\newblock Localization in {log-gamma} polymers with boundaries.
\newblock {\em Probability Theory and Related Fields}, 166:429--461, 2016.

\bibitem[Com17]{comets}
Francis Comets.
\newblock {\em Directed polymers in random environments}.
\newblock Springer, 2017.

\bibitem[COSZ14]{cosz14}
Ivan Corwin, Neil O’Connell, Timo Sepp{\"a}l{\"a}inen, and Nikolaos Zygouras.
\newblock Tropical combinatorics and {W}hittaker functions.
\newblock {\em Duke Mathematical Journal}, 163(3):513--563, 2014.

\bibitem[CS18]{cs18}
Ivan Corwin and Hao Shen.
\newblock Open {ASEP} in the weakly asymmetric regime.
\newblock {\em Communications on Pure and Applied Mathematics},
  71(10):2065--2128, 2018.

\bibitem[DGL23]{yugu22}
Alexander Dunlap, Yu~Gu, and Liying Li.
\newblock Localization length of the 1+ 1 continuum directed random polymer.
\newblock In {\em Annales Henri Poincar{\'e}}, pages 1--19. Springer, 2023.

\bibitem[DH09]{den}
Frank Den~Hollander.
\newblock {\em Random Polymers: {\'E}cole d’{\'E}t{\'e} de Probabilit{\'e}s
  de Saint-Flour XXXVII--2007}.
\newblock Springer, 2009.

\bibitem[DNKLDT20]{dnkldt20}
Jacopo De~Nardis, Alexandre Krajenbrink, Pierre Le~Doussal, and Thimoth{\'e}e
  Thiery.
\newblock Delta-bose gas on a half-line and the {K}ardar-{P}arisi-{Z}hang
  equation: boundary bound states and unbinding transitions.
\newblock {\em Journal of Statistical Mechanics: Theory and Experiment},
  2020(4):043207, 2020.

\bibitem[DZ22a]{dz1}
Sayan Das and Weitao Zhu.
\newblock Localization of the continuum directed random polymer.
\newblock {\em arXiv preprint arXiv:2203.03607}, 2022.

\bibitem[DZ22b]{dz2}
Sayan Das and Weitao Zhu.
\newblock Short and long-time path tightness of the continuum directed random
  polymer.
\newblock {\em To appear in Ann. Instit. Henri Poincare (B) Probab. Stat.,
  arXiv:2205.05670}, 2022.

\bibitem[FO24]{fo22}
Patrik Ferrari and Alessandra Occelli.
\newblock {Time-time covariance for last passage percolation in half-space}.
\newblock {\em The Annals of Applied Probability}, 34(1A):627 -- 674, 2024.

\bibitem[Gia07]{gia}
G~Giacomin.
\newblock {\em Random polymer models}.
\newblock Imperial College Press, London., 2007.

\bibitem[GLD12]{gld12}
Thomas Gueudr{\'e} and Pierre Le~Doussal.
\newblock Directed polymer near a hard wall and {KPZ} equation in the
  half-space.
\newblock {\em Europhysics Letters}, 100(2):26006, 2012.

\bibitem[HHF85]{hhf}
David~A Huse, Christopher~L Henley, and Daniel~S Fisher.
\newblock Huse, {H}enley, and {F}isher respond.
\newblock {\em Physical review letters}, 55(26):2924, 1985.

\bibitem[IMS22]{ims22}
Takashi Imamura, Matteo Mucciconi, and Tomohiro Sasamoto.
\newblock Solvable models in the {KPZ} class: approach through periodic and
  free boundary schur measures.
\newblock {\em arXiv preprint arXiv:2204.08420}, 2022.

\bibitem[IS88]{imb}
John~Z Imbrie and Thomas Spencer.
\newblock Diffusion of directed polymers in a random environment.
\newblock {\em Journal of statistical Physics}, 52(3):609--626, 1988.

\bibitem[IT18]{it18}
Yasufumi Ito and Kazumasa~A Takeuchi.
\newblock When fast and slow interfaces grow together: connection to the
  half-space problem of the kardar-parisi-zhang class.
\newblock {\em Physical Review E}, 97(4):040103, 2018.

\bibitem[Kar85]{kar2}
Mehran Kardar.
\newblock Depinning by quenched randomness.
\newblock {\em Physical review letters}, 55(21):2235, 1985.

\bibitem[KLD18]{kld18}
Alexandre Krajenbrink and Pierre Le~Doussal.
\newblock Large fluctuations of the {KPZ} equation in a half-space.
\newblock {\em SciPost Physics}, 5(4):032, 2018.

\bibitem[NZ17]{nz17}
Vu-Lan Nguyen and Nikos Zygouras.
\newblock Variants of geometric {RSK}, geometric {PNG}, and the multipoint
  distribution of the log-gamma polymer.
\newblock {\em International Mathematics Research Notices},
  2017(15):4732--4795, 2017.

\bibitem[OSZ14]{osz14}
Neil O’Connell, Timo Sepp{\"a}l{\"a}inen, and Nikos Zygouras.
\newblock Geometric {RSK} correspondence, {W}hittaker functions and symmetrized
  random polymers.
\newblock {\em Inventiones mathematicae}, 197(2):361--416, 2014.

\bibitem[Par19]{par19}
Shalin Parekh.
\newblock The {KPZ} limit of {ASEP} with boundary.
\newblock {\em Communications in Mathematical Physics}, 365:569--649, 2019.

\bibitem[Par22]{par22}
Shalin Parekh.
\newblock Positive random walks and an identity for half-space {SPDEs}.
\newblock {\em Electronic Journal of Probability}, 27:1--47, 2022.

\bibitem[PS02]{ps02}
Michael Pr\"{a}hofer and Herbert Spohn.
\newblock Current fluctuations for the totally asymmetric simple exclusion
  process.
\newblock In {\em In and out of equilibrium}, pages 185--204. Springer, 2002.

\bibitem[PSW82]{psw82}
Rahul Pandit, M~Schick, and Michael Wortis.
\newblock Systematics of multilayer adsorption phenomena on attractive
  substrates.
\newblock {\em Physical Review B}, 26(9):5112, 1982.

\bibitem[Sep12]{timo}
Timo Sepp{\"a}l{\"a}inen.
\newblock Scaling for a one-dimensional directed polymer with boundary
  conditions.
\newblock {\em Ann. Probab.}, 40(1):19--73, 2012.

\bibitem[SI04]{si04}
Tomohiro Sasamoto and Takashi Imamura.
\newblock Fluctuations of the one-dimensional polynuclear growth model in
  half-space.
\newblock {\em Journal of statistical physics}, 115:749--803, 2004.

\bibitem[Szn98]{bk1}
Alain-Sol Sznitman.
\newblock {\em Brownian motion, obstacles and random media}.
\newblock Springer Science \& Business Media, 1998.

\bibitem[Wu20]{wu20}
Xuan Wu.
\newblock Intermediate disorder regime for half-space directed polymers.
\newblock {\em Journal of Statistical Physics}, 181(6):2372--2403, 2020.

\end{thebibliography}
\end{document}